\crefname{ineq}{Inequality}{inequalities}
\newtheorem{theorem}{Theorem}[section]
\newtheorem{lemma}[theorem]{Lemma}
\newtheorem{proposition}[theorem]{Proposition}
\newtheorem{corollary}[theorem]{Corollary}
\theoremstyle{definition}
\newtheorem*{theorem*}{Theorem}
\newtheorem{example}[theorem]{Example}
\newtheorem{definition}[theorem]{Definition}
\newtheorem{remark}[theorem]{Remark}
\numberwithin{equation}{section}
\DeclareMathOperator{\Div}{Div}
\DeclareMathOperator{\ddiv}{div}
\DeclareMathOperator{\GL}{GL}
\DeclareMathOperator{\SL}{SL}
\DeclareMathOperator{\tr}{tr}
\DeclareMathOperator{\id}{id}
\DeclareMathOperator{\gr}{gr}
\DeclareMathOperator{\Supp}{Supp}
\DeclareMathOperator{\API}{API}
\DeclareMathOperator{\AV}{AV}
\DeclareMathOperator{\red}{red}
\DeclareMathOperator{\Ad}{Ad}
\DeclareMathOperator{\diag}{diag}
\DeclareMathOperator{\even}{even}
\DeclareMathOperator{\odd}{odd}
\DeclareMathOperator{\codim}{codim}
\DeclareMathOperator{\Spec}{Spec}
\DeclareMathOperator{\Proj}{Proj}
\DeclareMathOperator{\im}{im}
\DeclareMathOperator{\Hom}{Hom}
\DeclareMathOperator{\rank}{rank}
\DeclareMathOperator{\End}{End}
\DeclareMathOperator{\Aut}{Aut}
\DeclareMathOperator{\Irr}{Irr}
\DeclareMathAlphabet{\mb}{U}{bbold}{m}{n}
\newcommand{\la}{\leftarrow}
\newcommand{\gitquo}{/\!/}
\newcommand{\hO}{\mathcal{O}}
\newcommand{\pO}{\partial\mathbb{O}}
\newcommand{\bO}{\overline{\mathbb{O}}}
\newcommand{\hA}{\mathcal{A}}
\newcommand{\hI}{\mathcal{I}}
\newcommand{\M}{\mathcal{M}}
\newcommand{\N}{\mathcal{N}}
\newcommand{\F}{\mathcal{F}}
\newcommand{\hL}{\mathcal{L}}
\newcommand{\hg}{\mathfrak{g}}
\newcommand{\hk}{\mathfrak{k}}
\newcommand{\hp}{\mathfrak{p}}
\newcommand{\A}{\mathbb{A}}
\newcommand{\C}{\mathbb{C}}
\newcommand{\Z}{\mathbb{Z}}
\newcommand{\pr}{\mathbb{P}}
\newcommand{\OO}{\mathbb{O}}
\newcommand{\U}{\mathcal{U}}
\newcommand{\ttheta}{\tilde{\theta}}
\newcommand{\wt}{\widetilde}
\newcommand{\gl}{\mathfrak{gl}}
\renewcommand{\sl}{\mathfrak{sl}}
\newcommand{\minus}{\scalebox{0.75}[1.0]{$-$}}
\DeclareMathSymbol{\sminus}{\mathbin}{AMSa}{"39}
\newcommand{\RMN}[1]{%
\textup{\uppercase\expandafter{\romannumeral#1}}%
}
\newcommand{\rmn}[1]{%
\textup{\lowercase\expandafter{\romannumeral#1}}%
}
\newcommand*{\RNum}{} 
\let\RNum\@Roman      
\title{On certain Lagrangian subvarieties in minimal resolutions of Kleinian singularities}
\author{Mengwei Hu}
\address{
  Department of Mathematics \\
  Yale University 
}
\email{m.hu@yale.edu}
\begin{document}
\begin{abstract}
Kleinian singularities are quotients of $\mathbb{C}^2$ by finite subgroups of $\mathrm{SL}_2(\mathbb{C})$. They are in bijection with the simply-laced Dynkin diagrams via the McKay correspondence. Anti-Poisson involutions and their fixed point loci appear naturally when we want to classify irreducible Harish-Chandra modules over Kleinian singularities. There are three goals of this paper. The first is to classify anti-Poisson involutions of Kleinian singularities up to conjugation by graded Poisson automorphisms. The second is to describe the scheme-theoretic fixed point loci of Kleinian singularities under anti-Poisson involutions. The last and the main goal is to describe the scheme-theoretic preimages of the fixed point loci under minimal resolutions of Kleinian singularities, which are singular Lagrangian subvarieties in the minimal resolutions whose irreducible components are $\mathbb{P}^1$'s and $\mathbb{A}^1$'s.
\end{abstract}
\maketitle
\tableofcontents
\setcounter{section}{-1}
\section{Introduction}
\subsection{Kleinian singularities}\label{Kleinianintro} Kleinian singularities are quotients of $\C^2$ by finite subgroups $\Gamma\subset\SL_2(\C)$. They can be described as surfaces in $\C^3$ with isolated singularities at $0$. The McKay correspondence states that the finite subgroups of $\SL_2(\C)$ are in bijection with the simply-laced Dynkin diagrams. Let $\pi\colon\wt{X}\to X:=\C^2/\Gamma$ denote the minimal resolution of a Kleinian singularity. The reduced exceptional fiber is of the form
\[
\pi^{-1}(0)_{\red}=C_1\cup\cdots\cup C_n,\ C_i\simeq \pr^1.
\]
The $C_i$ has self-intersection $C_i\cdot C_i=-2$ and pairwise transversal intersection according dually to a Dynkin diagram of types $ADE$.

The algebra $\C[u,v]$ is a graded Poisson algebra with respect to the usual bracket and grading. The regular functions on a Kleinian singularity $\C[X]=\C[u,v]^\Gamma$ is a graded Poisson subalgebra of $\C[u,v]$. Let $\theta$ be an \emph{anti-Poisson involution} on $X$, which means that $\theta$ is a graded involution on $\C[X]$ such that $\theta(\{\cdot,\cdot\})=-\{\theta(\cdot),\theta(\cdot)\}$. We would like to study the \emph{scheme-theoretic fixed point locus} $X^{\theta}:=\Spec\C[X]/(\theta(f)-f|f\in\C[X])$ and its scheme-theoretic preimage $\pi^{-1}(X^{\theta})$ under the minimal resolution. There are three goals of this paper.
\begin{enumerate}[label=(\alph*)]
    \item\label{main1} Classify anti-Poisson involutions on $X$ up to conjugation by graded Poisson automorphisms.
    \item\label{main2} Describe $X^{\theta}$ as a subscheme, i.e., determine its irreducible components, study how different components intersect with each other, and figure out if the scheme is reduced or not.
    \item\label{main3} Describe the preimage $\pi^{-1}(X^{\theta})$ under the minimal resolution as a subscheme.
\end{enumerate}
Before presenting the main results of the paper, let us talk about why we care about the anti-Poisson involutions of Kleinian singularities and their fixed point loci. They are related to the classification of irreducible Harish-Chandra $(\hg,K)$-modules annihilated by certain unipotent ideals. We review some results on nilpotent orbits in \Cref{nilp}, which will be needed for the discussion on Harish-Chandra modules in \Cref{HCmod}.

\subsection{Nilpotent orbits}\label{nilp} 
Let $\hg$ be a semisimple Lie algebra over $\C$ and $\OO\subset\hg$ a nilpotent orbit. Then $\OO$ can be identified with a coadjoint orbit in $\hg^*$ using the Killing form, and therefore carries the Kostant-Kirillov symplectic $2$-form. Let $\bO$ be the closure of $\OO$ in $\hg$ and $\OO'$ a codimension $2$ orbit contained in $\bO$. Let $S'$ be a transversal slice (e.g. Slodowy slice) in $\bO$ to a point $e\in\OO'$. Assume that $e'$ is a normal point in $\bO$. Then $S'\cap\bO$ is a normal Goreinstein (due to Panyushev \cite[Theorem 1]{Panyushev}) singularity of dimension $2$, hence isomorphic to a Kleinian singularity $X$ because Kleinian singularities are the only normal Gorenstein singularities in dimension $2$. Note that $\bO$ is always normal in type $A$ \cite[Theorem]{KPconj}, and we can pass to the normalization in the general situation.

Anti-Poisson involutions appear naturally from involutions of Lie algebras. Let $\sigma\colon\hg\to\hg$ be a Lie algebra involution. We can decompose $\hg=\hk\oplus\hp$, where $\mathfrak{k}$ is the eigenspace with eigenvalue $1$ under $\sigma$, and $\mathfrak{p}$ is the eigenspace with eigenvalue $-1$. Set $\theta:=-\sigma$. Then $\theta$ is an anti-Poisson involution on $\hg$. With a suitable choice of the element $e'$ and the slice $S'$, the action of $\theta$ restricts to an anti-Poisson involution on $S'\cap\bO\simeq X$. We have $S'\cap\bO\cap\hp=(S'\cap\bO)^{\theta}\simeq X^{\theta}$, where the fixed point locus shows up.

\subsection{Harish-Chandra modules}\label{HCmod} 
Let $\hg,\hk,\sigma,\theta$ be as in \Cref{nilp}. Let $G$ be the simply connected algebraic group with Lie algebra $\hg$ and $K\subset G$ the connected algebraic subgroup with Lie subalgebra $\hk$. Let $\U(\hg)$ be the universal enveloping algebra of $\hg$, equipped with the PBW filtration. By a \emph{Harish-Chandra} (shortly HC) $(\hg,K)$-module, one means a finitely generated $\U(\hg)$-module $M$ such that $\hk$ acts locally finitely and the action of $\hk$ integrates to that of $K$. A \emph{good} filtration on a HC $(\hg,K)$-module $M$ is a $K$-stable filtration that is compatible with the filtration on $\U(\hg)$ and such that $\gr M$ is finitely generated over $\gr\U(\hg)$. Pick a good filtration on $M$, we can define the \emph{associated variety} $\AV(M):=\Supp(\gr M)$ as a set. Note that $\AV(M)$ is independent of the choice of the good filtration (c.f. \cite[\S 2]{Vogan}). When $M$ is irreducible, we have $\AV(M)\subset\N\cap\hp$.

We focus on a special class of irreducible HC $(\hg,K)$-modules, namely those annihilated by unipotent ideals. 
Let $\OO\subset\hg$ be a nilpotent orbit. The algebra of regular functions $\C[\OO]$ is a graded Poisson algebra. It makes
sense to speak about filtered quantizations of $\C[\OO]$, and they are parameterized by $H^2(\OO,\C)$ \cite{Losevdeformation}[Theorem 3.4]. Consider the \emph{canonical} (in the terminology of \cite[\S 1.4]{LLDunipotent}) quantization $\hA_0$, its parameter is $0\in H^2(\OO,\C)$. The corresponding kernel of the quantum comoment map $\hI_0(\OO):=\ker(\U(\hg)\to\hA_0)$ is the \emph{unipotent ideal associated with $\OO$} in the terminology of \cite[\S 1.4]{LLDunipotent}. All such ideals are maximal. \cite[Theorem 8.5.1] {LLDunipotent}. Let $M$ be an irreducible HC $(\hg,K)$-module annihilated by $\hI_0(\OO)$. Then we have $\AV(M)\subset\bO\cap\hp$. Let us further assume that 
\[
  \codim_{\bO}\pO\geqslant 4.
\]
A result by Vogan states that $\AV(M)$ is irreducible hence is the closure of a single $K$-orbit in $\OO\cap\hp$ \cite[Theorem 4.6]{Vogan}. Let $\OO_K$ denote the open $K$-orbit in $\AV(M)$. Losev and Yu prove that there is a bijection between the set of irreducible HC $(\hg,K)$-modules annihilated by $\hI_0(\OO)$ with associated variety $\bO_K$ and the set of irreducible $K$-equivariant twisted (by half-canonical twist) local systems on $\OO_K$ \cite[Theorem 1.3.1]{LosevYu}. The bijection sends a HC module $M$ to the twisted local system $(\gr M)|_{\OO_K}$.

However, when $\codim_{\bO}\pO\geqslant 2$, the classification of irreducible HC $(\hg,K)$-modules annihilated by $\hI_0(\OO)$ is unknown. One difficulty is that $\AV(M)$ may not be irreducible. For example, consider the Lie algebra $\sl_2(\C)=\C\langle e,h,f\rangle$ with the inner involution $\sigma:=\Ad(\diag\{1,-1\})$. Then we have $\hk=\hg^{\sigma}=\C h$ is the Cartan subspace. Let $\OO$ be the regular nilpotent orbit in $\sl_2(\C)$. The unipotent ideal $\hI_0(\OO)$ is the two-sided ideal generated by the Casimir element $C=(h+1)^2+4fe$. Let $M$ be an irreducible $\sl_2(\C)$-module such that $h$ acts semisimply with even integral eigenvalues and the Casimir element $C$ acts by $0$. Then $M$ has neither a highest weight nor a lowest weight vector. Equip $M$ with a good filtration. Then $\AV(M)$ is a union of two lines. Another difficulty is that the condition $\codim_{\bO}(\pO)\geqslant 4$ is essential to show that non-isomorphic HC modules correspond to non-isomorphic twisted local systems in the proof of \cite[Theorem 1.3.1]{LosevYu}. Thus, in the codimension $2$ case, the geometric data needed will be much more complicated than a twisted local system on a $K$-orbit in $\OO\cap\hp$.

To proceed further with the classification, it is necessary to understand the structure of the variety $\bO\cap\hp$, in which the associated variety $\AV(M)$ is being contained. We will focus on $S'\cap\bO\cap\hp$, where $S'$ is a transversal slice to a point in a codimension $2$ orbit $\OO'$. As we explained at the end of \Cref{nilp}, $S'\cap\bO$ is isomorphic to a Kleinian singularity $X$, and $S'\cap\bO\cap\hp\simeq X^{\theta}$ is the fixed point locus of an anti-Poisson involution on $X$.

\subsection{Main results}
This paper provide complete answers to the three questions \ref{main1}-\ref{main3} proposed in \Cref{Kleinianintro}. We summarize our main results below. Recall that $X$ denotes a Kleinian singularity. The following proposition answers question \ref{main1}.
\begin{proposition}\label{APIclassification}
    There are finitely many anti-Poisson involutions on $X:=\C^2/\Gamma$ up to conjugation by Poisson automorphisms. They can all be written out explicitly in terms of generators of $\C[X]$.
\end{proposition}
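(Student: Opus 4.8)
The plan is to convert the classification into a computation with $2\times 2$ matrices and then to run that computation over the $ADE$ list.

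The first step is a lifting principle. Every element of finite order in $\SL_2(\C)$ other than the identity has no eigenvalue equal to $1$ (its two eigenvalues are roots of unity with product $1$, and a finite-order unipotent is trivial in characteristic zero), so $\Gamma$ acts freely on $\C^2\setminus\{0\}$; hence $\C^2\setminus\{0\}\to X\setminus\{0\}$ is a universal cover with deck group $\Gamma$. The origin is the only singular point of $X$, so any automorphism of $X$ preserves $X\setminus\{0\}$, lifts to the cover, normalizes $\Gamma$, and extends across the origin by Hartogs. A graded automorphism commutes with the Euler $\C^*$-action, so its lift is graded, hence linear, while every linear map normalizing $\Gamma$ descends; this gives a canonical isomorphism $\Aut_{\mathrm{gr}}(\C[X])\cong N_{\GL_2(\C)}(\Gamma)/\Gamma=:\mathbb G$. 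Since $\Gamma\subset\SL_2(\C)$, the determinant is constant on cosets of $\Gamma$ and descends to a homomorphism $\det\colon\mathbb G\to\C^*$. The bracket on $\C[X]$ is induced from $\{u,v\}=1$, and a linear map multiplies it by its determinant; therefore a graded automorphism is Poisson exactly when its lift has determinant $1$, and an anti-Poisson involution of $X$ is exactly an element $\bar A\in\mathbb G$ with $\bar A^2=1$ and $\det\bar A=-1$.

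It remains to count such elements up to conjugation by $\mathbb G^{\det=1}$, the group of graded Poisson automorphisms. The observation that yields finiteness is that the scalar matrices $\C^*\cdot I$ lie in $N_{\GL_2(\C)}(\Gamma)$, are central in $\mathbb G$, and have determinants $z\mapsto z^2$ filling all of $\C^*$. Consequently, if two anti-Poisson involutions are conjugate by some $g\in\mathbb G$, one may multiply $g$ by a central scalar to arrange $\det g=1$; hence conjugacy by graded Poisson automorphisms agrees with conjugacy by \emph{all} graded automorphisms on the set of anti-Poisson involutions. The problem thus reduces to counting conjugacy classes of order-two elements of determinant $-1$ in $\mathbb G$. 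In every $ADE$ case $\mathbb G$ is a reductive group with finitely many components: its identity component is a torus (the connected centralizer of $\Gamma$ modulo $\Gamma$), and its component group is a quotient of $N_{\GL_2(\C)}(\Gamma)/C_{\GL_2(\C)}(\Gamma)\hookrightarrow\Aut(\Gamma)$, hence finite. A possibly disconnected reductive group over $\C$ has only finitely many conjugacy classes of involutions, which establishes the finiteness assertion.

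For the explicit description I would fix, for each Dynkin type, the standard generating invariants $x,y,z$ of $\C[u,v]^\Gamma$ and the single defining relation $F(x,y,z)=0$. For that type one writes down $N_{\GL_2(\C)}(\Gamma)$ (for $\Gamma$ binary cyclic, binary dihedral, or binary tetrahedral/octahedral/icosahedral), lists representatives of the order-two cosets of determinant $-1$, and pushes each representative forward to the invariants, producing an explicit graded involution of $\C[x,y,z]/(F)$ by evaluating its action on $x,y,z$. The main obstacle is this type-by-type bookkeeping rather than the finiteness argument. Type $A$ requires care because $\mathbb G$ is positive-dimensional---$\Gamma$ is a torus and $\mathbb G$ contains a one-parameter family of rescalings together with the Weyl swap of the two eigenlines---so one must check that the continuous families of candidate involutions collapse to finitely many classes (which the determinant-adjustment above guarantees) and must separate the subcase $n$ odd, where $-I\notin\Gamma$. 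In types $D$ and $E$ the normalizer is finite but strictly larger than $\Gamma$, and the delicate points are deciding which determinant-$(-1)$ involutions become conjugate under $\mathbb G$ and translating the chosen matrix representatives into clean polynomial formulas on $x,y,z$. Once these computations are tabulated, both claims of the proposition follow.
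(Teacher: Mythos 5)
Your proposal is correct, and while its first half coincides with the paper's setup, the finiteness argument runs along a genuinely different route. The paper establishes $\Aut_{\gr}(\C^2/\Gamma)\cong N_{\GL_2(\C)}(\Gamma)/\Gamma$ by precisely your covering-space/Hartogs/linearization argument (\Cref{autogrbij}) and likewise characterizes anti-Poisson involutions as cosets of determinant $-1$ whose square lies in $\Gamma$; but for finiteness (\Cref{apiconjfinite}) it argues case by case: in type $A$ it computes the normalizer explicitly (diagonal and antidiagonal matrices) and checks by hand that the one-parameter family of antidiagonal involutions is a single orbit under $N_{\SL_2(\C)}(\Gamma)$, while outside type $A$ it shows $N_{\SL_2(\C)}(\Gamma)$ itself is finite (its identity component is trivial, by Schur's lemma applied to the centralizer), so the set of anti-Poisson involutions is already finite. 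You replace this dichotomy by two uniform observations the paper never makes explicit: first, since the central scalars $\C^*I$ normalize $\Gamma$ and their determinants fill $\C^*$, conjugacy of anti-Poisson involutions under \emph{all} graded automorphisms agrees with conjugacy under graded Poisson ones; second, $\mathbb{G}:=N_{\GL_2(\C)}(\Gamma)/\Gamma$ is a (possibly disconnected) reductive algebraic group, and such a group has only finitely many conjugacy classes of involutions --- a true but nontrivial input (provable by Weil rigidity: vanishing of $H^1(\Z/2\Z,\mathfrak{g})$ makes conjugation orbits open in the closed variety $\{g\in\mathbb{G} : g^2=1\}$, which has finitely many irreducible components). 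Your route buys uniformity across all types at the price of quoting that general theorem; the paper's route is elementary and self-contained, and its case analysis directly produces the matrix representatives needed anyway for the proposition's second claim. Two caveats, neither fatal: your assertion that the identity component of $\mathbb{G}$ is always a torus fails in type $A_1$, where $\Gamma=\{\pm I\}$ and $\mathbb{G}=\GL_2(\C)/\{\pm I\}$ is connected reductive of rank $2$ (harmless, since the finiteness theorem applies to it as well); and for the explicit lists, the paper mostly does \emph{not} push normalizer elements through the invariants as you plan --- it does so only for $A_1$ and $D_4$ --- but instead solves directly for graded involutions of $\C[x,y,z]/(F)$ using the degrees of the generators, the defining relation, and the Poisson brackets, recording the corresponding normalizer elements afterwards in remarks; this avoids ever computing the normalizers of the binary polyhedral groups, whereas your plan requires them.
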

The finiteness of the conjugacy classes of anti-Poisson automorphisms is dealt in \Cref{apiconjfinite}.
The explicit forms of the anti-Poisson involutions are studied in \Cref{AnAPIodd,AnAPIeven,DnevenAPI,DnoddAPI,E6API,E7API,E8API}. The next proposition answers question \ref{main2}.
\begin{proposition}\label{descriptionfixedloci}
    Let $\theta$ be an anti-Poisson involution on $X$. The fixed point locus $X^{\theta}$ is reduced. If $X^{\theta}$ is not a single point, each irreducible component of $X^{\theta}$ is either $\A^1$ or a cusp.
\end{proposition}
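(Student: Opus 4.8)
The plan is to exploit the classification of \Cref{APIclassification}: reducedness and the isomorphism type of $X^{\theta}$ are invariant under conjugation by graded Poisson automorphisms, so it suffices to analyze one explicit representative $\theta$ in each of the finitely many classes recorded in \Cref{AnAPIodd,AnAPIeven,DnevenAPI,DnoddAPI,E6API,E7API,E8API}. The first move is to rewrite the fixed scheme usefully. Since $\theta^2=\id$, the ring splits as $\C[X]=\C[X]^{+}\oplus\C[X]^{-}$ into $(\pm1)$-eigenspaces, and the defining ideal of $X^{\theta}$ is $I=(\theta(f)-f\mid f\in\C[X])=(\C[X]^{-})$, the ideal generated by the anti-invariants; hence $\C[X^{\theta}]=\C[X]/(\C[X]^{-})$. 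Because $\theta$ is graded, it is induced by a linear map $\ttheta\in\GL_2(\C)$ with $\det\ttheta=-1$ that normalizes $\Gamma$ and satisfies $\ttheta^{2}\in\Gamma$ — this is how the explicit involutions of \Cref{APIclassification} arise — and I would use this lift to understand the geometry.

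For the shape of the components I would argue set-theoretically through $\ttheta$. A point $\pi(\tilde x)\in X$ is $\theta$-fixed iff $\ttheta\tilde x\in\Gamma\tilde x$, i.e. iff $\tilde x$ lies in the $(+1)$-eigenline of $\gamma^{-1}\ttheta$ for some $\gamma\in\Gamma$. As $\det(\gamma^{-1}\ttheta)=-1$, such a map has eigenvalue $1$ precisely when it is a reflection (eigenvalues $1,-1$), with fixed line $L_{\gamma}$; thus $(X^{\theta})_{\mathrm{red}}=\bigcup_{\gamma}\pi(L_{\gamma})$, a finite union which collapses to the origin exactly when no $\gamma^{-1}\ttheta$ is a reflection. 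Each $\pi(L_{\gamma})$ is the image of $L_{\gamma}\cong\A^1$ under the finite map $\pi$; since $\pi^{-1}(0)=\{0\}$ and $\Gamma$ acts freely on $\C^2\setminus\{0\}$, this image has a single preimage over $0$ and no further self-crossings, so it is unibranch with normalization $\A^1$. A unibranch weighted-homogeneous rational curve is either smooth, giving $\A^1$, or singular, giving a cusp, which is precisely the asserted dichotomy. I must also keep track of the fact that all $\gamma$ with $\gamma^{-1}\ttheta$ a reflection contribute, not merely $\gamma=\id$; overlooking this undercounts components, as is already visible in even type $A$, where the single fixed line of $\ttheta$ maps onto only one of two lines.

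Reducedness I would split according to the stratification by the singular point. Away from $0$ the surface $X$ is smooth, and the fixed scheme of an involution of a smooth variety in characteristic zero is smooth; hence $X^{\theta}$ is smooth, in particular reduced, on $X\setminus\{0\}$, which gives condition $R_0$ (generic reducedness along every component). It then remains to exclude an embedded component at the origin, for which I would invoke Cohen–Macaulayness: $\C[X]$ is Gorenstein, and in the curve case I would check that the chosen anti-invariant generators of $I$ form a regular sequence cutting $X^{\theta}$ out as a complete intersection, so that $S_1$ holds and Serre's criterion $R_0+S_1$ forces reducedness. In the zero-dimensional case one checks directly that $\C[X]^{-}$ generates the maximal ideal, so $\C[X^{\theta}]=\C$ is a reduced point.

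The main obstacle is precisely the scheme-theoretic content at the origin, i.e. passing from the clean set-theoretic and normalization picture above to the statement that $I=(\C[X]^{-})$ is already radical. This is where the explicit data of each Dynkin type enters: one must exhibit generators of the anti-invariant ideal, verify the complete-intersection/regular-sequence property — equivalently, that the relation defining $X^{\theta}$ after eliminating the anti-invariant coordinates is squarefree — and do so uniformly across the more intricate relations in types $D_n,E_6,E_7,E_8$. A secondary technical point, needed to make the reduction rigorous, is confirming in each class that the lift $\ttheta$ exists with $\det\ttheta=-1$ and normalizes $\Gamma$, so that the image-of-lines description is genuinely available.
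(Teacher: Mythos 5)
Your strategy is correct in outline and genuinely different from the paper's, which proves this proposition by pure computation: for each representative from the classification (\Cref{AnAPIodd,AnAPIeven,DnevenAPI,DnoddAPI,E6API,E7API,E8API}) it exhibits the ideal $J$ and the quotient $\C[X]/J$ in coordinates and reads off reducedness and the list of components from an explicit squarefree equation (\Cref{Anfixedloci,Dnevenfixedloci,Dnoddfixedloci,E6fixedloci,E7fixedlocus,E8fixedlocus}). Your eigenline description --- the components of $(X^{\theta})_{\red}$ are the images of the $(+1)$-eigenlines of those $\gamma^{-1}g$, $\gamma\in\Gamma$, that are reflections, where $g\in N_{\GL_2(\C)}(\Gamma)^-$ induces $\theta$ --- is more structural: it explains uniformly why only unibranch curves with normalization $\A^1$ (that is, $\A^1$'s and cusps, in the paper's loose usage of ``cusp'') can occur, and it correctly predicts the component count, e.g.\ the two lines in type $A_n$, $n$ odd, case $\RMN{2}$, which come from two distinct reflections in the coset $g\Gamma$. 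Your identification $J=(\C[X]^-)$ and the $R_0$ argument via smoothness of fixed-point loci on the smooth surface $X\setminus\{0\}$ are also fine.

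The step that fails is your $S_1$ mechanism. In type $A_n$, $n$ even, case $\RMN{2}$ of \Cref{AnAPIeven}, one has $J=(x,z)$ and $X^{\theta}=\Spec\C[y]$, one ruling of the singularity; this is a Weil divisor that is not Cartier, so $J$ is a height-one prime that is not principal, and it cannot contain a regular sequence of length two (in the Cohen--Macaulay ring $\C[X]$ the depth of an ideal is bounded by its height; concretely, $z$ is a zerodivisor in $\C[X]/(x)\simeq\C[y,z]/(z^{n+1})$ because $z\cdot z^{n}=0$ while $z^{n}\neq 0$, and the other order fails similarly). Hence $X^{\theta}$ is not a complete intersection in $X$, and the proposed regular-sequence verification cannot go through in this case; it is exactly one of the two non-principal cases the paper itself flags at the end of \Cref{liftsectionq3}, the other being the zero-dimensional locus in type $A_n$ odd, case $\RMN{3}$, which you do treat separately. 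The gap is localized and easy to repair --- $\C[X]/(x,z)\simeq\C[y]$ is visibly reduced, indeed regular --- and in every remaining case $J$ is principal, generated by a single nonzero element of the domain $\C[X]$, so there the complete-intersection argument and Serre's criterion work exactly as you describe. With that one case patched by direct computation, your proof is a valid alternative to the paper's.
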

This is a case by case calculation based on the classification result in \Cref{APIclassification}. The proofs are given in \Cref{Anfixedloci,Dnevenfixedloci,Dnoddfixedloci,E6fixedloci,E7fixedlocus,E8fixedlocus}. Recall $\pi\colon\wt{X}\to X$ denotes the minimal resolution. \Cref{liftexists} is crucial to the description of $\pi^{-1}(X^{\theta})$ as a subvariety, which constitutes the major part of the answer to the question \ref{main3}.
\begin{theorem}\label{liftexists}
   There exists a unique anti-symplectic involution $\ttheta:\wt{X}\to\wt{X}$ such that $\ttheta\circ\pi=\pi\circ\theta$. We call $\ttheta$ a \emph{lift} of $\theta$.
\end{theorem}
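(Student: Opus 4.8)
The plan is to build $\ttheta$ first as an automorphism of the underlying variety, and only afterwards to upgrade it to an anti-symplectic map. An anti-Poisson involution is in particular a graded $\C$-algebra involution of $\C[X]$, so it induces an honest involutive automorphism of the affine variety $X=\Spec\C[X]$, which I also denote by $\theta$; being graded, it is $\C^\times$-equivariant and in particular fixes the vertex $0$. Now $\theta\circ\pi\colon\wt X\to X$ is proper and birational with smooth source, hence a resolution of $X$. Since $\pi$ is the \emph{minimal} resolution of the normal surface singularity $X$, it has the universal property that every resolution of $X$ factors uniquely through it. Applying this to the resolution $\theta\circ\pi$ produces a unique morphism $\ttheta\colon\wt X\to\wt X$ satisfying the required compatibility $\pi\circ\ttheta=\theta\circ\pi$. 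This single invocation of the universal property delivers both existence and uniqueness of the lift.

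Next I would check that $\ttheta$ is an involution, hence an isomorphism. Composing the defining relation with itself gives
\[
\pi\circ\ttheta^2=(\theta\circ\pi)\circ\ttheta=\theta\circ(\pi\circ\ttheta)=\theta^2\circ\pi=\pi .
\]
Thus $\ttheta^2$ is a factorization of $\pi$ through $\pi$; but $\id$ is such a factorization, so the uniqueness clause of the universal property forces $\ttheta^2=\id$. In particular $\ttheta$ is invertible, i.e. a genuine involutive automorphism of $\wt X$ lying over $\theta$.

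It remains to show $\ttheta$ is anti-symplectic. Recall that the minimal resolution of a Kleinian singularity is a symplectic (crepant) resolution: the $\Gamma$-invariant symplectic form on $\C^2$ descends to a symplectic form $\omega_0$ on the smooth locus $X\setminus\{0\}$, and because $\pi$ is an isomorphism over $X\setminus\{0\}$ and crepant, $\pi^*\omega_0$ extends to a global symplectic form $\omega$ on $\wt X$. The anti-Poisson condition on $\theta$ says precisely that $\theta^*\omega_0=-\omega_0$ on the symplectic leaf $X\setminus\{0\}$. Hence on the dense open set $\wt X\setminus\pi^{-1}(0)$, where $\pi$ restricts to an isomorphism, the relation $\pi\circ\ttheta=\theta\circ\pi$ yields
\[
\ttheta^*\omega=\ttheta^*\pi^*\omega_0=(\pi\circ\ttheta)^*\omega_0=(\theta\circ\pi)^*\omega_0=\pi^*\theta^*\omega_0=-\pi^*\omega_0=-\omega .
\]
Since $\ttheta^*\omega$ and $-\omega$ are holomorphic $2$-forms on the smooth variety $\wt X$ that agree on a dense open subset, they agree everywhere, so $\ttheta^*\omega=-\omega$ and $\ttheta$ is anti-symplectic.

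The conceptual core is the first paragraph: the whole construction rests on the universal property of the minimal resolution of a normal surface singularity, and once the variety-level lift is in hand everything else is formal—the involutivity is a one-line diagram chase, and the anti-symplectic property is a sign computation over the locus where $\pi$ is an isomorphism. Accordingly, the main thing to get right is not a hard estimate but a correct bookkeeping of two standard inputs: that the minimal resolution is a terminal object among resolutions (so the lift exists and is unique), and that $\pi$ is genuinely a symplectic resolution, i.e. the form $\omega$ extends across the exceptional fiber $\pi^{-1}(0)$, which is exactly what lets me promote the identity $\ttheta^*\omega=-\omega$ from the dense open locus to all of $\wt X$.
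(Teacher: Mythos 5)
Your proposal is correct, but it proves the theorem by a genuinely different route than the paper. You get existence and uniqueness simultaneously from the universal property of the minimal resolution: $\theta\circ\pi$ is again a resolution of $X$, hence factors uniquely through $\pi$; involutivity is then a formal consequence of that uniqueness, and the anti-symplectic property follows by extending the identity $\ttheta^*\omega=-\omega$ from the dense open locus where $\pi$ is an isomorphism, using that $\omega$ is the extension of $\pi^*\omega_0$ across the exceptional fiber. The paper instead argues constructively and case by case: it realizes $X$ and $\wt{X}$ as Nakajima quiver varieties, exhibits for each conjugacy class of $\theta$ in each type an explicit linear anti-symplectic involution $\Theta$ of $M(\delta,w)$ (\Cref{Anliftx-y,Anliftz,Anevenliftx-y,Anliftxz,Dnevenliftz,Dnevenliftyy',Dnoddlifty,Dnoddliftzz',E6liftx,E6liftz,E7lift,E8lift}), and checks via \Cref{liftconditions} that $\Theta$ descends to a lift; uniqueness and automatic involutivity come from the same density observation you make, stated just before \Cref{liftconditions}. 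Your approach buys brevity and uniformity --- no case analysis, no need for the explicit generators of \Cref{Dngeneratorprop,E6generatorprop} or for \Cref{halfdeglemma} --- at the cost of two standard external inputs (that every resolution of a normal surface singularity factors uniquely through the minimal one, and that $\pi$ is a symplectic resolution, so $\pi^*\omega_0$ extends to a symplectic form on $\wt{X}$); both are classical for Kleinian singularities, and the paper obtains the second from the quiver model (\Cref{stcor}, \Cref{sympres}), so this is a citation issue rather than a gap. What your argument does not deliver is the computational payoff that motivates the paper's construction: the explicit quiver description of $\ttheta$ is exactly what allows the paper, via \Cref{Ciequation}, to determine how $\ttheta$ permutes and fixes the components $C_i$ of the exceptional fiber, which is the essential input for describing $\pi^{-1}(X^{\theta})$ in \Cref{Anquiver,Dnquiver,E6quiver,E7quiver,E8quiver}.
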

The proof of \Cref{liftexists} is based on a case by case study in \Cref{Anliftx-y,Anliftz,Anevenliftx-y,Anliftxz,Dnevenliftz,Dnevenliftyy',Dnoddlifty,Dnoddliftzz',E6liftx,E6liftz,E7lift,E8lift}. The key ingredient is to realize both $\wt{X}$ and $X$ as Nakajima quiver varieties explicitly. The technical details in types $D_n,E_6$ are treated in \Cref{Dngeneratorprop,E6generatorprop}. The proofs in types $E_7,E_8$ can be simplified with the help of \Cref{halfdeglemma}. With \Cref{liftexists}, we can describe $\pi^{-1}(X^{\theta})_{\red}$ following the recipe given in \Cref{liftsectionq2}. Recall the irreducible components of $\pi^{-1}(0)$ are $C_1,\cdots,C_n$ with $C_i\simeq\pr^1$. Denote the irreducible components of $X^{\theta}$ by $L_1,\cdots,L_m$. Set 
\[
\tilde{L}_j:=\overline{\pi^{-1}(L_j\setminus\{0\})},\ 1\leq j\leq m.
\]
Then we have $\tilde{L}_j\simeq\A^1$ (c.f. \Cref{LA1}). Define $b_i:=\#\bigcup_{j=1}^m\{\tilde{L}_j|\tilde{L}_j\cap C_i\neq\emptyset\}$. Recall that $\mathcal{C}$ denotes the Cartan matrix of the Dynkin diagram labeling $\Gamma$. \Cref{multintro} allows us to determine the multiplicities of each component in $\pi^{-1}(X^{\theta})$ as a divisor, which provides the remaining answer to the question \ref{main3}.
\begin{proposition}[\ref{multofcomponent}]\label{multintro}
       If $X^{\theta}\subset X$ is a principal divisor, then $\pi^{-1}(X^{\theta})=\sum_{j=1}^m\tilde{L}_j+\sum_{i=1}^n a_iC_i$ with
   \begin{equation*}
       (a_1,\cdots,a_n)^t=\mathcal{C}^{-1}(b_1,\cdots,b_n)^t.
   \end{equation*}
\end{proposition}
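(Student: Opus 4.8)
The plan is to reduce everything to intersection theory on the smooth surface $\wt{X}$ against the complete curves $C_k$, using that a principal divisor pulls back to a divisor that is numerically trivial on the whole exceptional fiber. First I would fix a regular function $f\in\C[X]$ cutting out $X^{\theta}$; such an $f$ exists precisely because $X^{\theta}$ is assumed to be a principal divisor, and by the reducedness established in \Cref{descriptionfixedloci} the function $f$ vanishes to order exactly $1$ along each component $L_j$. Since the ideal of $X^{\theta}$ is generated by the single element $f$, the scheme-theoretic preimage is the zero scheme of $\pi^{*}f$, i.e.\ the effective Cartier divisor $\pi^{-1}(X^{\theta})=\ddiv(\pi^{*}f)=\pi^{*}X^{\theta}$. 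Because $\pi$ is an isomorphism over $X\setminus\{0\}$ and $f$ has order $1$ along each $L_j$, the pullback $\pi^{*}f$ has order $1$ along each strict transform $\tilde{L}_j$, while its only other possible zeros lie in the exceptional fiber $\bigcup_i C_i$. Writing the resulting decomposition as $\pi^{*}X^{\theta}=\sum_{j=1}^{m}\tilde{L}_j+\sum_{i=1}^{n}a_iC_i$ already yields the asserted shape of the divisor, so it remains only to pin down the integers $a_i$.

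The key input is that $\pi^{*}X^{\theta}$ is numerically trivial on every $C_k$. Indeed, each $C_k$ is complete and contracted to the point $0$, so $\pi_{*}[C_k]=0$ as a $1$-cycle, and the projection formula gives $\pi^{*}X^{\theta}\cdot C_k=X^{\theta}\cdot\pi_{*}[C_k]=0$ for all $k$. Intersecting the decomposition above with $C_k$ therefore produces the linear system
\begin{equation*}
0=\sum_{j=1}^{m}\bigl(\tilde{L}_j\cdot C_k\bigr)+\sum_{i=1}^{n}a_i\,\bigl(C_i\cdot C_k\bigr),\qquad 1\leq k\leq n,
\end{equation*}
where each pairing is well defined because $C_k$ is complete.

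I would then evaluate the two kinds of intersection numbers. For the exceptional part, the intersection matrix of the dual ADE configuration is $(C_i\cdot C_k)=-\mathcal{C}$, with $C_k\cdot C_k=-2$ and $C_i\cdot C_k=1$ exactly when $i,k$ are adjacent in the Dynkin diagram. For the strict transforms, $\tilde{L}_j\cdot C_k=1$ when $\tilde{L}_j\cap C_k\neq\emptyset$ and $0$ otherwise, so $\sum_{j}\tilde{L}_j\cdot C_k=b_k$ by the definition of $b_k$. Substituting these values and using the symmetry $\mathcal{C}=\mathcal{C}^{t}$, the system becomes $\mathcal{C}\,(a_1,\dots,a_n)^{t}=(b_1,\dots,b_n)^{t}$, and since the ADE Cartan matrix $\mathcal{C}$ is positive definite hence invertible, we conclude $(a_1,\dots,a_n)^{t}=\mathcal{C}^{-1}(b_1,\dots,b_n)^{t}$.

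The step I expect to require the most care is the evaluation $\tilde{L}_j\cdot C_k=1$ at each meeting point: the count $b_k$ records incidences with multiplicity one, so I must know that every nonempty intersection $\tilde{L}_j\cap C_k$ is a single reduced (transversal) point rather than a tangency or a multiple intersection. This is exactly the local geometry supplied by the explicit construction of the $\tilde{L}_j$ in \Cref{liftsectionq2} together with $\tilde{L}_j\simeq\A^1$ from \Cref{LA1}; granting it, the remainder is the purely formal inversion of the Cartan matrix carried out above. Note also that the reducedness hypothesis of \Cref{descriptionfixedloci} is what forces the strict-transform coefficients to equal $1$ rather than something larger, so it enters the argument essentially and not merely cosmetically.
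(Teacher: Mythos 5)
Your proposal is correct and follows essentially the same route as the paper's proof of \Cref{multofcomponent}: write $\pi^{-1}(X^{\theta})=\ddiv(\pi^*f)=\sum_{j=1}^m\tilde{L}_j+\sum_{i=1}^n a_iC_i$ (with coefficient $1$ on each $\tilde{L}_j$ by reducedness of $X^{\theta}$), intersect with each $C_k$, identify $\sum_j\tilde{L}_j\cdot C_k=b_k$ via the transversality statement of \Cref{isolatedtransversal}, and invert the Cartan matrix. The only difference is in how the vanishing $\pi^{-1}(X^{\theta})\cdot C_k=0$ is justified: you use the projection formula together with $\pi_*[C_k]=0$, while the paper applies property \ref{pair4} of \Cref{ZSpairing} (a principal divisor pairs to zero against divisors supported on the exceptional fiber) to the principal divisor $\ddiv(\pi^*f)$ --- two standard justifications of the same numerical triviality.
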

We comment that $X^{\theta}$ is always a principal divisor in $X$ except for two cases of $\theta$ in type $A$, in which we have variant methods to determine the multiplicities (c.f. \Cref{Anoddcase2sub,Anevencase1sub}). The descriptions of the scheme-theoretic preimages $\pi^{-1}(X^{\theta})$ are given in \Cref{Anoddcase1preimage,Anoddcase2preimage,Anevencase1preimage,Anevencase2preimage,Dnevencase1preimage,Dnevencase2preimage,Dnoddcase1preimage,Dnoddcase2preimage,E6case1preimage,E6case2preimage,E7preimage,E8preimage}.

We would like to point out that a recent parallel and independent work by Bhaduri et al. \cite{McKayGLn} studied the set-theoretic preimage $\pi^{-1}(X^{\theta})_{\red}$ for some of the anti-Poisson involutions (half of the cases in types $D_n,E_6$, and all the cases in types $E_7,E_8$) using a different approach. Their motivation is to generalize the McKay correspondence to some complex reflection groups in $\GL_2(\C)$.

\subsection{Structure of the paper}
In \Cref{Kleinianreview}, we recall some basic facts about Kleinian singularities and construct the Poisson structures on them. In \Cref{APIsection}, we classify the anti-Poisson involutions on $X$ up to conjugation by Poisson automorphisms as well as compute the fixed point loci $X^{\theta}$. In \Cref{repofquiver,quiverapp}, we review the construction of Nakajima quiver varieties and apply it to Kleinian singularities. In \Cref{liftsection}, we talk about the procedures to determine the scheme-theoretic preimage $\pi^{-1}(X^{\theta})$. In \Cref{Anquiver,Dnquiver,E6quiver,E7quiver,E8quiver}, we realize Kleinian singularities as Nakajima quiver varieties explicitly and study the preimages $\pi^{-1}(X^{\theta})$'s, following the recipe given in \Cref{liftsection}.

\subsection*{Acknowledgements}
I would like to express my sincere gratitude to Ivan Losev for introducing this problem, as well as for the enlightening
discussions and valuable feedback to improve this work. I would like to thank Kenta Suzuki for pointing me to the reference \cite{Artin66}, and for the stimulating discussions. I would also like to thank David Bai and Do Kien Hoang for fruitful conversations. This work has been partially supported by the NSF under grant DMS-$2001139$.

\section{Preliminaries}

\subsection{Reminder on Kleinian singularities}\label{Kleinianreview}
We give a reminder on Kleinian singularities, following \cite[\S1,2]{SlodowyCassens} and \cite[\S 5]{SlodowyPlato}. Let $\Gamma\subset\SL_2(\C)$ denote a finite subgroup. Up to conjugacy, there are five classes of such groups. We list the conjugacy classes below, where $\epsilon_n$ to denote a primitive $n$-th root of unity for $n=5,8,10$.
 
($A_n$) The cyclic group of order $n+1$, i.e. $\big\{\begin{pmatrix} \epsilon & 0 \\ 0 & \epsilon^{-1} \end{pmatrix}|\epsilon^{n+1}=1,\ n\geqslant 1\big\}$.

($D_n$) The binary dihedral group of order $4(n-2)$ i.e. $\big\{\begin{pmatrix} \epsilon & 0 \\ 0 & \epsilon^{-1} \end{pmatrix},\ \begin{pmatrix} 0 & \epsilon \\ -\epsilon^{-1} & 0 \end{pmatrix}|\epsilon^{2(n-2)}=1,\ n\geqslant 4\big\}$.

($E_6$) The binary tetrahedral group of order $24$. 

$\ \ \ \ \ \ $ It is generated by $\begin{pmatrix} i & 0 \\ 0 & -i \end{pmatrix},\ \begin{pmatrix} 0 & i \\ i & 0 \end{pmatrix},\ \dfrac{1}{1-i}\begin{pmatrix} 1 & i \\ 1 & -i \end{pmatrix}$ 

($E_7$) The binary octahedral group of order $48$.

$\ \ \ \ \ \ $ It is generated by $\begin{pmatrix} \epsilon_8 & 0 \\ 0 & \epsilon_8^{-1} \end{pmatrix},\ \begin{pmatrix} 0 & i \\ i & 0 \end{pmatrix},\ \dfrac{1}{1-i}\begin{pmatrix} 1 & i \\ 1 & -i \end{pmatrix}$,

($E_8$) The binary icosahedral group of order $120$.

$\ \ \ \ \ \ $ It is generated by $\begin{pmatrix} \epsilon_{10} & 0 \\ 0 & \epsilon_{10}^{-1} \end{pmatrix},\ \begin{pmatrix} 0 & i \\ i & 0 \end{pmatrix},\ \dfrac{1}{\sqrt{5}}\begin{pmatrix} \epsilon_5-\epsilon_5^4 & \epsilon_5^2-\epsilon_5^3 \\ \epsilon_5^2-\epsilon_5^3 & -\epsilon_5+\epsilon_5^4. \end{pmatrix}$.

The \emph{Kleinian singularity} attached to a finite subgroup $\Gamma$ is the quotient singularity 
\[
X:=\C^2/\Gamma=\Spec\C[u,v]^{\Gamma}.
\]
Klein showed that $X$ can be viewed as a hypersurface in $\C^3$
\[
  X\simeq\{(x,y,z)\in\C^3|F(x,y,z)=0\},
\]
with an isolated singularity at $0$ (\cite{Klein}). Here $F$ denotes the relation among the three fundamental invariants $x,y,z$ in $\C[u,v]^{\Gamma}$. We list the most commonly used form of these relations below.

($A_n$) $x_1y_1+z_1^{n+1}=0$ with $n\geqslant 1$.

($D_n$) $x_1^{n-1}+x_1y_1^2+z_1^2=0$ with $n\geqslant 4$.

($E_6$) $x_1^4+y_1^3+z_1^2=0$.

($E_7$) $x_1^3y_1+y_1^3+z_1^2=0$.

($E_8$) $x_1^5+y_1^3+z_1^2=0$.

The reason that we use $x_1,y_1,z_1$ with subscripts here is because $x,y,z$ are saved for different choices of generators in the later part of this paper (c.f. \Cref{ADExyz}). One can observe that the above list of finite subgroups of $\SL_2(\C)$ is in bijection with $ADE$ type Dynkin diagrams. There are two ways to attach a simply-laced Dynkin diagram to a Kleinian singularity $X$. McKay observed a representation-theoretic way, called the Mckay correspondence, in 1979 \cite{McKay79}. Let $\Irr(\Gamma):=\{V_0,\cdots, V_n\}$ denote the set of irreducible representations of $\Gamma$ and assume that $V_0$ is the trivial representation. Set $a_{ij}=\dim\Hom_{\Gamma}(V_i\otimes\C^2,V_j)$ where $\C^2$ is the tautological representation of $\SL_2(\C)$. It easy to check that $a_{ij}=a_{ji}$ because $\C^2\simeq(\C^2)^*$ is self-dual. Consider a graph with vertices $0,\cdots,n$ and the number of unoriented edges between $i$ and $j$ equal to $a_{ij}$. This is called the \emph{McKay graph} of $\Gamma$, which is exactly the extended Dynkin diagram in the corresponding type with the trivial representation $V_0$ corresponding to the extended vertex $0$. The other identification is through an algebro-geometric way. Recall that by a resolution of $X$, one means a smooth vareity $\wt{X}$ equipped with a projective birational morphism to $X$. Since we are in dimension $2$, there is a \emph{minimal} resolution $\pi\colon\wt{X}\to X$; the minimality condition means that any other resolution factors through $\wt{X}$. The minimal resolution $\pi\colon\wt{X}\to X$ had been essentially studied by Du Val \cite{DuVal}. For a Kleinian singularity $X:=\C^2/\Gamma$, the reduced fiber over the isolated singularity $0$ under the minimal resolution $\pi\colon\wt{X}\to X$ is a connected union of projective lines 
\[
\pi^{-1}(0)_{\red}=C_1\cup\cdots\cup C_n,\ C_i\simeq\pr^1,
\]
with pairwise transversal intersection according dually to the Dynkin diagram labeling $\Gamma$. Elaborating, if $C_i\cap C_j\neq\emptyset$, then it is a single point, and each $C_i$ has self-intersection number $\minus2$. Let $\mathcal{C}$ denote the Cartan matrix of the corresponding type $ADE$. Then the intersection matrix $(C_i\cdot C_j)$ equals $\minus\mathcal{C}$. The intersection pairing will be defined rigorously in \Cref{pairingsection}. 

\begin{remark}\label{exceptionalnonreduced}
    The scheme-theoretic exceptional fiber $\pi^{-1}(0)$ is not reduced except in type $A$. Let $\hg$ be a simple Lie algebra of types $ADE$, and $\{\alpha_1,\cdots,\alpha_n\}$ be the set of simple roots. There is a unique maximal root $\alpha=\sum_{i=1}^n\delta_i\alpha_i$ in the adjoint representation. Then $\pi^{-1}(0)=\sum_{i=1}^n\delta_iC_i$ as a divisor, \cite[Theorem 4]{Artin66}. For example, in type $A_n$, we have $\delta_i=1,\ \forall\ i$, so $\pi^{-1}(0)$ is generically reduced. (Actually, $\pi^{-1}(0)$ is reduced in type $A_n$, which can be seen from iterated blow-ups.) In type $D_4$, we have $\delta_i=1,i\neq 2$ and $\delta_2=2$, so $\pi^{-1}(0)$ is not reduced.
\end{remark}

\subsection{Intersection pairing}\label{pairingsection}
We define an intersection pairing on $\wt{X}$ rigorously in this section. The main purpose of the discussion is to pave the road for the proof of \Cref{multofcomponent}. Note that the variety $\wt{X}$ is not projective, so it is not possible to define a pairing with good properties on the entire group of Cartier divisors on $\wt{X}$. Instead, we define the pairing on a smaller subset of divisors on $\wt{X}$ (c.f.\Cref{ZSpairing}). We consider a slightly more general situation, following the exposition in \cite[\S 2.1]{Dolgachev}. Let $S$ be a smooth surface, and $Z$ be an effective Cartier divisor on $S$ with support projective over $\C$. We further assume that each irreducible component in the support of $Z$ is smooth. Though we will only care about the situation $S=\wt{X}$ and $Z=\pi^{-1}(0)$, it is easier to write down the results in a more general setting.

Let $\Div(S)$ be the group of Cartier divisors on $S$, which coincides with the group of Weil divisors on $S$ because $S$ is smooth. Let $\Div_{Z}(S)$ denote the subgroup of $\Div(S)$ that consists of divisors with support contained in the support of $Z$. We identify effective divisors on $S$ with closed (not necessary reduced) subschemes of $S$ defined by the ideal sheaf $\hO_{S}(-D)$. Let $\F$ be a coherent sheaf on the projective scheme $Z$. We can define the Euler characteristic of $\F$ by $\hat{\chi}(\F):=\sum_i(-1)^i\dim_{\C}H^i(Z,\F)$ (c.f. \cite[Exercise III.5.1]{Hartshorne}). Let $C$ be an effective divisor in $\Div_Z(S)$. For any invertible sheaf $\hL$ on $C$, we set
\begin{equation}\label{linebundledegree}
\deg(\hL):=\hat{\chi}(\hL)-\hat{\chi}(\hO_C).
\end{equation}
For any divisor $D$ on $S$, we set
\begin{equation}\label{CDintersect}
    C\cdot D:=\deg\hO_S(D)\otimes_{\hO_S}\hO_C.
\end{equation}
Let $P=\sum_i n_iP_i$ be a Weil divisor on $C$. Define $\deg P:=\sum_in_i$ as the degree of the divisor $P$. Further assume that $C$ is irreducible, then $C$ is a smooth projective curve by our assumption. By the Riemann-Roch Theorem \cite[Theorem IV.1.3]{Hartshorne}, we have
\[
   \deg(\hO_C(P))=\deg P.
\]
Let $f$ be a rational function on $S$, we denote by $\ddiv(f)$ its corresponding divisor. Any divisor which is equal to the divisor of a function is called a \emph{principal divisor}.

\begin{proposition}\cite[Example 2.4.9]{FultonIntersection}\label{ZSpairing}
The pairing 
\begin{equation}\label{pairingdef}
    \Div_Z(S)\times\Div(S)\to\Z,\ (C,D)\mapsto C\cdot D 
\end{equation}
satisfies the following properties
   \begin{enumerate}[label=(\arabic*)]
       \item\label{pair1} If $C$ and $D$ are smooth curves that meet transversally, then $C\cdot D=\#(C\cap D)$.
       \item\label{pair2} If $C,D\in\Div_Z(S)$, then $C\cdot D=D\cdot C$.
       \item\label{pair3} $C\cdot (D_1+D_2)=C\cdot D_1+C\cdot D_2$.
       \item\label{pair4} If $D$ is a principal divisor, then $C\cdot D=0$.
   \end{enumerate}
\end{proposition}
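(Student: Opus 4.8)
The entire statement rests on the degree function $\deg\colon\Pic(C)\to\Z$ attached to each effective $C\in\Div_Z(S)$ by \eqref{linebundledegree}, so the plan is first to isolate the two properties of $\deg$ that I need and then to read off \ref{pair1}--\ref{pair4} one at a time. The two inputs are: (i) the homomorphism property $\deg(\hL_1\otimes\hL_2)=\deg\hL_1+\deg\hL_2$ for invertible sheaves on $C$; and (ii) the Riemann--Roch identity $\deg\hO_C(P)=\deg P$ on an \emph{irreducible} (hence smooth projective) $C$, already recorded above. I would prove the four items in the order \ref{pair4}, \ref{pair1}, \ref{pair3}, \ref{pair2}, since the symmetry \ref{pair2} is the only one using the surface geometry in an essential way and depends on the additivity \ref{pair3}.

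Property \ref{pair4} is essentially the definition: if $D=\ddiv(f)$ is principal on $S$ then multiplication by $f$ exhibits $\hO_S(D)\cong\hO_S$, hence $\hO_S(D)\otimes_{\hO_S}\hO_C\cong\hO_C$, and by \eqref{linebundledegree} its degree is $\hat{\chi}(\hO_C)-\hat{\chi}(\hO_C)=0$. For \ref{pair1}, with $C\in\Div_Z(S)$ a smooth projective curve and $D$ a smooth curve meeting it transversally, I would restrict the canonical section of $\hO_S(D)$ (the one cutting out the effective divisor $D$) to $C$. Since $C\neq D$ and $C$ is integral, this restriction is a nonzero section whose zero scheme is the scheme-theoretic intersection $C\cap D$; transversality makes $C\cap D$ a reduced finite set, so $\hO_S(D)|_C\cong\hO_C(C\cap D)$ and \eqref{CDintersect} together with Riemann--Roch gives $C\cdot D=\deg(C\cap D)=\#(C\cap D)$.

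For the additivity \ref{pair3} I would use $\hO_S(D_1+D_2)=\hO_S(D_1)\otimes\hO_S(D_2)$, restrict to $C$, and invoke input (i). Input (i) is the one genuinely non-formal point on the sheaf side, because $C$ may be non-reduced (e.g.\ $\pi^{-1}(0)$ in type $D$), so I cannot simply treat $C$ as a smooth curve. I would deduce it from Snapper's lemma: for invertible $\hL_1,\hL_2$ the function $(m,n)\mapsto\hat{\chi}(\hL_1^{\otimes m}\otimes\hL_2^{\otimes n})$ is a polynomial of total degree at most $\dim C=1$, hence has no mixed $mn$-term; the vanishing of the mixed second difference is exactly $\hat{\chi}(\hL_1\otimes\hL_2)-\hat{\chi}(\hL_1)-\hat{\chi}(\hL_2)+\hat{\chi}(\hO_C)=0$, which rearranges to (i). The same exact-sequence bookkeeping, applied to $0\to\hO_S(-C_1)|_{C_2}\to\hO_{C_1+C_2}\to\hO_{C_1}\to0$ tensored by $\hO_S(D)$, upgrades \ref{pair3} to bilinearity of the pairing in both arguments.

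The main obstacle is the symmetry \ref{pair2}. Using bilinearity I would reduce to the case where $C=C_i$ and $D=C_j$ are prime components of $Z$; the case $i=j$ is trivially symmetric. For $i\neq j$ both are distinct smooth prime divisors, so as in \ref{pair1} (but without transversality) $\hO_S(C_j)|_{C_i}\cong\hO_{C_i}(C_i\cap C_j)$, whence by Riemann--Roch $C_i\cdot C_j=\deg(C_i\cap C_j)=\sum_{p}\dim_{\C}\hO_{S,p}/(f_i,f_j)$, where $f_i,f_j$ are local equations of $C_i,C_j$. This last expression is manifestly symmetric in $i$ and $j$, giving $C_i\cdot C_j=C_j\cdot C_i$. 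The points requiring care are that the local intersection length is computed on the surface $S$, which is what makes symmetry visible, and that it genuinely agrees with the degree of the zero-cycle $C_i\cap C_j$ on the curve $C_i$; verifying this compatibility, rather than any single computation, is where the real work lies.
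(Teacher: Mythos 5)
Your argument is correct, but it takes a genuinely different route from the paper, which in fact offers no proof at all: \Cref{ZSpairing} is imported wholesale from Fulton \cite[Example 2.4.9]{FultonIntersection}, where the four properties come out of the Chow-group machinery of Chapter 2 (pseudo-divisors, Gysin maps, and the commutativity theorem for intersections with Cartier divisors) specialized to a surface. Your proof is instead self-contained and purely cohomological, carried out entirely within the paper's own definitions \eqref{linebundledegree} and \eqref{CDintersect}. Its three substantive ingredients are all sound: Snapper's lemma for additivity of $\deg$ on a possibly non-reduced $C$ (genuinely needed, since e.g.\ $Z=\pi^{-1}(0)$ is non-reduced in types $D$ and $E$); the decomposition sequence $0\to\hO_S(-C_1)\otimes\hO_{C_2}\to\hO_{C_1+C_2}\to\hO_{C_1}\to 0$ for additivity in the first argument; and the symmetry of the local lengths $\dim_{\C}\hO_{S,p}/(f_i,f_j)$ for \ref{pair2}. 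The reduction of \ref{pair2} to prime components is exactly the right move here, because the standard projective-surface proofs of symmetry --- either via $\chi(\hO_S)-\chi(\hO_S(-C))-\chi(\hO_S(-D))+\chi(\hO_S(-C-D))$ or by moving to differences of very ample divisors --- are unavailable on the non-projective surface $S=\wt{X}$. What the citation to Fulton buys is brevity and generality (no smoothness hypothesis on the components of $Z$ is required there); what your argument buys is a proof readable from the paper's set-up alone. One point you should state explicitly rather than leave implicit: \eqref{CDintersect} defines $C\cdot D$ only for \emph{effective} $C$, whereas the pairing \eqref{pairingdef} is asserted on all of $\Div_Z(S)$, so the well-definedness of the linear extension to differences of effective divisors --- which is what your first-argument additivity provides --- is itself part of what must be checked.
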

We apply \Cref{ZSpairing} to the situation $S=\wt{X},\ Z=\pi^{-1}(0)$. The properties in \Cref{ZSpairing} will be useful in the proof of \Cref{multofcomponent}.

\subsection{Poisson brackets on Kleinian singularities}\label{pbsubsection}
Consider the algebra $\C[u,v]$, with grading given by the degree of the polynomial function in variables $u,v$. There is a standard Poisson bracket of degree $(\minus 2)$ on $\C[u,v]$.
\begin{equation}\label{poissonbracket}
        \{f_1,f_2\}=\frac{\partial f_1}{\partial u}\frac{\partial f_2}{\partial v}-\frac{\partial f_1}{\partial v}\frac{\partial f_2}{\partial u}. 
\end{equation}
Let $\Gamma$ be a finite subgroup of $\SL_2(\C)$ and consider the Kleinian singularity $X:=\C^2/\Gamma$. The finite subgroup $\Gamma$ preserves the Poisson bracket \cref{poissonbracket}, then the algebra of invariant functions $\C[X]=\C[u,v]^{\Gamma}$ is a graded Poisson subalgebra of $\C[u,v]$. We use $\deg f$ to denote the degree of a homogeneous element $f\in\C[X]$ and $\C[X]_d$ to denote the degree $d$-th component of $\C[X]$. 

Next, we write down the fundamental invariants in $\C[X]=\C[u,v]^{\Gamma}$, and calculate the Poisson brackets among the generators. We take the fundamental invariants of $\C[u.v]^{\Gamma}$ to be of the following form, altered from \cite[\S1.3]{Dolgachev}, which align better with the fundamental invariants obtained through Nakajima quiver varieties in \Cref{quiverapp}.

\begin{proposition}\label{ADExyz}
Let $\Gamma\subset\SL_2(C)$ be a finite subgroup. The subalgebra $\C[u,v]^{\Gamma}$ is generated by the following polynomials.

$(A_n,\ n\geqslant 1)$ $x=u^{n+1}$,

$\ \ \ \ \ \ \ y=v^{n+1}$,

$\ \ \ \ \ \ \ z=uv$,

$\ \ \ \ \ \ \ $with relation $xy=z^{n+1}$.

$(D_n,\ n\geqslant 4\ \even)$ $x=u^2v^2$,

$\ \ \ \ \ \ \ y=-\frac{1}{4}(u^{2n-4}-2u^{n-2}v^{n-2}+v^{2n-4})$,

$\ \ \ \ \ \ \ z=\frac{1}{4}uv(u^{2n-4}-v^{2n-4})$,

$\ \ \ \ \ \ \ $with relation $xy(y-x^{\frac{n-2}{2}})=z^2$.

$(D_n,\ n\geqslant 5\ \odd)$ $x=u^2v^2$,

$\ \ \ \ \ \ \ y=\frac{1}{4}(u^{2n-4}-v^{2n-4})$,

$\ \ \ \ \ \ \ z=-\frac{1}{4}uv(u^{2n-4}-2u^{n-2}v^{n-2}+v^{2n-4})$, 

$\ \ \ \ \ \ \ $with relation $xy^2=z(z-x^{\frac{n-1}{2}})$.

$(E_6)$ $x=uv(u^4-v^4)$, 

$\ \ \ \ \ \ \ y=-(u^8+14u^4v^4+v^8)$,

$\ \ \ \ \ \ \ z=u^{12}-33u^8v^4-33u^4v^8+v^{12}-\frac{1}{2}u^2v^2(u^4-v^4)^2$,

$\ \ \ \ \ \ \ $with relation $z^2+zx^2+y^3=0$.

$(E_7)$ $x=-108(u^8+14u^4v^4+v^8)$,

$\ \ \ \ \ \ \ y=108(u^5v-uv^5)^2$,

$\ \ \ \ \ \ \ z=108^2uv(u^8-v^8)(u^8-34u^4v^4-v^8)$, 

$\ \ \ \ \ \ \ $with relation $x^3y+y^3+z^2=0$.

$(E_8)$ $x=-1728uv(u^{10}+11u^5v^5-v^{10})$,

$\ \ \ \ \ \ \ y=-1728^2(u^{20}+v^{20}-228(u^{15}v^5-u^5v^{15})+494u^{10}v^{10})$,

$\ \ \ \ \ \ \ z=1728^3(u^{30}+v^{30}+522(u^{25}v^5-u^5v^{25})-10005(u^{20}v^{10}+u^{10}v^{20}))$, 

$\ \ \ \ \ \ \ $with relation $x^5+y^3+z^2=0$.
\end{proposition}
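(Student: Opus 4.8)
The plan is to prove \Cref{ADExyz} by explicit verification in each of the five types, reducing the statement to three sub-claims: (i) each proposed $x,y,z$ lies in $\C[u,v]^{\Gamma}$; (ii) the displayed relation $F(x,y,z)=0$ holds identically in $\C[u,v]$; and (iii) $x,y,z$ generate the whole invariant ring. In type $A_n$ everything is elementary: the cyclic generator acts by $u\mapsto\epsilon u,\ v\mapsto\epsilon^{-1}v$, so the invariant monomials are exactly the $u^av^b$ with $a\equiv b\pmod{n+1}$, and such a monomial equals $z^{\min(a,b)}x^{(a-b)/(n+1)}$ when $a\geq b$ and $z^{\min(a,b)}y^{(b-a)/(n+1)}$ otherwise; this gives generation directly, while $xy=(uv)^{n+1}=z^{n+1}$ gives the relation. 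The remaining types require the structural argument below.

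For (i), I would check fixedness under each listed generator of $\Gamma$. The diagonal generator $\diag(\epsilon,\epsilon^{-1})$ only imposes a congruence on the exponents appearing in $x,y,z$ and is immediate; the anti-diagonal generator and, in types $E_6,E_7,E_8$, the order-three element require a direct but finite substitution. For (ii), I would expand $F(x,y,z)$ as a polynomial in $u,v$ and check that it vanishes. The normalization constants (the factors $\frac14$, $-108$, $1728$, and their powers) are precisely those that clear denominators and render the displayed relation monic, so this step simultaneously fixes the constants and confirms the relation. Both (i) and (ii) are mechanical; the only genuinely heavy computations are the degree-$20$ and degree-$30$ invariants in types $E_7$ and $E_8$.

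The crux is (iii). Reading off degrees from the formulas, the proposed $x,y,z$ are weighted-homogeneous of degrees $(n{+}1,n{+}1,2)$, $(4,2n{-}4,2n{-}2)$, $(6,8,12)$, $(8,12,18)$, $(12,20,30)$ respectively in the five types. Giving variables $X,Y,Z$ these weights and writing $B:=\C[X,Y,Z]/(F)$, sub-claims (i) and (ii) yield a graded surjection $B\twoheadrightarrow\C[x,y,z]\hookrightarrow\C[u,v]^{\Gamma}$. Since $F$ is weighted-homogeneous and irreducible, $B$ is a two-dimensional domain; because $x,y,z$ already contain two algebraically independent elements, $\C[x,y,z]$ also has Krull dimension two, forcing the surjection of domains to be an isomorphism $B\cong\C[x,y,z]$. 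Hence $\C[x,y,z]$ has the hypersurface Poincaré series $\frac{1-t^{e}}{(1-t^{d_1})(1-t^{d_2})(1-t^{d_3})}$ with $e=\deg F$. I would then compute the Poincaré series of $\C[u,v]^{\Gamma}$ from Molien's formula $\frac{1}{|\Gamma|}\sum_{g\in\Gamma}\det(1-tg)^{-1}$ and check in each type that the two series agree. This gives $\dim_{\C}\C[x,y,z]_m=\dim_{\C}\C[u,v]^{\Gamma}_m$ for every $m$, and combined with the inclusion $\C[x,y,z]\subseteq\C[u,v]^{\Gamma}$ it forces equality degree by degree, which is (iii).

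The main obstacle is twofold: organizing the explicit verifications of (i)–(ii) in types $E_7,E_8$, where the invariants are large, and confirming the inputs needed for the generation step uniformly, namely that $F$ is irreducible and that the Molien series matches the hypersurface series. An equivalent route to (iii) that avoids Molien's formula is to note that $\C[x,y,z]\cong B$ is normal (Kleinian singularities are normal), that $\C[u,v]^{\Gamma}$ is module-finite over $\C[x,y,z]$ since the latter has Krull dimension two, and that the two fraction fields coincide, the last point by checking $[\C(u,v):\mathrm{Frac}\,\C[x,y,z]]=|\Gamma|$; then $\C[u,v]^{\Gamma}$, being integral over the normal ring $\C[x,y,z]$ with the same fraction field, must equal it.
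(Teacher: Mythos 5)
Your proposal is correct in substance, but it takes a genuinely different route from the paper's. The paper does not prove \Cref{ADExyz} from scratch: it treats the proposition essentially as a citation, taking Klein's classical fundamental invariants as presented in \cite[\S 1.3]{Dolgachev}, altering them by an explicit linear change of coordinates (recorded immediately after the proposition, e.g.\ $x_1=x,\ y_1=-2iy+ix^{\frac{n-2}{2}},\ z_1=2z$ in type $D_n$, $n$ even), so that both generation and the displayed relation are inherited from the classical result, and the only work is bookkeeping of the coordinate change. You instead give a self-contained proof: check invariance and the relation directly, identify $\C[x,y,z]$ with the hypersurface ring $B=\C[X,Y,Z]/(F)$ via the observation that a graded surjection from a two-dimensional domain onto a two-dimensional domain has zero kernel, and then force equality $\C[x,y,z]=\C[u,v]^{\Gamma}$ by matching the hypersurface Hilbert series against Molien's formula. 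This buys independence from the literature and makes the normalization constants self-verifying, at the cost of the heavy explicit computations in types $E_7$, $E_8$ (the relation check and Molien sums over groups of order $48$ and $120$), all of which the paper's citation-plus-coordinate-change route avoids.

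One caveat on your alternative ending: module-finiteness of $\C[u,v]^{\Gamma}$ over $\C[x,y,z]$ does not follow from the Krull dimension count alone. For instance, $\C[x,xy]\subset\C[x,y]$ is a graded inclusion of two-dimensional normal domains with equal fraction fields that is not module-finite (so the same reasoning would "prove" $\C[x,xy]=\C[x,y]$). What rescues the argument here is that the common zero locus of $x,y,z$ in $\C^2$ is exactly the origin, so $x,y,z$ contain a homogeneous system of parameters for $\C[u,v]^{\Gamma}$ and finiteness follows from the graded Nakayama argument. With that fix, or simply using your primary Molien-series route, the proof is complete modulo the mechanical verifications you already flagged.
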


It is easy to see that the relations among the fundamental invariants $x,y,z$ in \Cref{ADExyz} are equivalent to those relations among the variables $x_1,y_1,z_1$ listed in \Cref{Kleinianreview} up to a change of coordinates. For example, in type $D_n$, with $n$ even, a change of coordinate can be given by $x_1=x,\ y_1=-2iy+ix^{\frac{n-2}{2}},\ z_1=2z$. In type $D_n$, with $n$ odd, a change of coordinate can be given by $x_1=x,\ y_1=2y,\ z_1=-2iz+ix^{\frac{n-1}{2}}$. In type $E_6$, a change of coordinate can be given by $x_1=\frac{\epsilon_8x}{\sqrt{2}},\ y_1=y,\ z_1=z+\frac{x^2}{2}$. 
The Poisson brackets among the fundamental invariants can be calculated either by hand or by software such as Mathematica. We summarize the results in the following proposition.

\begin{proposition}\label{ADEpoissonbracket}
    The Poisson brackets among the fundamental invariants in \Cref{ADExyz} are given by
\begin{flalign*}
(A_n,\ n\geqslant 1)\qquad \{x,y\}&=(n+1)^2 z^n,\\
    \{x,z\}&=(n+1)x, \\
    \{y,z\}&=-(n+1)y.\\
(D_n,\ n\geqslant 4\ \even)\qquad \{x,y\}&=(4n-8)z, \\
    \{x,z\}&=(2n-4)x(2y-x^{\frac{n-2}{2}}), \\
    \{y,z\}&=(n-2)y(nx^{\frac{n-2}{2}}-2y). \\
(D_n,\ n\geqslant 5\ \odd)\qquad \{x,y\}&=(2n-4)(2z-x^{\frac{n-1}{2}}), \\
    \{x,z\}&=(4n-8)xy, \\
    \{y,z\}&=(n-2)((n-1)x^{\frac{n-3}{2}}z-2y^2). \\
(E_6)\qquad \{x,y\}&=-4(2z+x^2), \\
    \{x,z\}&=-12y^2, \\
    \{y,z\}&=4x(431x^2-2z). \\ 
(E_7)\qquad\{x,y\}&=-16z, \\
    \{x,z\}&=x^3+34992y^2, \\
    \{y,z\}&=-24x^2y. \\
(E_8)\qquad\{x,y\}&=-20z, \\
    \{x,z\}&=30y^2, \\
    \{y,z\}&=-4320000x^4. &&
\end{flalign*}
\end{proposition}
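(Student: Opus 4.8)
The plan is to prove \Cref{ADEpoissonbracket} by direct computation from the explicit generators of \Cref{ADExyz}, organized by two structural observations that turn a potentially enormous calculation into a short one. First I would record the key fact that since every $\gamma\in\Gamma\subset\SL_2(\C)$ preserves the symplectic form $du\wedge dv$ (because $\det\gamma=1$), the bracket \eqref{poissonbracket} is $\Gamma$-equivariant; consequently the bracket of two invariants is again invariant, so each of $\{x,y\},\{x,z\},\{y,z\}$ lies in $\C[u,v]^{\Gamma}=\C\langle x,y,z\rangle$ and can be rewritten as a polynomial in $x,y,z$. For each type I would then substitute the polynomials of \Cref{ADExyz} into \eqref{poissonbracket}, compute the three partial-derivative combinations, and re-express the resulting homogeneous polynomial in $u,v$ in terms of the fundamental invariants.

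The second observation controls the re-expression step. The bracket \eqref{poissonbracket} has degree $-2$, so $\{f,g\}$ is homogeneous of degree $\deg f+\deg g-2$; this pins down a short finite list of candidate monomials in $x,y,z$ (using the relation of \Cref{ADExyz} to remove redundancy), after which matching coefficients against the computed $u,v$-polynomial determines the answer by a small linear solve. Even more efficiently, the defining relation $F(x,y,z)$ is identically zero on $X$, hence is automatically a Casimir: $\{F,g\}=0$ for all $g$. On the hypersurface $\{F=0\}\subset\C^3$ this forces the bracket into Jacobian (Nambu) form $\{x_i,x_j\}=h\cdot\varepsilon_{ijk}\,\partial_{x_k}F$ for a multiplier $h$, and a degree count (one checks $\deg x+\deg y+\deg z-\deg F-2=0$ in every type) shows $h$ is a \emph{constant}. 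Thus in each type the whole bracket is pinned down by a single scalar, which I would read off by computing just one bracket honestly in $u,v$; the other two then follow, and the centrality identity $\{F,\cdot\}=0$ serves as an independent check.

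With this machinery the cases split by difficulty. The $A_n$ case is immediate, since $x=u^{n+1},\,y=v^{n+1},\,z=uv$ are monomials and the three brackets evaluate directly to $(n+1)^2z^n$, $(n+1)x$, and $-(n+1)y$. The $D_n$ cases carry a free parameter $n$, so the point there is to verify the polynomial identities \emph{symbolically in $n$} rather than for fixed values; the Nambu form with a constant $h$ (for instance $h=-2(n-2)$ in type $D_n$ even) packages all three brackets at once, and the main work is simplifying $\partial_xF,\partial_yF,\partial_zF$ against the relation. The exceptional types $E_6,E_7,E_8$ involve invariants of degree up to $30$, so the honest $u,v$-computation of even one bracket is heavy and best carried out in a computer algebra system; the Casimir/Nambu reduction is what keeps this feasible, since one needs only the relation $F$ and a single scalar.

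The main obstacle is precisely the bookkeeping in the exceptional types: expanding high-degree products in $u,v$ and correctly re-collecting them into $x,y,z$ modulo the relation is error-prone, and one must fix a consistent normal form for polynomials in $x,y,z$ since their representation is no longer unique once the relation is imposed. For $D_n$ the analogous obstacle is establishing the identities for all admissible $n$ simultaneously. In both situations the Casimir condition $\{F,\cdot\}=0$ is the decisive simplification and the most reliable cross-check: it reduces the content of \Cref{ADEpoissonbracket} in each type to determining the one constant $h$, and any arithmetic slip surfaces immediately as a failure of $F$ to be central.
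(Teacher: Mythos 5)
Your strategy is correct, and it is a genuinely different route from the paper's: the paper's proof consists of the sentence that the brackets ``can be calculated either by hand or by software such as Mathematica,'' i.e.\ a brute-force Jacobian computation of all three brackets in each type, whereas you reduce each type to the determination of a single scalar. Your reduction is valid, but the step ``the Casimir condition forces Nambu form'' deserves one more sentence of justification: what forces it is that at a smooth point of $X=\{F=0\}$ the space of bivectors annihilating $dF$ is one-dimensional and spanned by the Nambu bivector (nonvanishing on $X\setminus\{0\}$ since the singularity is isolated), so the bracket inherited from $\C[u,v]^{\Gamma}$ equals $h\cdot\{\cdot,\cdot\}_F$ with $h$ regular on $X\setminus\{0\}$; then $h$ extends over the origin by normality and is constant by your degree count $\deg x+\deg y+\deg z-\deg F-2=0$, which indeed holds in all six families. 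Your spot checks are also right: $h=-(n+1)$ in type $A_n$ and $h=-2(n-2)$ in type $D_n$ even reproduce the stated formulas exactly. What your route buys is a uniform-in-$n$ treatment of the $D_n$ series, a single honest Jacobian per type, and a falsifiable consistency criterion; what the paper's route buys is the freedom not to justify anything.

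Be aware, however, that executing your plan will \emph{not} reproduce the proposition as printed, because several of the stated formulas fail your own Casimir test and hence cannot be the brackets of any Poisson structure on $\C[x,y,z]/(F)$ with the stated $F$. In type $D_n$ odd (where the generators of \Cref{ADExyz} do satisfy the stated relation), expanding $0=\{F,y\}$ with the stated $\{x,y\}$ forces $\{y,z\}=-(n-2)\bigl(2y^2+(n-1)x^{\frac{n-3}{2}}z\bigr)$, so the sign of the $x^{\frac{n-3}{2}}z$ term in \Cref{ADEpoissonbracket} is wrong; a direct computation of $y_uz_v-y_vz_u$ at $n=5$ confirms the corrected sign. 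In type $E_7$ your argument forces $\{x,z\}$ to be a constant multiple of $\partial F/\partial y=x^3+3y^2$, which $x^3+34992y^2$ is not; in type $E_8$, $0=\{F,y\}$ together with $\{x,y\}=-20z$ forces $\{y,z\}=-50x^4$ rather than $-4320000x^4$; type $E_6$ fails similarly, since the stated formulas give $\{F,x\}=24y^2(2z+x^2)\neq0$. The root cause is that the exceptional-type normalizations in \Cref{ADExyz} do not satisfy the stated relations: for instance the $E_8$ generators are $x=-1728f$, $y=1728^{2}H$, $z=1728^{3}T$ in Klein's icosahedral forms, and Klein's syzygy $T^{2}+H^{3}=1728f^{5}$ yields $x^{5}+y^{3}+z^{2}=1728^{5}(1728^{2}-1)f^{5}\neq 0$. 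So your method is the right one---indeed it is sharp enough to detect these inconsistencies---but as written it will establish a corrected version of \Cref{ADEpoissonbracket} (and of the normalizations in \Cref{ADExyz}), not the literal statement; you should report the discrepancies rather than try to force your computation to match the printed coefficients.
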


\section{Anti-Poisson involutions and their fixed point loci}\label{APIsection}

We classify the conjugacy classes of anti-Poisson involution of a Kleinian singularity and compute their fixed point loci in this section. We use $X=\C^2/\Gamma$ to denote a Kleinian singularity throughout. Recall from \Cref{pbsubsection}, $\C[X]=\C[u,v]^{\Gamma}$ is a graded Poisson algebra with grading given by the degree of the invariant polynomials in variable $u,v$. The fundamental invariants of $\C[X]$ are listed in \Cref{ADExyz}, and the Poisson brackets are given in \Cref{ADEpoissonbracket}.

\subsection{Anti-Poisson involutions}
\begin{definition}\label{api}
By an \emph{anti-Poisson involution} on $X$, we mean a graded algebra involution $\theta\colon \C[X]\to \C[X]$ such that
\begin{equation}\label{apiequation}
    \theta(\{f_1,f_2\})=-\{\theta(f_1),\theta(f_2)\},\ \forall\ f_1, f_2 \in \C[X].
\end{equation}
\end{definition}

\begin{example}\label{A1-1}
    Consider the Kleinian singularity of type $A_1\colon X=\C^2/\Gamma$ with $\Gamma=\Z/\Z_2\simeq\{\pm I_2\}$. We have $\C[X]=\C[u,v]^{\Gamma}=\C[x=u^2,y=v^2,z=uv]\simeq\C[x,y,z]/(xy-z^2)$. The brackets among the generators are
    \[
      \{x,y\}=4z,\ \{x,z\}=2x,\ \{y,z\}=-2y.
    \]
    The graded automorphism $x\mapsto y,\ y\mapsto x,\ z\mapsto z$ is an anti-Poisson involution.
\end{example}

Similar to the anti-Poisson involution on Kleinian singularities (or any Poisson algebra), we can define the \emph{anti-symplectic involution} on a smooth symplectic variety $(N,\omega)$, which is an involution $\eta\colon N\to N$ such that $\eta^*\omega=-\omega$. The following proposition will be very useful in \Cref{liftsection}.

\begin{proposition}\label{antisymtangent}
    Let $(N,\omega)$ be a smooth symplectic variety with an anti-symplectic involution $\eta$. Then the scheme-theoretic fixed point locus $N^{\eta}$ is either empty or a smooth Lagrangian subvariety.
\end{proposition}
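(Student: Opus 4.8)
The plan is to work formally locally at a fixed point and reduce everything to linear algebra on the tangent space, exploiting that we are in characteristic $0$ so that the order-$2$ automorphism $\eta$ can be diagonalized. If $N^{\eta}=\emptyset$ there is nothing to prove, so fix a closed point $p\in N^{\eta}$ and pass to the completed local ring $\widehat{\hO}_{N,p}\cong\C[[t_1,\dots,t_{2m}]]$, where $2m=\dim N$. The involution acts on $\widehat{\hO}_{N,p}$ preserving the maximal ideal $\mathfrak{m}$, hence acts on $\mathfrak{m}/\mathfrak{m}^2=(T_pN)^*$; since $\eta^2=\id$ and $2$ is invertible, this action is diagonalizable with eigenvalues $\pm1$. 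Averaging lifts via $f\mapsto\tfrac12(f\pm\eta^*f)$, I would choose a regular system of parameters $t_1,\dots,t_{2m}$ that are eigenvectors, $\eta^*t_i=\varepsilon_i t_i$ with $\varepsilon_i\in\{\pm1\}$, so that $\eta$ acts diagonally.

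With such coordinates I would identify the scheme structure of $N^{\eta}$. It is cut out by the ideal generated by all $\eta^*f-f$, matching the convention used for $X^{\theta}$; writing $f=\tfrac12(f+\eta^*f)+\tfrac12(f-\eta^*f)$ shows this ideal is generated by the anti-invariant functions. A monomial $t^\alpha$ is anti-invariant exactly when $\sum_{\varepsilon_i=-1}\alpha_i$ is odd, so every anti-invariant function lies in $(t_i:\varepsilon_i=-1)$, while each such $t_i$ is itself anti-invariant; hence the ideal equals $(t_i:\varepsilon_i=-1)$. Thus $N^{\eta}$ is, formally locally, a smooth coordinate subspace, so it is reduced and smooth, with $T_pN^{\eta}=\ker(d\eta_p-\id)=:V_+$, the $(+1)$-eigenspace of $d\eta_p$ on $T_pN$. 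For the Lagrangian property I would then run the symplectic computation: writing $T_pN=V_+\oplus V_-$ for the $\pm1$-eigenspaces and evaluating $\eta^*\omega=-\omega$ at $p$ gives $\omega_p(d\eta_p v,d\eta_p w)=-\omega_p(v,w)$. For $v,w\in V_+$ this reads $\omega_p(v,w)=-\omega_p(v,w)$, so $V_+$ is isotropic, and likewise $V_-$; since $\omega_p$ is nondegenerate each isotropic subspace has dimension at most $m$, whereas $\dim V_+ +\dim V_- =2m$, forcing $\dim V_+=\dim V_-=m$ and both Lagrangian. Combined with smoothness, $N^{\eta}$ is pure of dimension $m=\tfrac12\dim N$ with Lagrangian tangent spaces, which is the claim.

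I expect the main obstacle to be the smoothness and reducedness step rather than the Lagrangian computation: one must argue carefully that the averaging procedure genuinely produces an $\eta$-equivariant formal coordinate system, and that the scheme-theoretic fixed locus coincides formally locally with the vanishing of the $(-1)$-eigencoordinates, so that regularity of the completed local ring transfers back to smoothness of the finite-type scheme. This is where characteristic $0$ is essential, since both the diagonalizability of $d\eta_p$ and the splitting $f=\tfrac12(f+\eta^*f)+\tfrac12(f-\eta^*f)$ break down in characteristic $2$. Once the tangent space is identified with $V_+$, the Lagrangian conclusion is immediate linear algebra.
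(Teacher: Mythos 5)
Your proof is correct, but it takes a genuinely different route from the paper on the smoothness step. The paper disposes of smoothness of the scheme-theoretic fixed locus by citing Luna's slice theorem \cite[Proposition 5.8]{Luna04} and leaves the Lagrangian statement as a linear-algebra exercise --- that exercise is exactly your $V_+\oplus V_-$ computation, so the two arguments agree there. What you do differently is replace the appeal to Luna's theorem by a direct formal-local linearization: averaging to produce $\eta$-equivariant formal coordinates at a fixed point, identifying the ideal $(\eta^*f-f\mid f)$ with the ideal generated by the $(-1)$-eigencoordinates, and reading off regularity (and reducedness) of the completed local ring. This buys self-containedness and a weaker hypothesis profile --- you use only that $2$ is invertible, and you avoid the machinery packaged into the \'etale slice theorem, such as arranging an invariant affine chart for the reductive group action --- at the cost of a longer argument; the paper's citation is shorter but opaque. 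One compatibility that your sketch only gestures at and that should be made explicit: the ideal of $N^{\eta}$ is generated by the anti-invariants of the algebraic local ring, and you need its completion to coincide with the ideal generated by \emph{formal} anti-invariants. This does hold --- the averaging projection $f\mapsto\tfrac12(f-\eta^*f)$ is continuous for the $\mathfrak{m}$-adic topology and ideals in a complete Noetherian local ring are closed, so the formal anti-invariants lie in the completed ideal --- and with that remark in place your argument is a complete and valid alternative proof.
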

\begin{proof}
    The smoothness of the scheme-theoretic fixed point locus $N^{\eta}$ is a corollary of Luna's slice theorem (c.f. \cite[Proposition 5.8]{Luna04}). The fact that $N^{\eta}$ is a Lagrangian subvariety is an easy exercise in linear algebra. We leave it to the reader as an exercise.
\end{proof}

\begin{definition}\label{apifixedlocus}
    Let $X=\C^2/\Gamma$ be a Kleinian singularity with anti-Poisson involution $\theta$. The \emph{scheme-theoretic fixed point locus} of $X$ under $\theta$ is the scheme
    \begin{equation}\label{fixedlocusdef}
        X^{\theta}=\Spec \C[X]/J,
    \end{equation}
    where $J=(\theta(f)-f|f\in \C[X])$.
\end{definition}

\begin{remark}
    The scheme-theoretic fixed point locus $X^{\theta}$ turns out to be reduced, this follows from a case by case calculation in \Cref{Anfixedloci,Dnevenfixedloci,Dnoddfixedloci,E6fixedloci,E8fixedlocus}.
\end{remark}

We have $0\in X^{\theta}$ because $\theta$ is a graded automorphism. Similar to \Cref{antisymtangent}, one can show that the fixed point locus $X^{\theta}$ is either a singular Lagrangian subvariety ($1$-dimensional) or only contains the singular point ($0$-dimensional). The latter case only appears in the Type \RMN{3} anti-Poisson involutions for the Type $A_n$ Kleinian singularity, with $n$ odd (c.f. \Cref{Anfixedloci}). 

If two anti-Poisson involutions $\theta_i,\ i=1,2$, are conjugate to each other by a graded Poisson automorphism $\sigma$, then $\sigma$ induces an isomorphism between the fixed point loci $X^{\theta_1}\xrightarrow[]{\sim}X^{\theta_2}$. We want to classify anti-Poisson involutions on $\C^2/\Gamma$ up to conjugation by graded Poisson automorphisms. It turns out that the conjugacy classes form a finite set, this will be proved in \Cref{apiconjfinite}.

\subsection{Normalizer and graded automorphisms} In this section, we discuss the action of the normalizer $N_{\GL_2(\C)}(\Gamma)$ on the Kleinian singularity $\C^2/\Gamma$. Recall that $\C[X]=\C[u,v]^{\Gamma}$ is a graded subalgebra of $\C[u,v]$. The grading induces $\C^*$-actions on both $\C^2$ and $X=\C^2/\Gamma$, and the quotient morphism $\pi\colon\C^2\to X$ is $\C^*$-equivariant. Let $X^0:=X\setminus\{0\}$ denote the smooth locus of $X$ and $\pi^0\colon \C^2\setminus\{0\}\to X^0$ denote the restriction of the quotient morphism. Then $\pi^0$ is an (analytic) universal covering map. Let us denote the group of graded automorphisms of the Kleinian singularity by $\Aut_{\gr}(\C^2/\Gamma)$. Given an element $g\in N_{\GL_2(\C)}(\Gamma)$, we can produce an element in $\Aut_{\gr}(\C^2/\Gamma)$ by
\begin{align*}
    \theta_g\colon\C[u,v]^{\Gamma}&\to\C[u,v]^{\Gamma}, \\
    f(-)&\mapsto g\cdot f:=f(g^{-1}-).
\end{align*}
Clearly, the elements in $\Gamma$ induce the identity automorphism of $\C[u,v]^{\Gamma}$. Moreover, $\Gamma$ is a normal subgroup of $N_{\GL_2(\C)}(\Gamma)$. Therefore, we get a well-defined group homomorphism 
\begin{equation}\label{autogrmap}
        N_{\GL_2(\C)}(\Gamma)/\Gamma\xrightarrow[]{}\Aut_{\gr}(\C^2/\Gamma).
\end{equation}

\begin{proposition}\label{autogrbij}
    \cref{autogrmap} is a bijection.
\end{proposition}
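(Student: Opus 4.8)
The plan is to show that the group homomorphism
\[
N_{\GL_2(\C)}(\Gamma)/\Gamma\to\Aut_{\gr}(\C^2/\Gamma)
\]
of \cref{autogrmap} is both injective and surjective. The key geometric input is that $\pi^0\colon\C^2\setminus\{0\}\to X^0$ is the universal covering of the smooth locus, so that any graded automorphism of $X$ — which necessarily fixes the singular point $0$ and restricts to an automorphism of $X^0$ — can be analyzed by lifting it through this covering. I would first record that a graded automorphism $\phi$ of $\C[u,v]^\Gamma$ is the same as a $\C^*$-equivariant algebraic automorphism of the variety $X$, and that since $0$ is the unique singular point, $\phi$ preserves $X^0$ and fixes $0$.

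For surjectivity, given $\phi\in\Aut_{\gr}(\C^2/\Gamma)$, I would lift the induced automorphism $\phi^0$ of $X^0$ to the universal cover $\C^2\setminus\{0\}$. By the lifting criterion for covering spaces, $\phi^0\circ\pi^0$ lifts to a map $\tilde g\colon\C^2\setminus\{0\}\to\C^2\setminus\{0\}$ commuting suitably with $\pi^0$; since $\pi^0$ is a Galois covering with deck group $\Gamma$, such a lift is an automorphism of $\C^2\setminus\{0\}$ that normalizes $\Gamma$. Here I would invoke Hartogs' theorem (or the algebraic fact that $\C^2\setminus\{0\}$ has the same global functions as $\C^2$) to extend $\tilde g$ over the origin to an automorphism $g$ of $\C^2$ fixing $0$. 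The grading (i.e.\ $\C^*$-equivariance of $\phi$) forces $g$ to respect the linear $\C^*$-action, so $g$ is linear, i.e.\ $g\in\GL_2(\C)$, and the compatibility $g\pi^0=\pi^0\phi^0$ together with the deck-transformation analysis shows $g\Gamma g^{-1}=\Gamma$, so $g\in N_{\GL_2(\C)}(\Gamma)$ and $\theta_g=\phi$.

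For injectivity, suppose $g\in N_{\GL_2(\C)}(\Gamma)$ induces the identity on $\C[u,v]^\Gamma$. Then $g$ acts trivially on $X^0$, so the two lifts $g$ and $\id$ of the identity map of $X^0$ through the Galois covering $\pi^0$ differ by a deck transformation, i.e.\ $g\in\Gamma$. Hence $g$ is trivial in $N_{\GL_2(\C)}(\Gamma)/\Gamma$, giving injectivity.

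The main obstacle I expect is making the lifting-and-linearization step fully rigorous: one must be careful that the topological lift through $\pi^0$ is algebraic (or at least biholomorphic and then algebraic by the GAGA/normality arguments), that it genuinely extends across $0$, and — most importantly — that $\C^*$-equivariance of $\phi$ really does force the extended automorphism $g$ of $\C^2$ to be \emph{linear} rather than merely $\C^*$-equivariant in a weaker sense. I would handle the last point by noting that $g$ fixes $0$ and commutes with the scaling action up to the grading, so it preserves the space of linear coordinate functions $\C u\oplus\C v$ (the degree-one part under the pulled-back grading), which pins $g$ down to $\GL_2(\C)$; verifying that this degree-one identification is the correct one is the delicate bookkeeping step.
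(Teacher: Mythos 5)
Your overall strategy is the same as the paper's: both proofs restrict a graded automorphism to the smooth locus, lift it through the universal covering $\pi^0\colon\C^2\setminus\{0\}\to X^0$, extend the lift across the origin by Hartogs, and use the grading to force linearity, concluding the lift lies in $N_{\GL_2(\C)}(\Gamma)$. Your injectivity argument is a minor, valid variant: you identify $g$ with a deck transformation of the Galois covering $\pi^0$, whereas the paper writes $\C^2$ as the finite union of the linear subspaces $V_\gamma=\{v\mid gv=\gamma v\}$ and concludes that one of them is all of $\C^2$; both work.

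There is, however, one genuine gap, and it sits exactly at the point you flag as delicate. You assert that $\C^*$-equivariance of $\phi$ ``forces $g$ to respect the linear $\C^*$-action,'' and propose to justify this by saying $g^*$ preserves the degree-one part $\C u\oplus\C v$ of ``the pulled-back grading.'' As stated this is circular: the grading on $\C[u,v]$ is the weight grading of the scaling action on $\C^2$, and $g^*$ is only known to preserve it once you know $g$ commutes with that scaling action --- which is precisely the claim to be proven. What you actually know a priori is weaker: since $\phi$ commutes with $\C^*$ on $X$, the points $\lambda^{-1}\hat{g}(\lambda v)$ and $\hat{g}(v)$ lie in the same fiber of the quotient map $\C^2\setminus\{0\}\to X^0$, i.e.\ $\hat{g}(\lambda v)=\lambda\,\gamma_{\lambda,v}\,\hat{g}(v)$ for some $\gamma_{\lambda,v}\in\Gamma$ which could a priori depend on $\lambda$ and $v$. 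The paper closes this gap with a short connectedness argument: for fixed $v$ and a path $\lambda(t)$ in $\C^*$ from $1$ to $\lambda$, the map $t\mapsto\lambda(t)^{-1}\hat{g}(\lambda(t)v)$ is continuous with values in a discrete (finite) fiber, hence constant, so $\gamma_{\lambda,v}=e$ identically. Once genuine equivariance of the lift is established, linearity does follow as you say (from the Taylor expansion at $0$, equivalently your degree-one argument, which only then becomes legitimate). With that step inserted, your plan is complete and coincides with the paper's proof.
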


\begin{proof}
    We first show that the map \cref{autogrmap} is injective. Let $g\in N_{\GL_2(\C)}(\Gamma)$ such that $g$ acts trivially on $X=\C^2/\Gamma$. Then $g$ preserves the $\Gamma$-orbits of $\C^2$. For each $v\in \C^2$, there exists $\gamma\in\Gamma$ such that $gv=\gamma v$. Next, consider the linear subspaces $V_\gamma:=\{v\in\C^2|gv=\gamma v\},\ \gamma\in\Gamma$ Then we have 
    \begin{equation}\label{gammaunion}
        \bigcup_{\gamma\in\Gamma} V_{\gamma}=\C^2.
    \end{equation}
    The left-hand side of \cref{gammaunion} is a finite union of linear subspaces of $\C^2$, therefore one of them, say $V_{\gamma_0}$, has to be the entire $\C^2$. It follows that $g=\gamma_0\in\Gamma$.
    
    Next, we show that the map \cref{autogrmap} is surjective. Let $\theta$ be a graded automorphism of $X$. Consider the restriction $\theta\colon X^0\to X^0$, and lift it to the universal covering space $\hat{\theta}\colon \C^2\setminus\{0\}\to\C^2\setminus\{0\}$. The lift $\hat{\theta}$ is still holomorphic (c.f. \cite[Theorem 4.9]{Forster}). We claim that $\hat{\theta}$ commutes with the $\C^*$-action on $\C^2\setminus\{0\}$ induced from that on $\C^2$, which reduces to $\lambda\hat{\theta}=\hat{\theta}\lambda,\ \lambda\in\C^*$, and we show this pointwise. For any $v\in\C^2\setminus\{0\}$, it is easy to see that $\lambda^{-1}\hat{\theta}(\lambda v)$ and $\hat{\theta}(v)$ are both in the fiber $\pi^{-1}(\pi(v))$ because $\theta$ commutes with the $\C^*$-action. Next, let $\lambda(t),\ 0\leq t\leq 1$ be a continuous path connecting $\lambda(0)=1$ and $\lambda(1)=\lambda$ in $\C^*$. Then consider the following continuous map
    \begin{align*}
        \phi\colon [0,1] &\rightarrow \pi^{-1}(\pi(v)) \\
        t &\mapsto \lambda(t)^{-1}\hat{\theta}(\lambda(t)v).
    \end{align*}
    The range is a discrete space, so $\phi$ has to be constant. Thus, we get $\lambda^{-1}\hat{\theta}(\lambda v)=\phi(1)=\phi(0)=\hat{\theta}(v)$. The final step is to extend the lifting $\hat{\theta}$ to a holomorphic map from $\C^2$ to $\C^2$ by Hartogs's Extension Theorem (c.f. \cite[Theorem 2.3.2]{Hormander}). The only possible choice is to send $0$ to $0$ because $\hat{\theta}\colon\C^2\setminus\{0\}\to\C^2\setminus\{0\}$ commutes with the $\C^*$-action. Now we have a holomorphic automorphism $\hat{\theta}$ on $\C^2$ that commutes with the $\C^*$-action, which has to be linear and invertible from looking at its Taylor expansion. Thus, $\hat{\theta}$ must come from an element $g\in\GL_2(\C)$. Note that the lift $\hat{\theta}$ preserves each $\Gamma$-orbit, then so does $g$. It follows that $g\in N_{\GL_2}(\Gamma)$.
\end{proof}

\begin{remark}\label{detauto}
    Let $g\in N_{\GL_2(\C)}(\Gamma)$. By \cref{poissonbracket}, we obtain that
    \begin{equation}\label{det-1}
        \{g\cdot f_1,g\cdot f_2\}=\det(g)g\cdot\{f_1,f_2\}.
    \end{equation}
    There are two kinds of graded automorphisms in $\Aut_{\gr}(\C^2/\Gamma)$ that are worth special attention. The first is graded Poisson automorphisms, which come from elements in $N_{\SL_2}(\Gamma)$. The second is graded anti-Poisson automorphisms, which come from elements in $N_{\GL_2}(\Gamma)$ with determinant $\minus1$, which we denote by $N_{\GL_2}(\Gamma)^-$. 
\end{remark}

Let us denote the set of anti-Poisson involutions on $X=\C^2/\Gamma$ by $\API(\C^2/\Gamma)$ and the conjugacy classes of anti-Poisson involutions by $\API(\C^2/\Gamma)/\sim$. An element $g\in N_{\GL_2(\C)}(\Gamma)$ gives rise to anti-Poisson involution if and only if $\det(g)=-1$ and $g^2\in\Gamma$. \Cref{autogrbij} tells us that there is a bijection between sets
\begin{equation}\label{apibijsets}
    \{g\in N_{\GL_2(\C)}(\Gamma)^-|g^2\in\Gamma\}/\Gamma\xrightarrow[]{\sim}\API(\C^2/\Gamma).
\end{equation}
Consider the conjugation action of $N_{\SL_2(\C)}(\Gamma)$ on the left-hand side of \cref{apibijsets}, i.e., the set of $\Gamma$-cosets, then we further obtain the following bijection between sets
\begin{equation}\label{apiconjbijsets}
    \big(\{g\in N_{\GL_2(\C)}(\Gamma)^-|g^2=-1\}/\Gamma\big)\big/N_{\SL_2(\C)}(\Gamma)\xrightarrow[]{\sim}\API(\C^2/\Gamma)/\sim.
\end{equation}

\begin{corollary}\label{apiconjfinite}
    There are finitely many anti-Poisson involutions on $\C^2/\Gamma$ up to conjugation by Poisson automorphisms.
\end{corollary}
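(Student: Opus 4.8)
The plan is to read off finiteness directly from the bijection \cref{apiconjbijsets}, reducing everything to a statement about conjugacy classes of finite-order elements in $N_{\GL_2(\C)}(\Gamma)$. Set
\[
\Sigma:=\{g\in N_{\GL_2(\C)}(\Gamma)\mid \det g=-1,\ g^2\in\Gamma\},
\]
so that by \cref{apibijsets} one has $\Sigma/\Gamma\cong\API(\C^2/\Gamma)$, and the double quotient of $\Sigma$ by $\Gamma$-translation and $N_{\SL_2(\C)}(\Gamma)$-conjugation is identified with $\API(\C^2/\Gamma)/\!\sim$. Since $\GL_2(\C)=\C^*\cdot\SL_2(\C)$ and scalars are central, we have $N_{\GL_2(\C)}(\Gamma)=\C^*\cdot N_{\SL_2(\C)}(\Gamma)$, and conjugation by any element of $N_{\GL_2(\C)}(\Gamma)$ agrees with conjugation by its $\SL_2$-factor. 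Hence it suffices to prove that $\Sigma$ meets only finitely many $N_{\GL_2(\C)}(\Gamma)$-conjugacy classes, as this a fortiori bounds the coarser double quotient.

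First I would observe that the elements of $\Sigma$ have uniformly bounded order: if $g^2\in\Gamma$ then $g^{2|\Gamma|}=I$, so each $g$ is semisimple with all eigenvalues among the $2|\Gamma|$-th roots of unity. Next I would use that $N_{\GL_2(\C)}(\Gamma)$ is a (possibly disconnected) reductive algebraic group: its identity component is the reductive centralizer $Z_{\GL_2(\C)}(\Gamma)^0$ — the scalars $\C^*$ when $\C^2$ is $\Gamma$-irreducible (types $D,E$, by Schur's lemma), the diagonal torus when $\Gamma$ contains a regular semisimple element (type $A_n$, $n\ge 2$), and $\GL_2(\C)$ itself in the degenerate case $\Gamma=\{\pm I\}$ — while the component group embeds into the finite group $\Aut(\Gamma)$ and is therefore finite. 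Finiteness then follows from the standard fact that a reductive group over $\C$ has only finitely many conjugacy classes of elements of any fixed bounded order.

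The step that requires genuine care, and which I expect to be the main obstacle, is this last passage: over $\GL_2(\C)$ finiteness is automatic (the eigenvalue multiset is a complete conjugacy invariant, and there are finitely many multisets of bounded-order roots of unity), but I need finiteness under the smaller group $N_{\SL_2(\C)}(\Gamma)$, and a single $\GL_2(\C)$-class can in principle break into many $N_{\SL_2(\C)}(\Gamma)$-classes. I would settle this by type. In types $D,E$ the group $N_{\SL_2(\C)}(\Gamma)$ is itself finite: the image $\bar\Gamma\subset\PGL_2(\C)$ is a non-cyclic rotation group (dihedral, tetrahedral, octahedral, or icosahedral), whose centralizer in $\PGL_2(\C)$ is finite, hence so is its normalizer, and $N_{\SL_2(\C)}(\Gamma)$ maps to $N_{\PGL_2(\C)}(\bar\Gamma)$ with kernel contained in $\{\pm I\}$; there is then nothing more to prove. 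In type $A$, where $\bar\Gamma$ is cyclic, $N_{\SL_2(\C)}(\Gamma)$ is infinite, and here the quadratic constraint is essential: writing $g=ih$ with $h\in N_{\SL_2(\C)}(\Gamma)$, the condition $g^2\in\Gamma$ becomes $h^2\in-\Gamma$ (where $-\Gamma:=\{-\gamma\mid\gamma\in\Gamma\}$), which forces the eigenvalues of $h$ to be roots of unity of bounded order and so leaves only finitely many classes after conjugation and $\Gamma$-translation. I would close by noting that the explicit, case-by-case classification carried out in the remainder of the paper exhibits these finitely many classes by hand, providing an independent check.
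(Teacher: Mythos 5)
Your reduction and overall architecture match the paper's: both pass through \cref{apibijsets} and \cref{apiconjbijsets} to turn the corollary into a counting problem in the normalizer, both dispose of types $D,E$ by showing $N_{\SL_2(\C)}(\Gamma)$ is finite, and both treat type $A$ by a direct analysis. Where you genuinely differ is the $D,E$ step: you pass to $\PGL_2(\C)$ and invoke finiteness of the centralizer of a non-cyclic polyhedral group, whereas the paper shows $N_{\SL_2(\C)}(\Gamma)^{\circ}$ is trivial — a connected group acts trivially on the finite group $\Gamma$, so $N_{\SL_2(\C)}(\Gamma)^{\circ}\subset Z_{\SL_2(\C)}(\Gamma)$, which is $\{\pm I_2\}$ by Schur's lemma since $\C^2$ is $\Gamma$-irreducible. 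Both routes work; the paper's is more self-contained, since your centralizer claim in $\PGL_2(\C)$ is asserted rather than proved (it does follow: a positive-dimensional centralizer would place $\bar\Gamma$ inside a maximal torus or a unipotent subgroup, forcing $\bar\Gamma$ to be cyclic or trivial). Your second-paragraph principle — finitely many conjugacy classes of bounded-order elements in a possibly disconnected reductive group — is also true, and had you justified it (e.g.\ via Richardson's theorem on intersections of $\GL_2(\C)$-classes with reductive subgroups, applied to $N_{\GL_2(\C)}(\Gamma)\subset\GL_2(\C)$), it would finish the proof uniformly with no case analysis at all.

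The gap is in type $A$, at exactly the step you flagged as the main obstacle. You claim that $h^2\in-\Gamma$ "forces the eigenvalues of $h$ to be roots of unity of bounded order and so leaves only finitely many classes after conjugation and $\Gamma$-translation." But for the antidiagonal elements $h=\begin{pmatrix}0&c\\-c^{-1}&0\end{pmatrix}$, $c\in\C^*$, the constraint $h^2=-I_2\in-\Gamma$ holds identically and the eigenvalues are $\pm i$ for every $c$: bounded eigenvalue order cuts this family down not at all, and it is infinite. (For diagonal $h$ the bounded-order argument does pin down finitely many elements, so that half is fine.) What collapses the antidiagonal family to finitely many classes is conjugation, and this is precisely the computation the paper performs: conjugating by $\diag(a',a'^{-1})\in N_{\SL_2(\C)}(\Gamma)$ sends $c\mapsto a'^2c$, and since every element of $\C^*$ is a square, the whole family is a single orbit. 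As written, your type-$A$ sentence either asserts the conclusion outright or silently re-invokes the general disconnected-reductive-group fact that you declared you would not rely on — so the one place in the corollary where an infinite family genuinely has to be collapsed by conjugation is left unargued. The fix is three lines, but it is the actual content of the type-$A$ case.
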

\begin{proof}
    We prove the type $A$ case and the non-type $A$ cases separately. In type $A_n,$ with $n\geq 1$, we have $\Gamma=\{\begin{pmatrix} \epsilon & 0 \\ 0 & \epsilon^{-1} \end{pmatrix}|\epsilon^{n+1}=1\}$ is the cyclic group. We can calculate that $N_{\GL_2(\C)}(\Gamma)=\{\begin{pmatrix}
        a & 0 \\
        0 & d 
    \end{pmatrix},\begin{pmatrix}
        0 & c \\
        b & 0
    \end{pmatrix}|ad\neq0,bc\neq 0\}$. Following the bijection in \cref{apibijsets}, we have
    \[
    \bigg\{\begin{pmatrix}
        a & 0 \\
        0 & -a^{-1} \\
    \end{pmatrix},\begin{pmatrix}
        0 & c \\
        c^{-1} & 0
    \end{pmatrix}\bigg\vert a^{2n+2}=1 \bigg\}\bigg/\Gamma\xrightarrow[]{\sim}\API(\C^2/\Gamma).
    \]
    Note that $\{\begin{pmatrix}
        0 & c \\
        c^{-1} & 0\end{pmatrix}| c\neq 0\}$ forms a single orbit under the conjugation action of $N_{\SL_2(\C)}(\Gamma)$. Taking $h=\begin{pmatrix}
        a' & 0 \\
        0 & a'^{-1}
    \end{pmatrix}\in N_{\SL_2(\C)}(\Gamma)$ such that $a'^2c=1$, we can calculate that $h\begin{pmatrix}
        0 & c \\
        c^{-1} & 0
    \end{pmatrix}h^{-1}=\begin{pmatrix}
        0 & 1 \\
        1 & 0
    \end{pmatrix}$. Therefore, $\API(\C^2/\Gamma)/\sim$ is a finite set.

    For the non-type $A$ cases, we first prove that $N_{\SL_2(\C)}(\Gamma)$ is a finite set. Note that $N_{\SL_2(\C)}(\Gamma)$ is an algebraic group. It suffices to show that the connected component $N_{\SL_2(\C)}(\Gamma)^{\circ}$ is a singleton. This will follow from the following two observations.
    \begin{enumerate}
        \item The action of the connected group $N_{\SL_2(\C)}(\Gamma)^{\circ}$ on the finite group $\Gamma$ has to be trivial, so $N_{\SL_2(\C)}(\Gamma)^{\circ}$ is contained in the centralizer $Z_{\SL_2(\C)}(\Gamma)$.
        \item The tautological representation $\C^2$ is irreducible as a $\Gamma$-representation. According to Schur's Lemma, $Z_{\SL_2(\C)}(\Gamma)$ consists of scalar matrices of determinant $1$.
    \end{enumerate}
    Therefore, $N_{\SL_2(\C)}(\Gamma)^{\circ}$ only consists of the identity matrix because it is connected. Next, observe that there is a bijection $N_{\GL_2(\C)}(\Gamma)^-\xrightarrow[]{\sim}N_{\SL_2(\C)}(\Gamma)$ by sending $g\mapsto \begin{pmatrix}
        i & 0 \\
        0 & i
    \end{pmatrix}\cdot g$. Thus $N_{\GL_2(\C)}(\Gamma)^-$ is also finite. It follows that the set of anti-Poisson automorphisms on $\C^2/\Gamma$ forms a finite subset, so does $\API(\C^2/\Gamma)$, the set of anti-Poisson involutions. As a corollary, the set of conjugacy classes $\API(\C^2/\Gamma)/\sim$ is also finite.
\end{proof}

Moreover, there exists a representative in each conjugacy class of anti-Poisson involutions that has a simple form. It can be written out explicitly in terms of generators of $\C[X]=\C[u,v]^{\Gamma}$. We list all the conjugacy classes of anti-Poisson involutions for Kleinian singularities in \Cref{AnAPIodd,AnAPIeven,DnevenAPI,DnoddAPI,E6API,E7API,E8API}. Moreover, we also explain which element in $N_{\GL_2(\C)}(\Gamma)^-$ gives the corresponding anti-Poisson involution in the subsequent remark after each of the proposition.

We would also like to know how an anti-Poisson involution/an element in the normalizer acts on the McKay graph of $\Gamma$. Recall that $\Irr(\Gamma)=\{V_0,\cdots, V_n\}$ denotes the set of irreducible representations of $\Gamma$. Let $g\in N_{\GL_2(\C)}(\Gamma)^-$ and $V_i\in\Irr(\Gamma)$. We define $V_i^g$, the twist of the representation $V_i$ under $g$, as follows. As a vector space $V_i^g=V_i$, but the action of $\Gamma$ is different.
\begin{equation}\label{reptwist}
  \gamma\cdot v:=g\gamma g^{-1}(v),\ \gamma\in\Gamma,\ v\in V^g_i.
\end{equation}
Then $V^g_i$ is also an irreducible representation of $\Gamma$. Moreover, if $V_i\otimes\C^2=\oplus_{j}V_j$, then we have $V^g_i\otimes\C^2=\oplus_{j}V^g_j$ because $(\C^2)^g\simeq\C^2,\ \forall g\in N_{\GL_2(\C)}(\Gamma)$. Therefore, the twist \cref{reptwist} induces an automorphism of the McKay graph of $\Gamma$ (an extended Dynkin diagram). The twist of the trivial representation $V_0$ is still the trivial representation, no matter what $g$ is. Thus, we obtain an automorphism of the finite Dynkin diagram labeling $\Gamma$ (removing the vertex $0$). We will discuss what kind of involutions of the Dynkin diagrams we can get in \Cref{AnAPIrem,DnevenAPIrem,DnoddAPIrem,E6APIrem,E7APIrem,E8APIrem} when we take $g\in N_{\GL_2(\C)}(\Gamma)^-$ that induces an anti-Poisson involution on $\C^2/\Gamma$.

\subsection{Type $A_n$}\label{AnAPIsubsection}
The Poisson algebra of interest is $\C[x,y,z]/(xy-z^{n+1})$. Recall from \Cref{ADExyz} and \Cref{ADEpoissonbracket} that  the degrees of the generators are $\deg x=\deg y=n+1,\ \deg z=2$, and the Poisson brackets among them are given by
\[
\{x,y\}=(n+1)^2 z^n,\ \{x,z\}=(n+1)x,\ \{y,z\}=-(n+1)y.
\]

\begin{proposition}\label{AnAPIodd}
    The Kleinian singularity of Type $A_n$, with $n$ odd, has three conjugacy classes of anti-Poisson involutions.
    \begin{enumerate} [label=\Roman*.]
       \item $\theta(x)=y,\ \theta(y)=x,\ \theta(z)=z$.
       \item $\theta(x)=x,\ \theta(y)=y,\ \theta(z)=-z$.
    \item $\theta(x)=-x,\ \theta(y)=-y,\ \theta(z)=-z$.
    \end{enumerate}
    Note that when $n=1$, the Poisson automorphism $x\mapsto \frac{1}{2}(x+y)-z,\ y\mapsto \frac{1}{2}(x+y)+z,\ z\mapsto \frac{1}{2}(x-y)$ conjugates Type $\RMN{2}$ anti-Poisson involution to Type $\RMN{1}$.
\end{proposition}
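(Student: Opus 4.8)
The plan is to run the classification entirely through the normalizer description already established in \eqref{apibijsets} and \eqref{apiconjbijsets}, specialized to the cyclic group $\Gamma$. For $n\geq 2$ the computation in the proof of \Cref{apiconjfinite} gives $N_{\GL_2(\C)}(\Gamma)=\{\diag(a,d):ad\neq0\}\cup\{\left(\begin{smallmatrix}0&c\\b&0\end{smallmatrix}\right):bc\neq0\}$. Imposing $\det g=-1$ and $g^2\in\Gamma$ (the conditions of \eqref{apibijsets}) leaves exactly two families: the anti-diagonal matrices $\left(\begin{smallmatrix}0&c\\c^{-1}&0\end{smallmatrix}\right)$ (for which $g^2=I$ automatically), and the diagonal matrices $g_a:=\diag(a,-a^{-1})$ with $a^{2n+2}=1$. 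By \Cref{detauto} every such $g$ already induces an anti-Poisson \emph{involution}, so no extra checking is needed there. First I would translate these into their action on the fundamental invariants via $\theta_g(f)=f(g^{-1}\,\cdot\,)$. The swap $\left(\begin{smallmatrix}0&1\\1&0\end{smallmatrix}\right)$ sends $u\leftrightarrow v$, hence $x\leftrightarrow y$ and $z\mapsto z$, which is Type~I; while $\theta_{g_a}$ sends $x\mapsto a^{-(n+1)}x$, $y\mapsto(-1)^{n+1}a^{n+1}y$ and $z\mapsto -z$. Here the hypothesis that $n$ is odd enters: $(-1)^{n+1}=1$ and $a^{n+1}=\pm1$, so $g_a$ produces precisely Type~II (when $a^{n+1}=1$) and Type~III (when $a^{n+1}=-1$). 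This shows the three listed forms exhaust all anti-Poisson involutions.

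\textbf{Conjugacy count.} Next I would pass to conjugacy, i.e.\ to $N_{\SL_2(\C)}(\Gamma)$-orbits on $\Gamma$-cosets as in \eqref{apiconjbijsets}. The anti-diagonal matrices form a single such orbit --- this is exactly the computation already carried out in the proof of \Cref{apiconjfinite} --- giving one class, Type~I. For the diagonal family I would record the two identifications on the parameter $a$: multiplication by $\Gamma$ gives $a\sim a\epsilon$ with $\epsilon^{n+1}=1$, and conjugation by an anti-diagonal element of $N_{\SL_2(\C)}(\Gamma)$ gives $g_a\mapsto g_{-a^{-1}}$, i.e.\ $a\sim -a^{-1}$. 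The quantity $a^{n+1}\in\{\pm1\}$ is visibly invariant under $a\mapsto a\epsilon$, and (again using $n$ odd, so $(-1)^{n+1}=1$) invariant under $a\mapsto -a^{-1}$ as well, since then $a^{n+1}\mapsto a^{-(n+1)}=a^{n+1}$; as it also clearly separates the two cosets, it is a complete invariant. Hence the diagonal family splits into exactly two classes, Types~II and~III. Finally, conjugation by the monomial group $N_{\SL_2(\C)}(\Gamma)$ preserves the diagonal/anti-diagonal shape (the two elements of $S_2$ are conjugation-invariant), so Type~I is distinct from the other two; this yields precisely three classes for odd $n\geq3$. As an independent sanity check, $\tr(\theta|_{\C[X]_2})$ and $\tr(\theta|_{\C[X]_{n+1}})$ are conjugacy invariants, because a graded Poisson automorphism restricts to an intertwining linear isomorphism on each graded piece; a quick count shows these already separate I, II and~III.

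\textbf{The case $n=1$ and the main obstacle.} For the final assertion I would verify the displayed automorphism $\sigma$ directly in the algebra $\C[x,y,z]/(xy-z^2)$. Two short checks suffice: that $\sigma$ preserves the relation $xy=z^2$ and each bracket of \Cref{ADEpoissonbracket} (using only $\{x,y\}=4z$, $\{x,z\}=2x$, $\{y,z\}=-2y$), so that $\sigma$ is a graded Poisson automorphism; and that $\theta_{\mathrm{I}}\circ\sigma=\sigma\circ\theta_{\mathrm{II}}$ on $x,y,z$, so that $\sigma$ conjugates Type~II to Type~I. I expect the only genuine subtlety to be exactly this degeneration at $n=1$: there $\Gamma=\{\pm I_2\}$ is central, so $N_{\GL_2(\C)}(\Gamma)=\GL_2(\C)$ rather than the monomial group, and the ``shape is preserved'' argument of the previous paragraph breaks down. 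The extra conjugations available at $n=1$ are what merge Types~I and~II, and the explicit $\sigma$ is the concrete witness of this collapse. Thus the count must be phrased carefully --- three classes for odd $n\geq3$, with $\sigma$ recording the merger at $n=1$ --- and making this dichotomy precise, rather than the routine bracket computations, is where the real care lies.
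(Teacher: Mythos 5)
Your treatment of odd $n\geq 3$ is correct, and it takes a genuinely different route from the paper's. The paper argues inside the algebra: from $\C[X]_2=\C z$ it gets $\theta(z)=\pm z$, introduces the auxiliary grading $\deg_z:=\{z,\cdot\}$, shows $\theta$ reverses it when $\theta(z)=z$ and preserves it when $\theta(z)=-z$, hence $\theta(x)=ay,\ \theta(y)=a^{-1}x$ or $\theta(x)=ax,\ \theta(y)=by$ with $ab=1$, and then normalizes the constant by an explicit Poisson conjugation; pairwise non-conjugacy is read off from the forced scaling of $z$ and from comparing fixed point loci. You instead compute the orbit space of \eqref{apiconjbijsets} directly: the anti-diagonal cosets form a single orbit (Type $\RMN{1}$), the diagonal cosets $\diag(a,-a^{-1})$, $a^{2n+2}=1$, carry the identifications $a\sim a\epsilon$ and $a\sim -a^{-1}$ with complete invariant $a^{n+1}\in\{\pm 1\}$ (Types $\RMN{2}$, $\RMN{3}$), and the diagonal/anti-diagonal shape together with your graded-trace invariants gives pairwise non-conjugacy. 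All of these computations check out ($hg_ah^{-1}=g_{-a^{-1}}$ for anti-diagonal $h\in N_{\SL_2(\C)}(\Gamma)$, and the invariance of $a^{n+1}$ uses $n$ odd exactly where you say it does). Your route makes the matrix origin of each class transparent, matching \Cref{AnAPIrem}, at the cost of leaning on the explicit normalizer computation; the paper's route is intrinsic to $\C[X]$ and carries over nearly verbatim to \Cref{AnAPIeven} and to types $D$, $E$.

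The genuine gap is at $n=1$, which you flag but do not close. Your whole classification mechanism presumes $N_{\GL_2(\C)}(\Gamma)$ is the monomial group, which fails for $\Gamma=\{\pm I_2\}$, so at $n=1$ your proposal proves only that $\sigma$ is a graded Poisson automorphism satisfying $\theta_{\RMN{1}}\circ\sigma=\sigma\circ\theta_{\RMN{2}}$ (both of your verifications do hold). It does not prove that every anti-Poisson involution of $\C[x,y,z]/(xy-z^2)$ is conjugate to one of $\RMN{1}$--$\RMN{3}$, nor that $\RMN{3}$ is not conjugate to $\RMN{1}\sim\RMN{2}$ there. The paper closes both points by an eigenvalue analysis in $\GL_2(\C)$: $\det g=-1$ and $g^2=\pm I_2$ force $g$ to be diagonalizable with eigenvalue pair $\{1,-1\}$, $\{i,i\}$, or $\{-i,-i\}$, so up to $\Gamma$ and $\SL_2(\C)$-conjugation there are exactly two classes; their correct representatives are $\diag(1,-1)$ (Type $\RMN{2}$) and the scalar matrix $iI_2$ (Type $\RMN{3}$, as in \Cref{AnAPIrem}), while the anti-diagonal swap gives Type $\RMN{1}$ and is $\SL_2(\C)$-conjugate to $\diag(1,-1)$ -- the matrix counterpart of your $\sigma$. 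Your degree-$2$ trace does settle the remaining non-conjugacy at $n=1$ (it equals $1$ for Types $\RMN{1}$, $\RMN{2}$ and $-3$ for Type $\RMN{3}$), but exhaustiveness at $n=1$ still requires this $\GL_2(\C)$ argument, so it must be added rather than left as a remark about where the care lies.
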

\begin{proof}
    When $n=1$, we have $\Gamma=\{\pm I_2\}$ and $N_{\GL_2(\C)}(\Gamma)=\GL_2(\C)$. An element $g\in\GL_2(\C)$ gives rise to an anti-Poisson involution if and only if $\det(g)=-1$ and $g^2=\pm I_2$ by \Cref{autogrbij} and \Cref{detauto}. It follows that $g$ is diagonalizable, and its eigenvalues lie in $\{1,-1,i,-i\}$ with the requirement that the product of the two eigenvalues is $-1$. Then up to multiplication by $\Gamma$ and conjugation by $N_{\SL_2(\C)}(\Gamma)=\SL_2(\C)$, the element $g$ is either $\begin{pmatrix}
        1 & 0 \\
        0 & -1
    \end{pmatrix}$
    or $\begin{pmatrix}
        0 & 1 \\
        1 & 0
    \end{pmatrix}$, which correspond to Type $\RMN{2}$ and Type $\RMN{3}$ anti-Poisson involutions respectively. It is obvious that they are not conjugate to each other.

    When $n>1$, it is easy to verify that the involutions $\RMN{1}-\RMN{3}$ are anti-Poisson using the Poisson brackets given in \Cref{ADEpoissonbracket}. Conversely, we show that any anti-Poisson involution can be conjugated to one of Types $\RMN{1}-\RMN{3}$. First, notice that we have $\theta(z)=\pm z$ because $\theta$ is a graded involution, and we have $\C[X]_2=\C z$. Consider the operator defined by $\{z,\cdot\}$ on $\C[X]$. It is diagonalizable with integral eigenvalues, therefore produces a new grading on $\C[X]$, which we denote by $\deg_z:=\{z,\cdot\}$. Under this new grading, we have $\deg_z(x)=-(n+1),\ \deg_z(y)=n+1,\ \deg_z(z)=0$. When $\theta(z)=z$, it follows from 
    \begin{equation}\label{newdegz}
        \{z,\theta(f)\}=\{\theta(z),\theta(f)\}=-\theta(\{z,f\}),\ \forall f\in\C[X],
    \end{equation}
    that $\theta$ reverses the new grading $\deg_z$. Thus we must have
    \[
    \theta(x)=ay,\ \theta(y)=bx,\ a,b\in \C.
    \]
    Moreover, using that $\theta^2(x)=x,\ \theta^2(y)=y$, we get $ab=1$. It is easy to verify that $\theta: x\mapsto ay,\ y\mapsto a^{-1}x,\ z\mapsto z$ is an anti-Poisson involution on $\C[X]$ and its conjugate $\sigma\theta\sigma^{-1}$ is the Type $\RMN{1}$ anti-involution, where $\sigma$ is the Poisson automorphism $x \mapsto a'x,\ y\mapsto a'^{-1}y,\ z\mapsto z$ such that $a'^2a=1$. 
    
    When $\theta(z)=-z$, similar to \cref{newdegz}, we can show that $\theta$ preserves the new grading $\deg_z$. Thus we must have
    \begin{equation}\label{TypeAaxby}
        \theta(x)=ax,\ \theta(y)=by,\ a,b\in \C.
    \end{equation}
    Moreover, using that $\theta^2(x)=x,\ \theta(y)=y$, we get $a^2=b^2=1$. Plugging \cref{TypeAaxby} into
    \[
    \theta(\{x,y\})=-\{\theta(x),\theta(y)\},
    \]
    we get $ab=1$. The two solutions $a=b=1$ and $a=b=-1$ correspond to Type $\RMN{2}$ and Type $\RMN{3}$ anti-Poisson involutions respectively.
    
    Finally, we show that Types $\RMN{1}-\RMN{3}$ anti-Poisson involutions are not conjugate to each other. Any graded Poisson automorphism has to send $z$ to a scalar multiple of $z$ because $\C[X]_2=\C z$, so it can not conjugate an anti-Poisson involution sending $z$ to $-z$ (Types $\RMN{2},\ \RMN{3}$) to those sending $z$ to $z$ (Type $\RMN{1}$). Furthermore, the fixed point locus of Type $\RMN{3}$ anti-Poisson involution is the singular point, while the fixed point locus of Type $\RMN{2}$ anti-Poisson clearly contains more than one point. Hence, Types $\RMN{2}$ and $\RMN{3}$ anti-Poisson involutions are not conjugate to each other, either.
\end{proof}

\begin{proposition}\label{AnAPIeven}
    The Kleinian singularity of Type $A_n$, with $n$ even, has two conjugacy classes of anti-Poisson involutions.
    \begin{enumerate}[label=\Roman*.]
        \item $\theta(x)=y, \theta(y)=x, \theta(z)=z$.
        \item $\theta(x)=-x, \theta(y)=y, \theta(z)=-z$.
    \end{enumerate}
\end{proposition}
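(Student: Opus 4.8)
The plan is to run the argument of \Cref{AnAPIodd} almost verbatim, isolating the single place where the parity of $n$ enters and changes the count of classes. First I would record the two structural facts that hold regardless of parity. Since $\C[X]_2=\C z$ and $\theta$ is a graded involution, $\theta(z)=\pm z$. Next, as in \Cref{AnAPIodd}, the operator $\{z,\cdot\}$ defines an auxiliary grading $\deg_z$ with $\deg_z(x)=-(n+1)$, $\deg_z(y)=n+1$, $\deg_z(z)=0$. Because $n$ is even, $n+1$ is odd, so a quick inspection of the ordinary grading gives $\C[X]_{n+1}=\C x\oplus\C y$; hence any graded involution maps this plane to itself. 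The anti-Poisson identity then forces $\theta$ to reverse $\deg_z$ when $\theta(z)=z$ and to preserve it when $\theta(z)=-z$.

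In the case $\theta(z)=z$, reversing $\deg_z$ gives $\theta(x)=ay$, $\theta(y)=bx$, and $\theta^2=\id$ yields $ab=1$. Since $\theta(z^n)=z^n$ here, the same check as in \Cref{AnAPIodd} shows the anti-Poisson condition imposes no further constraint, and conjugation by the diagonal Poisson automorphism $x\mapsto a'x,\ y\mapsto a'^{-1}y,\ z\mapsto z$ with $a'^2a=1$ normalizes $\theta$ to Type \RMN{1}. In the case $\theta(z)=-z$, preserving $\deg_z$ gives $\theta(x)=ax$, $\theta(y)=by$ with $a^2=b^2=1$. This is the one spot where parity matters: imposing $\theta(\{x,y\})=-\{\theta(x),\theta(y)\}$ on $\{x,y\}=(n+1)^2z^n$ introduces the factor $\theta(z)^n=(-1)^nz^n$, which for $n$ even equals $+z^n$, so the condition reads $ab=-1$ rather than the $ab=1$ of the odd case. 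The two solutions are therefore $(a,b)=(1,-1)$ and $(a,b)=(-1,1)$, i.e. $\theta(x)=x,\ \theta(y)=-y,\ \theta(z)=-z$ and $\theta(x)=-x,\ \theta(y)=y,\ \theta(z)=-z$.

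The step I expect to be the crux, and the genuinely new feature relative to the odd case, is showing that these two solutions are conjugate and so collapse to a single class (whereas the analogous pair in \Cref{AnAPIodd} stayed distinct as Types \RMN{2} and \RMN{3}). I would produce an explicit anti-diagonal Poisson automorphism: the element $\begin{pmatrix} 0 & c \\ -c^{-1} & 0 \end{pmatrix}\in N_{\SL_2(\C)}(\Gamma)$, whose determinant $1$ is precisely what makes it Poisson rather than anti-Poisson, induces $x\mapsto(-c)^{n+1}y,\ y\mapsto c^{-(n+1)}x,\ z\mapsto -z$ on $\C[X]$. A direct conjugation check (using $(-1)^{n+1}=-1$ for $n$ even) carries the first solution to the second, so both are represented by Type \RMN{2}.

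Finally I would verify that Types \RMN{1} and \RMN{2} are not conjugate: any graded Poisson automorphism scales $z$ since $\C[X]_2=\C z$, so the sign of the action on $z$ is a conjugation invariant, equal to $+$ for Type \RMN{1} and $-$ for Type \RMN{2}. Together with the two cases above, this gives exactly the two conjugacy classes asserted. The main obstacle is genuinely just the conjugacy computation of the third paragraph; everything else is a parity-sensitive rerun of \Cref{AnAPIodd}.
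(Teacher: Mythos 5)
Your proposal is correct and follows exactly the route the paper intends: the paper's own proof here is deferred (``pretty similar to \Cref{AnAPIodd}, left to the reader''), and your argument is precisely that rerun, with parity entering only through $\theta(z)^n=(-1)^nz^n$, so that the diagonal case $\theta(z)=-z$ forces $ab=-1$ instead of $ab=1$. Your third paragraph—conjugating $(a,b)=(1,-1)$ to $(-1,1)$ by the graded Poisson automorphism $x\mapsto(-c)^{n+1}y,\ y\mapsto c^{-(n+1)}x,\ z\mapsto -z$ induced by $\begin{pmatrix} 0 & c \\ -c^{-1} & 0 \end{pmatrix}\in N_{\SL_2(\C)}(\Gamma)$—is the one genuinely non-verbatim step; it checks out (indeed $\sigma\theta_1\sigma^{-1}=\theta_2$ on all three generators), and it is exactly the fact the paper later uses implicitly in \Cref{AnevenYreduced} when asserting that $\theta'$ is conjugate to $\theta$.
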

\begin{proof}
    The proof is pretty similar to \Cref{AnAPIodd}. We leave it to the reader.
\end{proof}

\begin{remark}\label{AnAPIrem}
    \begin{enumerate}
    \item According to \Cref{detauto}, every anti-Poisson involution on $X$ comes from an element in $N_{\GL_2(\C)}(\Gamma)$ with determinant $-1$. We list here the elements in the normalizer that give rise to the anti-Poisson involutions for type $A_n$. When $n$ is odd, the matrices $\begin{pmatrix}
        0 & 1 \\
        1 & 0
    \end{pmatrix},\ \begin{pmatrix}
        -1 & 0 \\
        0 & 1
    \end{pmatrix},\ \begin{pmatrix}
        \epsilon & 0 \\
        0 & -\epsilon^{-1}
    \end{pmatrix}$ give rise to anti-Poisson involutions of Types $\RMN{1},\RMN{2},\RMN{3}$ in \Cref{AnAPIodd} respectively, where $\epsilon$ is a $(2n+2)$-th primitive root of unity. When $n$ is even, the matrices $\begin{pmatrix}
        0 & 1 \\
        1 & 0
    \end{pmatrix},\ \begin{pmatrix}
        -1 & 0 \\
        0 & 1
    \end{pmatrix}$ give rise an anti-Poisson involutions of Types $\RMN{1},\RMN{2}$ in \Cref{AnAPIeven} respectively. 

    \item It is easy to see that the matrix $\begin{pmatrix}
        0 & 1 \\
        1 & 0
    \end{pmatrix}$ induces the automorphism of Dynkin diagram $A_n$ by swapping the vertices $i,n+1-i$ (in both $n$ even and odd cases), via twisting the irreducible representations of $\Gamma$ as defined in \cref{reptwist}. The matrices $\begin{pmatrix}
        -1 & 0 \\
        0 & 1
    \end{pmatrix},\ \begin{pmatrix}
        \epsilon & 0 \\
        0 & -\epsilon^{-1}
    \end{pmatrix}$ act trivially on the set $\Irr(\Gamma)$ because they commute with $\Gamma$, hence induce the trivial automorphisms on the Dynkin diagram $A_n$.
    
    \item Consider the simple Lie algebra of type $A_n$, i.e. the Lie algebra $\hg=\sl_{n+1}(\C)$. Let $\sigma\colon\hg\to\hg$ be a Lie algebra involution. Let $\N$ denote the nilpotent cone of $\hg$ that coincides with the closure of the regular nilpotent orbit, and $S$ the Slodowy slice to a subregular nilpotent element. Then we have $S\cap\N\simeq X$, the type $A_n$ Kleinian singularity (c.f. \cite{Brieskorn}). According to our discussion in \Cref{HCmod}, $\theta:=-\sigma$ is an anti-Poisson involution on $S\cap\N\simeq X$. We list below which Lie algebra involution $\sigma$ an anti-Poisson in \Cref{AnAPIeven,AnAPIodd} corresponds to. In \Cref{involutionlie}, we use $M$ to denote a matrix in $\sl_{n+1}(\C)$, and  $M^t$ to denote its transpose. The matrix $I_{k,l}$ refers to a diagonal matrix with $k$-entries of $1$ and $l$-entries of $\minus1$.
    \begin{table}[ht]
        \centering
        \begin{tabular}{c|c|c|c}
            \hline
             & Type \RMN{1} & Type \RMN{2} & Type \RMN{3} \\
             \hline
            $n$ odd & $\sigma: M\mapsto -M^t$ & \begin{tabular}{c} $\sigma:M\mapsto \Ad I_{\frac{n+1}{2},\frac{n+1}{2}}(M)$
            \end{tabular}  & no involution unless $n=1$ \\
            \hline
            $n$ even & $\sigma: M\mapsto -M^t$ & \begin{tabular}{c} $\sigma: M\mapsto \Ad I_{\frac{n+2}{2},\frac{n}{2}}(M)$
            \end{tabular}  & N/A \\
            \hline
        \end{tabular}
        \caption{Involutions of $\sl_{n+1}$}
        \label{involutionlie}
    \end{table}
    We would like to point out that, for $n \geqslant 3$ odd, the anti-Poisson involution of Type $\RMN{3}$ does not come from any involution of the Lie algebra; while for $n=1$, it comes from the trivial involution $\sigma=\id$. Note that all involutions of $\sl_2(\C)$ are inner because there is no nontrivial automorphism of the Dynkin diagram $A_1$. Moreover, the Type $\RMN{1}$ and the Type $\RMN{2}$ anti-Poisson involutions are conjugate to each when $n=1$ as mentioned in \Cref{AnAPIodd}. The correspondence in \Cref{involutionlie} is not trivial, but not very hard. We leave it as an exercise to the reader.
    \end{enumerate}
\end{remark}

A direct calculation gives the following description of the fixed point loci in Type $A_n$.

\begin{corollary}\label{Anfixedloci}
    The fixed point loci of anti-Poisson involutions in Type $A_n$ are of the following forms.

    When $n$ is odd,
    \begin{enumerate} [label=\Roman*.]
        \item $J=(x-y)$. $X^{\theta}=\Spec\C[x,z]/(x^2-z^{n+1})$, the union of two copies of $\A^1$'s.
        \item $J=(z)$. $X^{\theta}=\Spec\C[x,y]/(xy)$, the union of two copies of $\A^1$'s.
        \item $J=(x,y,z)$. $X^{\theta}=\Spec\C$, a point.
    \end{enumerate}
    
    When $n$ is even,
    \begin{enumerate}[label=\Roman*.]
        \item $J=(x-y)$. $X^{\theta}=\Spec\C[x,z]/(x^2-z^{n+1})$, a cusp.
        \item $J=(x,z)$. $X^{\theta}=\Spec\C[y]\simeq\A^1$.
    \end{enumerate}
    Moreover, all the fixed point loci are reduced as schemes.
\end{corollary}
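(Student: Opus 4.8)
The plan is to compute the ideal $J=(\theta(f)-f\mid f\in\C[X])$ explicitly in each of the five cases produced by \Cref{AnAPIodd,AnAPIeven}, and then to identify the quotient ring $\C[X]/J=\C[x,y,z]/(xy-z^{n+1},J)$ as an affine scheme.

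The first step is to reduce $J$ to a manageable generating set. Since $\theta$ is an algebra automorphism, for any product one has the Leibniz-type identity $\theta(gh)-gh=\theta(g)\bigl(\theta(h)-h\bigr)+\bigl(\theta(g)-g\bigr)h$, and $\theta$ is additive; hence $J$ is generated by the three elements $\theta(x)-x$, $\theta(y)-y$, $\theta(z)-z$. This reduces the whole computation to substituting the explicit action of $\theta$ on the fundamental invariants.

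Then I carry out the five substitutions. In Type $\RMN{1}$ (both parities) one gets $\theta(x)-x=y-x$, $\theta(y)-y=-(y-x)$, and $\theta(z)-z=0$, so $J=(x-y)$ and, eliminating $y$, $\C[X]/J\cong\C[x,z]/(x^2-z^{n+1})$. For $n$ odd I would set $m=\tfrac{n+1}{2}$ and factor $x^2-z^{2m}=(x-z^m)(x+z^m)$, exhibiting two components each parametrized by $z\mapsto(\pm z^m,z)$, hence two copies of $\A^1$; for $n$ even the polynomial $x^2-z^{n+1}$ is irreducible and defines a cusp. The remaining cases are immediate: for $n$ odd, Type $\RMN{2}$ gives $J=(z)$ and $\C[x,y]/(xy)$, a union of two $\A^1$'s, while Type $\RMN{3}$ gives $J=(x,y,z)$ and the single reduced point $\Spec\C$; for $n$ even, Type $\RMN{2}$ gives $J=(x,z)$ and $\C[y]\cong\A^1$.

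Finally I address reducedness. Since $\C[x,z]$ and $\C[x,y]$ are UFDs, it suffices to check that the defining polynomials are squarefree: $xy$ is a product of two distinct irreducibles, and $x^2-z^{n+1}$ is squarefree in every case. The two remaining quotient rings $\C$ and $\C[y]$ are reduced on the nose. The only step requiring genuine (though standard) care is the irreducibility of $x^2-z^{n+1}$ in the cusp case, which I would verify by noting that a monic-in-$x$ factorization $x^2-z^{n+1}=(x-a(z))(x+a(z))$ would force $a(z)^2=z^{n+1}$, impossible in $\C[z]$ when $n+1$ is odd. Everything else being an explicit substitution, this irreducibility check is the sole point of substance.
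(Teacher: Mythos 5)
Your proposal is correct and takes essentially the same route as the paper: the paper presents \Cref{Anfixedloci} as a ``direct calculation'' from the classification in \Cref{AnAPIodd,AnAPIeven}, and your argument is exactly that calculation carried out in full. The only details you add beyond what the paper leaves implicit --- that $J$ is generated by $\theta(x)-x,\ \theta(y)-y,\ \theta(z)-z$ via the identity $\theta(gh)-gh=\theta(g)(\theta(h)-h)+(\theta(g)-g)h$, and the squarefreeness/irreducibility check of $x^2-z^{n+1}$ giving reducedness --- are both correct and are the right way to justify the claims.
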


\subsection{Type $D_n,\ n$ even}\label{DnevenAPIsubsection}

The Poisson algebra of interest is $\C[X]=\C[x,y,z]/(xy(y-x^{\frac{n-2}{2}})-z^2)$. Recall from \Cref{ADExyz} and \Cref{ADEpoissonbracket} that the degrees of the generators are $\deg x=4,\ \deg y=2n-4,\ \deg z=2n-2$, and the Poisson brackets among them are given by
\[
\{x,y\}=(4n-8)z,\ \{x,z\}=(2n-4)x(2y-x^{\frac{n-2}{2}}),\ \{y,z\}=(n-2)y(nx^{\frac{n-2}{2}}-2y).
\]

\begin{proposition}\label{DnevenAPI}
    The Kleinian singularity of Type $D_n$, with $n$ even, has two conjugacy classes of anti-Poisson involutions.
    \begin{enumerate} [label=\Roman*.]
        \item $\theta(x)=x,\ \theta(y)=y,\ \theta(z)=-z$.
        \item $\theta(x)=x,\ \theta(y)=x^{\frac{n-2}{2}}-y,\ \theta(z)=z$.
    \end{enumerate}
\end{proposition}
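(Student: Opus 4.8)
The plan is to follow the same three-step template as in the proof of \Cref{AnAPIodd}: first verify that Type~$\RMN{1}$ and Type~$\RMN{2}$ are genuine anti-Poisson involutions, then use the grading to show every anti-Poisson involution is conjugate to one of them, and finally distinguish the two classes. For the first step I would substitute the two candidate maps into the three bracket relations of \Cref{ADEpoissonbracket}. Writing $m:=\frac{n-2}{2}$, both maps fix $x$, so $\{x^m,\cdot\}$ is handled by the Leibniz rule. For Type~$\RMN{1}$ (which only flips the sign of $z$) each identity $\theta(\{f_1,f_2\})=-\{\theta(f_1),\theta(f_2)\}$ is immediate since every bracket is odd in $z$. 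For Type~$\RMN{2}$ the checks on $\{x,y\}$ and $\{x,z\}$ are short, and the only substantial computation is on $\{y,z\}$: substituting $\theta(y)=x^m-y$ and using $m(2n-4)=(n-2)^2$ makes both sides equal to $(n-2)\big((n-2)x^{2m}+(4-n)x^m y-2y^2\big)$. One also checks directly that each map preserves the defining relation $xy(y-x^m)=z^2$, so it descends to $\C[X]$.

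For the converse I would exploit the degrees $\deg x=4$, $\deg y=2n-4$, $\deg z=2n-2$. When $n\geqslant 6$ these are distinct, and a monomial count gives $\C[X]_4=\C x$, $\C[X]_{2n-2}=\C z$, and $\C[X]_{2n-4}=\C y\oplus\C x^{m}$. Since $\theta$ is graded, this forces $\theta(x)=\epsilon_1 x$ and $\theta(z)=\epsilon_2 z$ with $\epsilon_i\in\{\pm 1\}$, and $\theta(y)=\alpha y+\beta x^{m}$. Imposing $\theta^2=\id$ together with the anti-Poisson identities for $\{x,y\}$ and $\{x,z\}$ yields a small system of scalar equations; I expect it to force $\epsilon_1=1$ and to leave exactly the two solutions $(\epsilon_2,\alpha,\beta)=(-1,1,0)$ and $(1,-1,1)$, that is, precisely Type~$\RMN{1}$ and Type~$\RMN{2}$. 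Thus for $n\geqslant 6$ every anti-Poisson involution equals one of the two listed maps on the nose, and no conjugation is needed.

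The delicate case, and the main obstacle, is $D_4$ ($n=4$), where $\deg x=\deg y=4$, so $\theta$ may mix $x$ and $y$. Here $\C[X]_{6}=\C z$ still gives $\theta(z)=\epsilon z$, while $\theta$ acts on $\C[X]_4=\C x\oplus\C y$ by a matrix $M$ with $M^2=I$, and the $\{x,y\}$-identity forces $\epsilon=-\det M$. When $\det M=1$ one gets $M=\pm I$, and the remaining bracket identities exclude $M=-I$, leaving only Type~$\RMN{1}$. When $\det M=-1$ the bracket equations cut the candidates down to the three explicit involutions $x\mapsto x,\ y\mapsto x-y$; $\ x\mapsto y-x,\ y\mapsto y$; and $x\mapsto -y,\ y\mapsto -x$ (all fixing $z$). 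The point to establish is that these three are mutually conjugate by graded Poisson automorphisms: they are exactly the transpositions of the triality $S_3$ of $D_4$, realized by the Poisson automorphisms permuting the three lines $x=0$, $y=0$, $y=x$ of the cubic $xy(y-x)$. Exhibiting these automorphisms explicitly and checking that they are Poisson, i.e.\ that they arise from $N_{\SL_2(\C)}(\Gamma)$ via \Cref{autogrbij}, is the step requiring genuine care; once done, all three collapse to the single class Type~$\RMN{2}$.

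Finally, the two classes are distinct. For every even $n$ the space $\C[X]_{2n-2}$ is the line $\C z$, so any graded Poisson automorphism scales $z$, and hence conjugation preserves the eigenvalue of $\theta$ on $z$. Since Type~$\RMN{1}$ acts on $z$ by $-1$ and Type~$\RMN{2}$ by $+1$, they cannot be conjugate, which also shows the count is exactly two.
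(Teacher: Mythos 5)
Your verification step and your generic-case argument are sound, and for $n\geqslant 6$ your route is essentially the paper's: the grading forces $\theta(x)=\epsilon_1x$, $\theta(z)=\epsilon_2z$, $\theta(y)=\alpha y+\beta x^{m}$, and the identities for $\{x,y\}$ and $\{x,z\}$ give $\alpha=-\epsilon_2$, $\epsilon_1=1$, $2\beta=1+\epsilon_2$, leaving exactly the two listed involutions (the paper pins down the same coefficients using instead that $xy(y-x^{m})-z^2=0$ is the unique relation of degree $4n-8$, but the two computations are interchangeable). Your non-conjugacy argument via the eigenvalue of $\theta$ on $\C[X]_{2n-2}=\C z$ is also exactly the paper's.

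The gap is in $D_4$, and it sits at the point you yourself flag. You correctly reduce to the three relation-preserving involutions with $\det M=-1$, namely $x\mapsto x,\ y\mapsto x-y$; $\ x\mapsto y-x,\ y\mapsto y$; $\ x\mapsto -y,\ y\mapsto -x$, but you only \emph{assert} that they are mutually conjugate by graded Poisson automorphisms realizing the transpositions of the three lines $\{x=0\},\{y=0\},\{y=x\}$; you never produce such automorphisms or prove that the graded Poisson automorphism group of the $D_4$ singularity maps \emph{onto} $S_3$. This cannot be waved away: a priori that group could act on the three lines through a proper subgroup of $S_3$, in which case $D_4$ would have three or four conjugacy classes rather than two, so the statement itself is at stake. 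The paper closes this by working group-theoretically: by \Cref{autogrbij} and \Cref{detauto}, anti-Poisson involutions correspond to classes of $g\in N_{\GL_2(\C)}(\Gamma)^-$ with $g^2\in\Gamma$, and for the order-$8$ group $\Gamma$ one computes that, up to multiplication by $\Gamma$ and conjugation by $N_{\SL_2(\C)}(\Gamma)$, only the two representatives $\begin{pmatrix}1&0\\0&-1\end{pmatrix}$ and $\begin{pmatrix}\epsilon_8&0\\0&\epsilon_8^3\end{pmatrix}$ survive, so the conjugating elements come for free. If you want to stay inside $\C[X]$ as you propose, the fix is concrete: check that $\sigma\colon x\mapsto -y,\ y\mapsto x-y,\ z\mapsto z$ preserves the relation and all three brackets of \Cref{ADEpoissonbracket}, hence is a graded Poisson automorphism; it has order $3$ and cyclically permutes the three lines, and since (as your enumeration shows) each transposition of the lines lifts to a \emph{unique} anti-Poisson involution, conjugation by $\sigma$ and $\sigma^2$ carries your Type $\RMN{2}$ map to the other two. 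With that supplement your proof is complete.
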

\begin{proof}
    When $n=4$, we have $\Gamma=\big\{\pm I_2,\ \pm\begin{pmatrix}
        i & 0 \\
        0 & -i
    \end{pmatrix},\ \pm\begin{pmatrix}
        0 & 1 \\
        -1 & 0
    \end{pmatrix},\ \pm\begin{pmatrix}
        0 & i \\
        i & 0
    \end{pmatrix}\big\}$. An element $g\in N_{\GL_2{\C}}(\Gamma)$ gives rise to an anti-Poisson involution if and only if $\det(g)=-1$ and $g^2\in\Gamma$ by \Cref{autogrbij} and \Cref{detauto}. Then up to multiplication by $\Gamma$ and conjugation by $N_{\SL_2(\C)}(\Gamma)$, the element $g$ is either $\begin{pmatrix}
        1 & 0 \\
        0 & -1
    \end{pmatrix},\ \begin{pmatrix}
        \epsilon_8 & 0 \\
        0 & \epsilon_8^3
    \end{pmatrix}$, where $\epsilon_8$ is an $8$-th primitive root of unity. Using the formulas of the generators $x,y,z$ given in \Cref{ADExyz}, we see that the two matrices correspond to the Type $\RMN{1}$ and the Type $\RMN{2}$ anti-Poisson involutions respectively. It is obvious that they are not conjugate to each other. 
    
    When $n>4$, it is easy to verify that the involutions $\RMN{1},\RMN{2}$ are anti-Poisson using the Poisson brackets given in \Cref{ADEpoissonbracket}.
    Conversely, let $\theta$ be an anti-Poisson involution of $\C[X]$. Note that $\deg z$ is not an integral linear combination of $\deg x$ and $\deg y$. 
    Thus we may assume
    \begin{equation*}
        \theta(x)=ax,\quad
        \theta(y)=by+b'x^{\frac{n-2}{2}},\quad
        \theta(z)=cz,\quad
    \end{equation*}
    as $\theta$ is graded. We have $\theta^2(x)=x,\ \theta^2(y)=y,\ \theta^2(z)=z$ as $\theta$ is an involution, which implies that $a^2=b^2=c^2=1$. From
    \begin{equation*}
        0=\theta(xy(y-x^{\frac{n-2}{2}})-z^2)=axy^2+ab(2b'-a^{\frac{n-2}{2}})yx^{\frac{n}{2}}+ab'(b'-a^{\frac{n-2}{2}})x^{n-1}-z^2,
    \end{equation*}
    we get $a=1,\ ab(2b'-a^{\frac{n-2}{2}})=-1,\ ab'(b'-a^{\frac{n-2}{2}})=0$ because $xy(y+4x^{\frac{n-2}{2}})-z^2=0$ is the only relation of degree $4n-8$ in $\C[X]$ up to scaling. By plugging $a=1$ into the latter two equations, we get $b(2b'-1)=-1,\ b'(b'-1)=0$. There are only two solutions. The first is $b'=0,\ b=1$. Using
    \begin{equation}\label{Drxy}
        \{\theta(x),\theta(y)\}=-\theta(\{x,y\})
    \end{equation}
    as $\theta$ is anti-Poisson, we can further deduce that $c=-1$. This corresponds to Type $\RMN{1}$ anti-Poisson involution. The second solution is $b'=1,\ b=-1$. With a similar procedure as above, we can obtain that $c=1$. This corresponds to the Type $\RMN{2}$ anti-Poisson involution.

    To see that Types $\RMN{1},\RMN{2}$ anti-Poisson involutions are non-conjugate, note that any graded automorphism has to send $z$ to a scalar multiple of $z$ because $\C[X]_{2n-2}=\C z$. Thus, an anti-involution sending $z$ to $-z$ (Type $\RMN{1}$) can not conjugate to that sending $z$ to $z$ (Type $\RMN{2}$).
\end{proof}

\begin{remark}\label{DnevenAPIrem}
    \begin{enumerate}[label=(\arabic*)]
    \item\label{Dnevenremtemp2} According to \Cref{autogrbij}, every anti-Poisson involution on $\C^2/\Gamma$ should come from an element in $N_{\GL_2(\C)}(\Gamma)^-$. We list here the elements in the normalizer that give rise to the anti-Poisson involutions for type $D_n$, with $n$ even. The matrix $\begin{pmatrix}
        -1 & 0 \\
        0 & 1
    \end{pmatrix}$ gives rise to the Type $\RMN{1}$ anti-Poisson involution in \Cref{DnevenAPI}, and the matrix $\begin{pmatrix}
        \epsilon & 0 \\
        0 & -\epsilon^{-1}
    \end{pmatrix}$ gives rise to the Type $\RMN{2}$ anti-Poisson involution in \Cref{DnevenAPI}, where $\epsilon$ is a $(4n\minus8)$-th primitive root of unity.
    \item To determine what kinds of automorphisms the matrices in \ref{Dnevenremtemp2} give rise to, we only need to look at their twists on the four $1$-dimensional representations of $\Gamma$ (including one trivial representation). Then it is not hard to discover that the matrix $\begin{pmatrix}
        -1 & 0 \\
        0 & 1
    \end{pmatrix}$ induces the trivial automorphism of the Dynkin diagram $D_n$, and the matrix $\begin{pmatrix}
        \epsilon & 0 \\
        0 & -\epsilon^{-1}
    \end{pmatrix}$ swaps two short arms of the Dynkin diagram $D_n$ (when $n$ is even). We can further determine which two vertices/$1$-dimensional representations are being swapped in type $D_4$, which is left as an exercise to the reader.
    \end{enumerate}
\end{remark}

\begin{corollary}\label{Dnevenfixedloci}
    The fixed point loci of anti-Poisson involutions in Type $D_n$, with $n$ even, are of the following forms.
    \begin{enumerate} [label=\Roman*.]
        \item $J=(z)$. The fixed point locus $X^{\theta}=\Spec\C[x,y]/(xy(y-x^{\frac{n-2}{2}}))$ is the union of three copies of $\A^1$'s intersecting at the singular point.
        \item $J=(2y-x^{\frac{n-2}{2}})$. The fixed point locus $X^{\theta}=\Spec\C[x,z]/(x^{n-1}+4z^2)$ is a cusp.
    \end{enumerate}
    Moreover, both fixed point loci are reduced as schemes.
\end{corollary}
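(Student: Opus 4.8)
The strategy is the same in both cases: first pin down the ideal $J=(\theta(f)-f\mid f\in\C[X])$ exactly, then substitute the resulting relation into the defining equation $xy(y-x^{\frac{n-2}{2}})-z^2=0$ of $\C[X]$, and finally read off the geometry of the resulting plane curve. The key observation that makes the computation of $J$ clean is that in each case one can choose graded coordinates in which $\theta$ acts as a sign change on a single generator; this forces $J$ to be principal.

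\emph{Type \RMN{1}.} Here $\theta$ fixes $x,y$ and sends $z\mapsto -z$, so $\theta(f)=f(x,y,-z)$ for every $f$, whence $\theta(f)-f$ is odd in $z$ and lies in $(z)$; conversely $\theta(z)-z=-2z\in J$, so $J=(z)$. Setting $z=0$ gives $X^{\theta}=\Spec\C[x,y]/(xy(y-x^{\frac{n-2}{2}}))$. The right way to finish is to note that $xy(y-x^{\frac{n-2}{2}})$ is a product of the three pairwise non-associate irreducibles $x$, $y$, $y-x^{\frac{n-2}{2}}$ in $\C[x,y]$, so the principal ideal is radical (giving reducedness) and the curve decomposes as $\{x=0\}\cup\{y=0\}\cup\{y=x^{\frac{n-2}{2}}\}$. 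Each of these is the graph of a polynomial in a single variable, hence isomorphic to $\A^1$, and all three meet only at the origin, as claimed.

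\emph{Type \RMN{2}.} The trick is to introduce $w:=2y-x^{\frac{n-2}{2}}$, which is homogeneous since $\deg y=\deg x^{\frac{n-2}{2}}=2n-4$. A direct check gives $\theta(w)=-w$ while $\theta(x)=x$ and $\theta(z)=z$; thus in the coordinates $(x,w,z)$ the involution $\theta$ is again a sign change on one variable, and the argument of Type \RMN{1} yields $J=(w)=(2y-x^{\frac{n-2}{2}})$. Imposing $2y=x^{\frac{n-2}{2}}$ eliminates $y$, and substituting into $xy(y-x^{\frac{n-2}{2}})=z^2$ produces $-x^{n-1}/4=z^2$, i.e. $X^{\theta}=\Spec\C[x,z]/(x^{n-1}+4z^2)$.

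\emph{Main obstacle.} I expect the only genuinely non-formal step to be identifying this last curve as an irreducible, reduced cusp. Since $n$ is even, $m:=n-1$ is odd, and I would prove $x^{m}+4z^2$ irreducible in $\C[x,z]$ by reading it as a quadratic in $z$ over $\C(x)$ with discriminant $-16x^{m}$, which is a non-square precisely because $m$ is odd; irreducibility gives reducedness (the polynomial is squarefree) and that $X^{\theta}$ is a single component. To exhibit the cusp I would write down the normalization $\A^1\to X^{\theta}$, $t\mapsto(t^2,\tfrac{i}{2}t^{m})$, which is injective (again using $m$ odd) and surjective onto $\{x^{m}+4z^2=0\}$, hence a birational morphism from the smooth $\A^1$ that is singular only over the origin. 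This realizes $X^{\theta}$ as a cuspidal curve $z^2=-x^{m}/4$ of exactly the same shape as the cusps encountered in the type $A_n$ computation, completing the proof.
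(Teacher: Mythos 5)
Your proposal is correct and follows essentially the same route as the paper, which treats this corollary as a direct calculation from \Cref{DnevenAPI}: determine $J$ from the explicit form of $\theta$, substitute into the defining relation, and read off the components. Your additional details (the squarefree/radical argument for reducedness, the coordinate change $w=2y-x^{\frac{n-2}{2}}$, and the explicit normalization $t\mapsto(t^2,\tfrac{i}{2}t^{n-1})$ exhibiting the cusp) are exactly the computations the paper leaves implicit.
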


\subsection{Type $D_n,\ n$ odd}\label{DnoddAPIsubsection}

The Poisson algebra of interest is $\C[X]=\C[x,y,z]/(xy^2-z(z-x^{\frac{n-1}{2}}))$. Recall from \Cref{ADExyz} and \Cref{ADEpoissonbracket} that the degrees of the generators are $\deg x=4,\ \deg y=2n-4,\ \deg z=2n-2$, and the Poisson brackets among them are given by
\[
\{x,y\}=(2n-4)(2z-x^{\frac{n-1}{2}}),\ \{x,z\}=(4n-8)xy,\ \{y,z\}=(n-2)((n-1)x^{\frac{n-3}{2}}z-2y^2).
\]

\begin{proposition}\label{DnoddAPI}
    The Kleinian singularity of Type $D_n$, with $n$ odd, has two conjugacy classes of anti-Poisson involutions.
    \begin{enumerate} [label=\Roman*.]
        \item $\theta(x)=x,\ \theta(y)=-y,\ \theta(z)=z$.
        \item $\theta(x)=x,\ \theta(y)=y,\ \theta(z)=x^{\frac{n-1}{2}}-z$.
    \end{enumerate}
\end{proposition}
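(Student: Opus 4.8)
The plan is to follow the template of the even case (\Cref{DnevenAPI}), but with the roles of $y$ and $z$ interchanged to reflect the different shape of the relation $xy^2=z(z-x^{\frac{n-1}{2}})$. The easy direction I would record first: using the brackets of \Cref{ADEpoissonbracket}, a short computation verifies that both displayed maps are graded algebra involutions satisfying \cref{apiequation}, hence genuine anti-Poisson involutions. The substance is the converse, namely that any anti-Poisson involution $\theta$ equals one of the two.

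First I would pin down the shape of $\theta$ from the grading. Since $n$ is odd and $\geqslant 5$, the degrees $\deg x=4$, $\deg y=2n-4$, $\deg z=2n-2$ never collide (in contrast to $n=4$ in the even case, so here no exceptional base case is needed), and a direct monomial count gives $\C[X]_4=\C x$ and $\C[X]_{2n-4}=\C y$, while $\C[X]_{2n-2}=\C z\oplus\C x^{\frac{n-1}{2}}$ because now $\frac{n-1}{2}$ is an integer. Hence $\theta$ is forced to have the form $\theta(x)=ax$, $\theta(y)=by$, $\theta(z)=cz+c'x^{\frac{n-1}{2}}$. Imposing $\theta^2=\id$ then yields $a^2=b^2=c^2=1$ together with $c'(c+a^{\frac{n-1}{2}})=0$.

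Next I would feed these into the defining relation. Since $\theta$ is an algebra automorphism, $\theta(xy^2-z^2+zx^{\frac{n-1}{2}})$ must lie in the defining ideal; being homogeneous of degree $4n-4$, it must be a scalar multiple of the relation itself, which is the unique relation of that degree up to scaling. Matching the coefficients of $xy^2$, $z^2$, $zx^{\frac{n-1}{2}}$ and $x^{n-1}$ forces $a=1$ and leaves exactly two possibilities, $(c',c)=(0,1)$ or $(c',c)=(1,-1)$. Crucially, the relation alone does \emph{not} determine $b$; the decisive step is to impose the anti-Poisson identity on a single bracket, $\theta(\{x,y\})=-\{\theta(x),\theta(y)\}$ with $\{x,y\}=(2n-4)(2z-x^{\frac{n-1}{2}})$. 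Comparing the coefficients of $z$ and of $x^{\frac{n-1}{2}}$ gives $c=-b$ and $2c'-1=b$, which selects $b=-1$ in the first case (Type \RMN{1}) and $b=1$ in the second (Type \RMN{2}). Thus every anti-Poisson involution is exactly one of the two displayed maps.

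Finally I would show the two classes are distinct. Any graded Poisson automorphism $\sigma$ must scale $y$, since $\C[X]_{2n-4}=\C y$, so $\sigma(y)=\mu y$; consequently the scalar by which an anti-Poisson involution acts on $y$ is a conjugacy invariant. Type \RMN{1} acts by $-1$ and Type \RMN{2} by $+1$, so they lie in distinct conjugacy classes, and there are exactly two. I expect the main bookkeeping obstacle to be the third step: unlike the even case, the extra term now sits inside $\theta(z)$, so the coefficient-matching against the defining relation and then against the anti-Poisson bracket must be tracked carefully to confirm that $a$ and $c'$ are forced by the relation while $b$ is pinned only after invoking the anti-Poisson identity.
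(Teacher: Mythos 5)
Your proof is correct and is essentially the argument the paper intends: the paper's own proof of this proposition simply says it is "pretty similar" to the even case (\Cref{DnevenAPI}) and leaves it to the reader, and your write-up is precisely that adaptation, with the extra term moved into $\theta(z)$, the relation $xy^2-z(z-x^{\frac{n-1}{2}})$ used to force $a,c,c'$, and the bracket $\{x,y\}$ used to pin down $b$. The one place where a blind transfer of the even-case template would fail—the non-conjugacy step, since for $n$ odd $\C[X]_{2n-2}=\C z\oplus\C x^{\frac{n-1}{2}}$ is two-dimensional so graded automorphisms need not scale $z$—you handle correctly by using the eigenvalue on $y$ (where $\C[X]_{2n-4}=\C y$) as the conjugacy invariant instead.
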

\begin{proof}
    The proof is pretty similar to \Cref{DnevenAPI}. We leave it to the reader.
\end{proof}

\begin{remark}\label{DnoddAPIrem}
    \begin{enumerate}
    \item  According to \Cref{detauto}, every anti-Poisson involution on $\C^2/\Gamma$ comes from an element in $N_{\GL_2(\C)}(\Gamma)^-$. We list here the elements in the normalizer that give rise to the anti-Poisson involutions for type $D_n$, with $n$ odd. The matrix $\begin{pmatrix}
        \epsilon & 0 \\
        0 & -\epsilon^{-1}
    \end{pmatrix}$ gives rise to the Type $\RMN{1}$ anti-Poisson involution in \Cref{DnoddAPI}, where $\epsilon$ is a $(4n-8)$-th primitive root of unity. The matrix $\begin{pmatrix}
        -1 & 0 \\
        0 & 1
    \end{pmatrix}$ gives rise to the Type $\RMN{2}$ anti-Poisson involution in \Cref{DnoddAPI}.
    \item Similar to \Cref{DnevenAPIrem}, one can show that the matrix $\begin{pmatrix}
        \epsilon & 0 \\
        0 & -\epsilon^{-1}
    \end{pmatrix}$ induces the trivial automorphism of the Dynkin diagram $D_n$, and the matrix  
    $\begin{pmatrix}
        -1 & 0 \\
        0 & 1
    \end{pmatrix}$ swaps the two short arms of the Dynkin diagram $D_n$ (when $n$ is odd).
    \end{enumerate}
\end{remark}

\begin{corollary}\label{Dnoddfixedloci}
    The fixed point loci of anti-Poisson involutions in Type $D_n$, with $n$ odd, are of the following forms.
    \begin{enumerate} [label=\Roman*.]
        \item $J=(y)$. The fixed point locus $X^{\theta}=\Spec\C[x,z]/(z(z-x^{\frac{n-1}{2}}))$ is the union of two copies of $\A^1$'s intersecting at the singular point.
        \item $J=(2z-x^{\frac{n-1}{2}})$. The fixed point locus $X^{\theta}=\Spec\C[x,y]/(x(4y^2+x^{n-2}))$ is the union of $\A^1$ and a cusp, intersecting at the singular point.
    \end{enumerate}
     Moreover, both fixed point loci are reduced as schemes.
\end{corollary}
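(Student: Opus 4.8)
The plan is to reduce the determination of $X^{\theta}=\Spec\C[X]/J$ to a finite computation by first describing the ideal $J$ on generators. Since $\theta$ is a graded algebra automorphism and $\C[X]$ is generated by $x,y,z$, I would first record the elementary fact that
\[
J=(\theta(x)-x,\ \theta(y)-y,\ \theta(z)-z).
\]
One inclusion is immediate. For the other, let $J_0$ denote the right-hand side and $q\colon\C[X]\to\C[X]/J_0$ the quotient map. Both $q\circ\theta$ and $q$ are algebra homomorphisms, and they agree on $x,y,z$ by construction, hence agree on all of $\C[X]$; therefore $\theta(f)-f\in\ker q=J_0$ for every $f$. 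With this in hand the whole problem becomes substitution on three generators followed by elimination.

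Next I would carry out the two substitutions coming from \Cref{DnoddAPI}. For Type $\RMN{1}$ one has $\theta(x)-x=0$, $\theta(y)-y=-2y$, $\theta(z)-z=0$, so $J=(y)$; setting $y=0$ in the relation $xy^2-z(z-x^{\frac{n-1}{2}})$ leaves $z(z-x^{\frac{n-1}{2}})$, giving $X^{\theta}\simeq\Spec\C[x,z]/(z(z-x^{\frac{n-1}{2}}))$. For Type $\RMN{2}$ one has $\theta(z)-z=x^{\frac{n-1}{2}}-2z$ (the other two differences vanishing), so $J=(2z-x^{\frac{n-1}{2}})$; eliminating $z=\tfrac12 x^{\frac{n-1}{2}}$ from the relation and clearing denominators yields $X^{\theta}\simeq\Spec\C[x,y]/(x(4y^2+x^{n-2}))$. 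In each case the single nontrivial generator of $J$ is linear in the variable being eliminated, which is precisely why the fixed locus is a plane curve and the elimination is clean.

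I would then read off the components by factoring. For Type $\RMN{1}$ the curve is $\{z=0\}\cup\{z=x^{\frac{n-1}{2}}\}$, two graphs over the $x$-line, i.e. two copies of $\A^1$ meeting exactly where $z=0=x^{\frac{n-1}{2}}$, namely the origin. For Type $\RMN{2}$ the factor $x$ cuts out an $\A^1$ (the $y$-line), while $4y^2+x^{n-2}$ cuts out a cusp, the two meeting exactly at $x=y=0$, again the singular point. Here the parity hypothesis is essential: because $n$ is odd, $n-2$ is odd and $4y^2+x^{n-2}$ is irreducible (were $n-2$ even it would split into two lines and the geometry would be different), so this component is genuinely a cusp rather than a pair of $\A^1$'s.

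Finally, for reducedness I would argue squarefreeness: over $\C$, a principal ideal of a polynomial ring is radical exactly when its generator has no repeated irreducible factor. In Type $\RMN{1}$ the factors $z$ and $z-x^{\frac{n-1}{2}}$ are distinct non-associate irreducibles, and in Type $\RMN{2}$ the factors $x$ and $4y^2+x^{n-2}$ are distinct irreducibles (using the irreducibility noted above), so both defining polynomials are squarefree and the schemes are reduced. I expect the only genuinely delicate step to be this last irreducibility/squarefreeness point, where the oddness of $n$ is exactly what prevents $4y^2+x^{n-2}$ from factoring; everything preceding it is routine bookkeeping once the generator-level description of $J$ is established.
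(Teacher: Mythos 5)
Your proposal is correct and follows essentially the same route the paper intends: the paper presents this corollary as a direct calculation from \Cref{DnoddAPI}, namely computing $J$ on the generators $x,y,z$, substituting into the defining relation $xy^2-z(z-x^{\frac{n-1}{2}})$, and reading off components and reducedness from the factorization. The one step you make explicit that the paper leaves implicit --- that $J$ is generated by $\theta(x)-x,\ \theta(y)-y,\ \theta(z)-z$ alone --- is argued correctly, and your squarefreeness argument for reducedness (using that $n$ odd makes $4y^2+x^{n-2}$ irreducible) is exactly the delicate point needed.
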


We have finished the classification of conjugacy classes of anti-Poisson involutions in the classical types. Next, we consider the three exceptional types.

\subsection{Type $E_6$}\label{E6APIsubsection}

The Poisson algebra of interest is $\C[X]=\C[x,y,z]/(z^2+zx^2+y^3)$. Recall from \Cref{ADExyz} and \Cref{ADEpoissonbracket} that the degrees of the generators are $\deg x=6,\ \deg y=8,\ \deg z=12$, and the Poisson brackets among them are given by 
\[
\{x,y\}=-4(2z+x^2),\ \{x,z\}=-12y^2,\ \{y,z\}=4x(431x^2-2z).
\]

\begin{proposition}\label{E6API}
    The Kleinian singularity of Type $E_6$ has two conjugacy classes of anti-Poisson involutions.
    \begin{enumerate} [label=\Roman*.]
        \item $\theta(x)=-x,\ \theta(y)=y,\ \theta(z)=z$.
        \item $\theta(x)=x,\ \theta(y)=y,\ \theta(z)=-z-x^2$.
    \end{enumerate}
\end{proposition}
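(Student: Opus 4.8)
The plan is to mirror the proof of \Cref{DnevenAPI}: verify directly that the two listed maps are anti-Poisson involutions, use the grading to force any anti-Poisson involution into a short finite list of candidates, and then prune that list and check non-conjugacy. The forward direction is a routine verification: for each of \RMN{1} and \RMN{2} one checks $\theta^2=\id$ on the generators and checks \cref{apiequation} on the three brackets $\{x,y\},\{x,z\},\{y,z\}$ using \Cref{ADEpoissonbracket}. The only mild subtlety is that in Type \RMN{2} one expands $\{\,\cdot\,,-z-x^2\}$ via Leibniz, using $\{x,x^2\}=0$ and $\{y,x^2\}=2x\{y,x\}$.

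For the converse I would first exploit the grading. Since $\deg x=6,\ \deg y=8,\ \deg z=12$ and the relation only appears in degree $24$, the relevant graded pieces are $\C[X]_6=\C x$, $\C[X]_8=\C y$, and $\C[X]_{12}=\C z\oplus\C x^2$. Hence any graded involution has the form
\[
\theta(x)=ax,\qquad \theta(y)=by,\qquad \theta(z)=cz+c'x^2 .
\]
Imposing $\theta^2=\id$ gives $a^2=b^2=c^2=1$ and $c'(c+1)=0$. Because $\theta$ must preserve the defining relation $R=z^2+zx^2+y^3$, and $R$ is homogeneous of degree $24$ with $(R)_{24}=\C R$, we must have $\theta(R)=\lambda R$; expanding $\theta(R)$, using $a^2=1$, and comparing the $z^2,\,zx^2,\,x^4,\,y^3$ coefficients yields $\lambda=1$, $b=1$, $c'^2+c'=0$, and $c(2c'+1)=1$. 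This pins down $c'\in\{0,-1\}$ with $c=1$ or $c=-1$ respectively, leaving exactly the four candidates $(a,b,c,c')\in\{(\pm1,1,1,0),(\pm1,1,-1,-1)\}$.

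Finally, the anti-Poisson condition prunes this list to two: the candidate $(1,1,1,0)$ is the identity, which is Poisson rather than anti-Poisson, and $(-1,1,-1,-1)$ already fails on $\{x,y\}$ (one finds $\theta(\{x,y\})=8z+4x^2$ but $-\{\theta(x),\theta(y)\}=-8z-4x^2$), so the survivors are precisely \RMN{1} (from $a=-1$) and \RMN{2} (from $a=1$). For non-conjugacy I would use the induced linear action on $\C[X]_{12}=\Span(z,x^2)$: a graded automorphism conjugating one involution to the other would conjugate these two $2\times2$ operators and hence preserve their trace, but involution \RMN{1} acts as the identity (trace $2$) while involution \RMN{2} has eigenvalues $\pm1$ (trace $0$). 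I expect this last step to be the main obstacle relative to the classical types: because $\C[X]_{12}$ is two-dimensional, $z$ is no longer the unique top-degree generator up to scale, so the simple ``$z\mapsto\pm z$'' separation used in \Cref{DnevenAPI} is unavailable and must be replaced by this trace/eigenvalue invariant on the degree-$12$ piece.
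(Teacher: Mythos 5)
Your proposal is correct and, for the classification itself, follows essentially the same route as the paper: reduce via the grading to $\theta(x)=ax$, $\theta(y)=by$, $\theta(z)=cz+c'x^2$, use $\theta^2=\id$ together with preservation of the unique degree-$24$ relation to pin down $(b,c,c')$, and invoke the anti-Poisson condition to finish. The differences are minor: the paper applies the anti-Poisson condition earlier (to $\{x,z\}$, yielding $ac=-1$) instead of pruning four candidates at the end, and both orderings work. The one point where your proposal misdiagnoses the situation is non-conjugacy: the paper's ``one-dimensional graded piece'' trick is applied to $x$, not to $z$ --- since $\C[X]_6=\C x$, any graded automorphism sends $x$ to a scalar multiple of $x$, so the sign of $\theta(x)$ ($-1$ for Type \RMN{1}, $+1$ for Type \RMN{2}) is already a conjugation invariant. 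Your trace argument on $\C[X]_{12}=\C z\oplus\C x^2$ is valid, just heavier than necessary; the simple separation you believed was unavailable in type $E_6$ is available one degree lower.
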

\begin{proof}
    It is easy to verify the involutions $\RMN{1},\RMN{2}$ anti-Poisson using the Poisson brackets given in \Cref{ADEpoissonbracket}. Conversely, let $\theta$ be an anti-Poisson involution of $\C[X]$. We may assume
    \[
    \theta(x)=ax,\ \theta(y)=by,\ \theta(z)=cz+c'x^2.
    \]
    as $\theta$ is graded. We have $\theta^2(x)=x,\ \theta^2(y)=y,\ \theta^2(z)=z$, which implies that $a^2=b^2=c^2=1$. From
    \begin{equation*}
    0=\theta(z^2+zx^2+y^3)=c^2z^2+c(2c'+a^2)zx^2+b^3y^3+c'(c'+a^2)x^4,
    \end{equation*}
    we get $b^3=1,\ c(2c'+a^2)=1$ using the relation $c^2=1$ and the fact that $z^2+zx^2+y^3=0$ is the only relation of degree $24$ in $\C[X]$ up to scaling. Together with $b^2=1$, we can deduce that $b=1$. Using
    \begin{equation*}
        \{\theta(x),\theta(z)\}=-\theta(\{x,z\})
    \end{equation*}
    as $\theta$ is anti-Poisson, we can further determine that $ac=-1$. Setting $a=-1,\ c=1$, we get $c'=0$ from $c(2c'+a^2)=1$, which corresponds to Type \RMN{1} anti-Poisson involution. Setting $a=1,\ c=-1$, we get $c'=-1$ from $c(2c'+a^2)=1$, which corresponds to Type \RMN{2} anti-Poisson involution.

    To see that Types $\RMN{1},\RMN{2}$ anti-Poisson involutions are non-conjugate, note that any graded automorphism has to send $x$ to a scalar multiple of $x$ because $\C[X]_6=\C x$. Thus, an anti-involution sending $x$ to $-x$ (Type $\RMN{1}$) can not conjugate to that sending $x$ to $x$ (Type $\RMN{2}$). 
\end{proof}

\begin{remark}\label{E6APIrem}
\begin{enumerate}[label=(\arabic*)]
    \item\label{E6APIremtemp2}  According to \Cref{detauto}, every anti-Poisson involution on $\C^2/\Gamma$ comes from an element in $N_{\GL_2(\C)}(\Gamma)^-$. We list here the elements in the normalizer that give rise to the anti-Poisson involutions for the Kleinian singularity of type $E_6$. The matrix $\begin{pmatrix}
        -1 & 0 \\
        0 & 1
    \end{pmatrix}$ gives rise to Type $\RMN{1}$ anti-Poisson involution in \Cref{E6API}, and the matrix $\begin{pmatrix}
        \epsilon_8 & 0 \\
        0 & -\epsilon_8^{-1}
    \end{pmatrix}$ gives rise to Type $\RMN{2}$ anti-Poisson involution in \Cref{E6API}, where $\epsilon_8$ is a $8$-th primitive root of unity. It is not quite obvious to see that both $\begin{pmatrix}
        -1 & 0 \\
        0 & 1
    \end{pmatrix},\ \begin{pmatrix}
        \epsilon_8 & 0 \\
        0 & -\epsilon^{-1}_8
    \end{pmatrix}$ are in $N_{\GL_2(\C)}^-(\Gamma)$. One way is to verify that they preserve $\C[X]=\C[u,v]^{\Gamma}$ by using the formulas of the fundamental invariants $x,y,z$ given in \Cref{ADExyz} and deduce that they lie in $N_{\GL_2(\C)}^-(\Gamma)$ by \Cref{autogrbij}, which is faster than computing that they normalize $\Gamma$ directly.

    \item Similar to \Cref{DnevenAPIrem}, we can obtain the actions of the matrices in \ref{E6APIremtemp2}  on the Dynkin diagram $E_6$ by studying their twists on the three $1$-dimension representations of the finite subgroup $\Gamma\subset\SL_2(\C)$ of type $E_6$, or through looking at the character table of $\Gamma$ given in \cite[Example 5.2.3]{Dolgachev}. The matrix $\begin{pmatrix}
        -1 & 0 \\
        0 & 1
    \end{pmatrix}$ induces the trivial action on the Dynkin diagram $E_6$, and the matrix $\begin{pmatrix}
        \epsilon_8 & 0 \\
        0 & -\epsilon^{-1}_8
    \end{pmatrix}$
    acts on the finite Dynkin diagram $E_6$ by swapping the two longer arms.
    \end{enumerate}
\end{remark}

\begin{corollary}\label{E6fixedloci}
    The fixed point loci of anti-Poisson involutions in Type $E_6$ are of the following forms.
    \begin{enumerate} [label=\Roman*.]
        \item $J=(x)$. The fixed point locus $X^{\theta}=\Spec\C[y,z]/(z^2+y^3)$ is a cusp.
        \item $J=(2z+x^2)$. The fixed point locus $X^{\theta}=\Spec\C[x,y]/(x^4-4y^3)$ is a cusp.
    \end{enumerate}
    Moreover, both fixed point loci are reduced as schemes.
\end{corollary}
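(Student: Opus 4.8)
The plan is to use that each involution of \Cref{E6API} becomes \emph{diagonal} on a monomial spanning set of $\C[X]$ after a harmless linear change of the generator $z$, which makes the ideal $J=(\theta(f)-f\mid f\in\C[X])$ defining $X^\theta$ completely transparent. First I would record the general principle that $J$ equals the ideal generated by the $(-1)$-eigenspace $\C[X]^-$ of $\theta$: every generator $\theta(f)-f$ lies in $\C[X]^-$, and conversely any $g\in\C[X]^-$ equals $-\tfrac12(\theta(g)-g)\in J$, so $J=(\C[X]^-)$. Since $\theta$ is a graded algebra involution, once the three generators are chosen to be eigenvectors it acts on every monomial by a sign, and the task reduces to exhibiting a single ideal generator of $\C[X]^-$.

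For Type \RMN{1} the generators $x,y,z$ are already eigenvectors, with $\theta(x^ay^bz^c)=(-1)^a x^ay^bz^c$. Hence $\theta(f)-f$ is (twice the negative of) the odd-in-$x$ part of $f$, which is divisible by $x$, while $\theta(x)-x=-2x$ gives $x\in J$; this yields $J=(x)$. The quotient is $\C[X]/(x)=\C[y,z]/(z^2+y^3)$, the cuspidal cubic.

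For Type \RMN{2} the involution is not diagonal on $x,y,z$, so I would first pass to the coordinate $w:=z+\tfrac12 x^2$, which satisfies $\theta(w)=-w$ while $x,y$ are fixed. Substituting $z=w-\tfrac12x^2$ rewrites the defining relation $z^2+zx^2+y^3$ as $w^2+y^3-\tfrac14x^4$, which is even in $w$; the same monomial argument then gives $J=(w)=(2z+x^2)$ and $\C[X]/J=\C[x,y]/(x^4-4y^3)$. Finally, to see that each locus is a reduced cusp I would verify that $z^2+y^3$ and $x^4-4y^3$ are irreducible — for instance through the explicit normalizations $t\mapsto(y,z)=(-t^2,t^3)$ and $t\mapsto(x,y)=(4^{1/4}t^3,t^4)$ — so that the coordinate rings are integral domains, each with a single unibranch singular point at the origin, i.e.\ a cusp.

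The computations here are genuinely short; the only step requiring a small idea is spotting the change $w=z+\tfrac12x^2$ that diagonalizes the Type \RMN{2} involution, and it is precisely the evenness of the rewritten relation in $w$ that underlies the inclusion $J\subseteq(2z+x^2)$. Beyond this I anticipate no real obstacle, only the routine bookkeeping already set up in the classification of \Cref{E6API}.
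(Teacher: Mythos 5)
Your proposal is correct and amounts to exactly the ``direct calculation'' the paper invokes (the paper states \Cref{E6fixedloci} without writing out the computation): one checks $x\in J$ and $2z+x^2\in J$, observes the defining relation is even under the relevant sign action so $J=(x)$ resp.\ $J=(2z+x^2)$, and identifies the quotients as the stated irreducible (hence reduced) cuspidal curves. Your organizing lemma $J=(\C[X]^-)$ and the diagonalizing substitution $w=z+\tfrac12x^2$ (which the paper itself uses as a coordinate change after \Cref{ADExyz}) are clean packaging of that same computation, not a different method.
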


\subsection{Type $E_7$}\label{E7APIsubsection}

The Poisson algebra of interest is $\C[X]=\C[x,y,z]/(x^3y+y^3+z^2)$. Recall from \Cref{ADExyz} and \Cref{ADEpoissonbracket} that the degrees of the generators are $\deg x=8,\ \deg y=12,\ \deg z=18$, and the Poisson brackets among them are given by 
\[
\{x,y\}=-16z,\ \{x,z\}=x^3+34992y^2,\ \{y,z\}=-24x^2y.
\]

\begin{proposition}\label{E7API}
    The Kleinian singularity of Type $E_7$ has a unique anti-Poisson involution (up to conjugation).
    \[
    \theta(x)=x,\ \theta(y)=y,\ \theta(z)=-z
    \]
\end{proposition}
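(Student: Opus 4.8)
The plan is to follow the same template as Propositions \ref{DnevenAPI} and \ref{E6API}: first confirm the displayed map really is an anti-Poisson involution, and then show it is the only possibility, so that uniqueness holds even before passing to conjugacy classes.

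\emph{Verification.} The map $\theta\colon x\mapsto x,\ y\mapsto y,\ z\mapsto -z$ respects the relation, since $x^3y+y^3+(-z)^2=x^3y+y^3+z^2$, and clearly $\theta^2=\id$. To see it is anti-Poisson I would check \cref{apiequation} on the three generating brackets from \Cref{ADEpoissonbracket}: $\theta(\{x,y\})=\theta(-16z)=16z$ while $-\{\theta(x),\theta(y)\}=-\{x,y\}=16z$; next $\theta(\{x,z\})=\theta(x^3+34992y^2)=x^3+34992y^2$ while $-\{\theta(x),\theta(z)\}=\{x,z\}$ gives the same; and $\theta(\{y,z\})=\theta(-24x^2y)=-24x^2y$ while $-\{\theta(y),\theta(z)\}=\{y,z\}$ again agrees.

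\emph{Degree analysis.} Using $\C[X]=\C[x,y]\oplus\C[x,y]z$ and $\deg x=8,\ \deg y=12,\ \deg z=18$, I would pin down the relevant graded pieces. Since $8a+12b=4(2a+3b)$ is always divisible by $4$ and $18$ is not, degree $18$ is attained only by $z$, so $\C[X]_{18}=\C z$; likewise $2a+3b=2$ and $2a+3b=3$ have the unique solutions $(1,0)$ and $(0,1)$, giving $\C[X]_8=\C x$ and $\C[X]_{12}=\C y$. Hence, as $\theta$ is graded, $\theta(x)=ax,\ \theta(y)=by,\ \theta(z)=cz$ for scalars $a,b,c$. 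The key structural point, which distinguishes $E_7$ from $E_6$ (where $\deg x^2=\deg z$ forced an extra term $c'x^2$), is that here no quadratic correction can appear in $\theta(z)$.

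\emph{Rigidity and the obstacle.} The involution property yields $a^2=b^2=c^2=1$. Requiring $\theta$ to preserve the ideal $(x^3y+y^3+z^2)$, I would compute $\theta(x^3y+y^3+z^2)=a^3b\,x^3y+b^3y^3+c^2z^2$ and compare coefficients in the basis $\{x^3y,y^3\}$ of $\C[X]_{36}$ (eliminating $z^2=-x^3y-y^3$), obtaining $c^2=a^3b=b^3$. As $c^2=1$, this reads $a^3b=b^3=1$; together with $a^2=b^2=1$ it forces $b=1$ and then $a=1$. Finally the anti-Poisson identity on $\{x,y\}=-16z$ gives $-16cz=16z$, so $c=-1$, and we recover exactly the displayed involution. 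The only genuinely delicate step is the degree bookkeeping ensuring each of $\C[X]_8,\C[X]_{12},\C[X]_{18}$ is one-dimensional; this one-dimensionality is precisely what makes type $E_7$ rigid, and everything after it is a short linear computation in $a,b,c$.
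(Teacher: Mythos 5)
Your proposal is correct and follows essentially the same route as the paper's proof: verify the displayed map is anti-Poisson, use gradedness to write $\theta(x)=ax$, $\theta(y)=by$, $\theta(z)=cz$, pin down $a=b=1$ from the degree-$36$ relation together with $\theta^2=\id$, and extract $c=-1$ from the bracket $\{x,y\}=-16z$. The only difference is presentational: you spell out the one-dimensionality of $\C[X]_8,\C[X]_{12},\C[X]_{18}$ and work in the basis $\{x^3y,y^3\}$ of $\C[X]_{36}$, steps the paper leaves implicit in the phrases ``as $\theta$ is graded'' and ``the only relation of degree $36$ up to scaling.''
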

\begin{proof}
    It is easy to verify the above automorphism is indeed an anti-Poisson involution of $\C[X]$. Conversely, let $\theta$ be an anti-Poisson involution of $\C[X]$. We may assume
    \[
    \theta(x)=ax,\ \theta(y)=by,\ \theta(z)=cz.
    \]
    as $\theta$ is graded. $\theta^2=\id$ gives us that $a^2=b^2=c^2=1$. From
    \begin{equation*}
       0=\theta(x^3y+y^3+z^2)=abx^3y+b^3y^3+c^2z^2=abx^3y+by^3+z^2
    \end{equation*}
    we get $ab=b=1$, hence $a=b=1$, because $x^3y+y^3+z^2=0$ is the only relation of degree $36$ in $\C[X]$ up to scaling. Using the condition that $\theta$ is anti-Poisson, we have
    \[
    \{\theta(x),\theta(y)\}=-\theta(\{x,y\}),
    \]
    from which we can determine that $c=-1$.
\end{proof}

\begin{remark}\label{E7APIrem}
    The matrix $\begin{pmatrix}
        -1 & 0 \\
        0 & 1
    \end{pmatrix}\in N_{\GL_2(\C)}(\Gamma)^-$ gives rise to the unique anti-Poisson involution of the Kleinian singularity of type $E_7$. There is no nontrivial antomorphism on the Dynkin diagram $E_7$, thus the matrix $\begin{pmatrix}
        -1 & 0 \\
        0 & 1
    \end{pmatrix}$ induces the trivial action on the Dynkin diagram $E_7$.
\end{remark}

\begin{corollary}\label{E7fixedlocus}
    The vanishing ideal of the anti-Poisson involution in Type $E_7$ is $J=(z)$. The corresponding fixed point locus is $X^{\theta}=\Spec\C[x,y]/(y(x^3+y^2))$, which is the union of $\A^1$ and a cusp, intersecting at the singular point. Moreover, the fixed point locus is reduced as scheme.
\end{corollary}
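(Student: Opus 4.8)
The plan is to compute the ideal $J$ explicitly and then read off the geometry of the resulting coordinate ring. The first observation is that, because the defining relation $x^3y+y^3+z^2=0$ expresses $z^2$ as the element $-(x^3y+y^3)$ of $\C[x,y]$, the algebra $\C[X]=\C[x,y,z]/(x^3y+y^3+z^2)$ is a free $\C[x,y]$-module with basis $\{1,z\}$: every $f\in\C[X]$ has a unique expression $f=p(x,y)+q(x,y)z$ (freeness follows since any relation $p+qz=h\cdot(z^2+x^3y+y^3)$ in $\C[x,y,z]$ forces $h=0$ by comparing degrees in $z$). Since the anti-Poisson involution of \Cref{E7API} fixes $x$ and $y$ and sends $z\mapsto-z$, we get $\theta(f)-f=-2\,q(x,y)\,z$. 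Letting $q$ range over all of $\C[x,y]$, and in particular taking $q=1$, shows that $J=(\theta(f)-f\mid f\in\C[X])$ equals $(z)$. Consequently $\C[X]/J=\C[x,y,z]/(x^3y+y^3,\,z)\cong\C[x,y]/(x^3y+y^3)$, and factoring $x^3y+y^3=y(x^3+y^2)$ gives $X^{\theta}=\Spec\C[x,y]/(y(x^3+y^2))$.

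Next I would identify the irreducible components. The factor $y$ cuts out the line $V(y)\cong\Spec\C[x]\cong\A^1$, while $x^3+y^2$ cuts out the cuspidal cubic, i.e.\ a cusp. Their set-theoretic intersection is where $y=0$ and $x^3+y^2=0$, which forces $x=y=0$; hence the two components meet only at the origin, the image of the singular point $0\in X$ corresponding to the maximal ideal $(x,y,z)$. This gives the asserted description of $X^{\theta}$ as a union of an $\A^1$ and a cusp meeting at the singular point.

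It remains to prove reducedness. Since $\C[x,y]$ is a UFD, the principal ideal $(y(x^3+y^2))$ is radical precisely when its generator is squarefree, i.e.\ when $y$ and $x^3+y^2$ are non-associate irreducibles. The factor $y$ is irreducible, and for $x^3+y^2$ I would view it as the monic quadratic $y^2+x^3\in\C(x)[y]$: it is irreducible over $\C(x)$ because $-x^3$ is not a square in $\C(x)$ (its order at $x=0$ is odd), and as a primitive polynomial over $\C[x]$ it is then irreducible in $\C[x,y]$ by Gauss's lemma. These two factors are manifestly non-associate, so $y(x^3+y^2)$ is squarefree, the ideal is radical, and $X^{\theta}$ is reduced as claimed.

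The argument is routine once the free-module structure of $\C[X]$ over $\C[x,y]$ is established, which immediately collapses $J$ to $(z)$. The only point requiring genuine care is the squarefreeness step, equivalently the irreducibility of $x^3+y^2$; this is where I would expect to spend the most attention, though the valuation/Gauss's-lemma argument above settles it cleanly.
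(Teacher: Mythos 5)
Your proposal is correct, and it is essentially the computation the paper has in mind: the paper states \Cref{E7fixedlocus} as a direct calculation following \Cref{E7API}, and your argument (collapsing $J$ to $(z)$ via the $\C[x,y]$-basis $\{1,z\}$ of $\C[X]$, factoring $x^3y+y^3=y(x^3+y^2)$, and checking squarefreeness in the UFD $\C[x,y]$) fills in exactly those omitted details. No gaps; the Gauss's-lemma step for irreducibility of $x^3+y^2$ is sound.
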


\subsection{Type $E_8$}\label{E8APIsubsection}

In Type $E_8$, the Poisson algebra of interest is $\C[X]=\C[x,y,z]/(x^5+y^3+z^2)$. Recall from \Cref{ADExyz} and \Cref{ADEpoissonbracket} that the degrees of the generators are $\deg x=12,\ \deg y=20,\ \deg z=30$, and the Poisson brackets among them are given by 
\[
\{x,y\}=-20z,\ \{x,z\}=30y^2,\ \{y,z\}=-4320000x^4.
\]

\begin{proposition}\label{E8API}
    The Kleinian singularity of Type $E_8$ has a unique anti-Poisson involution (up to conjugation).
    \[
    \theta(x)=x,\ \theta(y)=y,\ \theta(z)=-z
    \]
\end{proposition}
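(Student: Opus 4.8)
The plan is to follow the same template as the proofs of \Cref{E6API,E7API}. First I would check directly that the displayed automorphism is anti-Poisson. Since it fixes $x,y$ and sends $z\mapsto -z$, this amounts to comparing $\theta(\{\cdot,\cdot\})$ with $-\{\theta(\cdot),\theta(\cdot)\}$ on the three brackets of \Cref{ADEpoissonbracket}: for instance $\theta(\{x,y\})=\theta(-20z)=20z$ while $-\{\theta(x),\theta(y)\}=-\{x,y\}=20z$, and the analogous one-line checks for $\{x,z\}=30y^2$ and $\{y,z\}=-4320000x^4$ go through because $z\mapsto -z$ flips the sign exactly once on each side. Since the automorphism also visibly preserves the relation $x^5+y^3+z^2=0$, it is a well-defined anti-Poisson involution.

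For uniqueness, let $\theta$ be an arbitrary anti-Poisson involution. The key structural observation---and the point where type $E_8$ is cleaner than type $E_6$---is that the degrees $\deg x=12,\ \deg y=20,\ \deg z=30$ generate no unwanted monomials in the relevant degrees: the only monomial in $x,y,z$ of degree $12$ is $x$, the only one of degree $20$ is $y$, and the only one of degree $30$ is $z$. (One checks $12a+20b+30c=30$ forces $(a,b,c)=(0,0,1)$, so in particular no cross-term such as the $x^2$ appearing in \Cref{E6API} can occur in $\theta(z)$.) Since $\theta$ is graded, this forces
\[
\theta(x)=ax,\quad \theta(y)=by,\quad \theta(z)=cz
\]
for scalars $a,b,c\in\C$.

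Next I would impose the remaining constraints. From $\theta^2=\id$ we get $a^2=b^2=c^2=1$. Applying $\theta$ to the defining relation gives
\[
0=\theta(x^5+y^3+z^2)=a^5x^5+b^3y^3+c^2z^2=a^5x^5+b^3y^3+z^2,
\]
and since $x^5+y^3+z^2=0$ is the only relation of degree $60$ in $\C[X]$ up to scaling, we conclude $a^5=b^3=1$; combined with $a^2=b^2=1$ this yields $a=b=1$. Finally, to pin down $c$ I would use the anti-Poisson condition on a single bracket, say $\{x,y\}=-20z$: comparing $\theta(\{x,y\})=-20cz$ with $-\{\theta(x),\theta(y)\}=-ab\{x,y\}=20abz=20z$ forces $c=-1$. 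Hence $\theta$ is exactly the displayed involution.

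I do not anticipate a serious obstacle: the argument is entirely parallel to \Cref{E7API}, and type $E_8$ is in fact the easiest of the exceptional cases, since the one-dimensionality of each relevant graded piece rules out the cross-terms that complicate type $E_6$. The only point requiring a moment of care is the elementary Diophantine check that $12,20,30$ produce no extra monomials in the degrees $12,20,30$; everything after that is a routine computation with the brackets of \Cref{ADEpoissonbracket}.
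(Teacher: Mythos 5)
Your proposal is correct and follows essentially the same route as the paper, whose proof of this proposition simply repeats the template of \Cref{E7API}: verify the displayed involution, use gradedness to force the diagonal form $\theta(x)=ax,\ \theta(y)=by,\ \theta(z)=cz$, apply $\theta$ to the defining relation together with $\theta^2=\id$ to get $a=b=1$, and use the anti-Poisson condition on one bracket to conclude $c=-1$. Your explicit Diophantine check that the degrees $12,20,30$ admit no cross-terms (unlike the $x^2$ term in type $E_6$) is exactly the point the paper leaves implicit when it says the proof repeats that of type $E_7$.
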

\begin{proof}
   The proof repeats that of type $E_7$ in \Cref{E7API}.
\end{proof}

\begin{remark}\label{E8APIrem}
    The matrix $\begin{pmatrix}
        i & 0 \\
        0 & i
    \end{pmatrix}\in N_{\GL_2(\C)}(\Gamma)^-$ gives rise to the unique anti-Poisson involution of the Kleinian singularity of type $E_8$. There is no nontrivial antomorphism on the Dynkin diagram $E_8$, thus the matrix $\begin{pmatrix}
        i & 0 \\
        0 & i
    \end{pmatrix}$ induces the trivial action on the Dynkin diagram $E_8$.
\end{remark}

\begin{corollary}\label{E8fixedlocus}
    The vanishing ideal of the anti-Poisson involution in Type $E_8$ is $J=(z)$, and the corresponding fixed point locus $X^{\theta}=\Spec\C[x,y]/(x^5+y^3)$ is a cusp. Moreover, the fixed point locus is reduced as a scheme.
\end{corollary}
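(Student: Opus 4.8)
The plan is to follow the same two-step strategy used in the classical types: first pin down the ideal $J$, then analyse the resulting quotient. Since $X^\theta=\Spec\C[X]/J$ with $J=(\theta(f)-f\mid f\in\C[X])$, the first task is to reduce the infinite generating set to finitely many generators. The key identity is that for any $a,b\in\C[X]$ one has $\theta(ab)-ab=\theta(a)(\theta(b)-b)+(\theta(a)-a)b$; together with $\C$-linearity of $\theta$, this shows by induction on the length of monomials that $J$ is already generated by $\theta(x)-x$, $\theta(y)-y$, and $\theta(z)-z$. By \Cref{E8API} these three elements are $0$, $0$, and $-2z$, so $J=(z)$, which is the asserted vanishing ideal.

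Next I would compute the quotient directly. Imposing $z=0$ on $\C[X]=\C[x,y,z]/(x^5+y^3+z^2)$ gives
\[
X^\theta=\Spec\C[x,y,z]/(x^5+y^3+z^2,\ z)=\Spec\C[x,y]/(x^5+y^3).
\]
To see that this scheme is reduced, it suffices to show that the principal ideal $(x^5+y^3)$ is radical in the UFD $\C[x,y]$, equivalently that $x^5+y^3$ is squarefree. In fact $x^5+y^3$ is irreducible: viewing it as $x^5-(-y^3)\in\C(y)[x]$ and noting that $-y^3$ is not a fifth power in $\C(y)$ (its order of vanishing at $y=0$ is $3$, which is prime to $5$), the standard irreducibility criterion for $x^p-a$ applies; Gauss's lemma then gives irreducibility in $\C[x,y]$. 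Hence $\C[x,y]/(x^5+y^3)$ is a domain and $X^\theta$ is integral, in particular reduced.

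Finally, to identify $X^\theta$ as a cusp I would exhibit its normalization explicitly. The map $\nu\colon\A^1\to X^\theta$, $t\mapsto(t^3,-t^5)$, lands in $V(x^5+y^3)$ since $(t^3)^5+(-t^5)^3=t^{15}-t^{15}=0$, and it is injective; moreover, because $\gcd(3,5)=1$ one recovers $t=-x^2/y$ as a rational function on the curve, so $\nu$ is birational. Thus $X^\theta$ is a rational curve with a single unibranch singular point at the origin, i.e.\ a cusp, completing the identification.

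The only nonroutine point is the reducedness claim, namely the squarefreeness (here, irreducibility) of $x^5+y^3$; everything else is a direct computation entirely parallel to \Cref{E7fixedlocus}, and indeed the whole argument mirrors the $E_7$ case with the relation $y^3$ replaced by $x^5$.
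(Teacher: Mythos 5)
Your proposal is correct and follows essentially the same route as the paper, which treats this corollary as a direct calculation from \Cref{E8API}: reduce $J$ to the generators' differences to get $J=(z)$, identify the quotient $\C[x,y]/(x^5+y^3)$, and observe it is an irreducible (hence reduced) cuspidal curve. Your added details — the twisted Leibniz identity showing $J$ is generated by $\theta(x)-x,\theta(y)-y,\theta(z)-z$, the irreducibility of $x^5+y^3$ via the $x^p-a$ criterion and Gauss's lemma, and the explicit normalization $t\mapsto(t^3,-t^5)$ — are exactly the routine verifications the paper leaves implicit.
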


\section{Nakajima quiver varieties}\label{quiver}

We will describe the preimages of the fixed point loci from \Cref{Anfixedloci,Dnevenfixedloci,Dnoddfixedloci,E6fixedloci,E7fixedlocus,E8fixedlocus} through the realization of Kleinian singularities as Nakajima quiver varieties. Let us briefly review the construction of Nakajima quiver varieties, following \cite{Nakajima94, Nakajima98}. 

\subsection{Representations of quivers}\label{repofquiver} By a quiver $Q=(I,\Omega)$ we mean an oriented graph with the vertex set $I=\{0,1,\cdots,n\}$ and the edge set $\Omega$. For an arrow $a\colon i\to j\in \Omega$, we denote by $t(a)=i\in I$ its initial vertex and by $h(a)=j\in I$ its terminal vertex. The quivers we care about are extended Dynkin quivers of types $ADE$. Let $V=(V_i)_{i\in I}$ be a collection of finite-dimensional vector spaces corresponding to the vertices. The dimension of $V$ is a row vector
\[
\dim V=(\dim V_0,\dim V_1,\cdots,\dim V_n) \in \Z^{n+1}_{\geqslant 0}.
\]

For two collections of vector spaces $V$ and $W$ with $\delta=\dim V,\ w=\dim W$, we can consider the \textit{framed representations} of the quiver $Q$.
\[
R(Q,\delta,w):=\bigoplus_{a\in \Omega}\Hom_{\C}(V_{t(a)},V_{h(a)})\oplus\bigoplus_{i\in I}\Hom_{\C}(W_i,V_i).
\]

Next, we consider the doubled quiver $\bar{Q}=(I,\bar{\Omega})$, where $\bar{\Omega}=\Omega\sqcup\Omega^*$, and $\Omega\xrightarrow[]{\sim}\Omega^*,\ a\mapsto a^*$, is the reversal of the orientation. The framed representations of the doubled quiver $\bar{Q}$ is
\begin{equation}\label{quiverrepdef}
    M(\delta,w):=\bigoplus_{a\in\Omega}\big(\Hom(V_{t(a)},V_{h(a)})\oplus\Hom(V_{h(a)},V_{t(a)})\big)\oplus\bigoplus_{i\in I}\Hom(W_i,V_i)\oplus\bigoplus_{i\in I}\Hom(V_i,W_i).
\end{equation}
An element in $M(\delta,w)$ can be considered as a quadruple $(B,B^*,l,k)$ with
\begin{equation}\label{quadruple}
\begin{aligned}
    B&=(B_a)_{a\in\Omega}\in\bigoplus_{a\in\Omega}\Hom(V_{t(a)},V_{h(a)}),\ \ B^*=(B_{a^*})_{a\in\Omega}\in\bigoplus_{a\in\Omega}\Hom(V_{h(a)},V_{t(a)}), \\
    l&=(l_i)_{i\in I}\in\bigoplus_{i\in I}\Hom(W_i,V_i),\qquad k=(k_i)_{i\in I}\in\bigoplus_{i\in I}\Hom(V_i,W_i).
\end{aligned}
\end{equation}
We have $M(\delta,w)\simeq T^*R(Q,\delta,w)$ by identifying $\Hom_{\C}(V_i,V_j)^*$ with $\Hom_{\C}(V_j,V_i)$ via the trace form $(A,B):=\tr(AB)$. Then the cotangent bundle $M(\delta,w)$ is naturally endowed with a symplectic form
\begin{equation}\label{quiversymform}
  \omega((B_a,B_{a^*},l,k),(B'_a,B'_{a^*},l',k'))=\sum_{i\in I}\bigg(\sum_{\substack{a\in\Omega \\ h(a)=i}}\tr(B_aB'_{a^*})-\sum_{\substack{a\in\Omega \\ t(a)=i}}\tr(B_{a^*}B'_a)+\tr(l_ik'_i-k_il'_i)\bigg).
\end{equation}

Set $\GL(\delta):=\prod_{i\in I}\GL(V_i)$ and $\gl(\delta):=\prod_{i\in I}\gl(V_i)$. Given $g=(g_i)_{i\in I}\in\GL(\delta)$, it acts on the symplectic vector space $M(\delta,w)$ via
\begin{equation}\label{actionquadruple}
    g\cdot B_a=g_{t(a)} B_a g_{h(a)}^{-1},\ g\cdot B_{a^*}=g_{h(a)}B_{a^*} g_{t(a)}^{-1},\ g\cdot l_i=g_il_i,\  g\cdot k_i=k_ig_i^{-1}.
\end{equation}
The $\GL(\delta)$-action on $M(\delta,w)$ preserves the symplectic form \cref{quiversymform}, and is Hamiltonian. We can consider the corresponding moment map $\mu\colon M(\delta,w)\to\gl(\delta)$, where $\mu=(\mu_i)_{i\in I}$, with
\begin{equation}\label{momentmap}
\mu_i(B,B^*,l,k)=\sum_{\substack{a\in\Omega,\\ h(a)=i}}B_aB_{a^*}-\sum_{\substack{a\in\Omega,\\ t(a)=i}}B_{a^*}B_a+l_ik_i.
\end{equation}
For a reference to \cref{momentmap}, one may consult \cite[\S 3.i]{Nakajima98}. The equation $\mu=0$ is also called the ``ADHM equation'', referring to Atiyah-Drinfeld-Hitchin-Manin. A \emph{character} is a group homormophism $\chi\colon\GL(\delta)\to\C^*$. It is given by
\[
\chi(g)=\prod_{i\in I}\det(g_i)^{\chi_i},\ \text{where}\ g=(g_i)_{i\in I},\ \chi=(\chi_i)_{i\in I}\in\Z^I
\]
because the only nontrivial characters of $\GL_m(\C)$ are integral powers of the determinant map and $\GL(\delta)$ is just a product of $\GL_m(\C)$'s. Let $\lambda=(\lambda_i)_{i\in I}\in\gl(\delta)^{\GL(\delta)}$. Then the fiber $\mu^{-1}(\lambda)$ is $\GL(\delta)$-invariant. Consider the \emph{$\chi$-semi-invariants} of weight $m$, 
\begin{equation}\label{semim}
    \C[\mu^{-1}(\lambda)]^{\GL(\delta),m\chi}:=\{f\in\C[\mu^{-1}(\lambda)]|f(g^{-1}(B,B^*,l,k))=\chi(g)^m f((B,B^*,l,k))\}.
\end{equation}
The direct sum of $\C[\mu^{-1}(\lambda)]^{\GL(\delta),m\chi}$ with respect to $m\in\Z_{\geqslant0}$ is a graded algebra. A \emph{Nakajima quiver variety} is the GIT quotient
\begin{equation}\label{GITquotient}
     \M^{\lambda}_{\chi}(\delta,w):=\mu^{-1}(\lambda)/\!/^{\chi}\GL(\delta)=\Proj\bigoplus_{m\geqslant 0}\C[\mu^{-1}(\lambda)]^{\GL(\delta),m\chi}.
\end{equation}
When $\chi$ equals the trivial character $(0,\cdots,0)$, we have
\[
 \M^{\lambda}_0(\delta,w)=\mu^{-1}(\lambda)\gitquo\GL(\delta)=\Spec\C[\mu^{-1}(\lambda)]^{\GL(\delta)}.
\]
Moreover, there is a natural projective morphism 
\begin{equation}\label{piproj}
   \pi\colon\M_{\chi}^{\lambda}(\delta,w)\to\M_0^{\lambda}(\delta,w).
\end{equation}

In this paper, we mainly focus on the zero fiber of the moment map. We denote $\M_{\chi}(\delta,w):=\M^{0}_{\chi}(\delta,w)$ for simplicity. We use $[B,B^*,l,k]$ to denote the $\GL(\delta)$-orbit of a point $(B,B^*,l,k)$.

\begin{remark}
    If we take the framing vector $w$ to be zero, then we get the usual representation space of quivers. The $\GL(\delta)$-action and the moment map $\mu$ pass along with $l=0,\ k=0$. We can follow our previous notation to denote it as $M(\delta,0)$.
\end{remark}

Next, we discuss the $\chi$-semistable points. If $f$ is a semi-invariant, then the set of non-vanishing locus $\mu^{-1}(0)_f$ is a $\GL(\delta)$-invariant open subset of $\mu^{-1}(0)$. We define the \textit{$\chi$-semistable locus}
\begin{equation}\label{chisemistable}
    \mu^{-1}(0)^{\chi-ss}:=\{u\in\mu^{-1}(0)|\exists f\in\C[\mu^{-1}(0)]^{\GL(\delta),m\chi}\ \text{for}\ m>0\ \text{s.t.}\ f(u)\neq 0\}.
\end{equation}

We have $\mu^{-1}(\lambda)^{\chi-ss}=\bigcup_f\mu^{-1}(0)_f$, where $f$ runs through all the nonzero semi-invariants. King first interprets $\chi$-semistability of representations in terms of subrepresentations in \cite{King94}. Nakajima gives the following description of the semistable locus for the character $\chi=(-1,\cdots,-1)$.

\begin{proposition}[\cite{Nakajima98}, Lemma 3.8]\label{stTFAE}
    Let $\chi=(-1,\cdots,-1)$. Then a point $(B,B^*,l,k)$ is $\chi$-semistable if and only if $\im(l)$ generates the entire collection of vector spaces $V=(V_i)_{i\in I}$ under the action of $(B,B^*)$.
\end{proposition}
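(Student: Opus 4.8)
The plan is to deduce the statement from the Hilbert--Mumford numerical criterion together with the standard dictionary between one-parameter subgroups of $\GL(\delta)$ and $\Z$-gradings of the collection $V=(V_i)_{i\in I}$. A one-parameter subgroup $\lambda\colon\C^*\to\GL(\delta)$ is the same datum as a decomposition $V_i=\bigoplus_{n\in\Z}V_i(n)$ into weight spaces, where $\lambda(t)$ acts on $V_i(n)$ by $t^n$; write $F_i^{\geq p}:=\bigoplus_{n\geq p}V_i(n)$ for the associated increasing filtration. Using the action \cref{actionquadruple}, I would first record the key translation: $\lim_{t\to 0}\lambda(t)\cdot(B,B^*,l,k)$ exists if and only if the filtration $F^{\geq\bullet}$ is preserved by all the $B_a$ and $B_{a^*}$, one has $\im(l)\subseteq F^{\geq 0}$, and $k$ vanishes on $F^{\geq 1}$; moreover $\langle\chi,\lambda\rangle=-\sum_{i\in I}\sum_{n\in\Z}n\dim V_i(n)$ since $\chi=(-1,\dots,-1)$. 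I would also reduce everything to this numerical pairing by the elementary observation that if $\lim_{t\to 0}\lambda(t)\cdot u$ exists and $\langle\chi,\lambda\rangle>0$, then any semi-invariant of positive weight $m\chi$ transforms under $\lambda$ with weight $-m\langle\chi,\lambda\rangle<0$ and hence must vanish at $u$, so $u$ is not $\chi$-semistable; the converse implication is exactly the content of the Hilbert--Mumford criterion, in the form used by King \cite{King94}.

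For the direction \emph{semistable $\Rightarrow$ generation} I would argue by contraposition. Suppose $\im(l)$ does not generate $V$, and let $S=(S_i)$ be the smallest $(B,B^*)$-stable collection containing $\im(l)$; by hypothesis $S\subsetneq V$ is proper. Choose complements $V_i=S_i\oplus T_i$ and let $\lambda$ act by weight $0$ on $S_i$ and weight $-1$ on $T_i$. Then $F^{\geq 0}=S$ is $(B,B^*)$-stable, $\im(l)\subseteq S=F^{\geq 0}$, and $F^{\geq 1}=0$, so $\lim_{t\to0}\lambda(t)\cdot u$ exists; and $\langle\chi,\lambda\rangle=\sum_{i\in I}(\dim V_i-\dim S_i)>0$ because $S$ is proper. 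By the observation above, $u$ is not $\chi$-semistable.

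For the direction \emph{generation $\Rightarrow$ semistable} I would again argue by contraposition, now invoking the nontrivial direction of Hilbert--Mumford: if $u$ is not $\chi$-semistable, there is a one-parameter subgroup $\lambda$ with $\lim_{t\to0}\lambda(t)\cdot u$ existing and $\langle\chi,\lambda\rangle>0$. Existence of the limit gives that $F^{\geq 0}$ is $(B,B^*)$-stable and contains $\im(l)$, while $\langle\chi,\lambda\rangle>0$ forces $\sum_{i,n}n\dim V_i(n)<0$, hence $V_i(n)\neq 0$ for some $n<0$, so $F^{\geq 0}\subsetneq V$ is proper. Thus $F^{\geq 0}$ is a proper $(B,B^*)$-stable collection containing $\im(l)$, i.e. $\im(l)$ does not generate $V$, as desired.

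The routine part is the bookkeeping in the translation step, and I expect the main obstacle to be pinning down the sign conventions precisely: one must match the weight of a positive-weight semi-invariant under $\lambda$ against the sign of $\langle\chi,\lambda\rangle$ produced by $\chi=(-1,\dots,-1)$, and verify the exact shape of the ``limit exists'' condition directly from \cref{actionquadruple}. In particular I would want to make explicit that only the filtration-preservation of $(B,B^*)$ and the containment $\im(l)\subseteq F^{\geq 0}$ enter the extraction of a destabilizing subcollection, whereas the constraint on $k$ coming from limit-existence is never used in either direction.
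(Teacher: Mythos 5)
Your argument is correct, but note that the paper itself contains no proof of \Cref{stTFAE} to compare against: the statement is imported from \cite{Nakajima98}, Lemma 3.8, and the paper only ever uses it (together with its unframed analogue, the subrepresentation criterion of \cite{Nakajima96}, Lemma 2.4, quoted in the proof of \Cref{Ciequation}) as a black box. What you have written is essentially the standard argument underlying the cited result: the dictionary between one-parameter subgroups of $\GL(\delta)$ and $\Z$-gradings of $V$, the elementary vanishing observation for positive-weight semi-invariants along an orbit whose limit exists, and King's numerical criterion for the hard direction. Both of your constructions are sound: the destabilizing one-parameter subgroup attached to a proper $(B,B^*)$-stable subcollection $S\supseteq\im(l)$ (weight $0$ on $S$, weight $-1$ on a complement) has an existing limit precisely because $F^{\geq 1}=0$ renders the $k$-constraint vacuous, and conversely the subcollection $F^{\geq 0}$ extracted from a destabilizing one-parameter subgroup is $(B,B^*)$-stable, contains $\im(l)$, and is proper because $\langle\chi,\lambda\rangle>0$ forces some negative weight space to be nonzero. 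Two points should be made explicit to make this airtight. First, King's criterion in \cite{King94} is formulated for the linear action on the ambient representation space, whereas \cref{chisemistable} defines semistability through semi-invariants of $\C[\mu^{-1}(0)]$; the two notions agree because $\GL(\delta)$ is reductive, so the restriction map $\C[M(\delta,w)]^{\GL(\delta),m\chi}\to\C[\mu^{-1}(0)]^{\GL(\delta),m\chi}$ is surjective (alternatively, invoke the affine Hilbert--Mumford criterion directly on the $\GL(\delta)$-stable closed subvariety $\mu^{-1}(0)$). Second, King's Proposition 2.5 assumes the character is trivial on the kernel of the action; here the framing at the vertex $0$ makes the action of $\GL(\delta)$ on $M(\delta,w)$ faithful, so this hypothesis holds vacuously. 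Your sign bookkeeping is consistent with the paper's convention $f(g^{-1}u)=\chi(g)^m f(u)$ (which is the opposite of King's), and your closing remark --- that the $k$-constraint coming from limit-existence is never actually used --- is correct and explains why only $\im(l)$, and not $\ker(k)$, appears in the criterion for this particular choice of $\chi$.
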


\begin{corollary}\label{stcor}
    Let $\chi=(-1,\cdots,-1)$. Then we have
    \begin{enumerate}
        \item\label{stcor1} $\GL(\delta)$ acts on $\mu^{-1}(0)^{\chi-ss}$ freely, and each $\GL(\delta)$-orbit is closed. 
        \item We have $\M_{\chi}(\delta,w)\simeq\mu^{-1}(0)^{\chi-ss}\gitquo\GL(\delta)$, and it is smooth.
        \item The symplectic form \cref{quiversymform} induces a symplectic form on $\M_{\chi}(\delta,w)$.
\end{enumerate}
\end{corollary}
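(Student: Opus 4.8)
The plan is to derive all three statements from the semistability criterion of \Cref{stTFAE}, which I will abbreviate by saying that a point of $\mu^{-1}(0)$ is semistable exactly when $\im(l)$ \emph{generates} the collection $V=(V_i)_{i\in I}$ under $(B,B^*)$.

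\textbf{Part (1): freeness and closed orbits.} First I would prove freeness directly from \Cref{stTFAE}. Suppose $g=(g_i)_{i\in I}\in\GL(\delta)$ fixes a semistable point $(B,B^*,l,k)$. Reading off \cref{actionquadruple}, the fixed-point equations say that $g$ intertwines all the arrow maps $B_a,B_{a^*}$ and that $g_il_i=l_i$. Consequently the collection $U=(\ker(g_i-\mathrm{id}))_{i\in I}$ is a subrepresentation for $(B,B^*)$ and contains $\im(l)$. By \Cref{stTFAE} we must have $U=V$, hence $g=\mathrm{id}$, so every stabilizer on the semistable locus is trivial. For the closedness of orbits I would invoke King's stability dictionary \cite{King94}: because every $\chi$-semistable point has trivial stabilizer, each such point is in fact $\chi$-stable, and stable orbits are closed inside $\mu^{-1}(0)^{\chi-ss}$.

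\textbf{Part (2): quotient description and smoothness.} The identification $\M_\chi(\delta,w)\simeq\mu^{-1}(0)^{\chi-ss}\gitquo\GL(\delta)$ is then immediate from GIT: once semistable $=$ stable, the $\Proj$ in \cref{GITquotient} computes the geometric quotient of the stable locus. For smoothness I would argue in two steps. First, $\mu^{-1}(0)^{\chi-ss}$ is smooth because $0$ is a regular value of $\mu$ there: for a Hamiltonian $\GL(\delta)$-action the image of the differential $d_x\mu$ is the annihilator (under the trace form identifying $\gl(\delta)^*\cong\gl(\delta)$) of the stabilizer Lie algebra $\hg_x$, and by Part (1) we have $\hg_x=0$ at every semistable $x$, so $d_x\mu$ is surjective. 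Second, by Part (1) the reductive group $\GL(\delta)$ acts freely with closed orbits on this smooth locus, so Luna's slice theorem \cite{Luna04} exhibits the quotient map as a principal $\GL(\delta)$-bundle; hence $\M_\chi(\delta,w)$ is smooth.

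\textbf{Part (3): the reduced symplectic form.} This is algebraic symplectic (Marsden--Weinstein) reduction. Writing $\iota\colon\mu^{-1}(0)^{\chi-ss}\hookrightarrow M(\delta,w)$ and $p\colon\mu^{-1}(0)^{\chi-ss}\to\M_\chi(\delta,w)$ for the principal-bundle quotient of Part (2), the pullback $\iota^*\omega$ of the form \cref{quiversymform} is $\GL(\delta)$-basic: it is invariant, and for any infinitesimal generator $\xi$ of the action the moment-map identity $\iota_{\xi}\omega=d\langle\mu,\xi\rangle$ shows that $\omega(\xi,v)=0$ for all $v\in\ker d_x\mu=T_x\mu^{-1}(0)$, so $\iota^*\omega$ is also horizontal. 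Therefore it descends to a unique $2$-form $\bar\omega$ on $\M_\chi(\delta,w)$ with $p^*\bar\omega=\iota^*\omega$, and $\bar\omega$ is closed and nondegenerate by the usual reduction computation.

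\textbf{Main obstacle.} I expect the one genuinely nontrivial input to be the passage from freeness to closedness of orbits in Part (1), i.e. the identification of the semistable and stable loci, which rests on the GIT stability dictionary of \cite{King94} rather than on an elementary argument; the remaining ingredients are either the invariant-subspace bookkeeping or the formal machinery of symplectic reduction. A secondary technical point is keeping the index conventions of \cref{actionquadruple} consistent when checking that $U$ is $(B,B^*)$-stable, since the direction of each arrow map must be matched correctly against the corresponding $g_i$.
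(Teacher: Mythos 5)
The paper itself gives no proof of \Cref{stcor}: it is imported wholesale from Nakajima's work (it is in substance \cite{Nakajima98}, Corollary 3.12, following the semistability criterion quoted as \Cref{stTFAE}), so your proposal has to be judged on its own terms. Most of it is the standard argument and is correct. The freeness proof in Part (1) — the collection $U=(\ker(g_i-\mathrm{id}))_{i\in I}$ is $(B,B^*)$-stable, contains $\im(l)$, hence equals $V$ by \Cref{stTFAE} — is exactly the usual one (note in passing that \cref{actionquadruple} as printed in the paper has $t(a)$ and $h(a)$ interchanged; your intertwining computation implicitly uses the correct convention $g\cdot B_a=g_{h(a)}B_ag_{t(a)}^{-1}$). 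Part (2), via surjectivity of $d_x\mu$ at points with trivial stabilizer algebra together with Luna's slice theorem, and Part (3), via Marsden--Weinstein reduction using $\iota_{\xi}\omega=d\langle\mu,\xi\rangle$, are both complete and correct as written.

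The one step that does not hold as stated is the passage from freeness to closedness of orbits in Part (1). The pointwise implication ``$x$ is $\chi$-semistable with trivial stabilizer $\Rightarrow$ $x$ is $\chi$-stable'' is false in general GIT: stability requires, besides a finite stabilizer, that the orbit be closed in the semistable locus, which is precisely what you are trying to establish, so as phrased the argument is circular. (A minimal counterexample to the pointwise claim: $\C^*$ acting on $\C^2$ by $t\cdot(u,v)=(tu,t^{-1}v)$ with trivial character; the point $(1,0)$ is semistable with trivial stabilizer, yet its orbit is not closed.) There are two standard repairs. Either exploit that freeness holds at \emph{every} semistable point: then every orbit in $\mu^{-1}(0)^{\chi-ss}$ has dimension $\dim\GL(\delta)$, and since the boundary of an orbit closure is a union of orbits of strictly smaller dimension, no orbit can lie in the closure of another, so all orbits are closed in $\mu^{-1}(0)^{\chi-ss}$ and semistable $=$ stable follows. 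Or verify King's numerical criterion \cite{King94} directly: passing to the extended (Crawley-Boevey) quiver, a proper subrepresentation $S$ whose component at the framing vertex is nonzero must contain $\im(l)$ and be $(B,B^*)$-stable, contradicting the generation condition of \Cref{stTFAE}, while any nonzero subrepresentation killed at the framing vertex automatically has strictly negative $\chi$-pairing; hence every semistable point is stable for $\chi=(-1,\cdots,-1)$. With either repair inserted, Part (1), and with it the whole proposal, is correct.
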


\subsection{Applications to Kleinian singularities}\label{quiverapp} Take $Q=(I,\Omega)$ to be an extended Dynkin quiver of types $ADE$, i.e., the vertex set $I$ is given by the affine simple roots and the edge set $\Omega$ is given by the extended Dynkin diagram with some orientation. Let $\alpha_0,\alpha_1,\cdots,\alpha_n$ denote the set of simple affine roots. Then there is a unique relation $\sum_{0\leq i\leq n}\delta_i\alpha_i=0$ where $\delta_i$ are nonnegative integers with $\delta_0=1$. Following the notations in \Cref{repofquiver}, we consider the space $M(\delta,w)$ with the dimension and framing vectors 
\begin{equation}\label{dimvec}
    \delta=(\delta_0,\delta_1,\cdots,\delta_n),\ w=(1,0,\cdots,0),
\end{equation}
i.e. there is a single framing of dimension $1$ attached to the extended vertex $0$. We have $l=(l_0,0,\cdots,0),\ k=(k_0,0,\cdots 0)$. Moreover, the moment map \cref{momentmap} becomes
\begin{equation}\label{ADEmomentmap}
    \begin{aligned}
    \mu_i(B_a,B_{a^*},l,k)=\sum_{\substack{a\in\Omega,\\ h(a)=i}}B_aB_{a^*}-\sum_{\substack{a\in\Omega,\\ t(a)=i}}B_{a^*}B_a+\delta_{i,0}l_0k_0,\ i\in I.
    \end{aligned}
\end{equation}
Setting \cref{ADEmomentmap} equal to zero, we can further get the ADHM equation
\begin{equation}\label{ADEmomentmap0}
    \sum_{\substack{a\in\Omega,\\
    h(a)=i}}B_aB_{a^*}-\sum_{\substack{h\in\Omega,\\ t(a)=i}}B_{a^*}B_a=l_0k_0=0,\ 0\leq i\leq n.
\end{equation}
by summing up all $\sum_{i=0}^n\tr(\mu_i)=0$ and using the fact that $V_0$ is $1$-dimensional. Moreover, the $\chi$-semistability condition in \Cref{stTFAE} implies that if $(B,B^*,l_0,k_0)\in\mu^{-1}(0)^{\chi-ss}$, then $l_0\neq 0,\ k_0=0$. 

\begin{example}\label{D4quiver} For the extended Dynkin quiver $\widetilde{D}_4$, we have $\delta=(1,1,2,1,1),\ w=(1,0,0,0,0)$. The representation space $M(\delta,w)$ is illustrated in Picture \ref{quiverD4}.

{
\captionsetup[table]{name=Picture}
\begin{table}[ht]
    \centering
\begin{tabular}{c}
    \begin{tikzpicture}
    \tikzset
        {
        vertex/.style={circle, inner sep=0pt, outer sep=0pt, minimum width=0.8cm},
        frame/.style={rectangle,minimum height=0.7cm,
        minimum width=0.7cm},
        }

        \node at (-2.5,1.4) [vertex,draw] (V0) {$V_0$};
        \node at (-2.5,-1.4) [vertex,draw] (V1) {$V_1$};
        \node at (0,0) [vertex,draw] (V2) {$V_2$};
        \node at (2.5,1.4) [vertex,draw] (V3) {$V_3$};
        \node at (2.5,-1.4) [vertex,draw] (V4) {$V_4$};
        
        \node at (-2.5,3) [frame,draw] (W0) {$W_0$};
        
        \draw[->] (-2.57,1.8) to node [left]{$k_0$} (-2.57,2.6);
        \draw[->] (-2.43,2.65) to node [right]{$l_0$} (-2.43,1.85);
        
        \draw [->] (V0) edge [bend left=10] node [above]{$B_{2\la 0}$} (V2);
        \draw [->] (V2) edge [bend left=10] node [below]{$B^*_{0\la 2}$} (V0);
        \draw [->] (V1) edge [bend left=10] node [above]{$B_{2\la 1}$} (V2);
        \draw [->] (V2) edge [bend left=10] node [below]{$B^*_{1\la 2}$} (V1);
        \draw [->] (V2) edge [bend left=10] node [above]{$B_{3\la 2}$} (V3);
        \draw [->] (V3) edge [bend left=10] node [below]{$B^*_{2\la 3}$} (V2);
        \draw [->] (V2) edge [bend left=10] node [above]{$B_{4\la 2}$} (V4);
        \draw [->] (V4) edge [bend left=10] node [below]{$B^*_{2\la 4}$} (V2);
    \end{tikzpicture}
\end{tabular}
\caption{Extended Dynkin quiver $\widetilde{D}_4$}
\label{quiverD4}
\end{table}
}
\end{example}

Next, we relate Nakajima quiver varieties of extended Dynkin quivers to Kleinian singularities and their minimal resolutions.

\begin{lemma}\label{woframing}
Let $Q=(I,\Omega)$ be an extended Dynkin quiver. Then we have
\begin{equation}\label{isowoframing}
    \M_0(\delta,w)\xrightarrow[]{\sim}\M_0(\delta,0).
\end{equation}
\end{lemma}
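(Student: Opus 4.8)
The plan is to prove the isomorphism directly at the level of invariant rings, comparing $\mu^{-1}(0)\subset M(\delta,w)$ with $\mu^{-1}(0)\subset M(\delta,0)$ and exploiting that both $V_0$ and $W_0$ are one-dimensional. Write $\mu^{\mathrm{fr}}$ for the framed moment map \eqref{ADEmomentmap} and $\mu^{\mathrm{unfr}}$ for the moment map of $M(\delta,0)$; they differ only at the vertex $0$, where $\mu_0^{\mathrm{fr}}=\mu_0^{\mathrm{unfr}}+l_0k_0$. On the framed zero fiber we already know $l_0k_0=0$ (this is precisely the computation producing \eqref{ADEmomentmap0}), so the projection $p\colon (\mu^{\mathrm{fr}})^{-1}(0)\to (\mu^{\mathrm{unfr}})^{-1}(0)$, $(B,B^*,l_0,k_0)\mapsto (B,B^*)$, is well defined: for $i\neq 0$ the two moment-map conditions coincide, and at $i=0$ one gets $\mu_0^{\mathrm{unfr}}=-l_0k_0=0$. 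Both $p$ and the section $\iota\colon (B,B^*)\mapsto (B,B^*,0,0)$ are $\GL(\delta)$-equivariant, and $p\circ\iota=\id$.

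First I would pass to invariant rings. Setting $A:=\C[(\mu^{\mathrm{fr}})^{-1}(0)]^{\GL(\delta)}$ and $B:=\C[(\mu^{\mathrm{unfr}})^{-1}(0)]^{\GL(\delta)}$, the two equivariant morphisms induce ring maps $p^*\colon B\to A$ and $\iota^*\colon A\to B$ with $\iota^*\circ p^*=\id_B$; in particular $p^*$ is injective and is exactly the comorphism of the natural map $\M_0(\delta,w)\to\M_0(\delta,0)$ appearing in \eqref{isowoframing}. It therefore remains to prove that $p^*$ is surjective, i.e. that every $\GL(\delta)$-invariant on the framed fiber is pulled back from the unframed fiber.

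For surjectivity I would invoke the first fundamental theorem for invariants of framed quiver representations under $\GL(\delta)=\prod_i\GL(V_i)$ (Le Bruyn--Procesi): the invariant ring of the ambient space is generated by traces of oriented cycles in the doubled quiver $\bar{Q}$, built from the $B_a,B_{a^*}$, together with the framing scalars obtained by composing an incoming framing map, a path in $\bar{Q}$, and an outgoing framing map; since $\GL(\delta)$ is linearly reductive, the same classes generate the invariants of the fiber $\C[\mu^{-1}(0)]^{\GL(\delta)}$. Because the only nonzero framing space is $W_0\cong\C$, every generator of the second kind has the form $k_0\,M\,l_0$ with $M\colon V_0\to V_0$ a path from $0$ to $0$; as $V_0\cong\C$ this equals $M\cdot(l_0k_0)$, which vanishes identically on $(\mu^{\mathrm{fr}})^{-1}(0)$ since $l_0k_0=0$ there. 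Thus every generator of the first kind lies in $\im(p^*)$ and every generator of the second kind is zero, so $A$ is generated as a ring by $\im(p^*)$; hence $p^*$ is surjective. Combined with injectivity, $p^*$ is an isomorphism, and applying $\Spec$ yields \eqref{isowoframing}.

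The main obstacle is precisely this surjectivity step, and concretely the justification of the generating set of the framed invariant ring; once one grants the first fundamental theorem, the rest is formal. An alternative route, avoiding explicit generators, would be to show that every closed $\GL(\delta)$-orbit in $(\mu^{\mathrm{fr}})^{-1}(0)$ meets the locus $\{l_0=k_0=0\}=\iota\bigl((\mu^{\mathrm{unfr}})^{-1}(0)\bigr)$ by running a one-parameter subgroup of $\GL(\delta)$ supported at the vertex $0$ to degenerate $l_0$ and $k_0$ to zero; I expect the bookkeeping there to be more delicate, since such a subgroup simultaneously rescales the arrows incident to $0$, which is why I would favor the invariant-theoretic argument above.
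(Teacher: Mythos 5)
Your proof is correct, and both it and the paper's hinge on the same elementary identity: on the framed zero fiber one has $l_0k_0=0$ (the trace computation giving \cref{ADEmomentmap0}), so the fiber splits as a product $\bar{\mu}^{-1}(0)\times\{(l_0,k_0)\mid l_0k_0=0\}$. From there the two arguments diverge. The paper writes $\C[\mu^{-1}(0)]=\C[\bar{\mu}^{-1}(0)][k_0^*,l_0^*]/(k_0^*l_0^*)$ and takes invariants under the scalar subgroup $\C^*\subset\GL(\delta)$ alone: this subgroup fixes $(B,B^*)$ and scales $l_0,k_0$ with opposite weights, so its invariants are already $\C[\bar{\mu}^{-1}(0)]$, and applying $(-)^{\GL(\delta)}$ to both sides finishes the proof in a few lines, with no structure theory of invariants at all. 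You instead run the projection/section formalism (which correctly reduces everything to surjectivity of $p^*$) and then invoke the first fundamental theorem for framed quiver invariants together with reductivity to restrict generators to the fiber. This is a valid argument: the framed FFT is standard, though as a citation matter Le Bruyn--Procesi prove the unframed statement, and the framed version is deduced from it by adjoining a one-dimensional framing vertex; and your observation that every hook $k_0B_pl_0$ equals $\tr(B_p)\cdot(l_0k_0)$, hence vanishes on the fiber, is exactly right because $V_0$ and $W_0$ are lines. What your route buys is explicit generators --- in effect a framed version of what the paper only obtains later (\cref{wframinginv}) by combining this lemma with \cref{quiverinv} --- but it imports a much heavier black box than the lemma requires. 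Worth noting: even inside your own setup the FFT can be excised, since any $\GL(\delta)$-invariant on $\mu^{-1}(0)$ is in particular invariant under the diagonal $\C^*$, and in $\C[l_0^*,k_0^*]/(l_0^*k_0^*)$ the only weight-zero elements are the constants; this gives surjectivity of $p^*$ directly and is, in substance, the paper's proof.
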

\begin{proof}
    We denote the moment map for the space $M(\delta,w)$ (resp., $M(\delta,0)$) by $\mu$ (resp., $\bar{\mu}$). From \cref{ADEmomentmap0}, we see that
    \[
    \C[\mu^{-1}(0)]=\C[\bar{\mu}^{-1}(0)][k^*_0,l^*_0]/(k^*_0l^*_0).
    \]
    The scalar subgroup $\C^*\subset\GL(\delta)$ acts on $M(\delta,w)$ by $\lambda\cdot(B,B^*,l_0,k_0)=(B,B^*,\lambda l_0,\lambda^{-1}k_0)$. Thus, we have 
    \[
    \C[\mu^{-1}(0)]^{\C^*}=\C[\bar{\mu}^{-1}(0)],
    \]
    which implies that
    \[
    \C[\bar{\mu}^{-1}(0)]^{\GL(\delta)}\xrightarrow{\sim}\C[\mu^{-1}(0)]^{\GL(\delta)}.
    \]
\end{proof}

Recall that a basis in the \emph{path algebra} $\C\bar{Q}$ consists of the paths in the doubled quiver $\bar{Q}$, including a trivial path $e_i$ for each vertex $i$. There is a natural grading on $\C\bar{Q}$ given by the number of arrows in a path. Consider the \emph{preprojective algebra}
\[
\Pi^{0}:=\C\bar{Q}/(\sum_{a\in\Omega}[a,a^*]).
\]

\begin{theorem}[\cite{CBH98}, Lemma 1.1, Corollary 3.5]\label{pathalgebra}
Let $\Gamma\subset\SL_2(\C)$ be a finite subgroup and $Q=(I,\Omega)$ the corresponding extended Dynkin quiver. There is a graded algebra isomorphism
\[e_0\Pi^{0}e_0\simeq\C[u,v]^{\Gamma}.
\]
\end{theorem}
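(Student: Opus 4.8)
The plan is to reproduce the Crawley--Boevey--Holland argument, reducing the claimed isomorphism to a corner computation inside the skew group algebra $\C[u,v]\rtimes\Gamma$. Write $R=\C\Gamma$; by semisimplicity $R\cong\prod_{i\in I}\End_\C(V_i)$, and choosing a primitive idempotent $f_i$ in the $i$-th block produces an idempotent $f=\sum_{i\in I}f_i$ that realizes the Morita equivalence between $R$ and its basic algebra $fRf\cong\bigoplus_{i}\C f_i\cong\C^I$. The entire argument rests on the graded algebra isomorphism
\[
\Pi^{0}\;\xrightarrow{\ \sim\ }\;f\,(\C[u,v]\rtimes\Gamma)\,f,
\]
after which I restrict to the corner at the extended vertex $0$, i.e.\ the trivial representation $V_0$. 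Here $\C[u,v]\rtimes\Gamma$ is graded by polynomial degree, with degree-$0$ part $R$ and degree-$1$ part $\C^2\otimes R$ (where $\C^2$ is the tautological $\Gamma$-module with coordinates $u,v$), and it is generated over $R$ in degree $1$.

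First I would construct the homomorphism degree by degree. Sandwiching by $f$, the degree-$0$ corner $fRf\cong\C^I$ matches the span of the vertex idempotents $e_i$ of $\Pi^{0}$, while the degree-$1$ corner $f(\C^2\otimes R)f$ matches the arrow space $\Span(\bar\Omega)$: this is precisely the McKay correspondence, since $a_{ij}=\dim\Hom_\Gamma(V_i\otimes\C^2,V_j)$ is the number of edges between $i$ and $j$, and the self-duality $\C^2\simeq(\C^2)^*$ coming from the $\SL_2(\C)$-invariant symplectic form lets me pair each arrow $a$ with its reverse $a^*$. Because both $\C\bar Q$ and $f(\C[u,v]\rtimes\Gamma)f$ are generated in degrees $0$ and $1$, sending $e_i\mapsto f_i$ and the arrows $a,a^*$ to the corresponding elements of $f(\C^2\otimes R)f$ defines a surjective graded algebra homomorphism $\C\bar Q\to f(\C[u,v]\rtimes\Gamma)f$.

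The main obstacle is matching the relations, i.e.\ showing that the kernel of this surjection is generated exactly by the preprojective relation $\sum_{a\in\Omega}[a,a^*]$. The key observation is that the defining relation $uv-vu=0$ of $\C[u,v]$ inside the tensor algebra $T(\C^2)$ is the image of the symplectic form $\omega\in\wedge^2\C^2$ under $\C^2\otimes\C^2\to T(\C^2)$, hence is $\Gamma$-invariant; after sandwiching by $f$ and using the pairing of each $a$ with $a^*$ supplied by $\omega$, its component at a vertex $i$ becomes exactly the term of $\sum_{a}[a,a^*]$ read off at $i$. Verifying that no further relations intervene, and that the path-length grading on $\Pi^{0}$ corresponds to the polynomial grading, then completes the identification $\Pi^{0}\cong f(\C[u,v]\rtimes\Gamma)f$; this relation-matching, together with the $\Gamma$-equivariant bookkeeping of the McKay isomorphism, is the technically substantive step.

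Finally I would localize at vertex $0$. Under this setup the primitive idempotent for the one-dimensional trivial representation $V_0$ is the averaging idempotent $f_0=\tfrac{1}{|\Gamma|}\sum_{\gamma\in\Gamma}\gamma\in R$, which corresponds to the vertex idempotent $e_0\in\Pi^{0}$, so $e_0\Pi^{0}e_0\cong f_0(\C[u,v]\rtimes\Gamma)f_0$. A direct computation in the skew group algebra, using $f_0\gamma=\gamma f_0=f_0$ for all $\gamma$, gives for a polynomial $p\in\C[u,v]$ (embedded at the identity group element) the identity $f_0\,p\,f_0=\bar p\,f_0$, where $\bar p=\tfrac{1}{|\Gamma|}\sum_{\gamma}\gamma\cdot p\in\C[u,v]^\Gamma$ is the $\Gamma$-average. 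Hence the assignment $P\mapsto P f_0$ identifies $\C[u,v]^\Gamma$ with this corner algebra as graded algebras, and composing the three isomorphisms yields the desired graded isomorphism $e_0\Pi^{0}e_0\simeq\C[u,v]^\Gamma$.
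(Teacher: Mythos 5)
Your proposal is correct and is essentially the same argument the paper relies on: the paper gives no independent proof of this statement but cites \cite{CBH98}, and your chain of isomorphisms --- $\Pi^{0}\cong f(\C[u,v]\rtimes\Gamma)f$ via the Morita idempotent $f$, with the McKay correspondence identifying the degree-one piece and the $\SL_2$-invariant symplectic form producing the preprojective relation, followed by cornering at the averaging idempotent $f_0=\tfrac{1}{|\Gamma|}\sum_{\gamma\in\Gamma}\gamma$ and the identity $f_0\,p\,f_0=\bar p\,f_0$ --- is precisely the route of Crawley--Boevey and Holland. The relation-matching step you flag as the technically substantive one is indeed where the content of their result lies (handled there by the ideal correspondence under Morita equivalence), so your outline faithfully reconstructs the cited proof.
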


There is also a natural grading on $\C[M(\delta,w)]$ given 
by the degree of the polynomial functions. This grading passes to $\C[\M_0(\delta,w)]=\C[\mu^{-1}(0)]^{\GL(\delta)}$ because the ADHM equation \cref{ADEmomentmap0} is homogeneous.
\begin{theorem}[\cite{CBH98}, Lemmas 8.1, 8.5, Theorem 8.10]\label{quiverinv}
There is a graded isomorphism of algebras
\begin{equation}
\begin{aligned}
    e_0\Pi^{0}e_0&\xrightarrow[]{\sim}\C[\M_0(\delta,0)].  \\
\end{aligned} 
\end{equation}
given by sending a loop $p=a_1\cdots a_m$ to the trace function $\tr(B_{a_1}\cdots B_{a_m})$.
\end{theorem}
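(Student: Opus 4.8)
The plan is to show that the prescription in the statement, sending a loop $p=a_1\cdots a_m$ based at the extended vertex $0$ to the trace function $f_p:=\tr(B_{a_1}\cdots B_{a_m})$, defines a graded algebra homomorphism $\Phi\colon e_0\Pi^0e_0\to\C[\M_0(\delta,0)]$, and then to prove that $\Phi$ is bijective. First I would verify that $\Phi$ is well-defined, unital, multiplicative, and graded. Since $\dim V_0=1$, the composition $B_{a_1}\cdots B_{a_m}$ of the matrices along a loop at $0$ is an endomorphism of the line $V_0$, so it coincides with its own trace; consequently $f_{pq}=f_pf_q$ for two loops $p,q$ and $f_{e_0}=1$, so $\Phi$ is a unital algebra map, and each $f_p$ is $\GL(\delta)$-invariant because $g_0\in\C^*$ commutes with scalars while the trace is conjugation-invariant (see \cref{actionquadruple}). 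For well-definedness I must check that the defining relation of the preprojective algebra is annihilated: a relation in $e_0\Pi^0e_0$ has the form $p_1\big(\sum_{a\in\Omega}[a,a^*]\big)_i\,p_2$ with $p_1\colon i\to 0$ and $p_2\colon 0\to i$, and under $\Phi$ its middle factor becomes the $i$-th moment-map component $\bar\mu_i=\sum_{h(a)=i}B_aB_{a^*}-\sum_{t(a)=i}B_{a^*}B_a$, which vanishes identically on $\bar\mu^{-1}(0)$ by \cref{ADEmomentmap0}. Hence $\Phi$ descends, and since the length of a path equals the polynomial degree of its trace function, $\Phi$ is graded.

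Next I would establish surjectivity. By the first fundamental theorem of invariant theory for quiver representations (Le Bruyn--Procesi), the ring $\C[M(\delta,0)]^{\GL(\delta)}$ is generated by the traces $\tr(B_c)$ of oriented cycles $c$ in $\bar Q$; since $\GL(\delta)$ is reductive, the restrictions of these functions still generate $\C[\M_0(\delta,0)]=\C[\bar\mu^{-1}(0)]^{\GL(\delta)}$. By cyclic invariance of the trace, any cycle that meets vertex $0$ may be read off as a loop based at $0$ and so lies in $\im\Phi$. It therefore suffices to show that the trace of a cycle \emph{avoiding} vertex $0$ also lies in the subalgebra generated by loops at $0$.

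The reduction of cycles avoiding $0$ is the technical heart of the argument, and I expect it to be the main obstacle. The mechanism is the ADHM relation $\bar\mu_i=0$: for a vertex $i\neq 0$ it gives, inside any trace, the identity $\sum_{h(a)=i}\tr\!\big(X\,B_aB_{a^*}\,Y\big)=\sum_{t(a)=i}\tr\!\big(X\,B_{a^*}B_a\,Y\big)$, which lets one trade the pair of arrows through which a cycle passes at $i$ for the complementary pairs at $i$, effectively ``sliding'' the cycle across $i$. Fixing a spanning tree of the extended Dynkin diagram rooted at $0$ and sliding repeatedly, every cycle is rewritten as a combination of cycles passing through $0$, or collapses to $0$ (as already happens at the leaves, where the relation forces the corresponding leaf-cycle trace to vanish). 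Connectedness of the quiver guarantees that this terminates after finitely many moves, so $\Phi$ is surjective. Making this combinatorial reduction precise, and checking that the slides are compatible with products of cycles coming from the FFT, is where the special role of the extended vertex $0$ must be used in full.

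Finally, injectivity follows cheaply. By \Cref{pathalgebra} the source $e_0\Pi^0e_0\cong\C[u,v]^\Gamma$ is an integral domain of Krull dimension $2$, and, granting the standard fact that the quiver variety $\M_0(\delta,0)$ is a reduced irreducible affine surface (from the dimension count for the preprojective variety attached to the minimal imaginary root $\delta$), $\C[\M_0(\delta,0)]$ is likewise a $2$-dimensional domain. The kernel of the surjection $\Phi$ is then a prime ideal $\mathfrak p$ of the domain $e_0\Pi^0e_0$ with $\dim\big(e_0\Pi^0e_0/\mathfrak p\big)=2$, forcing $\mathrm{ht}(\mathfrak p)=0$ and hence $\mathfrak p=0$. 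Thus $\Phi$ is an isomorphism of graded algebras. (Alternatively, injectivity can be obtained by composing $\Phi$ with evaluation on an explicit family of $\Pi^0$-representations of dimension $\delta$ parametrized by $\C^2$, recovering the isomorphism of \Cref{pathalgebra} and so showing $\ker\Phi=0$ directly.)
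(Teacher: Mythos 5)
The paper itself never proves this statement---it is imported wholesale from \cite{CBH98} and used as a black box---so your proposal has to be measured against that source rather than against an argument in the text. Your architecture is the standard one, and two of its three steps are essentially complete: (i) well-definedness, multiplicativity and gradedness of $\Phi$ are correctly reduced to $\dim V_0=1$, conjugation-invariance of traces, and the fact that the vertex components of the preprojective relation map to the moment-map components $\bar\mu_i$, which vanish on $\bar\mu^{-1}(0)$; (iii) for injectivity, your alternative argument (evaluate on the $\delta$-dimensional $\Pi^0$-modules attached to $\Gamma$-orbits in $\C^2$, so that $\Phi$ composed with evaluation recovers the isomorphism of \Cref{pathalgebra}) is clean and correct, and is preferable to your primary argument: reducedness and irreducibility of $\M_0(\delta,0)$, with its scheme structure coming from the possibly non-reduced fiber $\bar\mu^{-1}(0)$, is not a ``cheap'' standard fact but a nontrivial theorem, and granting it is very close to granting the injectivity you are trying to prove.

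The genuine gap is in step (ii), exactly where you flag it: the reduction of traces of cycles avoiding the vertex $0$. The mechanism you propose---slide across a vertex $i\neq 0$ using $\bar\mu_i=0$, organize the moves along a spanning tree, and appeal to connectedness for termination---cannot work as stated, because the moves return to the trace you started from rather than terminating. Concretely, take $\wt{D}_4$ and set $P_0=B_{2\la 0}B^*_{0\la 2}$, $P_1=B_{2\la 1}B^*_{1\la 2}$, $P_3=B^*_{2\la 3}B_{3\la 2}$, $P_4=B^*_{2\la 4}B_{4\la 2}$; on $\bar\mu^{-1}(0)$ one has $\tr P_j=0$, $P_j^2=0$ and $P_0+P_1=P_3+P_4$. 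The degree-$4$ invariants avoiding $0$ are $u=\tr(P_1P_3)$, $v=\tr(P_1P_4)$, $w=\tr(P_3P_4)$, and the degree-$4$ loops at $0$ are $A=\tr(P_0P_3)$, $B=\tr(P_0P_4)$, $C=\tr(P_0P_1)$. Sliding $u$ across vertex $2$ gives $u=C-v$, sliding $v$ gives $v=C-u$, and substituting one into the other returns the tautology $u=u$; only by combining slides in all directions and then \emph{solving} the resulting linear system does one get $u=\tfrac{1}{2}(B+C-A)$, $v=\tfrac{1}{2}(A+C-B)$, $w=\tfrac{1}{2}(A+B+C)$. The unavoidable factor $\tfrac{1}{2}$ shows that no formal rewriting procedure of the kind you describe can exist (the argument genuinely fails in characteristic $2$): what is needed is an induction on path length together with a proof that the associated linear systems are nonsingular in every type and every degree. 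That missing lemma is precisely the load-bearing content of the citation to \cite{CBH98}, and it is the one ingredient your proposal does not supply.
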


Combining \Cref{woframing}, \Cref{pathalgebra}, and \Cref{quiverinv}, we obtain that
\begin{corollary}\label{wframinginv}
There is an isomorphism of graded algebras 
\begin{equation}\label{gradedisoframing}
    C[\M_0(\delta,w)]\simeq\C[u,v]^{\Gamma}.
\end{equation}
Moreover, $\C[\M_0(\delta,w)]$ is generated by trace functions of paths that start and end at the vertex $0$.
\end{corollary}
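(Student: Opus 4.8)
The plan is to produce the claimed isomorphism as a composite of the three graded isomorphisms already established, and then to track the generators through that composite. Concretely, \Cref{pathalgebra} furnishes a graded isomorphism $\C[u,v]^{\Gamma}\simeq e_0\Pi^{0}e_0$; \Cref{quiverinv} provides a graded isomorphism $e_0\Pi^{0}e_0\xrightarrow{\sim}\C[\M_0(\delta,0)]$ sending a loop $p=a_1\cdots a_m$ based at the vertex $0$ to the trace function $\tr(B_{a_1}\cdots B_{a_m})$; and the variety isomorphism $\M_0(\delta,w)\xrightarrow{\sim}\M_0(\delta,0)$ of \Cref{woframing} induces, on coordinate rings, a graded isomorphism $\C[\M_0(\delta,0)]\xrightarrow{\sim}\C[\M_0(\delta,w)]$. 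Composing the three maps yields
\[
\C[u,v]^{\Gamma}\simeq e_0\Pi^{0}e_0\simeq\C[\M_0(\delta,0)]\simeq\C[\M_0(\delta,w)],
\]
which is the first assertion. The grading compatibility is inherited term by term, since each of the three intermediate maps is already a graded isomorphism.

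For the \emph{moreover} part I would argue as follows. The idempotent truncation $e_0\Pi^{0}e_0$ is spanned, as a vector space, by the classes of paths in the doubled quiver $\bar{Q}$ that both begin and end at the vertex $0$; in particular it is generated as an algebra by such loops. Under \Cref{quiverinv} each such loop is carried to the trace function of the corresponding path, so $\C[\M_0(\delta,0)]$ is generated by trace functions of paths based at $0$. It then remains to check that these trace functions are sent to the analogous trace functions under the identification of \Cref{woframing}. This is immediate from the proof of that lemma: the isomorphism $\C[\M_0(\delta,0)]\xrightarrow{\sim}\C[\M_0(\delta,w)]$ is realized by the inclusion $\C[\bar{\mu}^{-1}(0)]\hookrightarrow\C[\mu^{-1}(0)]$ followed by passage to $\GL(\delta)$-invariants, and a trace function $\tr(B_{a_1}\cdots B_{a_m})$ depends only on the $B$-matrices, which are common coordinates on $M(\delta,0)$ and $M(\delta,w)$. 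Hence the trace functions of loops at $0$ generate $\C[\M_0(\delta,w)]$ as well.

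I expect the only point requiring genuine care to be this last verification: one must confirm that the framing-removal isomorphism of \Cref{woframing} does not entangle the trace functions with the extra coordinates $l_0^{*}, k_0^{*}$ that are killed upon taking invariants, so that the generating set of trace functions is genuinely preserved under the identification. Once that is in hand, everything else is a purely formal composition of graded maps.
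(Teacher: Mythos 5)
Your proposal is correct and takes essentially the same approach as the paper: the paper obtains \Cref{wframinginv} precisely by combining \Cref{woframing}, \Cref{pathalgebra}, and \Cref{quiverinv}, exactly as you do, with the generation statement following because \Cref{quiverinv} sends the loops at the vertex $0$ (which span $e_0\Pi^{0}e_0$) to trace functions. The one point you flag as needing care is indeed handled by the paper's proof of \Cref{woframing}, where the identification $\C[\bar{\mu}^{-1}(0)]^{\GL(\delta)}\xrightarrow{\sim}\C[\mu^{-1}(0)]^{\GL(\delta)}$ is realized by killing the weight-$\pm1$ coordinates $l_0^*,k_0^*$ under the scalar $\C^*$-action, so trace functions in the $B$-matrices are visibly preserved.
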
 

We know that $\M_{\chi}(\delta,w)=\mu^{-1}(0)^{\chi-ss}\gitquo\GL(\delta)$ a smooth symplectic variety from \Cref{stcor}. By the fact that symplectic resolutions are minimal in dimension $2$, we obtain the following theorem.
\begin{theorem}[\cite{Nakajima98}, Proposition 3.24]\label{sympres}
  Let $\chi=(-1,\cdots,-1)$. The projective morphism
    \[
    \pi\colon\M_{\chi}(\delta,w)\to\M_0(\delta,w)
    \]
    in \cref{piproj} gives the minimal resolution of Kleinian singularities. 
\end{theorem}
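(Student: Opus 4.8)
The plan is to check that $\pi$ is a resolution of $X$---smooth source, projective, birational---and then to promote it to the minimal resolution using the symplectic form. Three of the ingredients are already in hand: by \Cref{wframinginv} the target $\M_0(\delta,w)$ is isomorphic to $X=\C^2/\Gamma$, so we are indeed resolving the Kleinian singularity; by \Cref{stcor} the source $\M_\chi(\delta,w)$ is smooth and carries a symplectic form; and $\pi$ is projective by its construction as a $\Proj$ in \cref{GITquotient}, \cref{piproj}. To see that both sides are surfaces, I would compute $\dim\M_\chi(\delta,w)$. Since $\GL(\delta)$ acts freely on $\mu^{-1}(0)^{\chi-ss}$ and $\mu$ is submersive there, $\dim\M_\chi(\delta,w)=\dim M(\delta,w)-2\dim\GL(\delta)=2\sum_{a\in\Omega}\delta_{t(a)}\delta_{h(a)}+2\delta_0-2\sum_{i\in I}\delta_i^2$. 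Writing $q$ for the Tits form of the extended Dynkin diagram, the vector $\delta$ is the null (imaginary) root, so $q(\delta)=\sum_i\delta_i^2-\sum_a\delta_{t(a)}\delta_{h(a)}=0$; with $\delta_0=1$ this yields $\dim\M_\chi(\delta,w)=2=\dim X$.

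The heart of the argument is birationality, which I would obtain by showing that $\pi$ restricts to an isomorphism over the smooth locus $X\setminus\{0\}$. The essential input is \Cref{stTFAE}: a point over $X\setminus\{0\}$ lies on a closed $\GL(\delta)$-orbit representing a representation in which $\im(l_0)$ already generates all of $V$ under the action of $(B,B^*)$, so this orbit is itself $\chi$-semistable. Consequently no collapsing occurs over $X\setminus\{0\}$ when passing from the affine quotient to the GIT quotient, and the fiber of $\pi$ over each such point is a single (free, closed) orbit, so $\pi$ is bijective over $X\setminus\{0\}$. Being projective with singleton fibers onto the smooth, hence normal, locus $X\setminus\{0\}$, it is an isomorphism there by Zariski's main theorem; in particular $\pi$ is birational, and together with the first paragraph $\pi$ is a projective resolution of $X$.

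Finally, I would use the symplectic form $\omega$ on $\M_\chi(\delta,w)$ from \Cref{stcor}: being a nowhere-degenerate $2$-form on a surface, $\omega$ trivializes the canonical bundle $K_{\M_\chi(\delta,w)}$. Therefore $K_{\M_\chi(\delta,w)}\cdot C=0$ for every irreducible curve $C$ contracted by $\pi$, so $\pi$ contracts no $(-1)$-curve. Since the minimal resolution of a surface singularity is exactly the resolution with no contracted $(-1)$-curves, $\pi$ is the minimal resolution, as claimed. I expect the birationality step to be the main obstacle: one must verify carefully that the closed orbits lying over $X\setminus\{0\}$ genuinely satisfy the generation condition of \Cref{stTFAE} (equivalently, that away from the exceptional point the affine and GIT quotients parametrize the same orbits), so that the exceptional locus of $\pi$ is confined to $\pi^{-1}(0)$.
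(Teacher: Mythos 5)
Your overall architecture (projective resolution first, then the symplectic form to exclude contracted $(-1)$-curves) is reasonable: the dimension count via the Tits form is correct, the Zariski's-main-theorem reduction is standard, and your final minimality step is in fact the paper's entire stated justification, since the paper does not prove this theorem at all but quotes it from \cite{Nakajima98}, remarking only that a smooth symplectic resolution in dimension $2$ is minimal. However, the birationality step --- which you correctly single out as the crux --- rests on a claim that is false, not merely unverified. You assert that the closed $\GL(\delta)$-orbit lying over a point $\xi\in X\setminus\{0\}$ satisfies the generation condition of \Cref{stTFAE} and is therefore $\chi$-semistable. It never is. For any point $(B,B^*,l_0,k_0)\in\mu^{-1}(0)$ over $\xi$ (recall $l_0k_0=0$ by \cref{ADEmomentmap0}), the scalar one-parameter subgroup $t\cdot\mathrm{id}\in\GL(\delta)$ fixes $(B,B^*)$ and rescales $(l_0,k_0)\mapsto(tl_0,t^{-1}k_0)$; letting $t\to 0$ (if $k_0=0$) or $t\to\infty$ (if $l_0=0$) shows that the closure of every orbit in the fiber meets the closed $\GL(\delta)$-stable set $\{l_0=k_0=0\}$. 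Hence the unique \emph{closed} orbit over $\xi$ has $l_0=0$, so $\im(l_0)$ generates nothing and that orbit is unstable. The semistable orbit over $\xi$ is a \emph{different} orbit, closed in $\mu^{-1}(0)^{\chi-ss}$ but not in $\mu^{-1}(0)$, so your inference ``closed orbit semistable $\Rightarrow$ no collapsing $\Rightarrow$ bijectivity over $X\setminus\{0\}$'' collapses at its first link.

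The step can be repaired, but by a different statement. Over $\xi\in X\setminus\{0\}$ the underlying unframed representation $(B,B^*)$ of any point in the fiber is \emph{simple} of dimension $\delta$ (free $\Gamma$-orbits in $\C^2\setminus\{0\}$ correspond to simple modules over the preprojective algebra, in the spirit of \cite{CBH98}; equivalently, a module whose semisimplification is simple is itself simple). Granting this, a framed point over $\xi$ is $\chi$-semistable if and only if $l_0\neq 0$ --- simplicity makes any nonzero vector of $V_0$ a generator, and then $k_0=0$ is forced --- and by Schur's lemma all such points form a single free $\GL(\delta)$-orbit. This yields bijectivity of $\pi$ over $X\setminus\{0\}$ (and surjectivity of $\pi$, since its image is closed and contains $X\setminus\{0\}$), after which your ZMT and canonical-bundle arguments go through. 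Alternatively, and more in the spirit of the paper, one can invoke the isomorphism $\varphi_1\colon\M_{\chi}(\delta,w)\xrightarrow{\sim}\M_{\chi'}(\delta,0)$ established later in the proof of \Cref{Ciequation}, which reduces the theorem to the unframed statement quoted from \cite{Nakajima96}.
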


At the end of this section, we comment that $\dim V_0=1$, so we can identify $\End(V_0)$ with $\C$ by taking traces. The graded isomorphism in \cref{gradedisoframing} allows us to pick out the generators of $\C[X]\simeq\C[\mu^{-1}(0)]^{\GL(\delta)}$ in terms of trace functions (c.f. \Cref{Dngeneratorprop,E6generatorprop,E7generatorprop,E8generatorprop}), which is essential to the construction of the liftings in \Cref{Anquiver,Dnquiver,E6quiver,E7quiver,E8quiver}.

\subsection{Lifting anti-Poisson involutions to the minimal resolutions}\label{liftsection}
Recall that $\pi\colon\wt{X}\to X$ denotes the minimal resolution of a Kleinian singularity. We would like to describe the preimage $\pi^{-1}(X^{\theta})$ in terms of the following aspects.
\begin{enumerate}[label=(\roman*)]
    \item\label{q1} What are the irreducible components?
    \item\label{q2} How do the irreducible components intersect with each other?
    \item\label{q3} Is $\pi^{-1}(X^{\theta})$ (generically) reduced?
\end{enumerate}

Our main approach is through lifting the anti-Poisson involution $\theta$ to the minimal resolution $\wt{X}$. By a \emph{lift} of $\theta$, we mean an anti-symplectic involution $\tilde{\theta}\colon\tilde{X}\to\tilde{X}$ such that 
\begin{equation}\label{liftcondition}
    \pi\circ\ttheta=\theta\circ\pi.
\end{equation}
Note that the action of $\ttheta$ is determined by its action on the open dense subset $\pi^{-1}(X\setminus\{0\})$, and $\pi$ is an isomorphism away from the exceptional fiber. Therefore, any anti-symplectic automorphism $\ttheta$ of $\wt{X}$ satisfying \cref{liftcondition} is automatically an involution, and the lift $\ttheta$ is unique if it exists. \emph{In the rest of this section, we always assume the lift $\ttheta$ exists.} We will use \Cref{liftconditions} as a guideline to produce the lifts in \Cref{Anquiver,Dnquiver,E6quiver,E7quiver,E8quiver}. Recall the symplectic form $\omega(-,-)$ defined on $M(\delta,w)$ in \cref{quiversymform}.
\begin{proposition}\label{liftconditions} 
    Let $\theta$ be an anti-Poisson involution of $X$. Let $\Theta\colon M(\delta,w)\to M(\delta,w),\ u\mapsto\Theta(u)$ be an anti-symplectic linear automorphism satisfying the following conditions.
    \begin{enumerate}[label=(\arabic*)]
    \item\label{lift2}
    The conjugation $g^{\Theta}:=\Theta g\Theta^{-1}$ is an automorphism of $\GL(\delta)$ obtained by permuting the factors of $g=(g_i)_{i\in I}$. 
    \item\label{lift3} $\Theta(x)=\theta(x),\ \Theta(y)=\theta(y),\ \Theta(z)=\theta(z)$, where $x,y,z$ are the generators of $\C[X]$.
\end{enumerate}
Then $\Theta$ descends to an anti-symplectic involution $\ttheta\colon\wt{X}\to\wt{X},\ [u]\mapsto[\Theta(u)]$. Moreover, $\ttheta$ is a lift of $\theta$.
\end{proposition}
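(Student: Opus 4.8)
The plan is to obtain $\ttheta$ by descending $\Theta$ through the two GIT quotients that produce $X$ and $\wt{X}$. Recall from \Cref{wframinginv} and \Cref{sympres} that $X=\M_0(\delta,w)=\mu^{-1}(0)\gitquo\GL(\delta)$ and $\wt{X}=\M_{\chi}(\delta,w)=\mu^{-1}(0)^{\chi-ss}\gitquo\GL(\delta)$ for $\chi=(-1,\dots,-1)$. So it suffices to check that $\Theta$ preserves the zero fiber $\mu^{-1}(0)$, that it carries $\GL(\delta)$-orbits to $\GL(\delta)$-orbits and preserves the $\chi$-semistable locus, and then to identify the two induced maps on $X$ and $\wt{X}$ and verify the compatibility \cref{liftcondition}. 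Condition \ref{lift2} is what guarantees the orbit-preservation, while condition \ref{lift3} is what pins the induced map on the base down to $\theta$.

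First I would show that $\Theta$ preserves $\mu^{-1}(0)$; this is the technical heart of the argument. Write $\mu^{\xi}:=\langle\mu,\xi\rangle$ for $\xi\in\gl(\delta)$, using the trace pairing, so that the defining property of the moment map is $\iota_{\xi_M}\omega=d\mu^{\xi}$, where $\xi_M$ is the vector field of the $\GL(\delta)$-action. Let $\psi\colon\GL(\delta)\to\GL(\delta)$ be the permutation automorphism from \ref{lift2}, so that $\Theta\circ\rho(g)=\rho(\psi(g))\circ\Theta$ with $\rho$ denoting the action. Since $\Theta$ is linear and anti-symplectic, the standard bookkeeping for anti-symplectic maps shows that the Hamiltonian vector field of $f\circ\Theta$ equals $-\Theta^{*}$ (the $\Theta$-pullback) of the Hamiltonian vector field of $f$; differentiating the intertwining relation gives $\xi_M=\Theta^{*}\big((\psi_{*}\xi)_M\big)$. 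Combining these, and using that both sides are homogeneous and vanish at $0$, I would conclude the transformation rule
\[
\mu(\Theta u)=-(\psi^{-1})^{*}\mu(u),
\]
from which $\mu(\Theta u)=0\iff\mu(u)=0$ because $\psi_{*}$ is a linear isomorphism of $\gl(\delta)$. Thus $\Theta$ restricts to an automorphism of $\mu^{-1}(0)$. Alternatively one can verify this directly from the explicit equation \cref{ADEmomentmap0} once $\Theta$ is written out case by case, but the invariant computation is cleaner.

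Next I would treat the semistable locus and the descent. By \ref{lift2} we have $\Theta\,\rho(\GL(\delta))\,\Theta^{-1}=\rho(\GL(\delta))$, hence $\Theta(\GL(\delta)\cdot u)=\GL(\delta)\cdot\Theta(u)$ and $\Theta$ sends orbits to orbits. Because $\chi=(-1,\dots,-1)$ is fixed by any permutation of the factors, $\Theta$ carries $m\chi$-semi-invariants to $m\chi$-semi-invariants, so a positive-weight semi-invariant nonvanishing at $u$ produces one nonvanishing at $\Theta u$; thus $\Theta$ preserves $\mu^{-1}(0)^{\chi-ss}$ (cf. \Cref{stTFAE} and \cref{chisemistable}). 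Consequently $\Theta$ descends to morphisms $\theta_0\colon X\to X$ and $\ttheta\colon\wt{X}\to\wt{X}$, both given by $[u]\mapsto[\Theta(u)]$, and the projective morphism \cref{piproj} is $\Theta$-equivariant, i.e. $\pi\circ\ttheta=\theta_0\circ\pi$. On $\C[X]=\C[\mu^{-1}(0)]^{\GL(\delta)}$ the map $\theta_0$ acts by $f\mapsto f\circ\Theta$, which by \ref{lift3} agrees with $\theta$ on the generators $x,y,z$ from \Cref{wframinginv}; hence $\theta_0=\theta$, so \cref{liftcondition} holds and $\ttheta$ is a lift of $\theta$. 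Finally, $\Theta^{*}\omega=-\omega$ descends (via \Cref{stcor}) to $\ttheta^{*}\omega_{\wt{X}}=-\omega_{\wt{X}}$, so $\ttheta$ is anti-symplectic, and it is an involution by the density/uniqueness argument recorded just before the proposition: it equals the involution $\theta$ on the dense open locus $\pi^{-1}(X\setminus\{0\})$, so $\ttheta^{2}$ is a symplectomorphism that is the identity there, hence everywhere.

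The main obstacle is the first step: determining precisely how $\mu$ transforms under the anti-symplectic, group-twisting map $\Theta$, keeping track of the sign coming from $\Theta^{*}\omega=-\omega$ and of the permutation $\psi$ acting on $\gl(\delta)$. Once the transformation rule $\mu\circ\Theta=-(\psi^{-1})^{*}\mu$ is established, the remaining steps are formal GIT descent, and conditions \ref{lift2} and \ref{lift3} are arranged exactly so that the orbit structure and the induced map on the base come out correctly.
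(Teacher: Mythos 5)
Your proposal is correct and takes essentially the same route as the paper's proof: verify that condition (1) makes $\Theta$ preserve $\mu^{-1}(0)$ via the transformation rule $\mu(\Theta u)=-\mu(u)^{\Theta}$, preserve the $\chi$-semistable locus because $\chi=(-1,\dots,-1)$ is permutation-invariant, and normalize the $\GL(\delta)$-action, then descend through both GIT quotients and use condition (2) to identify the induced map on $X$ with $\theta$, with anti-symplecticity and the involution property following from $\Theta^*\omega=-\omega$ and the uniqueness/density remark preceding the proposition. The only difference is presentational: you derive the moment-map transformation rule from the Hamiltonian-vector-field formalism, where the paper simply asserts ``one can compute that $\mu(\Theta(u))=-\mu(u)^{\Theta}$.''
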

\begin{proof}

We show that the condition \ref{lift2} implies that the following three things.
\begin{enumerate}[label=(1\alph*)]
    \item\label{lifta} $\Theta$ preserves $\mu^{-1}(0)$. The natural $\GL(\delta)$-action on $M(\delta,w)$ induces the $\gl(\delta)$-action on $M(\delta,w)$ and the condition \ref{lift2} implies that the conjugation $A^{\Theta}:=\Theta A\Theta^{-1}$ is obtained by permuting the factors of $A=(A_i)_{i\in I}\in\gl(\delta)$. One can compute that $\mu(\Theta(u))=-\mu(u)^{\Theta}$, and the claim follows.
    \item\label{liftb} $\Theta$ preserves $\mu^{-1}(0)^{\chi-ss}$. Recall that $\chi(g)=\prod_{i\in I}\det(g_i)^{-1}$. One can show that $\chi((\cdot)^{\Theta})=\chi(\cdot)$ by the condition \ref{lift2}, hence $\Theta(f)\in\C[\mu^{-1}(0)]^{\GL(\delta),m\chi}$ iff $f\in\C[\mu^{-1}(0)]^{\GL(\delta),m\chi}$. It follows that $\Theta(u)$ is $\chi$-semistable iff $u$ is (c.f. \cref{chisemistable}).
    \item\label{liftc} $\Theta$ normalizes the $\GL(\delta)$-action because the conjugation $g^{\Theta}=\Theta g\Theta^{-1}$ still lies in $\GL(\delta)$. 
\end{enumerate}
Together with the condition \ref{lift3}, we see that $\Theta$ descends to the anti-Poisson involution $\theta$ on $X\simeq\mu^{-1}(0)\gitquo\GL(\delta)$ and the anti-symplectic involution $\ttheta$ on $\wt{X}\simeq\mu^{-1}(0)^{ss}\gitquo\GL(\delta)$. We automatically have that $\pi\circ\ttheta=\theta\circ\pi$, hence $\ttheta$ is a lift of $\theta$.
\end{proof}

Consider the scheme-theoretic fixed point locus $\wt{X}^{\ttheta}$. It is a smooth Lagrangian subvariety of $\wt{X}$ (c.f. \Cref{antisymtangent}). It is easy to show that
\begin{equation}\label{fixedlocilift}
    \pi^{-1}(X^{\theta})_{\red}=\pi^{-1}(0)_{\red}\cup\widetilde{X}^{\ttheta}.
\end{equation}
The advantage of the lift $\ttheta$ is that the smooth Lagrangian $\wt{X}^{\ttheta}$ is much easier to analyze than $\pi^{-1}(X^{\theta})_{(\red)}$. We study the variety $\pi^{-1}(X^{\theta})_{\red}$ in \Cref{liftsectionq1,liftsectionq2}, and the scheme structure of $\pi^{-1}(X^{\theta})$ in \Cref{liftsectionq3}.

\subsubsection{Question \ref{q1}}\label{liftsectionq1}
The natural grading on $\C[M(\delta,w)]$ given by the degree of polynomial functions gives rise to a $\C^*$-action on $M(\delta,w)$.
\begin{equation}\label{torusaction}
    t\cdot(B,B^*,l_0,k_0):=(t^{-1}B,t^{-1}B^*,t^{-1}l_0,t^{-1}k_0),\ t\in\C^*.
\end{equation}
The symplectic form $\omega$ in \cref{quiversymform} is transformed into $t^{-2}\omega$ under the $\C^*$-action. The $\C^*$-action commutes with the $\GL(\delta)$-action, and the subsets $\mu^{-1}(0),\ \mu^{-1}(0)^{\chi-ss}$ are $\C^*$-stable. It follows that the $\C^*$-action descends to $X\simeq\mu^{-1}(0)\gitquo\GL(\delta)$ and $\wt{X}\simeq\mu^{-1}(0)^{\chi-ss}\gitquo\GL(\delta)$. Moreover, the minimal resolution $\pi\colon\wt{X}\to X$ is $\C^*$-equivariant.

There is an easier way to observe this $\C^*$-action on $X$. Recall the grading on $\C[X]=\C[u,v]^{\Gamma}$ given by the degree of the polynomial functions on $u,v$. We know from \Cref{wframinginv} that $\C[\mu^{-1}(0)]^{\GL(\delta)}\simeq\C[u,v]^{\Gamma}$ is a graded algebra automorphism. Therefore, the $\C^*$-action on $X$ induced from the grading of $\C[u,v]^{\Gamma}$ will be the same from the one descended from \cref{torusaction}. Let $x,y,z$ denote the generators of $\C[X]=\C[u,v]^{\Gamma}$ given in \Cref{ADExyz}. We have
\begin{equation}\label{XCstar}
    t\cdot(x_0,y_0,z_0)=(t^{-\deg x}x_0,t^{-\deg y}y_0,t^{-\deg z}z_0),\ (x_0,y_0,z_0)\in X.
\end{equation}
Let $L\subset X^{\theta}$ be an irreducible component. From \Cref{Anfixedloci,Dnevenfixedloci,Dnoddfixedloci,E6fixedloci,E7fixedlocus,E8fixedlocus}, we know that $L$ is either $\A^1$ or a cusp, and it is $\C^*$-stable. Moreover, one can deduce that the $\C^*$-action is transitive on the open part $L\setminus\{0\}$ by looking at the coordinate ring of $L$. Consider the variety 
\begin{equation}\label{tildeLdef}
    \tilde{L}:=\overline{\pi^{-1}(L\setminus\{0\})}\subset\wt{X}^{\ttheta},
\end{equation}
which is smooth as being an irreducible component of the smooth Lagrangian $\wt{X}^{\ttheta}$. We will further show that $\tilde{L}\simeq\A^1$ in \Cref{LA1}. Pick an arbitrary point $s\in L\setminus\{0\}$. It doesn't matter which $s$ we choose because the $\C^*$-action on $L\setminus\{0\}$ is transitive. Set
\begin{equation}\label{toruss0}
     s_0:=\lim_{t\to 0}t^{-1}\cdot\pi^{-1}(s)\in\tilde{L}\cap\pi^{-1}(0),
\end{equation}
which exists because the composition $\tilde{L}\xhookrightarrow[]{i}\pi^{-1}(L)\xrightarrow[]{\pi} L$ is proper and the limit point $\lim_{t\to 0} t^{-1}\cdot s=0$ exists.

\begin{proposition}\label{LA1}
    We have an isomorphism of varieties 
    \[
    \tilde{L}\xrightarrow[]{\sim
    }\A^1
    \]
    with the limit point $s_0$ being mapped to the origin.
\end{proposition}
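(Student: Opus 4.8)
The plan is to exploit the $\C^*$-action on $\tilde{L}$ and to identify $\tilde{L}$ as a smooth $\C^*$-equivariant curve with a single dense orbit. First I would observe that, since $\pi$ is a $\C^*$-equivariant isomorphism over the smooth locus $X^0=X\setminus\{0\}$ and $L\setminus\{0\}\subset X^0$, the open subset $U:=\pi^{-1}(L\setminus\{0\})$ of $\tilde{L}$ is $\C^*$-equivariantly isomorphic to $L\setminus\{0\}$. By the transitivity of the $\C^*$-action on $L\setminus\{0\}$ recorded just above, $U$ is a single $\C^*$-orbit, hence isomorphic to $\C^*/F$ for a finite subgroup $F\subset\C^*$, and so $U\cong\C^*$ as a variety. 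By construction $U$ is open and dense in the irreducible curve $\tilde{L}$, which is smooth because it is an irreducible component of the smooth Lagrangian $\wt{X}^{\ttheta}$ (\Cref{antisymtangent}).

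Next I would classify the possibilities for $\tilde{L}$. It is a smooth curve with a dense $\C^*$-orbit $U$, hence a smooth (normal) toric curve for the one-dimensional torus. Equivalently, the smooth projective model of $\tilde{L}$ is $\pr^1$ (as $\tilde{L}$ is rational), the $\C^*$-action extends to it, and $\tilde{L}$ is obtained from $\pr^1$ by deleting a subset of the two boundary fixed points $\{0,\infty\}$; thus $\tilde{L}$ is one of $\C^*$, $\A^1$, or $\pr^1$. The limit point $s_0=\lim_{t\to 0}t^{-1}\cdot\pi^{-1}(s)$ is a $\C^*$-fixed point of $\tilde{L}$, and it does not lie in $U$, since $U$ lies over $L\setminus\{0\}$ whereas $s_0$ lies over $0$. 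Hence $\tilde{L}$ strictly contains its open orbit, which rules out $\C^*$ and leaves $\tilde{L}\in\{\A^1,\pr^1\}$.

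To rule out $\pr^1$ I would use that $X$ is affine while $\pi$ is proper. If $\tilde{L}\cong\pr^1$ were complete, then its image $\pi(\tilde{L})$ would be a complete subvariety of the affine variety $X$, hence a single point; but $\pi(\tilde{L})=L$ is one-dimensional since $\pi|_{\tilde{L}}$ is surjective onto $L$, a contradiction. Therefore $\tilde{L}\cong\A^1$. Under this identification $\A^1$ carries the standard linear $\C^*$-action whose unique fixed point is the origin, and since $s_0$ is the unique $\C^*$-fixed point of $\tilde{L}$ lying over $0$, it is carried to the origin, as claimed.

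The main obstacle is the clean identification in the first two steps, namely verifying that $U$ is genuinely one full $\C^*$-orbit and that the smooth equivariant partial compactification $\tilde{L}$ has at most the two torus-fixed points of $\pr^1$; both reduce to standard facts about one-dimensional toric varieties once the equivariance of $\pi$ over $X^0$ and the transitivity statement are in hand. As an alternative avoiding the toric classification, one could instead argue directly that $\pi|_{\tilde{L}}\colon\tilde{L}\to L$ is a proper birational morphism from a smooth (hence normal) curve: when $L\cong\A^1$, Zariski's Main Theorem makes it an isomorphism, and when $L$ is a cusp it factors through the normalization $\A^1\to L$, the induced proper birational map of smooth curves $\tilde{L}\to\A^1$ then being an isomorphism. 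Either route yields $\tilde{L}\cong\A^1$ with $s_0$ the origin.
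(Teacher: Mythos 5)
Your proof is correct, but it takes a genuinely different route from the paper's. The paper first shows that $f=\pi|_{\tilde{L}}\colon\tilde{L}\to L$ is quasi-finite (the fiber over $0$ cannot be one-dimensional, since $\tilde{L}$ is irreducible), hence finite by properness, so that $\tilde{L}$ is affine; being also smooth and rational, $\tilde{L}$ must then be $\A^1$ with finitely many points removed, and the paper concludes by applying a version of Luna's slice theorem \cite[Theorem 5.4]{Luna04} to the contracting $\C^*$-action at $s_0$, identifying the open subset $\pi^{-1}(L\setminus\{0\})\cup\{s_0\}$ with its tangent space $\A^1$, and finally comparing invertible functions to see this open subset is all of $\tilde{L}$. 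You instead classify $\tilde{L}$ globally: it is a smooth curve containing the orbit $U\cong\C^*$ as a dense open subset, hence one of $\C^*$, $\A^1$, $\pr^1$ by the classification of one-dimensional toric varieties; the fixed point $s_0\notin U$ rules out $\C^*$, and completeness of $\pr^1$ against affineness of $X$ rules out $\pr^1$. This buys you a proof with no local analysis at $s_0$ (no slice theorem), at the cost of invoking the toric classification together with the standard fact that the $\C^*$-action extends to the smooth projective model. Your alternative route---factoring the proper birational morphism $\tilde{L}\to L$ through the normalization of $L$ and applying Zariski's Main Theorem---is closest in spirit to the first half of the paper's argument (both hinge on properness of $\tilde{L}\to L$) and is arguably the most economical, since it produces the isomorphism $\tilde{L}\xrightarrow{\sim}\A^1$ directly, without either Luna's theorem or the toric classification; the only point you gloss over there is quasi-finiteness of $\tilde{L}\to L$, which needs (and gets, from irreducibility of $\tilde{L}$, exactly as in the paper) a one-line justification before ZMT applies.
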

\begin{proof}
   We first show that the morphism $f:=\pi\circ i\colon\tilde{L}\to L$ is quasi-finite. It suffices to show that the fiber $f^{-1}(0)$ consists of finitely many points as $f$ is an isomorphism outside of $f^{-1}(0)$. Note that the morphism $f$ is of finite type, hence quasi-compact. Therefore, it is enough to show that $\dim f^{-1}(0)=0$. If $\dim f^{-1}(0)>0$, then $\dim f^{-1}(0)=1$. It follows that $f^{-1}(0)=\tilde{L}$ as $\tilde{L}$ is irreducible, a contradiction.
   
   Next, note that the morphism $f$ is also proper, hence finite. It follows that $\tilde{L}$ is affine as an affine scheme over the affine scheme $L$. Moreover, $\tilde{L}$ is smooth as an irreducible component of the smooth Lagrangian $\wt{X}^{\ttheta}$, and rational because it contains $\pi^{-1}(L\setminus\{0\})\simeq\C^*$ as an open dense subset. It follows that $\tilde{L}$ is isomorphic to $\A^1$ with finitely many points removed.

   Let $U:=\pi^{-1}(L\setminus\{0\})\cup\{s_0\}\subset\tilde{L}$, then it admits an open embedding into $\A^1$. More importantly, $U$ is equipped with a contracting $\C^*$-action to the point $s_0$ from \cref{torusaction,toruss0}. By a version of Luna's slice theorem \cite[Theorem 5.4]{Luna04}, we can identify $U$ with its tangent space, i.e., $U\simeq T_{s_0}U\simeq\A^1$. This implies that
   \[
   U=\tilde{L}\simeq\A^1.
   \]
   because an open subset of $\A^1$ cannot be isomorphic to $\A^1$, which can be seen from comparing the invertible functions on $U$ and $\A^1$.
\end{proof}

Then it is easy to see that the irreducible components of $\pi^{-1}(X^{\theta})$ are $\pr^1$ that are irreducible components of $\pi^{-1}(0)$ and $\A^1$'s that are normalizations of the irreducible components of $X^{\theta}$. Denote the $\pr^1$'s by $C_1,\cdots,C_n$ and the $\A^1$'s by $\tilde{L}_1,\cdots,\tilde{L}_m$, where $m$ is number of irreducible components of $X^{\ttheta}$. We would like to point out that the variety $\wt{X}^{\ttheta}$ may contain not only all of the $\tilde{L}_j$'s, but also some of the $C_i$'s, which is the situation in most of the cases.

\subsubsection{Question \ref{q2}}\label{liftsectionq2}
We study how the irreducible components of $\pi^{-1}(X^{\theta})$ intersect with each other. The $\A^1$'s do not intersect each other because they are irreducible components of the smooth variety $\wt{X}^{\ttheta}$. The $\pr^1$'s intersect with each according dually to the Dynkin diagram labeling $\Gamma$. The remaining thing is to determine how the $\A^1$'s intersect with the $\pr^1$'s. A detailed answer will be given in \Cref{isolatedtransversal}. 

The lift $\ttheta$ preserves $\pi^{-1}(0)$ because $\theta$ fixes the singular point $0$. Note that $\ttheta$ is of order $2$, so $\ttheta$ either preserves a component $C_i$, or swaps it with another component $C_j$. Moreover, the intersection points of the $\A^1$'s and the $\pr^1$'s must lie in $\pi^{-1}(0)^{\ttheta}$ (c.f. \cref{toruss0} and \Cref{LA1}). \Cref{liftoncomponent} will be useful in determining $\pi^{-1}(0)^{\ttheta}$.

\begin{lemma}\label{liftoncomponent}
    \begin{enumerate}[label=(\arabic*)]
        \item\label{autop1} If $\ttheta$ swaps two components $C_i$ and $C_j$, then $C_i\cap C_j$ is either empty, or a single point contained in $\pi^{-1}(0)^{\ttheta}$.
        \item\label{autop2} If $\ttheta$ preserves a component $C_i$, then $\ttheta$ either acts trivially on $C_i$, or has exactly two fixed points.
        \item\label{autop3} If $\ttheta$ preserves two components $C_i$ and $C_j$ such that $C_i\cap C_j\neq\emptyset$, then $\ttheta$ acts trivially on one of the components, and has exactly two fixed points on the other component.
    \end{enumerate}
\end{lemma}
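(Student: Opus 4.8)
The plan is to use three structural inputs: that $\ttheta$ is an involution of $\wt{X}$ preserving the exceptional fiber $\pi^{-1}(0)$ (since $\theta$ fixes $0$); that each $C_i\simeq\pr^1$ and any two of them meet transversally in at most one point; and that $\ttheta$ is anti-symplectic, so that at any fixed point $p$ the differential $d\ttheta_p$ is an anti-symplectic involution of the $2$-dimensional symplectic space $(T_p\wt{X},\omega_p)$.

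Part (1) is immediate: if $C_i\cap C_j\neq\emptyset$ it is a single point $p$, and since $\ttheta(C_i)=C_j$ and $\ttheta(C_j)=C_i$ I would compute $\ttheta(C_i\cap C_j)=\ttheta(C_i)\cap\ttheta(C_j)=C_j\cap C_i=\{p\}$, so $\ttheta(p)=p$, i.e. $p\in\pi^{-1}(0)^{\ttheta}$. For Part (2), the restriction $\ttheta|_{C_i}$ is an automorphism of $\pr^1$ squaring to the identity; invoking the classification of involutions in $\PGL_2(\C)$ (every nontrivial one is conjugate to $z\mapsto -z$), it is either the identity or has exactly two fixed points.

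For Part (3) I would argue locally at $p:=C_i\cap C_j$, which is a single $\ttheta$-fixed point since $\ttheta$ preserves both components. The key linear-algebra observation is that each $\pm1$-eigenspace $V_\pm$ of $d\ttheta_p$ is isotropic: for $v,w\in V_\pm$ one has $\omega_p(v,w)=\omega_p(d\ttheta_p v,d\ttheta_p w)=-\omega_p(v,w)$, so $\omega_p(v,w)=0$. As $d\ttheta_p$ is a diagonalizable involution with $T_p\wt{X}=V_+\oplus V_-$ and each summand of dimension $\leq 1$, both $V_+$ and $V_-$ are exactly one-dimensional. Now $T_pC_i$ and $T_pC_j$ are distinct lines spanning $T_p\wt{X}$ (transversality) and each is $d\ttheta_p$-invariant (as $\ttheta$ preserves $C_i,C_j$), so $\{T_pC_i,T_pC_j\}=\{V_+,V_-\}$. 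Finally, a nontrivial involution of $\pr^1$ acts by $-1$ on the tangent line at each of its fixed points, while a trivial action gives $+1$; hence $\ttheta$ acts trivially on whichever of $C_i,C_j$ has tangent line $V_+$ and nontrivially---thus with exactly two fixed points by Part (2)---on the one with tangent line $V_-$.

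The main obstacle is this local step in Part (3). The anti-symplectic hypothesis is exactly what forbids the degenerate cases $d\ttheta_p=\pm\id$, i.e. it rules out $\ttheta$ acting trivially on both components (which would also contradict smoothness of $\wt{X}^{\ttheta}$, since $C_i\cup C_j$ is nodal at $p$) or nontrivially on both; so although Parts (1) and (2) are elementary, Part (3) genuinely needs the symplectic geometry. Some care is required to match \emph{``acts trivially on $C_k$''} with \emph{``$d\ttheta_p$ has eigenvalue $+1$ on $T_pC_k$''}, which rests on the tangent-space action of involutions of $\pr^1$ recorded in Part (2).
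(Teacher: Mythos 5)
Your proposal is correct and follows essentially the same route as the paper: parts (1) and (2) via the single-intersection-point and M\"obius-transformation facts, and part (3) by noting that $\mathrm{d}_p\ttheta$ is an anti-symplectic linear involution of $T_p\wt{X}$ with eigenvalues $1,-1$, whose eigenlines must be $T_pC_i$ and $T_pC_j$, so that one component carries the trivial action and the other a nontrivial involution. The only difference is that you spell out what the paper asserts without proof --- the isotropy of the eigenspaces forcing eigenvalues $1,-1$, and the identification of the $\pm1$ tangent eigenvalue with trivial versus two-fixed-point action on $\pr^1$ --- which is a welcome elaboration rather than a different argument.
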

\begin{proof}
    \begin{enumerate}[label=(\arabic*)]
        \item Note that if $C_i\cap C_j$ is non-empty, then it is a single point. The rest follows.
        \item Any automorphism of $C_i\simeq\pr^1$ is given by a M$\ddot{o}$bius transformation (c.f. \cite[Exercise I. 6.6]{Hartshorne}). It follows that an involution of $\pr^1$ either acts trivially or has exactly two fixed points.
        \item We denote by $p$ the unique point in $C_i\cap C_j$. The tangent map $\mathrm{d}_p\ttheta\colon T_p\wt{X}\to T_p\wt{X}$ is an anti-symplectic linear involution of the $2$-dimensional symplectic vector space $T_p\wt{X}$ with two distinct eigenvalues $1,-1$. By assumption, $T_pC_i,T_pC_j$ are eigenspaces of the linear map $\mathrm{d}_p\ttheta$. It follows that $\mathrm{d}_p\ttheta$ acts on one of $T_pC_i,T_pC_j$ by eigenvalue $1$, and acts on the other by eigenvalue $-1$. The rest follows from \ref{autop2}.
    \end{enumerate}
\end{proof}

We emphasize that when we say that $\ttheta$ preserves a component, we mean that $\ttheta(C_i)\subset C_i$; when we say that $\ttheta$ fixes a component, we mean that $\ttheta$ fixes $C_i$ pointwise, i.e. acts trivially. A direct application of \ref{autop3} in \Cref{liftoncomponent} gives \Cref{liftoncomponentcor}.

\begin{corollary}\label{liftoncomponentcor}
    Consider a subset $\{C_{i'}\}_{i'\in I'}\subset\{C_i\}_{i\in I}$. Assume that
    \begin{enumerate}
        \item The lift $\ttheta$ preserves each component $C_{i'},\ i'\in I'$.
        \item The $C_{i'}$'s have pairwise transversal intersection according dually to a Dynkin diagram of type $A$.
    \end{enumerate}
    Then the $\ttheta$-fixed and non-fixed components appear alternatingly.
\end{corollary}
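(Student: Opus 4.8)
The plan is to use the dichotomy supplied by \Cref{liftoncomponent} to attach to each preserved component one of two labels, and then to observe that the type $A$ adjacency pattern forces these labels to alternate along the chain.

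First I would note that, by hypothesis, $\ttheta$ preserves every component $C_{i'}$, so part \ref{autop2} of \Cref{liftoncomponent} applies to each of them: either $\ttheta$ acts trivially on $C_{i'}$ (call such a component \emph{fixed}), or $\ttheta$ has exactly two fixed points on $C_{i'}$ (call it \emph{non-fixed}). Thus every member of the family carries a well-defined label in $\{\text{fixed},\text{non-fixed}\}$, and the assertion to be proved is precisely that these labels alternate.

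Next I would exploit the type $A$ assumption to linearly order the family. Since the $C_{i'}$ intersect pairwise transversally according dually to a type $A$ Dynkin diagram, their dual graph is a path, so after relabeling we may write the family as a chain $C_{i'_1},\dots,C_{i'_k}$ in which $C_{i'_s}\cap C_{i'_{s+1}}$ is a single transversal point for each $s$, while all non-consecutive pairwise intersections are empty. For each adjacent pair $(C_{i'_s},C_{i'_{s+1}})$, both components are preserved by $\ttheta$ and they meet, so part \ref{autop3} of \Cref{liftoncomponent} applies and tells us that exactly one of the two is fixed and the other is non-fixed. Hence adjacent components always carry opposite labels.

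Finally, a proper two-coloring of a path is unique up to swapping the two colors and is necessarily alternating, which is exactly the claim that the $\ttheta$-fixed and non-fixed components appear alternatingly. I do not expect a genuine obstacle here: essentially all the content is already packaged in \Cref{liftoncomponent}\ref{autop3}, and the only point meriting a word of care is to justify that the type $A$ hypothesis linearly orders the family, so that ``adjacent'' unambiguously means ``consecutive in the chain'' and the alternation statement is well-posed.
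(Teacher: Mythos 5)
Your proposal is correct and follows essentially the same route as the paper: the paper proves the corollary as a direct application of part \ref{autop3} of \Cref{liftoncomponent} to adjacent components along the type $A$ chain, which is exactly the content of your argument (with the two-coloring of a path making the alternation explicit).
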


\Cref{isolatedtransversal} follows from \Cref{LA1}, \Cref{liftoncomponent}, and the fact that $\wt{X}^{\ttheta}$ is a smooth Lagrangian (c.f. \Cref{antisymtangent}).

\begin{proposition}\label{isolatedtransversal}
\begin{enumerate}[label=(\arabic*)]
     \item\label{it1} $\pi^{-1}(0)^{\ttheta}$ is a disjoint union of $\pr^1$'s and finitely many points. The number of isolated points in $\pi^{-1}(0)^{\ttheta}$ equals the number of irreducible components of $X^{\theta}$. 
     \item\label{it2} For each of the isolated point $p\in\pi^{-1}(0)^{\ttheta}$, there is a unique $\tilde{L}_j$, such that $\tilde{L}_j\cap\pi^{-1}(0)^{\ttheta}=\{p\}$. 
     \item\label{it3} If $\tilde{L}_j\cap C_i\neq\emptyset$, then $\tilde{L}_j\cap C_i=\tilde{L}_j\cap\pi^{-1}(0)^{\ttheta}=\{p\}$. Moreover, $\tilde{L}_j$ and $C_i$ meet transversally at $p$. 
\end{enumerate}
\end{proposition}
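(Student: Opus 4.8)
The plan is to read everything off the global structure of the smooth Lagrangian $\wt{X}^{\ttheta}$ (\Cref{antisymtangent}), combined with the component-by-component analysis of $\ttheta$ on $\pi^{-1}(0)_{\red}$ furnished by \Cref{liftoncomponent}. The single most useful fact is that, being smooth, $\wt{X}^{\ttheta}$ has pairwise disjoint irreducible components, each of pure dimension $1$. I would first identify these components: a component contained in $\pi^{-1}(0)$ is a $C_i$ fixed pointwise by $\ttheta$, hence a $\pr^1$; a component $Z$ not contained in $\pi^{-1}(0)$ maps birationally onto a $1$-dimensional irreducible subvariety of $X^{\theta}$, i.e.\ onto some $L_j$, and since $\pi$ is an isomorphism over $L_j\setminus\{0\}$ carrying $\ttheta$ to $\theta$, one gets $Z=\overline{\pi^{-1}(L_j\setminus\{0\})}=\tilde{L}_j$. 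Thus the components of $\wt{X}^{\ttheta}$ are exactly the $\tilde{L}_1,\dots,\tilde{L}_m$ (each an $\A^1$ by \Cref{LA1}) together with the $\ttheta$-fixed $\pr^1$'s.

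For part \ref{it1} I would intersect this decomposition with $\pi^{-1}(0)$. A fixed $\pr^1$ lies entirely in $\pi^{-1}(0)$ and contributes a $\pr^1$, whereas each $\tilde{L}_j$ meets $\pi^{-1}(0)$ in the single limit point $s_0^{(j)}$ of \cref{toruss0}: the finite birational morphism $\tilde{L}_j\to L_j$ of \Cref{LA1} has $\tilde{L}_j\cap\pi^{-1}(0)$ as its fibre over $0$, a single point. Disjointness of the components then gives that $\pi^{-1}(0)^{\ttheta}$ is the disjoint union of the fixed $\pr^1$'s with the points $s_0^{(1)},\dots,s_0^{(m)}$, the latter being isolated because no further component of $\wt{X}^{\ttheta}$ passes through them. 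This proves \ref{it1} and \ref{it2} at once: the assignment $\tilde{L}_j\mapsto s_0^{(j)}$ is a bijection onto the isolated points, with inverse sending an isolated point $p$ to the unique component of the smooth curve $\wt{X}^{\ttheta}$ through $p$, which cannot be a fixed $\pr^1$ (else $p$ would fail to be isolated) and is therefore some $\tilde{L}_j$.

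For part \ref{it3}, suppose $\tilde{L}_j\cap C_i\neq\emptyset$. Since $\tilde{L}_j\cap\pi^{-1}(0)=\{s_0^{(j)}\}$ and $C_i\subset\pi^{-1}(0)$, the intersection is exactly $p:=s_0^{(j)}\in\pi^{-1}(0)^{\ttheta}$. The hypothesis forces $C_i$ not to be fixed pointwise, since a fixed $\pr^1$ is disjoint from $\tilde{L}_j$ by smoothness; hence by \Cref{liftoncomponent} either $\ttheta$ preserves $C_i$ with $p$ one of its two isolated fixed points, or $\ttheta$ swaps $C_i$ with some $C_{i'}$ and $p\in C_i\cap C_{i'}$. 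Transversality I would then get from linear algebra at $p$: the involution $\mathrm{d}_p\ttheta$ is anti-symplectic on the symplectic plane $T_p\wt{X}$, so its $(\pm1)$-eigenspaces $E_{\pm}$ are two distinct Lagrangian lines; since $\tilde{L}_j\subset\wt{X}^{\ttheta}$ one has $T_p\tilde{L}_j=T_p\wt{X}^{\ttheta}=E_{+}$, while $\mathrm{d}_p\ttheta$ sends the line $T_pC_i$ to $E_{-}$ in the preserved case and to $T_pC_{i'}\neq T_pC_i$ in the swapped case, so in either case $T_pC_i\neq E_{+}=T_p\tilde{L}_j$.

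\textbf{Main obstacle.} Everything outside part \ref{it3} is a clean formal consequence of the smoothness (hence disjointness of components) of $\wt{X}^{\ttheta}$ and of \Cref{LA1}. The one genuinely delicate point is the tangent-space bookkeeping in \ref{it3}: verifying that $\mathrm{d}_p\ttheta$ acts by $-1$ on $T_pC_i$ in the preserved case — which uses that $\ttheta|_{C_i}$ is a Möbius involution of $\pr^1$ with $p$ an isolated fixed point — and that in the swapped case no eigenline of $\mathrm{d}_p\ttheta$ can coincide with $T_pC_i$, because a line swapped with a distinct line cannot be fixed. Both reduce to short linear-algebra checks, so I expect no real difficulty beyond keeping the cases straight.
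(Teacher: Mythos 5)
Your proposal is correct and takes essentially the same route as the paper: the paper's proof is precisely the one-line observation that the proposition follows from \Cref{LA1}, \Cref{liftoncomponent}, and the fact that $\wt{X}^{\ttheta}$ is a smooth Lagrangian (\Cref{antisymtangent}), which are exactly the three ingredients you organize — disjointness of the components of $\wt{X}^{\ttheta}$, the single-point intersection $\tilde{L}_j\cap\pi^{-1}(0)=\{s_0^{(j)}\}$, and the eigenline analysis of $\mathrm{d}_p\ttheta$. Your write-up simply supplies the details the paper leaves implicit, including the case analysis for transversality in part (3).
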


To figure out which $C_i$ the component $\tilde{L}_j$ intersects with exactly, we need a description of $C_i$ in terms of quiver data. Recall $B_a\colon V_{t(a)}\to V_{h(a)}$. Set $\epsilon B_a:=\begin{cases}
    B_a,\ a\in\Omega\\
    -B_a,\ a\in\Omega^* 
\end{cases}$. Let $(B_{a})_{\substack{a\in\bar{\Omega} \\ t(a)=i}}$ denote the collection of the entries of $(B,B^*)$ with source $V_i$. Fix the character $\chi=(-1,\cdots,-1)$. Pick a point $(B,B^*,l_0,k_0)\in\mu^{-1}(0)^{\chi-ss}$, and consider the following chain of linear maps for each $i=1,\cdots,n$.
\begin{equation}\label{complex}
  V_i\xrightarrow[]{(B_{a})_{a\in\bar{\Omega},\ t(a)=i}}\bigoplus_{\substack{a\in\bar{\Omega} \\ t(a)=i}}V_{h(a)}\xrightarrow[]{\epsilon (B_{a^*})_{a\in\bar{\Omega},\ t(a)=i}}V_i.
\end{equation}
The composition of the maps in \cref{complex} is $0$ by the ADHM equation (c.f. \cref{ADEmomentmap0}). Moreover, the second map in \cref{complex} is surjective (c.f.\Cref{stTFAE}). Recall that $\dim\bigoplus_{\substack{a\in\bar{\Omega}\\ t(a)=i}}V_{h(a)}=2\dim V_i$. Therefore, \cref{complex} is an exact sequence if and only if the first map in \cref{complex} is injective, i.e. $\rank(B_{a})_{\substack{a\in\bar{\Omega}\\t(a)=i}}=\dim V_i$, where $\rank(B_{a})_{\substack{a\in\bar{\Omega}\\t(a)=i}}$ denotes the rank of the linear map $(B_{a})_{\substack{a\in\bar{\Omega}\\t(a)=i}}\colon V_i\to\bigoplus_{\substack{a\in\bar{\Omega} \\ t(a)=i}}V_{h(a)}$. 

\begin{proposition}\label{Ciequation}
    Define
    \[
    C_i=\bigg\{[B,B^*,l_0,0]\in\M_{\chi}(\delta,w)\big| \rank(B_{a})_{\substack{a\in\bar{\Omega}\\ t(a)=i}}<\dim V_i \bigg\},\ 1\leq i\leq n.
    \]
    Then $\pi^{-1}(0)=\bigcup_{1\leq i\leq n}C_i$, and $C_i\simeq\pr^1$ corresponds to the vertex $i$ in the Dynkin diagram labeling $\Gamma$.
\end{proposition}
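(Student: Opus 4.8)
The plan is to translate the rank condition defining $C_i$ into the representation theory of the preprojective algebra $\Pi^0$, and then to read off $\pi^{-1}(0)$ from the representation type. Write $S_i$ for the simple $\Pi^0$-module concentrated at the vertex $i$ (one-dimensional at $i$, zero elsewhere, every arrow acting by $0$). First I would record the reformulation already prepared in the text: a semistable point $(B,B^*,l_0,0)$ lies in $C_i$ exactly when the first map of \cref{complex} fails to be injective, i.e.\ some $0\neq v\in V_i$ is killed by every arrow leaving $i$. Such a $v$ spans a copy of $S_i$ inside the $\Pi^0$-module $V:=(B,B^*)$, so $C_i=\{[B,B^*,l_0,0]\mid S_i\hookrightarrow V\}$. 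It is here that the null-root identity $2\delta_i=\sum_{j\sim i}\delta_j$ (that is, $\mathcal C\delta=0$) is used: it forces the middle term of \cref{complex} to have dimension $2\dim V_i$, so that surjectivity of the second map gives $\dim\ker=\dim V_i$ and the inequality $\rank<\dim V_i$ is genuinely the failure of exactness.

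Next I would prove $\pi^{-1}(0)=\bigcup_i C_i$ representation-theoretically. By \Cref{stTFAE} the module $V$ of dimension $\delta$ is generated by $\xi:=l_0(1)\in V_0$, so any proper nonzero subrepresentation $N$ has $N_0=0$ (else $N_0=V_0=\C\xi$ forces $N=V$); hence $N$ is supported on the vertices $1,\dots,n$, which span the finite Dynkin diagram obtained by deleting the affine node, and every arrow meeting $0$ acts by $0$ on $N$. Thus $N$ is a module over the preprojective algebra of that finite Dynkin quiver, which is finite-dimensional and whose only simple modules are the vertex simples $S_j$; since a nonzero finite-dimensional module contains a simple submodule, we get the chain of equivalences: $V$ is simple $\iff$ $V$ has no $S_j$-subobject $(1\le j\le n)$ $\iff$ every outgoing map is injective $\iff$ $[V]\notin\bigcup_i C_i$. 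It remains to match simplicity with the fiber of $\pi$. By the Crawley--Boevey--Holland description (\Cref{pathalgebra}, \Cref{quiverinv}) the smooth locus $X\setminus\{0\}$ parameterizes exactly the simple $\Pi^0$-modules of dimension $\delta$, while $0\in X=\M_0(\delta,0)$ is the image of the decomposable module $\bigoplus_j S_j^{\oplus\delta_j}$. As the affine GIT quotient separates closed orbits, a simple module maps to a point $\neq0$, and a non-simple semistable module, lying over no point of $X\setminus\{0\}$, must map to $0$. Combining the two equivalences yields $\pi^{-1}(0)=\bigcup_{i=1}^n C_i$ as sets.

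Finally I would identify each $C_i$ with $\pr^1$ and with the vertex $i$. On the dense open part of $C_i$ the rank drops by exactly one, so $S_i$ appears with multiplicity one and there is a short exact sequence $0\to S_i\to V\to \bar V\to0$ with $\dim\bar V=\delta-\alpha_i$. Since $(\delta-\alpha_i,\delta-\alpha_i)=2$ (using $(\delta,\alpha_i)=0$ and $(\alpha_i,\alpha_i)=2$), the vector $\delta-\alpha_i$ is a real root, so there is an essentially unique rigid module $\bar V_i$ of that dimension and $\bar V$ is forced to be $\bar V_i$; moreover, as $\Pi^0$ is Calabi--Yau of dimension $2$, $\dim\operatorname{Ext}^1(\bar V_i,S_i)=\dim\Hom(\bar V_i,S_i)+\dim\Hom(S_i,\bar V_i)-(\delta-\alpha_i,\alpha_i)=2$ once the generic $\Hom$-spaces vanish. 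Hence the nonsplit extensions of $\bar V_i$ by $S_i$, taken up to isomorphism, form $\pr(\operatorname{Ext}^1(\bar V_i,S_i))\simeq\pr^1$, and this is precisely $C_i$. Distinct vertices give non-isomorphic subobjects $S_i$, hence distinct curves, so $C_1,\dots,C_n$ are exactly the $n$ irreducible components of $\pi^{-1}(0)$ recalled in \Cref{Kleinianreview}; tracking which $\bar V_i$ admit a common degeneration then matches $C_i\cap C_j\neq\emptyset$ with adjacency in the Dynkin diagram and pins down the labelling. The main obstacle is exactly this last step: verifying that the quotient is rigid, that $\dim\operatorname{Ext}^1=2$ (not merely $\le2$), and that the resulting $\pr^1$'s meet dually to the Dynkin diagram is where the genuine work lies, and in the body it is carried out through the explicit quiver coordinates of \Cref{Anquiver,Dnquiver,E6quiver,E7quiver,E8quiver}.
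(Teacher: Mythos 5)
Your first two paragraphs are correct and take a genuinely different route from the paper. You argue representation-theoretically, in the style of Crawley--Boevey: $C_i$ is the locus of modules containing the vertex simple $S_i$, and the equality $\pi^{-1}(0)=\bigcup_{i=1}^n C_i$ follows from generation by $V_0$ (\Cref{stTFAE}), the finite-dimensionality of the Dynkin preprojective algebra, and the fact that $X\setminus\{0\}$ parameterizes simple $\Pi^0$-modules of dimension $\delta$ while a non-simple module has non-simple semisimplification and hence maps to $0$. The paper does none of this: it constructs an isomorphism $\varphi_1\colon\M_{\chi}(\delta,w)\xrightarrow{\sim}\M_{\chi'}(\delta,0)$ by matching the framed semistability condition of \Cref{stTFAE} with King's $\chi'$-semistability, composes with the dualization isomorphism $\phi_1$ of \cref{comdiag}, and then pulls back Nakajima's description of the exceptional fiber (\Cref{Cilemma}, i.e.\ \cite[Theorem 5.10]{Nakajima96}), which already contains both the $\pr^1$ statement and the vertex correspondence. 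What your route buys is self-containedness and insight into the module structure; what the paper's route buys is that the two hardest claims never have to be proved, only transported.

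The genuine gap is your third paragraph, which is exactly the content that the paper outsources to Nakajima. You assert, without proof: (i) that the quotient $\bar V=V/S_i$ is rigid; (ii) that a rigid $\Pi^0$-module of dimension $\delta-\alpha_i$ is unique up to isomorphism, so that $\bar V\simeq\bar V_i$ at \emph{every} point of $C_i$ (including points of $C_i\cap C_j$, where $V/S_i$ contains $S_j$); (iii) that $\Hom(\bar V_i,S_i)=\Hom(S_i,\bar V_i)=0$, which is what upgrades $\dim\operatorname{Ext}^1(\bar V_i,S_i)\geqslant 2$ to equality; and (iv) that the resulting bijection between $C_i$ and $\pr(\operatorname{Ext}^1(\bar V_i,S_i))$ is an isomorphism of varieties compatible with the Du Val labelling of components. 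None of these follows formally from what precedes: for $\Pi^0$ (unlike $\Pi^\lambda$ with generic $\lambda$) there are many non-isomorphic modules of dimension $\delta-\alpha_i$, e.g.\ $\bigoplus_{j}S_j^{\oplus\delta_j}\ominus S_i$, so uniqueness of $\bar V$ is a real claim that needs the rigidity (i) plus a theorem. Moreover, your closing sentence — that this work "is carried out through the explicit quiver coordinates" of the later sections — misreads the paper: \Cref{Anquiver,Dnquiver,E6quiver,E7quiver,E8quiver} \emph{use} \Cref{Ciequation}, they do not prove it, and in the paper's proof items (i)--(iv) are precisely what is replaced by the citation of \Cref{Cilemma}. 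So as written, the hardest third of the statement is neither proved in your proposal nor correctly located elsewhere; to complete your route you would need to supply or cite the Crawley--Boevey-style arguments for (i)--(iv).
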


To prove \Cref{Ciequation}, we also need to consider quiver varieties without framing. Let $M(\delta,0)$ be the representation space of an extended Dynkin quiver of types $ADE$ without framing. Consider the character $\chi'=(\sum_{i=0}^n,-1\cdots,-1)$. Then $\pi_{\chi'}\colon\M_{\chi}(\delta,0)\to\M_0(\delta,0)\simeq\C^2/\Gamma$ gives the minimal resolution of a Kleinian singularity (\cite[Theorem 5.2]{Nakajima96}). Note that we use $\pi_{\chi'}$ to distinguish it from the minimal resolution $\pi\colon\M_{\chi}(\delta,w)\to\M_0(\delta,w)$ given in \Cref{sympres}. Next, define the dual representation space $M(\delta^*,0)$ by replacing each $V_i$ in the definition \cref{quiverrepdef} by $V^*_i$. Consider the character $-\chi'=(-\sum_{i=0}^n,1\cdots,1)$. Similarly to the case of $\pi_{\chi'}$, we have that $\pi_{-\chi'}\colon\M_{-\chi'}(\delta^*,0)\to\M_0(\delta^*,0)\simeq\C^2/\Gamma$ is the minimal resolution of a Kleinian singularity. For a linear map $B_a\colon V_{t(a)}\to V_{h(a)}$, we denote by $B'_{a^*}\colon V_{h(a)}^*\to V_{t(a)}^*$ its dual. There is an obvious linear isomorphism of vector spaces $M_{\chi'}(\delta,0)\xrightarrow[]{\sim} M_{-\chi'}(\delta^*,0),\ (B,B^*,0,0)\mapsto ((B^*)',B',0,0)$. It naturally induces the following commutative diagram.
\begin{equation}\label{comdiag}
    \begin{tabular}{c}
    \begin{tikzpicture}
    \def\a{1.5} \def\b{1}
    \path
    (-\a,0) node (A) {$\M_{\chi'}(\delta,0)$}      
    (\a,0) node (B) {$\M_{-\chi'}(\delta^*,0)$}
    (-\a,-\b) node (C) {$\M_0(\delta,0)$}
    (\a,-\b) node (D) {$\M_0(\delta^*,0)$};
    \begin{scope}[nodes={midway,scale=.75}]
    \draw [->] (A)--(B) node[above]{$\phi_1$};
    \draw [->] (C)--(D) node[above] {$\phi_0$};
    \draw [->] (A)--(C) node[left]
    {$\pi_{\chi'}$};
    \draw [->] (B)--(D) node[right]{$\pi_{-\chi'}$};
    \end{scope}
    \end{tikzpicture} 
    \end{tabular}
\end{equation}
Moreover, $\phi_1$ and $\phi_0$ are both isomorphisms of varieties (c.f. \cite[Lemma 10.29]{Kirillovjr}). It follows that $\phi_1$ restricts to an isomorphism on the exceptional fibers $\pi_{\chi'}^{-1}(0)\xrightarrow[]{\sim}\pi_{-\chi'}^{-1}(0)$. \Cref{Cilemma}, due to Nakajima, is essential to the proof of \Cref{Ciequation}. 
\begin{theorem}\cite[Theorem 5.10]{Nakajima96}\label{Cilemma}
Define
\[
  C'_i:=\bigg\{[B',B'^*,0,0]\in\M_{-\chi'}(\delta^*,0)\big| \rank(B'_{a^*})_{\substack{a\in\bar{\Omega}\\ t(a)=i}}<\dim V^*_i \bigg\},\ 1\leq i\leq n.
\]
Then $\pi_{-\chi'}^{-1}(0)=\bigcup_{1\leq i\leq n}C'_i$, and $C'_i\simeq\pr^1$ corresponds to the vertex $i$ in the Dynkin diagram labeling $\Gamma$.
\end{theorem}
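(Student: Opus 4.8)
The plan is to identify the fibre $\pi_{-\chi'}^{-1}(0)$ with the locus of \emph{nilpotent} semistable representations, to realize each $C'_i$ as the non-exactness locus of a tautological two-term complex at the vertex $i$, and finally to parametrize that locus as a projective line carrying the $i$-th Dynkin vertex. Since $\pi_{-\chi'}\colon\M_{-\chi'}(\delta^*,0)\to\C^2/\Gamma$ is the minimal resolution \cite[Theorem 5.2]{Nakajima96}, the classical theory of Kleinian singularities recalled in \Cref{Kleinianreview} already tells us that $\pi_{-\chi'}^{-1}(0)_{\red}$ is a union of exactly $n$ copies of $\pr^1$ meeting dually to the Dynkin diagram of $\Gamma$; the content of the theorem is therefore to match these abstract components with the explicit rank loci $C'_i$ and to pin down the vertex labelling. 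First I would record, using the graded isomorphism $e_0\Pi^0 e_0\simeq\C[\M_0(\delta^*,0)]$ of \Cref{quiverinv} together with the fact that the coordinate ring is generated by traces of loops based at the vertex $0$, that a semistable point $[B',B'^*,0,0]$ lies over $0$ if and only if the underlying representation is nilpotent, i.e. its semisimplification is $\bigoplus_{j}S_j^{\oplus\delta_j}$, where $S_j$ denotes the one-dimensional simple supported at vertex $j$.

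Next I would fix $i\in\{1,\dots,n\}$ and analyze the two-term complex
\[
V_i^*\xrightarrow{\ \alpha_i\ }\bigoplus_{\substack{a\in\bar{\Omega}\\ t(a)=i}}V_{h(a)}^*\xrightarrow{\ \beta_i\ }V_i^*,
\]
where $\alpha_i=(B'_{a^*})_{t(a)=i}$ is the outgoing map defining $C'_i$ and $\beta_i$ collects the incoming maps with the sign $\epsilon$. By the ADHM equation \cref{ADEmomentmap0} the composite $\beta_i\alpha_i$ vanishes, and the affine Cartan relation $2\delta_i=\sum_{j\sim i}\delta_j$ gives $\dim\bigoplus_{t(a)=i}V_{h(a)}^*=2\delta_i$. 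The semistability condition (the dual, unframed analogue of \Cref{stTFAE}) forces $\beta_i$ to be surjective, so $\ker\beta_i$ has dimension $\delta_i$, and $C'_i$ is precisely the locus where $\im\alpha_i\subsetneq\ker\beta_i$, i.e. where the cohomology $H_i:=\ker\beta_i/\im\alpha_i$ is nonzero. I would then establish the set-theoretic equality $\pi_{-\chi'}^{-1}(0)=\bigcup_i C'_i$ in two steps: failure of injectivity of some $\alpha_i$ produces a trapped vertex simple $S_i$ in a subquotient and hence nilpotency, giving $\bigcup_i C'_i\subseteq\pi_{-\chi'}^{-1}(0)$; conversely a semistable nilpotent representation of dimension $\delta$ must have some $H_i\neq 0$ for $i\geq 1$, since exactness at every vertex would make the trace functions of loops at $0$ nonvanishing and thus place the point off the origin.

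The crux is to show that each $C'_i$ is an irreducible projective line labelled by the vertex $i$. Over the open locus of $C'_i$ where $\dim H_i=1$, the representation is the gluing of two nilpotent subrepresentations along a single copy of $S_i$, and the gluing datum is a line in the two-dimensional tautological space spanned by the $u$- and $v$-directions at $i$; projectivizing yields a $\pr^1$. I would verify that $\dim H_i=1$ off finitely many points (so that $C'_i$ is irreducible and one-dimensional) and that distinct $i$ give distinct loci. Combined with the first paragraph, each $C'_i$ is then an irreducible closed curve inside the union of $n$ projective lines $\pi_{-\chi'}^{-1}(0)_{\red}$; being irreducible and one-dimensional, it must coincide with one of those lines, whence $C'_i\simeq\pr^1$. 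Finally, to see that $C'_i$ is attached to the vertex $i$, I would compute the degree of the tautological bundle $\mathcal{R}_j$ (with fibre $V_j^*$) restricted to $C'_i$ and check that it equals $\delta_{ij}$, matching the duality between fundamental weights and simple coroots, so that $i\mapsto C'_i$ realizes the Dynkin labelling.

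The main obstacle is the explicit parametrization $C'_i\simeq\pr^1$ together with the vertex-matching. One must control the cohomology $H_i$ uniformly along $C'_i$, ruling out jumps in $\dim H_i$ that would break irreducibility, and carry out the gluing/extension analysis precisely enough to identify the moduli with $\pr(\C^2)$ rather than merely to bound its dimension. The degree computation for $\mathcal{R}_j|_{C'_i}$, which fixes the bijection with the vertices, is the second delicate point; here one can either argue via the universal short exact sequences defining the $\mathcal{R}_j$, or reduce, through the duality square \cref{comdiag}, to the better-understood tautological bundles on the non-dual side $\M_{\chi'}(\delta,0)$.
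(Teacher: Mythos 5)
Note first that the paper contains no proof of this statement at all: Theorem \ref{Cilemma} is imported verbatim from \cite[Theorem 5.10]{Nakajima96}, and the only internal argument touching it is the reduction in the proof of \Cref{Ciequation}, which transports the rank loci across the isomorphisms $\phi_1$ and $\varphi_1$ of \cref{comdiag} and \cref{chichi'} to the framed side. So your sketch can only be judged on its own terms, and as written it has a concrete error together with genuine gaps. The error is directional. By the paper's convention, $B'_{a^*}$ is the \emph{dual} of $B_a$, hence a map $V^*_{h(a)}\to V^*_{t(a)}=V_i^*$; the collection $(B'_{a^*})_{a\in\bar\Omega,\,t(a)=i}$ therefore points \emph{into} $V_i^*$, and the defining condition of $C'_i$ is failure of \emph{surjectivity} of the incoming map, not failure of injectivity of your outgoing $\alpha_i$. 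The stability consequence is swapped accordingly: $-\chi'$-semistability forces the outgoing map from $V_i^*$ to be injective (this is the dual of the surjectivity of the second map in \cref{complex} on the framed side), not your claim that $\beta_i$ is surjective. This matters for your ``trapped simple'' argument: on the $-\chi'$ side, King's criterion forbids any nonzero subrepresentation vanishing at the vertex $0$, so an $S_i$-\emph{sub}representation with $i\geq 1$ would contradict semistability outright and your version of the inclusion $\bigcup_i C'_i\subseteq\pi_{-\chi'}^{-1}(0)$ collapses; the correct statement is that non-surjectivity at $i$ produces $S_i$ as a \emph{quotient}, which is permitted and does force the semisimplification to differ from the simple of dimension $\delta$. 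All of this is repairable by systematic dualization (which is exactly why the paper prefers to move everything to the framed side before citing Nakajima), but several of your assertions are false as stated.

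Beyond the orientation issue, two steps are asserted rather than proved. The converse inclusion rests on the claim that ``exactness at every vertex would make the trace functions of loops at $0$ nonvanishing''; nothing in your sketch produces a nonzero invariant from exactness, and this implication is not obvious --- it is essentially equivalent to identifying the exact locus with the preimage of $X\setminus\{0\}$, which is the substance of Nakajima's theorem, not an input to it. Likewise the crux you correctly identify --- that $\dim H_i=1$ generically along $C'_i$, that $C'_i$ is irreducible and isomorphic to $\pr^1$, and that the vertex matching follows from $\deg\mathcal{R}_j|_{C'_i}=\delta_{ij}$ --- is left as a plan; the degree computation in particular is where the labelling by the Dynkin vertex actually lives, and without it the statement ``$C'_i$ corresponds to the vertex $i$'' is unestablished even granting the set-theoretic decomposition. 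In short: the skeleton (nilpotent fiber, tautological complex, cohomology jump loci, tautological-bundle degrees) is a plausible reconstruction of how such a result is proved, but the proposal proves neither inclusion correctly as written and defers exactly the steps that carry the content.
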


\begin{proof}[Proof of \Cref{Ciequation}]
Consider the projection map $M_{\chi}(\delta,w)\to\M_{\chi'}(\delta,0),\ (B,B^*,l_0,k_0)\mapsto(B,B^*,0,0)$. We show that it induces an isomorphism of varieties.
\begin{equation}\label{chichi'}
    \begin{aligned}
        \varphi_1\colon \M_{\chi}(\delta,w)&\to\M_{\chi'}(\delta,0) \\
        [B,B^*,l_0,0]&\mapsto[B,B^*,0,0]
    \end{aligned}
\end{equation}
We denote the moment map for the space $M(\delta,w)$ (resp., $M(\delta,0)$) by $\mu$ (resp., $\bar{\mu}$). According to \cref{ADEmomentmap0}, $(B,B^*,l_0,k_0)\in\mu^{-1}(0)$ if and only if $(B,B^*,0,0)\in\bar{\mu}^{-1}(0)$ and $l_0k_0=0$. Moreover, if $(B,B^*,l_0,k_0)\in\mu^{-1}(0)^{ss}$, then $l_0\neq 0,\ k_0=0$ (c.f. \Cref{stTFAE}). To prove that \cref{chichi'} is an isomorphism, it suffices to show that $(B,B^*,l_0,0)\in\mu^{-1}(0)^{\chi-ss}$ if and only if $(B,B^*,0,0)\in\bar{\mu}^{-1}(0)^{\chi'-ss}$. We have that 
\begin{enumerate}[label=(\arabic*)]
    \item\label{ssequiv1} $(B,B^*,l_0,0)$ is $\chi$-semistable if and only if $\im(l_0)=V_0$ and it generates the entire collection of vector spaces $V=(V_i)_{1\leq i\leq n}$ under the action of $(B,B^*)$. (c.f. \Cref{stTFAE}).
    \item\label{ssequiv2} $(B,B^*,0,0)$ is $\chi'$-semistable if and only if any $(B,B^*)$-stable subspace $S=(S_i)_{i\in I}$ satisfies $\chi'(\dim S):=\sum_{i=0}^n\chi'_i\dim S_i\leq 0$. (\cite[Lemma 2.4]{Nakajima96}).
\end{enumerate}
Note that $\dim V_i=\delta_i$ and especially $\dim V_0=\delta_0=1$. For a subspace $S=(S_i)_{i\in I}\subset(V_i)_{i\in I}$, if $S_0=\{0\}$, then we automatically have that $\chi'(\dim S)\leq 0$. If $S_0=V_0$, then $\chi'(\dim S)\leq 0$ implies that $S_i=V_i$. Therefore, we can restate \ref{ssequiv2} into the following.
\begin{enumerate}[label=(\arabic*)$'$]
    \setcounter{enumi}{1}
    \item\label{ssequiv2'} $(B,B^*,0,0)$ is $\chi'$-semistable if and only if $V_0$ generates the entire $V$ under the action of $(B,B^*)$.
\end{enumerate}
From the semistability conditions \ref{ssequiv1} and \ref{ssequiv2'}, it is obvious that $(B,B^*,l_0,0)\in\mu^{-1}(0)^{\chi-ss}$ if and only if $(B,B^*,0,0)\in\bar{\mu}^{-1}(0)^{\chi'-ss}$. The isomorphism $\varphi_1$ in \cref{chichi'} then follows. Similar to showing that $\phi_1$ restricts to an isomorphism $\pi_{\chi'}^{-1}(0)\xrightarrow{\sim}\pi_{-\chi'}^{-1}(0)$ (c.f. the commutative diagram in \cref{comdiag}), one can show that $\varphi_1$ also restricts to an isomorphism $\pi^{-1}(0)\xrightarrow{\sim}\pi_{\chi'}^{-1}(0)$. The rest follows from \Cref{Cilemma} by computing that $(\phi_1\circ\varphi_1)^{-1}(C'_i)=C_i$.
\end{proof}

\subsubsection{Question \ref{q3}}\label{liftsectionq3}
We can write the corresponding divisor of $\pi^{-1}(X^{\theta})$ as a linear combination of the irreducible components $\tilde{L}_j,\ 1\leq j\leq m,\ C_i,\ 1\leq i\leq n$. The coefficient of $\tilde{L}_j$ has to be $1$ because we know that $X^{\theta}$ is reduced from \Cref{Anfixedloci,Dnevenfixedloci,Dnoddfixedloci,E6fixedloci,E7fixedlocus,E8fixedlocus}, and that $\pi$ is an isomorphism away from the exceptional fiber. Therefore, we have $\pi^{-1}(X^{\theta})=\sum_{j=1}^m\tilde{L}_j+\sum_{i=1}^n a_iC_i,\ a_i\geqslant 1$ as a divisor. Our goal is to determine the positive integers $a_i$. Recall the intersection pairing defined in \Cref{ZSpairing}.
\[
   \Div_{\pi^{-1}(0)}(\wt{X})\times\Div(\wt{X})\mapsto\Z,\ (C,D)\mapsto C\cdot D
\]
Recall that $\mathcal{C}$ denotes the Cartan matrix of the simply-laced Dynkin diagram labeling $\Gamma$, and we know that the intersection matrix $(C_i\cdot C_j)_{1\leq i,j\leq n}$ equals $\minus\mathcal{C}$. Define 
\begin{equation}\label{bidef}
    b_i:=\#\bigcup_{j=1}^m\{\tilde{L}_j|\tilde{L}_j\cap C_i\neq\emptyset\},\ \ 1\leq i\leq n.
\end{equation}
Note that if $\tilde{L}_j\cap C_i\neq\emptyset$ is nonempty, then the two curves meet transversally at a single point by \ref{it3} of \Cref{isolatedtransversal}. Applying the property \ref{pair1} of the intersection pairing in \Cref{ZSpairing}, we can deduce that
\begin{equation}\label{biequiv}
    b_i=\sum_{j=1}^mC_i\cdot\tilde{L}_j.
\end{equation}
\begin{proposition}\label{multofcomponent}
   If $X^{\theta}=\ddiv(f)\subset X$ is a principal divisor, then 
   \begin{equation}\label{mult}
       (a_1,\cdots,a_n)^t=\mathcal{C}^{-1}(b_1,\cdots,b_n)^t
   \end{equation}
\end{proposition}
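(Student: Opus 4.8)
The plan is to pin down the coefficients $a_i$ by pairing the divisor $\pi^{-1}(X^{\theta})$ against each exceptional curve $C_k$ and exploiting that $X^{\theta}$ is principal. The key structural input is \Cref{ZSpairing}: the pairing $\Div_{\pi^{-1}(0)}(\wt X)\times\Div(\wt X)\to\Z$ is well-defined, bilinear, symmetric on $\Div_{\pi^{-1}(0)}(\wt X)$, and—crucially—annihilates principal divisors (property \ref{pair4}). So first I would write the hypothesis as $\pi^{-1}(X^\theta)=\ddiv(\pi^*f)$ for the function $f$ with $X^\theta=\ddiv(f)$; since the total transform of a principal divisor is principal, $\pi^{-1}(X^\theta)$ is itself a principal divisor on $\wt X$. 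On the other hand, we have already decomposed it as $\pi^{-1}(X^\theta)=\sum_{j=1}^m\tilde L_j+\sum_{i=1}^n a_iC_i$ as a divisor (the $\tilde L_j$ appearing with multiplicity $1$ because $X^\theta$ is reduced and $\pi$ is an isomorphism off the exceptional fiber).

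Next I would pair both sides with $C_k$ for each fixed $k\in\{1,\dots,n\}$. Each $C_k$ lies in $\Div_{\pi^{-1}(0)}(\wt X)$, so $C_k\cdot(-)$ is a legitimate functional. Since the right-hand divisor is principal, property \ref{pair4} gives
\[
0 \;=\; C_k\cdot\Big(\sum_{j=1}^m\tilde L_j+\sum_{i=1}^n a_iC_i\Big)
\;=\; \sum_{j=1}^m C_k\cdot\tilde L_j \;+\; \sum_{i=1}^n a_i\,(C_k\cdot C_i),
\]
using bilinearity \ref{pair3}. By \cref{biequiv} the first sum is exactly $b_k$, and by the identification of the intersection matrix recalled in \Cref{pairingsection} we have $C_k\cdot C_i=-\mathcal{C}_{ki}$. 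Hence for every $k$,
\[
b_k \;=\; \sum_{i=1}^n a_i\,\mathcal{C}_{ki},
\]
which in matrix form reads $(b_1,\dots,b_n)^t=\mathcal{C}\,(a_1,\dots,a_n)^t$.

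Finally I would invert: the Cartan matrix $\mathcal{C}$ of an $ADE$ Dynkin diagram is positive definite, hence invertible over $\Q$, so $(a_1,\dots,a_n)^t=\mathcal{C}^{-1}(b_1,\dots,b_n)^t$, which is \cref{mult}. The only genuine subtlety—and the step I would treat most carefully—is justifying that $\pi^{-1}(X^\theta)$ is a \emph{principal} divisor to which \ref{pair4} applies, i.e. that the total transform $\pi^*\ddiv(f)$ equals $\ddiv(\pi^*f)$ as Weil divisors on $\wt X$ and coincides scheme-theoretically with our component decomposition $\sum_j\tilde L_j+\sum_i a_iC_i$. This rests on $\pi$ being a proper birational morphism that is an isomorphism away from $0$, so $\pi^*f$ is a rational function on $\wt X$ whose divisor agrees with $\sum_j\tilde L_j$ outside the exceptional locus and supplies exactly the extra exceptional contribution $\sum_i a_iC_i$; once this is in place the computation above is purely formal. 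Everything else is bilinear bookkeeping with the already-established properties of the pairing.
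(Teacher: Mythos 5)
Your proposal is correct and follows essentially the same route as the paper's proof: write $\pi^{-1}(X^{\theta})=\ddiv(\pi^*f)$ as $\sum_j\tilde{L}_j+\sum_i a_iC_i$, pair against each $C_k$ using bilinearity and the vanishing of the pairing on principal divisors, identify $\sum_j C_k\cdot\tilde{L}_j=b_k$ and $(C_k\cdot C_i)=-\mathcal{C}_{ki}$, and invert the Cartan matrix. Your extra care in justifying that $\ddiv(\pi^*f)$ is principal and agrees with the stated component decomposition is a point the paper asserts without elaboration, but it does not change the argument.
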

\begin{proof}
    If $X^{\theta}=\ddiv(f)$, then $\pi^{-1}(X^{\theta})=\ddiv(\pi^*f)=\sum_{j=1}^m\tilde{L}_j+\sum_{i=1}^na_iC_i$ is also a principal divisor. Applying properties \ref{pair3} and \ref{pair4} in \Cref{ZSpairing}, we have that
    \begin{equation}\label{intertemp}
    \sum_{j=1}^mC_i\cdot\tilde{L}_j+\sum_{k=1}^na_k C_i\cdot C_k= C_i\cdot\ddiv(\pi^*f)=0,\ 1\leq i\leq n.
    \end{equation}
    Rearranging the terms, by \cref{biequiv} and the fact that $(C_i\cdot C_j)_{1\leq i,j\leq n}=\minus\mathcal{C}$, we get
    \[
    \mathcal{C}(a_1,\cdots,a_n)^t=(\sum_{j=1}^mC_1\cdot\tilde{L}_j,\cdots,\sum_{j=1}^mC_n\cdot\tilde{L}_j)^t=(b_1,\cdots,b_n)^t.
    \]
    Then \cref{mult} follows from the fact that the Cartan matrix $\mathcal{C}$ is invertible.
\end{proof}

\begin{remark}\label{multofcomponentrem}
   \begin{enumerate}[label=(\arabic*)]
       \item\label{gred} We comment that $a_i=1,\ 1\leq i\leq n$ does not imply that $\pi^{-1}(X^{\theta})$ is reduced, but rather generically reduced. However, for a principal divisor, being generically reduced is equivalent to being reduced.
       \item Assume $X^{\theta}$ is a principal divisor. If $C_i$ intersects with more than $2$ irreducible components in $\{\tilde{L}_j|1\leq j\leq m\}\cup\{C_k|1\leq k\leq n\}$, then $a_i>1$. This follows from \cref{intertemp}.
       \item Note that $\pi^{-1}(X^{\theta})$ is definitely not reduced in types $D,E$ because $\pi^{-1}(0)$ is not reduced in these types (c.f. \Cref{exceptionalnonreduced}). However, $\pi^{-1}(X^{\theta})$ can be non-reduced in type $A$ (c.f. \Cref{Anoddcase1preimage,Anevencase1preimage}), though $\pi^{-1}(0)$ is reduced in type $A$. 
       \item\label{generaldivisor} Let $\ddiv(f)\subset X$ be a reduced principal divisor. We write $\ddiv(\pi^*f)=D+\sum_{i=1}^na_iC_i$ such that $D$ does not involve any $C_i$. Further assume that each irreducible component of $D$ intersects with at most one $C_i$, and the intersection is transversal. Define $b_i$ similarly as in \cref{bidef}. Then \cref{mult} still holds.
   \end{enumerate}
\end{remark}

From \Cref{Anfixedloci,Dnevenfixedloci,Dnoddfixedloci,E6fixedloci,E7fixedlocus,E8fixedlocus}, we see that the corresponding divisor of $X^{\theta}$ is principal except for the following two cases.

\begin{enumerate}
    \item Type $A_n$, with $n$ even, case $\RMN{3}$, where $X^{\theta}=\{0\}$. We have $\pi^{-1}(X^{\theta})=\pi^{-1}(0)=\sum_{i=1}^nC_i$ is reduced, according to \Cref{exceptionalnonreduced}.
    \item Type $A_n$, with $n$ odd, case $\RMN{2}$, where $X^{\theta}=\Spec\C[X]/(x,z)$. The preimage $\pi^{-1}(X^{\theta})$ is generically reduced (c.f. \ref{Anliftxztemp3} of \Cref{Anevencase2preimage}).
\end{enumerate}

\section{Preimages of fixed point loci for type $A_n$ Kleinian singularity}\label{Anquiver} 

{
\captionsetup[table]{name=Picture}
\begin{table}[ht]
    \centering
\begin{tabular}{c}
    \scalebox{1.3}{\begin{tikzpicture}
        \filldraw[black] (3,3.5) circle (2pt) node[anchor=south]{$W_0$};
        \filldraw[black] (3,2) circle (2pt) node[anchor=north]{$V_0$};
        \node at (-1.2,1.2) (Ar) {$\widetilde{A}_n$};
        \filldraw[black] (-0.05,0) circle (2pt) node[anchor=north]{$V_1$};
        \filldraw[black] (1.5,0) circle (2pt) node[anchor=north]{$V_2$};
        \filldraw[black] (3,0) circle (2pt) node[anchor=north]{$V_3$};
        \node at (3.75,0) (cdots) {$\cdots$};
        \filldraw[black] (4.5,0) circle (2pt) node[anchor=north]{$V_{n\sminus1}$};
        \filldraw[black] (6.05,0) circle (2pt) node[anchor=north]{$V_n$};

        \draw[<-,thick] (2.95,3.45) -- (2.95,2.1);
        \node at (2.7,2.75) (q) {$k_0$};
        \draw[->,thick] (3.05,3.4) -- (3.05,2.05);
        \node[] at (3.3,2.75) (p) {$l_0$};
        \draw[->,thick] (2.9,1.95) -- (0,0.1);
        \node at (2.1,1.1) (a0) {\scalebox{0.8}{$B_{1\la 0}$}};
        \draw[<-,thick] (2.83,2) -- (-0.1,0.15);
        \node[] at (1.5,1.55) (b0) {\scalebox{0.8}{$B^*_{0\la 1}$}};
        \draw[<-,thick] (3.1,1.95) -- (6,0.1);
        \node at (4.1,1.1) (ar) {\scalebox{0.8}{$B_{0\la n}$}};
        \draw[->,thick] (3.17,2) -- (6.1,0.15);
        \node[] at (4.35,1.55) (br) {\scalebox{0.8}{$B^*_{n\la 0}$}};
        \draw[->,thick] (0.1,0.05) -- (1.4,0.05);
        \node at (0.7,0.2) (a1) {\scalebox{0.8}{$B_{2\la 1}$}};
        \draw[<-,thick] (0.1,-0.05) -- (1.4,-0.05);
        \node[] at (0.7,-0.25) (b1) {\scalebox{0.8}{$B^*_{1\la 2}$}};
        \draw[->,thick] (1.6,0.05) -- (2.9,0.05);
        \node at (2.2,0.2) (a2) {\scalebox{0.8}{$B_{3\la 2}$}};
        \draw[<-,thick,] (1.6,-0.05) -- (2.9,-0.05);
        \node[] at (2.2,-0.25) (b2) {\scalebox{0.8}{$B^*_{2\la 3}$}};
        \draw[->,thick] (4.6,0.05) -- (5.9,0.05);
        \node at (5.2,0.2) (an-1) {\scalebox{0.8}{$B_{n\la n\sminus1}$}};
        \draw[<-,thick,] (4.6,-0.05) -- (5.9,-0.05);
        \node[] at (5.3,-0.25) (bn-1) {\scalebox{0.8}{$B^*_{n\sminus1\la n}$}};
    \end{tikzpicture}}
\end{tabular}
\caption{Extended Dynkin quiver $\widetilde{A}_n$}
\label{quiverAn}
\end{table}
}

We realize the Kleinian singularity of type $A_n: X=\Spec\C[x,y,z]/(xy-z^{n+1})$ as a Nakajima quiver variety explicitly. Recall that $\delta=(1,\cdots,1)$. Using the notations in \Cref{quiverapp}, we have $\dim W_0=\dim V_i=1,\ 1\leq i\leq n$, and $B_{i+1\la i}\in\Hom(V_i,V_{i+1}),\ B^*_{i\la i+1}\in\Hom(V_i,V_{i+1})^*\simeq\Hom(V_{i+1},V_i)$. The representation space is
\[
M(\delta,w)=\bigoplus_{i=0}^n\Hom(V_i,V_{i+1})\oplus\bigoplus_{i=0}^n\Hom(V_{i+1},V_i)\oplus\Hom(W_0,V_0)\oplus\Hom(V_0,W_0).
\]
The group $\GL(\delta)=\prod_{i=0}^n\GL(V_i)$ acts on $M(\delta,w)$ naturally with the moment map 
\[
\mu=(\mu_i)_{0\leq i\leq n}\colon M(\delta,w)\to\gl(\delta),
\]
where 
\begin{equation}\label{Anmomentmap}
\begin{aligned}
    \mu_0&=B_{0\la n}B^*_{n\la 0}-B^*_{0\la 1}B_{1\la 0}+l_0k_0, \\
    \mu_i&=B_{i\la i-1}B^*_{i-1\la i}-B^*_{i\la i+1}B_{i+1\la i},\ 1\leq i\leq n.
\end{aligned}
\end{equation}
According to \Cref{wframinginv} and \Cref{ADExyz}, the algebra of invariant functions $\C[\mu^{-1}(0)]^{\GL(\delta)}$ is generated by the trace functions of loops that start and end at the vertex $0$ of degrees $2$ or $n+1$, which are
\begin{equation}\label{Angenerator}
\begin{aligned}
    &x:=B_{0\la n}\cdots B_{2\la 1}B_{1\la 0},\ \ \ y:=B^*_{0\la 1}\cdots B^*_{n-1\la n}B^*_{n\la 0}, \\
    z:=&B^*_{0\la 1}B_{1\la 0}\stackrel{(\ref{Anmomentmap})}{=}B^*_{i\la i+1}B_{i+1\la i},\ 1\leq i\leq n\ (\text{assume}\ V_{n+1}=V_0).
\end{aligned}
\end{equation} 
It follows that $xy=z^{n+1}$. Thus we can construct a surjective homomorphism 
\begin{equation}\label{Ansurj}
   \C[x,y,z]/(xy-z^{n+1})\twoheadrightarrow\C[\mu^{-1}(0)]^{\GL(\delta)},
\end{equation}
which turns out to be an isomorphism because the left-hand side is a domain of dimension $2$ and the right hand side is also of dimension $2$ from \Cref{wframinginv}. The explicit isomorphism \cref{Ansurj} has been known before. For example, one can find a treatment in \cite[eq. (5.3)]{Nakajima96}. By \Cref{Ciequation}, we can write down the equation of each irreducible component of $\pi^{-1}(0)$.
\begin{equation}\label{AnCiequation}
\begin{aligned}
    C_i=\big\{[B,B^*,l_0,0]\in\M_{\chi}(\delta,w)\big| B_{i+1\la i}=B^*_{i-1\la i}=0 \big\},\ 1\leq i\leq n.
\end{aligned}
\end{equation}
Consider the following automorphism $\tau$ of the extended Dynkin diagram of type $\widetilde{A}_n$.
\begin{equation}\label{AnDynkinauto}
\tau(0)=0;\ \tau(i)=n+1-i,\ 1\leq i\leq n.
\end{equation}
It induces an automorphism of the extended Dynkin quiver $\widetilde{A}_n$ in Picture \ref{quiverAn} by sending an arrow $a:i\to j$ to the arrow $\tau(a):\tau(i)\to\tau(j)$. The automorphism $\tau$ will be used to construct lifts of the anti-Poisson involutions in \Cref{Anoddcase1sub,Anevencase1sub}. Note that $\tau$ coincides with the nontrivial automorphism of the McKay graph of type $\widetilde{A}_n$ induced by an element $g\in N_{\GL_2(\C)}(\Gamma)^-$ (c.f. \Cref{AnAPIrem}). \Cref{Anoddcase1sub,Anoddcase2sub,Anoddcase3sub} study the preimages of fixed point loci of anti-Poissons for Kleinian singularities of type $A_n$, with $n$ odd. \Cref{Anevencase1sub,Anevencase2sub} deal with the cases with $n$ even. 

\subsection{Type $A_n,\ n$ odd, case \RNum{1}}\label{Anoddcase1sub} 
In this section, we consider the anti-Poisson involution
\[
  \theta(x)=y,\ \theta(y)=x,\  \theta(z)=z.
\]
The fixed point locus $X^{\theta}=\Spec\C[X]/(x-y)=\Spec\C[x,z]/(x^2-z^{n+1})$ has two irreducible components $L_j,\ j=1,2$. Following the notations in \Cref{liftsectionq1}, we have $\pi^{-1}(X^{\theta})=\pi^{-1}(0)\cup\tilde{L}_1\cup\tilde{L}_2$.

\begin{proposition}\label{Anliftx-y}
The involution $\ttheta\colon\M_{\chi}(\delta,w)\to\M_{\chi}(\delta,w)$ defined by
\begin{equation}\label{Anliftx-yequation}
    \ttheta([B_{i+1\la i}, B^*_{j-1\la j},l_0,k_0])=([B^*_{\tau(i+1)\la\tau(i)}, B_{\tau(j-1)\la\tau(j)},l_0,-k_0])
\end{equation}
is anti-symplectic. Moreover, it is a lift of $\theta\colon X\to X$.
\end{proposition}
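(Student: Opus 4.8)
The plan is to realize $\ttheta$ as descending from an explicit linear automorphism $\Theta$ of $M(\delta,w)$ and then to invoke \Cref{liftconditions}, which reduces the problem to three verifications. Since $\dim V_i=1$ for all $i$ in type $A_n$, every $B_{i+1\la i}$ and $B^*_{i\la i+1}$ is a scalar, so I define $\Theta$ on $M(\delta,w)$ by the very formula appearing in \cref{Anliftx-yequation}: it sends each forward arrow $B_{i+1\la i}$ to the reversed arrow $B^*_{\tau(i+1)\la\tau(i)}$, each $B^*_{j-1\la j}$ to $B_{\tau(j-1)\la\tau(j)}$, fixes $l_0$, and negates $k_0$. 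Conceptually, $\Theta$ is the composite of the arrow-reversal $\Omega\leftrightarrow\Omega^*$ (interchanging $B_a$ and $B_{a^*}$) with the diagram automorphism $\tau$ of \cref{AnDynkinauto}, twisted by the sign $k_0\mapsto-k_0$ on the framing. Once $\Theta$ is seen to be a well-defined linear automorphism, it remains to check that it is anti-symplectic, that it satisfies condition \ref{lift2}, and that it satisfies condition \ref{lift3}.

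First I would verify that $\Theta$ is anti-symplectic, i.e.\ $\Theta^*\omega=-\omega$ for the form in \cref{quiversymform}. The arrow part of $\omega$ is $\sum_{a\in\Omega}\big(\tr(B_aB'_{a^*})-\tr(B_{a^*}B'_a)\big)$; interchanging $B_a\leftrightarrow B_{a^*}$ exchanges the two sums and hence flips the combined sign, while relabeling by $\tau$ is a bijection of $\bar{\Omega}$ and therefore preserves the total. The framing contribution $\tr(l_0k'_0-k_0l'_0)$ changes sign because $\Theta$ fixes $l_0$ and negates $k_0$. Together these give $\Theta^*\omega=-\omega$.

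Next I would check condition \ref{lift2}, that $g\mapsto\Theta g\Theta^{-1}$ permutes the factors of $\GL(\delta)$. Because $\Theta$ incorporates $\tau$ and swaps the source/target roles of the arrows, conjugation carries the action of $g=(g_i)$ in \cref{actionquadruple} to the action of the $\tau$-permuted tuple $(g_{\tau(i)})$; since $\tau$ fixes the framed vertex $0$, this is compatible with the $l_0,k_0$ factors. For condition \ref{lift3} I would trace the generators of \cref{Angenerator} through $\Theta$: the loop $x=B_{0\la n}\cdots B_{1\la 0}$ runs forward around the cycle $0\to 1\to\cdots\to n\to 0$, and applying the swap together with $\tau$ sends its factors to $B_{1\la 0}\mapsto B^*_{n\la 0}$, $B_{2\la 1}\mapsto B^*_{n-1\la n}$, $\ldots$, $B_{0\la n}\mapsto B^*_{0\la 1}$, whose product (scalars commute, as all $V_i$ are lines) is exactly $y$. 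Symmetrically $\Theta(y)=x$, and $\Theta(z)=z$ follows from the fact that the expressions $B^*_{i\la i+1}B_{i+1\la i}$ all coincide on $\mu^{-1}(0)$ by \cref{Anmomentmap}. This matches $\theta(x)=y,\ \theta(y)=x,\ \theta(z)=z$, so \Cref{liftconditions} applies and $\ttheta$ is an anti-symplectic lift of $\theta$.

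The main obstacle is the index bookkeeping in the last two steps: one must check carefully that combining the edge-reversal $B\leftrightarrow B^*$ with the reflection $\tau$ sends the forward cycle to the reversed cycle with the correct labels (so that $x\mapsto y$ rather than some other loop), and that the induced permutation of $\GL(\delta)$-factors in condition \ref{lift2} is genuinely $\tau$. The sign twist $k_0\mapsto-k_0$ is forced by anti-symplecticity but is invisible on the semistable locus, where $k_0=0$, so it does not interfere with condition \ref{lift3}.
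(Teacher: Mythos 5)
Your proposal is correct and follows essentially the same route as the paper: you define the same linear anti-symplectic involution $\Theta$ on $M(\delta,w)$, verify the two hypotheses of \Cref{liftconditions} (conjugation permutes the $\GL(\delta)$-factors by $\tau$, and $\Theta$ acts on the generators as $x\mapsto y$, $y\mapsto x$, $z\mapsto z$ via the moment map relations), and conclude by descent. The paper's proof merely states these verifications without detail, so your write-up is the same argument with the bookkeeping filled in.
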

\begin{proof}
We consider the following linear anti-symplectic involution of the representation space.
\begin{equation*}
    \Theta\colon M(\delta,w)\to M(\delta,w),\ \ (B_{i+1\la i}, B^*_{j-1\la j},l_0,k_0)\mapsto (B^*_{\tau(i+1)\la\tau(i)}, B_{\tau(j-1)\la\tau(j)},l_0,-k_0).
\end{equation*}
It satisfies the conditions listed in \Cref{liftconditions}.
    \begin{enumerate}
    \item $(g_i)_{i\in I}^{\Theta}=(g_{\tau(i)})_{i\in I}$.
    \item $\Theta(x)=y,\ \Theta(y)=x,\ \Theta(z)=z$.
\end{enumerate}
The involution $\ttheta$ defined in \cref{Anliftx-yequation} is the descent of $\Theta$ to $\M_{\chi}(\delta,w)$. The rest follows from \Cref{liftconditions}.
\end{proof}
Consider the action of $\ttheta$ on the exceptional fiber $\pi^{-1}(0)$, we have the following results.
\begin{proposition}\label{Anoddcase1preimage}
\begin{enumerate}[label=(\arabic*)]
   \item The involution $\ttheta$ preserves only the component $C_{\frac{n+1}{2}}$, and swaps the component $C_i$ with $C_{n+1\minus i}$ for $i\neq\frac{n+1}{2}$. We have $\pi^{-1}(0)^{\ttheta}=\{p_1,p_2\}\subset C_{\frac{n+1}{2}}\setminus(C_{\frac{n-1}{2}}\cup C_{\frac{n+3}{2}})$.
   \item\label{Anoddcase1temp2} With a suitable choice of labeling, we have $\tilde{L}_{i}\cap\pi^{-1}(0)=\tilde{L}_{i}\cap C_{\frac{n+1}{2}}=\{p_i\},\ i=1,2$.
   \item The scheme $\pi^{-1}(X^{\theta})$ is not reduced (except in type $A_1$). As a divisor, we have $\pi^{-1}(X^{\theta})=\tilde{L}_{1}+\tilde{L}_{2}+\sum_{i=1}^na_iC_i$, where $a_i=\min\{i,n+1\minus i\}$.
\end{enumerate}
\end{proposition}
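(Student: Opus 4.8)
The plan is to read the $\ttheta$-action on the exceptional components directly off the explicit formula \eqref{Anliftx-yequation} together with the defining equations \eqref{AnCiequation}, and then feed the resulting combinatorial incidence data into \Cref{isolatedtransversal,multofcomponent}; the three parts of the statement correspond exactly to the three questions these results answer.

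\textbf{Part (1).} First I would pin down how $\ttheta$ permutes the $C_i$. In type $A$ each $\dim V_i=1$, so \eqref{AnCiequation} cuts out $C_i$ by the vanishing of the two outgoing arrows $B_{i+1\la i}=B^*_{i-1\la i}=0$. Tracking these two arrows through \eqref{Anliftx-yequation} and using $\tau(m)=n+1-m$, one finds that $\ttheta$ carries the $B$-arrow out of $V_{n+1-i}$ to the old $B^*_{i-1\la i}$ and the $B^*$-arrow out of $V_{n+1-i}$ to the old $B_{i+1\la i}$; hence $\ttheta(C_i)=C_{n+1-i}$. Thus $\ttheta$ swaps $C_i\leftrightarrow C_{n+1-i}$ and preserves exactly the middle component $C_{\frac{n+1}{2}}$ (an integer since $n$ is odd). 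To locate $\pi^{-1}(0)^{\ttheta}$, note the finite diagram is the path $1-2-\cdots-n$, so a swapped pair $C_i,C_{n+1-i}$ meets iff $|2i-(n+1)|=1$; as $n$ is odd this is always even, so swapped pairs are disjoint and, by \ref{autop1} of \Cref{liftoncomponent}, carry no fixed points. Therefore every fixed point lies on $C_{\frac{n+1}{2}}$. Since \ref{it1} of \Cref{isolatedtransversal} forces exactly two isolated fixed points (one per component of $X^{\theta}$), $\ttheta$ cannot act trivially on $C_{\frac{n+1}{2}}$ and must fix precisely two points $p_1,p_2$ there by \ref{autop2} of \Cref{liftoncomponent}. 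Finally, because $\ttheta$ interchanges the two neighbors $C_{\frac{n-1}{2}},C_{\frac{n+3}{2}}$ of $C_{\frac{n+1}{2}}$, it interchanges the two nodal points $C_{\frac{n+1}{2}}\cap C_{\frac{n-1}{2}}$ and $C_{\frac{n+1}{2}}\cap C_{\frac{n+3}{2}}$; these are not fixed, placing $p_1,p_2$ in $C_{\frac{n+1}{2}}\setminus(C_{\frac{n-1}{2}}\cup C_{\frac{n+3}{2}})$.

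\textbf{Part (2).} This follows from the general structure theory once Part (1) is in hand. Each $\tilde{L}_i\subset\wt{X}^{\ttheta}$ contains the limit point $s_0$ of \eqref{toruss0}, which lies in $\pi^{-1}(0)^{\ttheta}$; since there are no $\ttheta$-fixed $\pr^1$'s, this point is isolated, hence one of $p_1,p_2$. Using the bijection of \ref{it2} of \Cref{isolatedtransversal} to label so that $\tilde{L}_i\cap\pi^{-1}(0)^{\ttheta}=\{p_i\}$ and then invoking \ref{it3}, I get $\tilde{L}_i\cap\pi^{-1}(0)=\tilde{L}_i\cap C_{\frac{n+1}{2}}=\{p_i\}$ with transversal intersection.

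\textbf{Part (3).} Here $X^{\theta}=\ddiv(x-y)$ is a principal divisor, so \Cref{multofcomponent} applies. By Part (2) the incidence numbers \eqref{bidef} are $b_{\frac{n+1}{2}}=2$ and $b_i=0$ otherwise, so $a_i=2(\mathcal{C}^{-1})_{i,\frac{n+1}{2}}$. Substituting the standard type-$A_n$ formula $(\mathcal{C}^{-1})_{ij}=\frac{\min(i,j)(n+1-\max(i,j))}{n+1}$ at $j=\frac{n+1}{2}$ collapses the factor of $2$ and yields $a_i=\min\{i,\,n+1-i\}$; the coefficient of each $\tilde{L}_j$ is $1$ since $X^{\theta}$ is reduced and $\pi$ is an isomorphism off the exceptional fiber. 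Because $a_{\frac{n+1}{2}}=\frac{n+1}{2}>1$ for $n\geq 3$, the divisor $\pi^{-1}(X^{\theta})$ is non-reduced there, while for $n=1$ all multiplicities equal $1$ and the principal divisor is reduced, which matches the stated exception.

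I expect the genuinely delicate step to be the middle of Part (1): translating the quiver-level formula \eqref{Anliftx-yequation} into the component permutation and then \emph{excluding} the trivial action on $C_{\frac{n+1}{2}}$. The exclusion is not a local computation but relies on combining the global count in \ref{it1} of \Cref{isolatedtransversal} with the parity argument showing swapped pairs are disjoint; the multiplicity formula in Part (3), though it is the concrete payoff, is then routine given the inverse Cartan matrix.
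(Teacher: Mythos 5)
Your proposal is correct and follows essentially the same route as the paper: part (1) reads the permutation $\ttheta(C_i)=C_{n+1-i}$ off \eqref{AnCiequation} and \eqref{Anliftx-yequation} and rules out fixed points at the nodes since $\ttheta$ swaps the neighbors of $C_{\frac{n+1}{2}}$, part (2) is exactly \Cref{isolatedtransversal}, and part (3) is \Cref{multofcomponent} with $b_{\frac{n+1}{2}}=2$, $b_i=0$ otherwise. The only cosmetic differences are that you exclude the trivial action on $C_{\frac{n+1}{2}}$ via the isolated-point count of \ref{it1} rather than directly from the non-stability of the nodal points, and you write out the inverse Cartan matrix entries explicitly, both of which are fine.
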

\begin{proof}
\begin{enumerate}[label=(\arabic*)]
    \item\label{temp1} It is easy to see that $\ttheta(C_i)=C_{\tau(i)}=C_{n+1\minus i}$ thanks to \cref{AnCiequation}. It follows that $\pi^{-1}(0)^{\ttheta}\subset C_{\frac{n+1}{2}}$. The intersection point $C_{\frac{n-1}{2}}\cap C_{\frac{n+1}{2}}$ is not $\ttheta$-stable because $\ttheta(C_{\frac{n-1}{2}})=C_{\frac{n+3}{2}}$. Similarly, the intersection point $C_{\frac{n+1}{2}}\cap C_{\frac{n+3}{2}}$ is not $\ttheta$-stable, either. 

    \item This follows from \Cref{isolatedtransversal}.
    
    \item Recall that we defined $b_i:=\#\bigcup_{j=1}^m\{\tilde{L}_j|\tilde{L}_j\cap C_i\neq\emptyset\}$ in \cref{bidef}. We know that $b_{\frac{n+1}{2}}=2$ and $b_i=0,\ i\neq\frac{n+1}{2}$ from \ref{Anoddcase1temp2}. The divisor $X^{\theta}=\ddiv(x-y)$ is principal. Then applying \Cref{multofcomponent}, we can determine that $a_i=\min\{i,n+1\minus i\}$.
\end{enumerate}
\end{proof}

We depict the fixed point locus $X^{\theta}$ and the preimage $\pi^{-1}(X^{\theta})$ in Picture \ref{Anoddcase1pic}. The curly lines are $\pr^1$'s and the straight lines are $\A^1$'s. The number below each component indicates its multiplicity. Thickened lines indicate that the multiplicities are larger than $1$. The $\ttheta$-fixed components of $\pi^{-1}(X^{\theta})$ are colored in red.
{
\captionsetup[table]{name=Picture}
\begin{table}[ht]
\begin{tabular}{ccc} 
    \begin{tikzpicture}
    \draw[line width=0.25mm] (-0.5,1) to (0.5,-1);
    \draw[line width=0.25mm] (0.5,1) to (-0.5,-1);
    \end{tikzpicture} & $\qquad$ &
    \begin{tikzpicture}
    \draw[line width=0.25mm] (-4,0) edge [bend left=35] node [below]{} (-1.5,0);
    \node at (-2.75,0) () {$1$};
    \draw[line width=0.4mm] (-2,0) edge [bend left=25] node [below]{} (-1,0.3);
    \node at (-1,0) () {$2$};
    \filldraw[black] (-0.65,0.3) circle (0.5pt);
    \filldraw[black] (-0.5,0.3) circle (0.5pt);
    \filldraw[black] (-0.35,0.3) circle (0.5pt);
    \draw[line width=0.4mm] (1,0) edge [bend right=25] node [below]{} (0,0.3);
    \node at (0,0) () {$\frac{n-1}{2}$};
    \draw[line width=0.4mm] (0.5,0) edge [bend left=35] node [below]{} (3,0);
    \node at (1.75,0) () {$\frac{n+1}{2}$};
    \draw[red, line width=0.25mm] (1.35,-0.8) to (1.35,1.2);
    \node at (1.35,-1) () {\textcolor{red}{$1$}};
    \draw[red, line width=0.25mm] (2.15,-0.8) to (2.15,1.2);
    \node at (2.15,-1) () {\textcolor{red}{$1$}};
    \draw[line width=0.4mm] (2.5,0) edge [bend left=25] node [below]{} (3.5,0.3);
    \node at (3.5,0) () {$\frac{n-1}{2}$};
    \filldraw[black] (3.8,0.3) circle (0.5pt);
    \filldraw[black] (4,0.3) circle (0.5pt);
    \filldraw[black] (4.2,0.3) circle (0.5pt);
    \draw[line width=0.4mm] (5.5,0) edge [bend right=25] node [below]{} (4.5,0.3);
    \node at (4.5,0) () {$2$};
    \draw[line width=0.25mm] (5,0) edge [bend left=35] node [below]{} (7.5,0);
    \node at (6.25,0) () {$1$};
    \end{tikzpicture} \\
     & & \\
    $X^{\theta}$ & $\qquad$ & $\pi^{-1}(X^{\theta})$
\end{tabular}
\caption{$A_n,\ n$ odd, Type \RMN{1}}
\label{Anoddcase1pic}
\end{table}
}

\subsection{Type $A_n,\ n$ odd, case \RNum{2}} \label{Anoddcase2sub}
In this section, we consider the anti-Poisson involution
\[
  \theta(x)=x,\ \theta(y)=y,\  \theta(z)=-z.
\]
The fixed point locus $X^{\theta}=\Spec\C[X]/(z)=\Spec\C[x,y]/(xy)$ has two irreducible components $L_j,\ j=1,2$. Following the notations in \Cref{liftsectionq1}, we have $\pi^{-1}(X^{\theta})=\pi^{-1}(0)\cup\tilde{L}_1\cup\tilde{L}_2$.

\begin{proposition}\label{Anliftz}
The involution $\ttheta\colon \M_{\chi}(\delta,w)\to \M_{\chi}(\delta,w)$ defined by
\begin{equation}\label{Anliftzequation}
    \ttheta([B,B^*,l_0,k_0])=[-B,B^*,l_0,-k_0]
\end{equation}
is anti-symplectic. Moreover, it is a lift of $\theta\colon X\to X$.
\end{proposition}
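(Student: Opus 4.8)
The plan is to imitate the proof of \Cref{Anliftx-y}: I would exhibit a linear anti-symplectic involution $\Theta$ of the representation space $M(\delta,w)$ whose descent to the GIT quotient is exactly the map $\ttheta$ of \eqref{Anliftzequation}, and then verify the two hypotheses of \Cref{liftconditions}. Reading off \eqref{Anliftzequation}, the natural candidate is
\[
\Theta\colon M(\delta,w)\to M(\delta,w),\qquad (B,B^*,l_0,k_0)\mapsto(-B,B^*,l_0,-k_0),
\]
that is, negate all the arrows $B_a$ ($a\in\Omega$) and the framing map $k_0$, while fixing $B^*$ and $l_0$. Since $\Theta$ is visibly a linear involution, what remains is to check that it is anti-symplectic and that it satisfies conditions \ref{lift2} and \ref{lift3} of \Cref{liftconditions}.

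First I would check $\Theta^*\omega=-\omega$ directly from the formula \eqref{quiversymform}. Each summand of $\omega$ is bilinear with exactly one factor drawn from the negated coordinates ($B$ or $k$) and one from the fixed coordinates ($B^*$ or $l$): the arrow terms $\tr(B_aB'_{a^*})$ and $\tr(B_{a^*}B'_a)$ each contain a single $B$-entry, and the framing terms $\tr(l_ik'_i)$ and $\tr(k_il'_i)$ each contain a single $k$-entry. Hence every term acquires exactly one sign and $\Theta^*\omega=-\omega$.

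Next I would verify the two conditions of \Cref{liftconditions}. For condition \ref{lift2}, note that $\Theta$ does not permute the vertex spaces $V_i$ but merely rescales arrows and framings; a one-line computation from the action \eqref{actionquadruple} shows $\Theta(g\cdot u)=g\cdot\Theta(u)$ for all $g\in\GL(\delta)$, so the conjugation $g^{\Theta}=g$ is the identity automorphism of $\GL(\delta)$, i.e.\ the trivial permutation of the factors. For condition \ref{lift3}, I would use the trace-function descriptions of the generators in \eqref{Angenerator}. Because $x=B_{0\la n}\cdots B_{1\la 0}$ is a product of $n+1$ arrows $B_a$, we get $\Theta(x)=(-1)^{n+1}x=x$ since $n$ is odd; because $y$ is a product of starred arrows $B^*$ only, $\Theta(y)=y$; and because $z=B^*_{0\la 1}B_{1\la 0}$ involves a single $B$-entry, $\Theta(z)=-z$. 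These agree with $\theta(x)=x,\ \theta(y)=y,\ \theta(z)=-z$, so \Cref{liftconditions} applies and produces the desired lift.

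The one place where care is genuinely required is the parity bookkeeping in the last step: $\Theta$ fixes $x$ precisely because $n+1$ is even, and this is exactly where the standing hypothesis ``$n$ odd'' enters; for $n$ even the same $\Theta$ would send $x\mapsto -x$, which is why that case calls for a modified sign pattern. Everything else is formal once \Cref{liftconditions} is in hand, so I expect no serious obstacle beyond confirming the signs and checking that the trivial permutation fulfills condition \ref{lift2}.
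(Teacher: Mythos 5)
Your proposal is correct and takes essentially the same approach as the paper: the same linear involution $\Theta(B,B^*,l_0,k_0)=(-B,B^*,l_0,-k_0)$ on $M(\delta,w)$, checked against the two conditions of \Cref{liftconditions}, with the same trace-function verification on the generators \cref{Angenerator}. In fact your parity bookkeeping is the more careful one: since $n$ is odd, $\Theta(x)=(-1)^{n+1}x=x$ as you write, whereas the paper's own proof misprints this computation as $(-1)^{n+1}x=-x$ (which would be the $n$ even case treated in \Cref{Anliftxz}).
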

\begin{proof} 
We consider the following linear anti-symplectic involution on the representation space.
\begin{equation*}
    \begin{aligned}
        \Theta\colon M(\delta,w)\to M(\delta,w),\ \ (B,B^*,l_0,k_0)\mapsto (-B,B^*,l_0,-k_0).
    \end{aligned}
\end{equation*}
It satisfies the conditions listed in \Cref{liftconditions}.
    \begin{enumerate}
    \item $g^{\Theta}=g$.
    \item $\Theta(x)=(-1)^{n+1}x=-x,\ \Theta(y)=y,\ \Theta(z)=-z$.
\end{enumerate}
The involution $\ttheta$ defined in \cref{Anliftzequation} is the descent of $\Theta$ to $\M_{\chi}(\delta,w)$. The rest follows from \Cref{liftconditions}.
\end{proof}

\begin{proposition}\label{Anoddcase2preimage}
   \begin{enumerate}[label=(\arabic*)]
       \item The involution $\ttheta$ preserves all the components $C_i,\ 1\leq i\leq n$. We have $\pi^{-1}(0)^{\ttheta}=\bigcup_{i\ \text{even}}C_i\cup\{p_1,p_2\}$, where $p_1\in C_1\setminus C_2$ and $p_2\in C_n\setminus C_{n-1}$.
       \item\label{Anoddcase2temp2} With a suitable choice of labeling, $\tilde{L}_1\cap\pi^{-1}(0)=\tilde{L}_1\cap C_1=\{p_1\},\ \tilde{L}_2\cap\pi^{-1}(0)=\tilde{L}_2\cap C_n=\{p_2\}$.
       \item The scheme $\pi^{-1}(X^{\theta})$ is reduced.
   \end{enumerate}
\end{proposition}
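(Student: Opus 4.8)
The plan is to analyze the lift $\ttheta$ from \Cref{Anliftz} directly on the exceptional fibre, using the quiver description of the components in \cref{AnCiequation}, and then feed the resulting combinatorial data into the general results \Cref{isolatedtransversal} and \Cref{multofcomponent}. Preservation of each $C_i$ in part $(1)$ is immediate: $C_i$ is cut out by $B_{i+1\la i}=0$ and $B^*_{i-1\la i}=0$, while the underlying linear map $\Theta$ acts by $B\mapsto -B$, $B^*\mapsto B^*$, $l_0\mapsto l_0$, $k_0\mapsto -k_0$, which preserves these two equations; hence $\ttheta(C_i)=C_i$. The heart of the argument — and the step I expect to be the main obstacle — is to compute the induced involution of each $C_i\simeq\pr^1$.

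Writing $b_j=B_{j+1\la j}$ and $c_j=B^*_{j\la j+1}$ and normalizing $l_0=1$, semistability (\Cref{stTFAE}) forces $V_0$ to generate the representation. On $C_i$ the relations $b_i=c_{i-1}=0$ make $V_i$ a sink, reached by a forward chain $b_0,\dots,b_{i-2}$ and a backward chain $c_{i+1},\dots,c_n$, which the gauge group can normalize to $1$; the residual torus $g_i$ then scales the two remaining maps $b_{i-1},c_i$ into $V_i$ equally, so $C_i$ carries the homogeneous coordinate $[b_{i-1}:c_i]$. Applying $\Theta$ flips the sign of every $b_j$, and re-normalizing the fixed chains back to $1$ produces a net sign, yielding $\ttheta\colon[b_{i-1}:c_i]\mapsto[(-1)^i b_{i-1}:c_i]$. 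This gauge re-normalization is the only delicate computation; alternatively the alternating pattern is forced a priori by \Cref{liftoncomponentcor}, and checking a single component fixes the parity.

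Reading off the sign: for $i$ even $\ttheta$ acts trivially on $C_i$, and for $i$ odd it has exactly the two fixed points $[1:0]$ (the point $c_i=0$, equal to $C_i\cap C_{i+1}$) and $[0:1]$ (the point $b_{i-1}=0$, equal to $C_i\cap C_{i-1}$). For an interior odd $C_i$ both neighbours are even and hence fixed, so both fixed points already lie on $\bigcup_{i\text{ even}}C_i$; only on the end components $C_1$ and $C_n$ does one fixed point miss an even neighbour, giving the isolated points $p_1=\{b_0=0\}\in C_1\setminus C_2$ and $p_2=\{c_n=0\}\in C_n\setminus C_{n-1}$. This produces $\pi^{-1}(0)^{\ttheta}=\bigcup_{i\text{ even}}C_i\cup\{p_1,p_2\}$ with exactly two isolated points, consistent with \ref{it1} of \Cref{isolatedtransversal} (two components of $X^{\theta}$).

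Part $(2)$ is then immediate from \ref{it2}--\ref{it3} of \Cref{isolatedtransversal}: each $\tilde L_j$ meets $\pi^{-1}(0)$ transversally in a single isolated fixed point, so after labelling $\tilde L_1\cap\pi^{-1}(0)=\tilde L_1\cap C_1=\{p_1\}$ and $\tilde L_2\cap\pi^{-1}(0)=\tilde L_2\cap C_n=\{p_2\}$. For part $(3)$, note $X^{\theta}=\ddiv(z)$ is a reduced principal divisor (\Cref{Anfixedloci}), so $\pi^{-1}(X^{\theta})=\ddiv(\pi^*z)$ is principal and \Cref{multofcomponent} applies; from part $(2)$ the intersection data is $b_1=b_n=1$ and $b_i=0$ otherwise, whence $(a_1,\dots,a_n)^t=\mathcal C^{-1}(1,0,\dots,0,1)^t$. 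Using $(\mathcal C^{-1})_{ij}=\min(i,j)-ij/(n+1)$ for type $A_n$ gives $a_i=1$ for every $i$, so $\pi^{-1}(X^{\theta})=\tilde L_1+\tilde L_2+\sum_{i=1}^n C_i$ is generically reduced; being a principal divisor, it is therefore reduced by \ref{gred} of \Cref{multofcomponentrem}.
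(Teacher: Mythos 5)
Your proposal is correct, and parts (2) and (3) coincide with the paper's own proof: part (2) is exactly the appeal to \Cref{isolatedtransversal}, and part (3) is the same application of \Cref{multofcomponent} with $(b_1,\dots,b_n)=(1,0,\dots,0,1)$, giving $a_i\equiv 1$ and then reducedness via \ref{gred} of \Cref{multofcomponentrem}. Where you genuinely diverge is part (1). The paper never computes the induced involution on any $C_i$: it observes $\ttheta(C_i)=C_i$ from \cref{AnCiequation}, invokes \Cref{liftoncomponentcor} for the alternating pattern, and then pins down the parity \emph{indirectly} --- if the odd-indexed components were the fixed ones, every fixed point on an odd component would be an intersection point, so $\pi^{-1}(0)^{\ttheta}$ would have no isolated points, contradicting \ref{it1} of \Cref{isolatedtransversal}, which demands exactly two (one per irreducible component of $X^{\theta}$); the isolated points on $C_1$ and $C_n$ are then located by \ref{autop3} of \Cref{liftoncomponent}. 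You instead compute the action directly in gauge-normalized quiver coordinates, obtaining $[b_{i-1}:c_i]\mapsto[(-1)^i b_{i-1}:c_i]$, which yields the parity, the two fixed points on each odd component, and their identification with the intersection points all at once; \Cref{isolatedtransversal} then serves only as a consistency check rather than as an input. Your sign computation is right (re-normalizing with $g_j=(-1)^j$ along the forward chain and $g_j=1$ along the backward chain, with $g_0=1$ forced by $l_0$, produces exactly the factor $(-1)^i$), though you elide one point: to know that $[b_{i-1}:c_i]$ is a \emph{complete} coordinate on $C_i$ one must check, using the moment map equations $\mu_{i+1}=0,\ \mu_{i+2}=0,\dots$ together with semistability, that all remaining entries ($b_j$ for $j\geq i$ and $c_j$ for $j\leq i-1$) vanish identically on $C_i$; this is standard but should be said. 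The trade-off: the paper's argument is shorter and coordinate-free but leans on the global count of components of $X^{\theta}$, while yours is self-contained and constructive, identifying $p_1=\{b_0=0\}$ and $p_2=\{c_n=0\}$ explicitly, at the cost of a gauge computation whose details must be carried out carefully.
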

\begin{proof}
   \begin{enumerate}[label=(\arabic*)]
       \item First, the lift $\ttheta$ preserves all the components $C_i$ thanks to \cref{AnCiequation}. It follows from \Cref{liftoncomponentcor} that the $\ttheta$-fixed components and non-fixed components appear alternatingly. If $\ttheta$ fixes all the odd components, then there are no isolated points in $\pi^{-1}(0)^{\ttheta}$. However, \ref{it1} of \Cref{isolatedtransversal} tells us that there are two isolated points in $\pi^{-1}(0)^{\ttheta}$. Therefore, $\ttheta$ fixes each $C_i$ with $i$ even. Then it follows from \ref{autop3} of \Cref{liftoncomponent} that $\ttheta$ has exactly two fixed points on each $C_j$ with $j$ odd. The intersection points $C_j\cap C_{j-1}$ and $C_j\cap C_{j+1}$ exhaust both $\ttheta$-fixed points of $C_j$ when $j\neq 1,n,\ j$ odd. However, when $j=1$ (resp. $j=n$), the intersection point $C_!\cap C_2$ (resp. $C_n\cap C_{n-1}$) only accounts for one of the $\ttheta$-fixed points of $C_1$ (resp. $C_n$), resulting in an extra $\ttheta$-fixed point on $C_1$ (resp. $C_n$) that does not lie in $C_2$ (resp. $C_{n-1}$).
        \item This follows from \Cref{isolatedtransversal}.
        \item Recall the definition of  $b_i$ in \cref{bidef}. We have $(b_1,b_2,\cdots,b_{n-1},b_n)=(1,0,\cdots,0,1)$ by \ref{Anoddcase2temp2}. The divisor $X^{\theta}=\ddiv(z)$ is principal. Then applying \Cref{multofcomponent}, we can determine that
   \begin{equation*}
        (a_1,\cdots,a_n)=(1,\cdots,1).
   \end{equation*}
    Therefore, $\pi^{-1}(X^{\theta})$ is generically reduced. Note that $\pi^{-1}(X^{\theta})=\ddiv(\pi^*z)$ is a principal divisor, so being generically reduced is equivalent to being reduced (c.f. \ref{gred} of \Cref{multofcomponentrem}).
    \end{enumerate}
\end{proof}

We depict the fixed point locus $X^{\theta}$ and the preimage $\pi^{-1}(X^{\theta})$ in Picture \ref{Anoddcase2pic}. All the components have multiplicity $1$. The $\ttheta$-fixed components of $\pi^{-1}(X^{\theta})$ are colored in red.
{
\captionsetup[table]{name=Picture}
\begin{table}[ht]
\centering
\begin{tabular}{ccc}
    \begin{tikzpicture}
    \draw[thick] (-0.5,1) to (0.5,-1);
    \draw[thick] (0.5,1) to (-0.5,-1);
    \end{tikzpicture} & $\qquad$ &
    \begin{tikzpicture}
    \draw[red, thick] (-5,1.2) to (-4,-0.8);
    \draw[thick] (-5,0) edge [bend left=35] node [below]{} (-2.5,0);
    \draw[red, thick] (-3.5,0) edge [bend left=35] node [below]{} (-1,0);
    \draw[thick] (-2,0) edge [bend left=25] node [below]{} (-0.75,0.3);
    \node at (0,0.2) (cdots) {$\cdots$};
    \draw[thick] (2,0) edge [bend right=25] node [below]{} (0.75,0.3);
    \draw[red, thick] (3.5,0) edge [bend right=35] node [below]{} (1,0);
    \draw[thick] (5,0) edge [bend right=35] node [below]{} (2.5,0);
    \draw[red, thick] (5,1.2) to (4,-0.8);
    \end{tikzpicture} \\
    &  & \\
    $X^{\theta}$ & $\qquad$ & $\pi^{-1}(X^{\theta})$
\end{tabular}
\caption{$A_n,\ n$ odd, Type \RMN{2}}
\label{Anoddcase2pic}
\end{table}
}

\subsection{Type $A_n,\ n$ odd, case \RNum{3}}\label{Anoddcase3sub} In this section, we consider the anti-Poisson involution
\[
  \theta(x)=-x,\ \theta(y)=-y,\  \theta(z)=-z.
\]
The fixed point locus $X^{\theta}$ is singular point $0$. The preimage $\pi^{-1}(X^{\theta})$ is the exceptional fiber $\pi^{-1}(0)$ whose irreducible components are $n$-$\pr^1$'s with pairwise transversal intersection according dually to the Dynkin diagram $A_n$. Moreover, $\pi^{-1}(0)$ is reduced (according to \Cref{exceptionalnonreduced}). All questions for this case have been solved without constructing a lift. 
\begin{remark}\label{Anliftxyz}
However, a lift of $\theta$ still exists, and it is given by  \begin{equation}\label{Anliftxyzequation}
    \ttheta\colon \M_{\chi}(\delta,w)\to \M_{\chi}(\delta,w),\ \ [B,B^*,l_0,k_0]\mapsto [-\epsilon B,\epsilon^{-1}B^*,l_0,-k_0],
\end{equation}
where $\epsilon$ is a primitive $(2n+2)$-th root of unity. Moreover, $\ttheta$ fixes each $C_i$ with $i$ odd. The proofs are similar to those of \Cref{Anliftz,Anoddcase2preimage}. 
\end{remark}

\subsection{Type $A_n,\ n$ even, case \RNum{1}}\label{Anevencase1sub} In this section, we consider the anti-Poisson involution
\[
  \theta(x)=y,\ \theta(y)=x,\  \theta(z)=-z.
\]
The fixed point locus $X^{\theta}=\Spec\C[X]/(x-y)=\Spec\C[x,y,z]/(x^2-z^{n+1})$ is a cusp (a single irreducible component $L$). Following the notations in \Cref{liftsectionq1}, we have $\pi^{-1}(X^{\theta})=\pi^{-1}(0)\cup\tilde{L}$. This section is very similar to \Cref{Anoddcase1sub} with some minor changes due to the parity. We state the results, but omit the proofs. Recall the automorphism $\tau$ of the extended Dynkin diagram of type $\wt{A}_n$ defined in \cref{AnDynkinauto}.

\begin{proposition}\label{Anevenliftx-y}
The involution $\ttheta\colon\M_{\chi}(\delta,w)\to\M_{\chi}(\delta,w)$ defined by 
\begin{equation}\label{Anevenliftx-yequation}
    \ttheta([B_{i+1\la i}, B^*_{j-1\la j},l_0,k_0])=[B^*_{\tau(i+1)\la\tau(i)}, B_{\tau(j-1)\la\tau(j)},l_0,-k_0]
\end{equation}
is anti-symplectic. Moreover, it is a lift of $\theta\colon X\to X$.
\end{proposition}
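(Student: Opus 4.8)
The plan is to realize $\ttheta$ as the descent of an explicit linear map on the representation space and then to quote \Cref{liftconditions}, exactly as in the proof of \Cref{Anliftx-y}. First I would introduce the linear involution
\[
\Theta\colon M(\delta,w)\to M(\delta,w),\quad (B_{i+1\la i}, B^*_{j-1\la j},l_0,k_0)\mapsto (B^*_{\tau(i+1)\la\tau(i)}, B_{\tau(j-1)\la\tau(j)},l_0,-k_0),
\]
with $\tau$ the diagram automorphism of $\wt{A}_n$ from \cref{AnDynkinauto}, and note that \cref{Anevenliftx-yequation} is precisely the induced map $[u]\mapsto[\Theta(u)]$ on the GIT quotient $\M_{\chi}(\delta,w)$. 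It then suffices to check that $\Theta$ is an anti-symplectic linear automorphism satisfying hypotheses \ref{lift2} and \ref{lift3} of \Cref{liftconditions}; that proposition then yields both the anti-symplecticity of $\ttheta$ and the lifting identity $\pi\circ\ttheta=\theta\circ\pi$.

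The verification proceeds in three steps. For anti-symplecticity I would use the identification $M(\delta,w)\simeq T^*R(Q,\delta,w)$: the interchange of the $B$- and $B^*$-blocks is the base--fibre swap on a cotangent bundle, which negates the arrow part of \cref{quiversymform}, the relabelling by the involution $\tau$ merely permutes the summands and so preserves it, and the sign on $k_0$ (with $l_0$ unchanged) negates the framing term $\tr(l_0k'_0-k_0l'_0)$; assembling these gives $\omega(\Theta u,\Theta u')=-\omega(u,u')$. For \ref{lift2} I would read off the $\GL(\delta)$-action \cref{actionquadruple} and use that every $V_i$ is one-dimensional to conclude that conjugation by $\Theta$ relabels factors by $\tau$, i.e. $(g_i)_{i\in I}^{\Theta}=(g_{\tau(i)})_{i\in I}$. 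For \ref{lift3} I would compute $\Theta$ on the trace-function generators \cref{Angenerator}: tracking the arrows of the loop defining $x$ through $\Theta$ and $\tau$ gives $\Theta(x)=y$ and symmetrically $\Theta(y)=x$, while for $z=B^*_{0\la1}B_{1\la0}$ the image is $B_{0\la n}B^*_{n\la0}$, which equals $z$ by the zero-fibre ADHM relation coming from $\mu_0=0$ in \cref{Anmomentmap}; this matches the action of $\theta$ on $\C[X]$.

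The only genuinely computational point, and hence the main obstacle, is the generator check in \ref{lift3}: one must compose the images $B^*_{\tau(i+1)\la\tau(i)}$ in the correct cyclic order and recognize the product as $y$, and must invoke the moment-map identity to identify the two expressions for $z$. The parity of $n$ enters solely through the fixed-point pattern of $\tau$ on $\{1,\dots,n\}$ --- for $n$ even, $\tau(i)=n+1-i$ fixes no vertex --- but this has no bearing on the existence of the lift; it matters only afterwards, when locating $\pi^{-1}(0)^{\ttheta}$ and the components $C_i$. Thus the argument is a parity-adapted copy of \Cref{Anliftx-y} and introduces no new difficulty.
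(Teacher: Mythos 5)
Your proposal is correct and is essentially the paper's own argument: the paper omits this proof, stating it is identical to the $n$ odd case (\Cref{Anliftx-y}), whose proof is exactly your scheme of defining the linear involution $\Theta$ on $M(\delta,w)$, checking $(g_i)_{i\in I}^{\Theta}=(g_{\tau(i)})_{i\in I}$ and $\Theta(x)=y$, $\Theta(y)=x$, $\Theta(z)=z$ (the last via the relation from $\mu_0=0$), and invoking \Cref{liftconditions}. Your added details on anti-symplecticity and on the parity of $n$ being irrelevant to the existence of the lift are accurate.
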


\begin{proposition}\label{Anevencase1preimage}
\begin{enumerate}
   \item The involution $\ttheta$ swaps the component $C_i$ with $C_{n+1\minus i},\ \forall i$. We have $\pi^{-1}(0)^{\ttheta}=C_{\frac{n-1}{2}}\cap C_{\frac{n+1}{2}}=\{p\}$. 
   \item $\tilde{L}\cap \pi^{-1}(0)=\tilde{L}\cap C_{\frac{n-1}{2}}\cap C_{\frac{n+1}{2}}=\{p\}$.
   \item The scheme $\pi^{-1}(X^{\theta})$ is not reduced. As a divisor, we have $\pi^{-1}(X^{\theta})=\tilde{L}_{1}+\tilde{L}_{2}+\sum_{i=1}^na_iC_i$, where $a_i=\min\{i,n+1\minus i\}$.
\end{enumerate}
\end{proposition}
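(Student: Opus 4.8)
The plan is to follow the template of \Cref{Anoddcase1sub} essentially verbatim, adjusting only for the parity of $n$, and to prove the three assertions in the order (1),(2),(3) using the general machinery of \Cref{liftsectionq1,liftsectionq2,liftsectionq3}. Throughout I interpret the two ``central'' components as $C_{n/2}$ and $C_{n/2+1}$ (the unique pair of vertices interchanged by $\tau$, since for $n$ even $\tau$ has no fixed vertex), and the single lift component as $\tilde{L}$, because here $X^{\theta}$ is an \emph{irreducible} cusp rather than a union of two lines; so the expression in (3) should read $\tilde{L}+\sum_i a_iC_i$.

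For assertion (1), I would first read off from \cref{AnCiequation} and the defining formula \cref{Anevenliftx-yequation} that $\ttheta(C_i)=C_{\tau(i)}=C_{n+1\minus i}$. Because $n$ is even, $n+1$ is odd and $\tau$ is fixed-point-free on $\{1,\dots,n\}$, so $\ttheta$ genuinely swaps every component in a pair $(C_i,C_{n+1\minus i})$ and fixes none. The key observation is that any $\ttheta$-fixed point $q\in\pi^{-1}(0)$ lying on $C_i$ must satisfy $q=\ttheta(q)\in C_{n+1\minus i}$, hence $q\in C_i\cap C_{n+1\minus i}$; two components of the $A_n$ chain meet only when adjacent, i.e. when $|2i-(n+1)|=1$, which forces $\{i,n+1\minus i\}=\{n/2,\,n/2+1\}$. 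Thus the only candidate fixed point is $p:=C_{n/2}\cap C_{n/2+1}$, and \ref{autop1} of \Cref{liftoncomponent} confirms $p\in\pi^{-1}(0)^{\ttheta}$. As a consistency check, \ref{it1} of \Cref{isolatedtransversal} predicts exactly one isolated fixed point, matching the single irreducible component of the cusp $X^{\theta}$. Assertion (2) is then immediate from \ref{it2} and \ref{it3} of \Cref{isolatedtransversal}: the unique $\tilde{L}$ attached to the single isolated point meets $\pi^{-1}(0)$ only at $p$, transversally, and since $p\in C_{n/2}\cap C_{n/2+1}$ this records the triple incidence $\tilde{L}\cap C_{n/2}\cap C_{n/2+1}=\{p\}$.

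For assertion (3), I would note that $X^{\theta}=\ddiv(x-y)$ is a reduced principal divisor (it is not among the two exceptional cases listed after \Cref{multofcomponentrem}), so \Cref{multofcomponent} applies with $\pi^{-1}(X^{\theta})=\tilde{L}+\sum_i a_iC_i$, where $\tilde{L}$ has multiplicity $1$. From (2) the incidence numbers are $b_{n/2}=b_{n/2+1}=1$ and $b_i=0$ otherwise, so I must solve $\mathcal{C}(a_1,\dots,a_n)^t=(b_1,\dots,b_n)^t$. Rather than inverting $\mathcal{C}$, I would solve the second-difference equation $2a_i-a_{i-1}-a_{i+1}=b_i$ (with $a_0=a_{n+1}=0$) directly: the right-hand side is supported at the two central indices, so the unique solution is the ``tent''-shaped function $a_i=\min\{i,\,n+1\minus i\}$, which one verifies satisfies the recursion at the boundary, in the interior, and at the two central vertices. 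Finally, since $a_{n/2}=n/2\geq 2$ for $n\geq 4$, the Cartier divisor $\pi^{-1}(X^{\theta})$ carries a component of multiplicity $>1$ on the smooth surface $\wt{X}$ and is therefore non-reduced; for $n=2$ all multiplicities equal $1$, so $\pi^{-1}(X^{\theta})$ is in fact reduced, which should be flagged as an exception exactly as $A_1$ is excepted in the odd case of \Cref{Anoddcase1preimage}.

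I expect the only genuine obstacle to be the bookkeeping in (1): pinning down that the central pair is the sole intersecting swapped pair and that no further fixed points are concealed on the components. The parity-sensitive verification of the tent-function formula in (3) is elementary, and everything else is inherited wholesale from the already-established \Cref{isolatedtransversal,multofcomponent,LA1}.
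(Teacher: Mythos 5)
Your proposal is correct and is precisely the parity-adjusted rerun of the paper's proof of \Cref{Anoddcase1preimage} (via \cref{AnCiequation}, \Cref{liftoncomponent}, \Cref{isolatedtransversal}, and \Cref{multofcomponent}) that the paper has in mind when it states the results for the even case and omits the proofs. Your emendations of the statement are also warranted: the central components must indeed be $C_{n/2}$ and $C_{n/2+1}$ (the stated indices $\frac{n-1}{2},\frac{n+1}{2}$ are not integers for $n$ even), the divisor involves a single $\tilde{L}$ rather than $\tilde{L}_1+\tilde{L}_2$, and non-reducedness fails for $n=2$ where all $a_i=1$ --- the exact analogue of the $A_1$ exception in the odd case.
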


The fixed point locus $X^{\theta}$ and the preimage $\pi^{-1}(X^{\theta})$ are depicted in Picture \ref{Anevencase1pic}. The number below each component indicates its multiplicity. Thickened lines indicate that the multiplicities are larger than $1$. The unique $\ttheta$-fixed component of $\pi^{-1}(X^{\theta})$ is $\tilde{L}$, and it is colored in red.
{
\captionsetup[table]{name=Picture}
\begin{table}[ht]
\begin{tabular}{ccc} 
    \begin{tikzpicture}
    \draw[line width=0.25mm] (0.5,-1.2) edge [bend right=20] node [below]{} (-0.3,0);
    \draw[line width=0.25mm] (0.5,1.2) edge [bend left=20] node [below]{} (-0.3,0);
    \end{tikzpicture} & $\ $ &
    \begin{tikzpicture}
    \draw[line width=0.25mm] (-4,0) edge [bend left=35] node [below]{} (-1.5,0);
    \node at (-2.75,0) () {$1$};
    \draw[line width=0.4mm] (-2,0) edge [bend left=25] node [below]{} (-1,0.3);
    \node at (-1,0) () {$2$};
    \filldraw[black] (-0.65,0.3) circle (0.5pt);
    \filldraw[black] (-0.5,0.3) circle (0.5pt);
    \filldraw[black] (-0.35,0.3) circle (0.5pt);
    \draw[line width=0.4mm] (1,0) edge [bend right=25] node [below]{} (0,0.3);
    \node at (0,0) () {$\frac{n-1}{2}$};
    \draw[line width=0.4mm] (0.5,0) edge [bend left=35] node [below]{} (3,0);
    \node at (1.75,0) () {$\frac{n}{2}$};
    \draw[line width=0.4mm] (2.5,0) edge [bend left=35] node [below]{} (5,0);
    \node at (3.75,0) () {$\frac{n}{2}$};
    \draw[red, line width=0.25mm] (2.75,-1.2) to (2.75,1.2);
    \node at (2.75,-1.4) () {\textcolor{red}{$1$}};
    \draw[line width=0.4mm] (4.5,0) edge [bend left=25] node [below]{} (5.5,0.3);
    \node at (5.5,0) () {$\frac{n-1}{2}$};
    \filldraw[black] (5.8,0.3) circle (0.5pt);
    \filldraw[black] (6,0.3) circle (0.5pt);
    \filldraw[black] (6.2,0.3) circle (0.5pt);
    \draw[line width=0.4mm] (7.5,0) edge [bend right=25] node [below]{} (6.5,0.3);
    \node at (6.5,0) () {$2$};
    \draw[line width=0.25mm] (7,0) edge [bend left=35] node [below]{} (9.5,0);
    \node at (8.25,0) () {$1$};
    \end{tikzpicture} \\
     & \\
    $X^{\theta}$ & & $\pi^{-1}(X^{\theta})$
\end{tabular}
\caption{$A_n,\ n$ even, Type \RMN{1}}
\label{Anevencase1pic}
\end{table}
}

\subsection{Type $A_n,\ n$ even, case \RNum{2}}\label{Anevencase2sub} In this section, we consider the anti-Poisson involution
\[
  \theta(x)=-x,\ \theta(y)=y,\  \theta(z)=-z.
\]
The fixed point locus $X^{\theta}=\Spec\C[X]/(x,z)\simeq \A^1$ has a single irreducible component $L$. Following the notations in \Cref{liftsectionq1}, we have $\pi^{-1}(X^{\theta})=\pi^{-1}(0)\cup\tilde{L}$. Similar to \Cref{Anliftz}, one can construct the following lift of $\theta$.

\begin{proposition}\label{Anliftxz}
The involution $\ttheta\colon \M_{\chi}(\delta,w)\to \M_{\chi}(\delta,w)$ defined by
\begin{equation}\label{Anliftxzequation}
    \ttheta([B,B^*,l_0,k_0])=[-B,B^*,l_0,-k_0]
\end{equation}
is anti-symplectic. Moreover, it is a lift of $\theta\colon X\to X$.
\end{proposition}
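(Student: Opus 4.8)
The plan is to reproduce the argument of \Cref{Anliftz} almost verbatim, feeding the defining data into \Cref{liftconditions}; the only place where this case genuinely differs from the odd case is a parity computation on the generator $x$. First I would introduce the linear map
\[
\Theta\colon M(\delta,w)\to M(\delta,w),\quad (B,B^*,l_0,k_0)\mapsto(-B,B^*,l_0,-k_0),
\]
whose descent to $\M_{\chi}(\delta,w)$ is exactly the $\ttheta$ of \cref{Anliftxzequation}. I would check directly from \cref{quiversymform} that $\Theta$ is anti-symplectic: flipping the signs of $B$ and $k_0$ while fixing $B^*$ and $l_0$ reverses the sign of each summand $\tr(B_aB'_{a^*})$, $\tr(B_{a^*}B'_a)$, and $\tr(l_ik'_i-k_il'_i)$, so $\Theta^*\omega=-\omega$.

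Next I would verify the two hypotheses of \Cref{liftconditions}. Condition \ref{lift2} is immediate because $\Theta$ acts on each factor $V_i$ separately without permuting the vertices, so the conjugation $g^{\Theta}=\Theta g\Theta^{-1}$ equals $g$ (the identity permutation of the factors). For condition \ref{lift3} I would track the action of $\Theta$ on the three generators listed in \cref{Angenerator}. The generator $x=B_{0\la n}\cdots B_{2\la 1}B_{1\la 0}$ is a product of $n+1$ arrows of type $B$, so $\Theta(x)=(-1)^{n+1}x=-x$ since $n$ is even; the generator $y$ is a product of $B^*$'s only, so $\Theta(y)=y$; and $z=B^*_{0\la 1}B_{1\la 0}$ contains exactly one $B$ and one $B^*$, so $\Theta(z)=-z$. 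These agree precisely with the anti-Poisson involution $\theta(x)=-x,\ \theta(y)=y,\ \theta(z)=-z$ of the present case.

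With both hypotheses in hand, \Cref{liftconditions} immediately yields that $\Theta$ descends to the anti-symplectic involution $\ttheta$ on $\wt{X}\simeq\M_{\chi}(\delta,w)$ and that $\ttheta$ is a lift of $\theta$, completing the proof. There is no real obstacle: the argument is routine once \Cref{liftconditions} is available, and the single subtle point is the sign computation $(-1)^{n+1}=-1$, which is exactly where the even case departs from the odd case of \Cref{Anliftz} and correctly produces $\theta(x)=-x$ rather than $\theta(x)=x$.
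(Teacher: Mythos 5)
Your proposal is correct and follows essentially the same route as the paper: the paper proves this proposition by pointing to the argument of \Cref{Anliftz}, namely defining the linear map $\Theta(B,B^*,l_0,k_0)=(-B,B^*,l_0,-k_0)$, checking conditions \ref{lift2} and \ref{lift3} of \Cref{liftconditions}, and letting $\ttheta$ be the descent of $\Theta$. Your parity computation $\Theta(x)=(-1)^{n+1}x=-x$ for $n$ even is exactly the point where this case differs from the odd case, and your explicit verification from \cref{quiversymform} that $\Theta$ is anti-symplectic fills in a step the paper leaves implicit.
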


\begin{proposition}\label{Anevencase2preimage}
\begin{enumerate}[label=(\arabic*)]
       \item\label{Anliftxztemp1} $\pi^{-1}(0)^{\ttheta}=\bigcup_{i\ \text{even}}C_i\cup\{p\}$ where $p\in C_1\setminus C_2$.
       \item\label{Anliftxztemp2} We have $\tilde{L}\cap\pi^{-1}(0)=\tilde{L}\cap C_1=\{p\}$.
       \item\label{Anliftxztemp3} The scheme $\pi^{-1}(X^{\theta})$ is generically reduced.
   \end{enumerate}
\end{proposition}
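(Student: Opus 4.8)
The plan is to mirror the proof of \Cref{Anoddcase2preimage}, since the lift here, $\ttheta([B,B^*,l_0,k_0])=[-B,B^*,l_0,-k_0]$, has the same shape. First I would note that $\ttheta$ preserves every component $C_i$: the defining equations $B_{i+1\la i}=B^*_{i-1\la i}=0$ of $C_i$ in \cref{AnCiequation} are manifestly stable under $(B,B^*)\mapsto(-B,B^*)$. By \Cref{liftoncomponentcor} the $\ttheta$-fixed and non-fixed components then alternate along the $A_n$ chain, and by \ref{it1} of \Cref{isolatedtransversal} there is exactly one isolated point in $\pi^{-1}(0)^{\ttheta}$, because $X^{\theta}\simeq\A^1$ is irreducible.

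The step that breaks the left/right symmetry and locates this isolated point is an explicit description of $\tilde L$. On $L\setminus\{0\}=V(x,z)\setminus\{0\}$ we have $y\neq 0$, so every backward arrow $B^*$ in the loop computing $y$ is nonzero (c.f.\ \cref{Angenerator}); since $z=B^*_{i\la i+1}B_{i+1\la i}=0$ for all $i$, every forward arrow vanishes, so $\tilde L$ is the closure of $\{B=0,\ B^*\text{ all nonzero},\ l_0\neq 0,\ k_0=0\}$. Forming the contracting limit $s_0$ of \cref{toruss0} and using that $s_0$ must again be $\chi$-semistable, i.e.\ $\im(l_0)=V_0$ must generate all of $V$ under $(B,B^*)$ (\Cref{stTFAE}), I would argue that the only backward arrow permitted to degenerate to $0$ is the one returning to the already-generated framed vertex, namely $B^*_{0\la 1}$, while $B^*_{n\la 0},B^*_{n-1\la n},\dots,B^*_{1\la 2}$ must all stay nonzero. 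Hence at $s_0$ both arrows out of vertex $1$ vanish but no other vertex degenerates, so $s_0\in C_1\setminus C_2$. This proves \ref{Anliftxztemp2}. Since $s_0$ is an isolated fixed point lying in the interior of $C_1$, \Cref{liftoncomponent}\ref{autop2} forces $\ttheta$ to act nontrivially on $C_1$, with fixed points exactly $s_0$ and $C_1\cap C_2$; alternation then gives that the fixed components are precisely $C_2,C_4,\dots,C_n$, which is \ref{Anliftxztemp1}.

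For generic reducedness I cannot invoke \Cref{multofcomponent} directly, as $X^{\theta}=V(x,z)$ is not a principal divisor. Instead I would work with the genuinely principal divisor $\ddiv_X(z)$, whose reduced support is the union of the two lines $V(x,z)$ and $V(y,z)$ (each appearing with multiplicity one, as $z=uv$ vanishes to order $1$ along $u=0$ and along $v=0$), and whose strict transforms meet $C_1$ and $C_n$ respectively, transversally, by the symmetry $x\leftrightarrow y$ together with \ref{it3} of \Cref{isolatedtransversal}. Applying \Cref{multofcomponent} via \ref{generaldivisor} of \Cref{multofcomponentrem} with $b=(1,0,\dots,0,1)$ gives $(a_1,\dots,a_n)^t=\mathcal{C}^{-1}(1,0,\dots,0,1)^t=(1,\dots,1)^t$, so $\pi^*z$ vanishes to order exactly $1$ along each $C_i$. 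Finally, since $\pi^{-1}(X^{\theta})=V(\pi^*x,\pi^*z)$ contains each $C_i$ and its ideal contains $\pi^*z$, at the generic point $\eta_i$ of $C_i$ the structure sheaf is a quotient of the DVR $\mathcal{O}_{\wt X,\eta_i}$ by an ideal caught between $\mathfrak{m}_{\eta_i}$ and $(\pi^*z)=\mathfrak{m}_{\eta_i}$, hence equal to $\mathfrak{m}_{\eta_i}$; so $\pi^{-1}(X^{\theta})$ is reduced at every $\eta_i$, and also at the generic point of $\tilde L$ where $\pi$ is an isomorphism, i.e.\ generically reduced.

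I expect the main obstacle to be part \ref{Anliftxztemp3}: because $X^{\theta}$ is not principal, the multiplicity machinery of \Cref{multofcomponent} does not apply, and the crux is to recognize $X^{\theta}$ as one of the two components of the principal divisor $\ddiv(z)$, so that the single function $\pi^*z$ already cuts out each $C_i$ with multiplicity one and thereby forces generic reducedness. A secondary subtlety is the semistability/limit computation that distinguishes $C_1$ from $C_n$; the asymmetry is caused entirely by the framing sitting at vertex $0$, which singles out $B^*_{0\la 1}$ as the unique arrow allowed to vanish in the limit.
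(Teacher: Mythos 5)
Your proposal is correct. For part \ref{Anliftxztemp3} you take essentially the paper's route: the paper also passes to the auxiliary principal divisor $Y=\ddiv(z)$ (Lemma \ref{AnevenYreduced}), identifies the components of $\pi^{-1}(Y)$ as $\tilde L,\tilde L',C_1,\dots,C_n$ with $b=(1,0,\dots,0,1)$, applies \Cref{multofcomponent} (via \ref{generaldivisor} of \Cref{multofcomponentrem}) to get $a_i=1$, and then deduces generic reducedness of $\pi^{-1}(X^{\theta})$ from the comparison $\pi^{-1}(X^{\theta})\leq\pi^{-1}(Y)$; your squeeze between $\mathfrak{m}_{\eta_i}$ and $(\pi^*z)=\mathfrak{m}_{\eta_i}$ in the DVR $\mathcal{O}_{\wt X,\eta_i}$ is just a more explicit rendering of that last step.

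Where you genuinely diverge is in parts \ref{Anliftxztemp1}--\ref{Anliftxztemp2}, and your route has a real advantage. The paper disposes of these by declaring them ``almost identical'' to \Cref{Anoddcase2preimage}, whose proof is a pure counting argument: alternation of fixed components (\Cref{liftoncomponentcor}) plus the count of isolated points from \ref{it1} of \Cref{isolatedtransversal}. In the even case that argument alone cannot close the proof: both alternation scenarios (odd components fixed, isolated point on $C_n\setminus C_{n-1}$; or even components fixed, isolated point on $C_1\setminus C_2$) produce exactly one isolated point, and the two scenarios are exchanged by the diagram involution $i\mapsto n+1-i$ (equivalently, by passing to the conjugate involution $\theta'$), so counting is symmetric-blind. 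Your semistability/limit computation — all forward arrows vanish along $\pi^{-1}(L\setminus\{0\})$, and at the limit the only backward arrow that semistability allows to die is $B^*_{0\la 1}$ — supplies exactly the symmetry-breaking input (the framing at vertex $0$) that the statement $p\in C_1\setminus C_2$ requires; it is the same asymmetry that the paper's chart computation in \Cref{Ancomputation} would exhibit ($\tilde L=\{v_0=0\}$ meeting $C_1=\{u_0=0\}$ in the chart $U_0$). So your argument is more self-contained precisely where the paper's cross-reference is thinnest, at the cost of a quiver-level computation; note also that you derive \ref{Anliftxztemp1} from \ref{Anliftxztemp2}, reversing the paper's order of deduction.

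Three small steps should be made explicit. First, you show $B^*_{0\la 1}$ is the only backward arrow \emph{permitted} to vanish at $s_0$; you also need that it \emph{does} vanish, which follows because $s_0\in\pi^{-1}(0)$ forces $y(s_0)=0$, i.e., some backward arrow vanishes. Second, to know the limit $s_0$ still has all forward arrows zero, observe that $\{B=0\}$ is a closed $\GL(\delta)$-stable subset of $\mu^{-1}(0)^{\chi-ss}$ and the quotient is geometric by \Cref{stcor}, so its image in $\M_{\chi}(\delta,w)$ is closed and contains $\tilde L$. Third, the assertion that $s_0$ is an isolated point of $\pi^{-1}(0)^{\ttheta}$ (needed before invoking \ref{autop2} of \Cref{liftoncomponent}) should be justified: by \ref{it1}--\ref{it2} of \Cref{isolatedtransversal} there is exactly one isolated point $p$ and $\tilde L\cap\pi^{-1}(0)^{\ttheta}=\{p\}$, while $s_0\in\tilde L\cap\pi^{-1}(0)\subset\tilde L\cap\pi^{-1}(0)^{\ttheta}$ since $\tilde L\subset\wt X^{\ttheta}$, whence $s_0=p$. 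None of these is a genuine gap; each is a one-line fill.
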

\begin{proof}
The proofs to \ref{Anliftxztemp1} and \ref{Anliftxztemp2} are almost identical to those of \Cref{Anoddcase2preimage}, so we leave it to the reader. The proof to \ref{Anliftxztemp3} will be a bit different from that of \Cref{Anoddcase2preimage} because $X^{\theta}$ is not a principal divisor. Set 
\begin{equation}\label{AnevenYdef}
Y:=\Spec\C[X]/(z)=\Spec\C[x,y,z]/(z,xy)\supset X^{\theta}.
\end{equation}
The closed embedding $\pi^{-1}(X^{\theta})\hookrightarrow\pi^{-1}(Y)$ implies that $\pi^{-1}(X^{\theta})\leq\pi^{-1}(Y)$ as divisors. We prove that 
$\pi^{-1}(Y)$ is reduced in \Cref{AnevenYreduced}. It follows that $\pi^{-1}(X^{\theta})$ is generically reduced as well.
\end{proof}

\begin{lemma}\label{AnevenYreduced}
    The scheme $\pi^{-1}(Y)$ is reduced.
\end{lemma}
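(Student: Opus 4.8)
The plan is to show that every irreducible component of the principal divisor $\pi^{-1}(Y)=\ddiv(\pi^*z)$ occurs with multiplicity $1$; since $\pi^{-1}(Y)$ is principal, hence an effective Cartier divisor, being generically reduced will force it to be reduced by \ref{gred} of \Cref{multofcomponentrem}. First I would record that $Y=\ddiv(z)$ is a reduced principal divisor on $X$: indeed $\C[X]/(z)\simeq\C[x,y]/(xy)$ is reduced, with two irreducible components, the lines $L_1=V(y,z)$ and $L_2=V(x,z)$. Pulling back along $\pi$, which is an isomorphism over $X\setminus\{0\}$, and writing $\tilde{L}_j=\overline{\pi^{-1}(L_j\setminus\{0\})}$, I obtain $\pi^{-1}(Y)=\ddiv(\pi^*z)=\tilde{L}_1+\tilde{L}_2+\sum_{i=1}^n a_iC_i$, where the coefficients of $\tilde{L}_1,\tilde{L}_2$ are forced to be $1$ and the $a_i\geq 1$ remain to be determined.

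The key step is the intersection analysis. Observe that $L_2=V(x,z)$ is precisely the fixed locus $X^{\theta}$ of the Type \RMN{2} involution (\cref{AnevenYdef}), so by \ref{Anliftxztemp2} of \Cref{Anevencase2preimage} the curve $\tilde{L}_2$ meets the exceptional fiber only at a single point $p\in C_1$, and transversally there by \ref{it3} of \Cref{isolatedtransversal}. The automorphism of $X$ induced by $u\leftrightarrow v$ exchanges $x$ and $y$ and fixes $z$, hence swaps $L_1$ and $L_2$; it lifts to an automorphism of $\wt{X}$ realizing the Dynkin diagram automorphism $\tau$ of \cref{AnDynkinauto}, carrying $C_i$ to $C_{n+1\minus i}$ and $\tilde{L}_2$ to $\tilde{L}_1$. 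Transporting the previous statement through this automorphism, $\tilde{L}_1$ meets the exceptional fiber only at a single point of $C_n$, transversally. Consequently the integers of \cref{bidef} are $b_1=b_n=1$ and $b_i=0$ for $2\leq i\leq n-1$.

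Finally comes the computation. The divisor $Y=\ddiv(z)$ is reduced and principal, and by the previous paragraph each component of $\tilde{L}_1+\tilde{L}_2$ meets at most one $C_i$, transversally, so \Cref{multofcomponent} applies in the form of \ref{generaldivisor} of \Cref{multofcomponentrem}, yielding $(a_1,\cdots,a_n)^t=\mathcal{C}^{-1}(1,0,\cdots,0,1)^t$. Since the $A_n$ Cartan matrix satisfies $\mathcal{C}(1,\cdots,1)^t=(1,0,\cdots,0,1)^t$ (the $i$-th entry of the left-hand side is $2$ minus one for each neighbor present, i.e. $1$ at the two ends and $0$ in between), I conclude $a_i=1$ for all $i$. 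Thus $\pi^{-1}(Y)=\tilde{L}_1+\tilde{L}_2+\sum_{i=1}^n C_i$ is generically reduced, and being a principal divisor it is reduced.

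The main obstacle is exactly the intersection pattern of $\tilde{L}_1,\tilde{L}_2$ with the exceptional curves; once $b=(1,0,\cdots,0,1)$ is in hand, the remainder is the formal Cartan-matrix inversion above. I would sidestep the obstacle by reusing the fixed-locus computation already carried out for Type \RMN{2} together with the evident $x\leftrightarrow y$ symmetry, as indicated. Should one prefer a self-contained derivation, the same intersection data can instead be read off from the toric (iterated blow-up) model of the $A_n$ resolution, in which the proper transforms of the two coordinate lines attach to the two extreme vertices $C_1$ and $C_n$ of the chain.
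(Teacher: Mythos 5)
Your proof is correct, and it follows the same skeleton as the paper's: decompose $\pi^{-1}(Y)=\tilde L_1+\tilde L_2+\sum_{i=1}^n a_iC_i$, establish $b=(1,0,\cdots,0,1)$, apply \Cref{multofcomponent} in the form of \ref{generaldivisor} of \Cref{multofcomponentrem}, and finish with the observation (\ref{gred}) that a generically reduced principal divisor is reduced. The one genuine divergence is the treatment of the second curve. The paper realizes $Y$ as the union of fixed point loci of two anti-Poisson involutions, $X^{\theta}=L$ and $X^{\theta'}=L'$ with $\theta'$ conjugate to $\theta$, and then asserts that the analysis of \Cref{Anevencase2preimage} applies ``similarly'' to $\tilde L'$ --- i.e.\ it implicitly re-runs the lift and fixed-point argument for $\theta'$. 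You instead transport the already-proved intersection data (\ref{Anliftxztemp2} of \Cref{Anevencase2preimage}, with transversality from \ref{it3} of \Cref{isolatedtransversal}) through a single explicit symmetry of the pair $(\wt X,\pi^{-1}(0))$: the Type \RMN{1} involution $\eta\colon x\mapsto y,\ y\mapsto x,\ z\mapsto z$ induced by $u\leftrightarrow v$, whose lift $\tilde\eta$ exists by \Cref{Anevenliftx-y} and interchanges $C_i$ with $C_{n+1\minus i}$ by \Cref{Anevencase1preimage}; since $\eta$ swaps $L_1$ and $L_2$ and $\pi\circ\tilde\eta=\eta\circ\pi$, the lift swaps $\tilde L_1$ and $\tilde L_2$, so the statement for $\tilde L_1$ follows with no further work. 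Your route buys economy and avoids the ``similarly'' step entirely --- indeed it sidesteps writing down $\theta'$ at all, which is a small bonus given that the paper's printed $\theta'$ (with $z\mapsto z$) is not actually anti-Poisson and has fixed ideal $(y)$ rather than $(y,z)$; it should read $z\mapsto -z$. The paper's route, on the other hand, stays uniformly within the fixed-locus formalism it develops for all types. Your explicit verification that $\mathcal{C}(1,\cdots,1)^t=(1,0,\cdots,0,1)^t$ also fills in the Cartan-matrix inversion that the paper leaves implicit.
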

\begin{proof}
    Note that $\pi^{-1}(Y)=\ddiv(\pi^*z)$ is a principal divisor. We want to apply \Cref{multofcomponent} to show that $\pi^{-1}(Y)$ is reduced (c.f. \ref{gred} and \ref{generaldivisor} of \Cref{multofcomponentrem}). To figure out what are the $b_i$'s as defined in \cref{bidef}, we need to determine the irreducible components of $\pi^{-1}(Y)$ and study how different components intersect with each other. The scheme $Y=\Spec\C[X]/(z)=\Spec[x,y]/(xy)$ has two irreducible components $L=\Spec\C[y]$ (coinciding with $X^{\theta}$) and $L':=\Spec\C[x]$. Similar to $\tilde{L}$, one can define $\tilde{L}':=\overline{\pi^{-1}(L'\setminus\{0\})}$. Then the irreducible components of $\pi^{-1}(Y)$ are $C_1,\cdots,C_n,\tilde{L},\tilde{L}'$.

    There is a recipe in \Cref{liftsection} that allows us to understand how different components of $\pi^{-1}(X^{\eta})$ intersect with each other when $\eta$ is an anti-Poisson involution of $X$. However, $Y$ is not the fixed point locus of an anti-Poison involution when $n$ is even (the setup of \Cref{Anevencase2sub}). Nevertheless, we can write $Y$ as the union of the fixed point loci of two different anti-Poisson involutions $\theta$ and $\theta'$ of $X$, where
\begin{equation*}
\theta'\colon\C[X]\to\C[X],\ \ x\mapsto x,\ y\mapsto -y,\ z\mapsto z.
\end{equation*}
We have $X^{\theta'}=L'$. It is easy to see that $\theta'$ is conjugate to $\theta$ from the classification in \Cref{AnAPIeven}. Recall that we have $\tilde{L}\cap\pi^{-1}(0)=\tilde{L}\cap C_1=\{p\}\subset C_1\setminus C_2$ from \ref{Anliftxztemp2} of \Cref{Anevencase2preimage}. Similarly, one can show that $\tilde{L}'\cap\pi^{-1}(0)=\tilde{L}'\cap C_n=\{p'\}\subset C_n\setminus C_{n-1}$. Therefore, $Y$ and $\pi^{-1}(Y)_{\red}$ look like in Picture \ref{Anoddcase2pic} (forgetting about the red coloring). 

Lastly, we compute the multiplicities of the components of $\pi^{-1}(Y)$. We write $\pi^{-1}(Y)=\tilde{L}+\tilde{L}'+\sum_{i=1}^na_iC_i$ as a divisor of $\wt{X}$. Recall the definition of $b_i$ in \cref{bidef}. We have $(b_1,b_2,\cdots,b_{n-1},b_n)=(1,0,\cdots,0,1)$ from the previous paragraph. We can determine that $a_i=1$ by \cref{mult}. Therefore, $\pi^{-1}(Y)$ is generically reduced.
\end{proof}

We depict the fixed point locus $X^{\theta}$ and the preimage $\pi^{-1}(X^{\theta})$ in Picture \ref{Anevencase2pic}. All the components have multiplicity $1$. The $\ttheta$-fixed components of $\pi^{-1}(X^{\theta})$ are colored in red.
{
\captionsetup[table]{name=Picture}
\begin{table}[ht]
\centering
\begin{tabular}{ccc}
    \begin{tikzpicture}
    \draw[thick] (0,1) to (0,-1);
    \end{tikzpicture} & $\qquad$ &
    \begin{tikzpicture}
    \draw[red, thick] (-5,1.2) to (-4,-0.8);
    \draw[thick] (-5,0) edge [bend left=35] node 
    [below]{} (-2.5,0);
    \draw[red, thick] (-3.5,0) edge [bend left=35] node [below]{} (-1,0);
    \draw[thick] (-2,0) edge [bend left=25] node [below]{} (-0.75,0.3);
    \node at (0,0.2) (cdots) {$\cdots$};
    \draw[thick] (2,0) edge [bend right=25] node [below]{} (0.75,0.3);
    \draw[red, thick] (3.5,0) edge [bend right=35] node [below]{} (1,0);
    \end{tikzpicture} \\
     &  & \\
    $X^{\theta}$ & $\qquad$ & $\pi^{-1}(X^{\theta})$
\end{tabular}
\caption{$A_n,\ n$ even, Type \RMN{2}}
\label{Anevencase2pic}
\end{table}
}

\begin{remark}\label{Ancomputation}
    There is also a computational way to study the scheme-theoretic preimage $\pi^{-1}(X^{\theta})$. We explain the method below, but omit some details. 
    
    Recall that we have $X\simeq\mu^{-1}(0)\gitquo\GL(\delta)\simeq\Spec\C[x,y,z]/(xy-z^{n+1})$, where the generators $x,y,z$ are defined in \cref{Angenerator}. Moreover, $\wt{X}=\mu^{-1}(0)\gitquo^{\chi}\GL(\delta)$, and the natural projective morphism $\pi\colon\wt{X}\to X$ gives the minimal resolution (c.f. \Cref{sympres}). Set $B_{j\leftarrow 0}:=B_{j\leftarrow j-1}\cdots B_{1 \leftarrow 0},\ B^*_{j\leftarrow 0}:=B^*_{j\leftarrow j+1}\cdots B^*_{n\leftarrow 0}$. We identify $B_{0\la 0}=B^*_{n+1\la 0}=\id_{V_0}$ and $B_{n+1\la 0}=x,\ B^*_{0\la 0}=y$. Consider $f_i:=(\prod_{j=0}^i B_{j\la 0}l_0)(\prod_{j=i+1}^n B^*_{j\la 0}l_0)\in\C[\mu^{-1}(0)]^{\GL(\delta),\chi}$. The principal affine open subset $U_i:=\mu^{-1}(0)_{f_i}\subset\mu^{-1}(0)^{\chi-ss}$ is $\GL(\delta)$-invariant, and one can show that $\{U_i\}_{i\in I}$ covers $\mu^{-1}(0)^{\chi-ss}$ by \Cref{stTFAE}. It follows that $\{U_i\gitquo\GL(\delta)\}_{i\in I}$ forms an affine open cover of $\wt{X}$. Moreover, for each $i\in I$, set $u_i:=\frac{B^*_{i\la 0}}{B_{i\la 0}},\ v_i:=\frac{B_{i+1\la 0}}{B^*_{i+1\la 0}}$. One can prove that 
\begin{equation}\label{UiC2}
U_i\gitquo\GL(\delta)=\Spec\C[U_i]^{\GL(\delta)}=\Spec\C[u_i,v_i]\simeq\C^2,
\end{equation}
using a version of the Zariski's Main Theorem \cite[Corollary 18.12.13]{EGAIV}. Moreover, two affine open subsets $U_i\gitquo\GL(\delta),\ U_j\gitquo\GL(\delta)$ with $i<j$ can be glued via $v_iu_jz^{j-i-1}=1$. The composition $\pi_i\colon U_i\gitquo\GL(\delta)\xhookrightarrow{}\wt{X}\xrightarrow[]{\pi}X$, in terms of coordinates, is given by
\begin{equation}\label{AnUimorphism}
    (u_i,v_i)\mapsto(x=u_i^{n-i}v_i^{n-i+1},\ y=u_i^{i+1}v_i^i,\ z=u_iv_i).
\end{equation}
We can determine the preimage $\pi^{-1}(Y)$ of any subscheme $Y\subset X$ by computing the preimages $\pi_i^{-1}(Y)=\pi^{-1}(Y)\cap(U_i\gitquo\GL(\delta))$ using \cref{AnUimorphism}, and then glue them together.

The idea of the computational method can be carried beyond type $A$ as well, but the computations are not manageable. 
\end{remark}

\section{Preimages of fixed point loci for type $D_n$ Kleinian singularity}\label{Dnquiver} 

{
\captionsetup[table]{name=Picture}
\begin{table}[ht]
    \centering
\begin{tabular}{c}
    \scalebox{1.3}{\begin{tikzpicture}
        \node at (-3,1) (Dn) {$\widetilde{D}_n$};
        \filldraw[black] (-1.4,2.4) circle (2pt) node[anchor=east]{$W_0$};
        \filldraw[black] (-1.4,0.9) circle (2pt) node[anchor=east]{$V_0$};
        \filldraw[black] (-1.4,-0.9) circle (2pt) node[anchor=east]{$V_1$};
        
        \filldraw[black] (0,0) circle (2pt) node[anchor=north]{$V_2$};
        \filldraw[black] (1.5,0) circle (2pt) node[anchor=north]{$V_3$};
        \filldraw[black] (3,0) circle (2pt) node[anchor=north]{$V_4$};
        \node at (3.75,0) (cdots) {$\cdots$};
        \filldraw[black] (4.5,0) circle (2pt) node[anchor=north]{$V_{n\sminus3}$};
        \filldraw[black] (6.35,0) circle (2pt) node[anchor=north]{$V_{n\sminus2}$};
        
        \filldraw[black] (7.65,0.9) circle (2pt) node[anchor=west]{$V_{n\sminus1}$};
        \filldraw[black] (7.65,-0.9) circle (2pt) node[anchor=west]{$V_n$};

        \draw[<-,thick] (-1.45,2.35) -- (-1.45,1.05);
        \node at (-1.7,1.675) (q) {$k_0$};
        \draw[->,thick] (-1.35,2.25) -- (-1.35,0.95);
        \node[] at (-1.1,1.625) (p) {$l_0$};
        
        \draw[->,thick] (-1.4,0.9) -- (-0.1,0.1);
        \node at (-0.4,0.62) (a0) {$b_{2\la 0}$};
        \draw[<-,thick] (-1.4,0.8) -- (-0.15,0.02);
        \node[] at (-0.95,0.3) (b0) {$b^*_{0\la 2}$};
        \draw[<-,thick] (-1.3,-0.92) -- (-0.1,-0.12);
        \node at (-1.1,-0.3) (a1) {$b_{2\la 1}$};
        \draw[->,thick] (-1.35,-0.85) -- (-0.15,-0.05);
        \node[] at (-0.35,-0.72) (b1) {$b^*_{1\la 2}$};

        \draw[<-,thick] (7.6,0.85) -- (6.35,0.1);
        \node at (6.4,0.72) (a0) {$b_{n\sminus1\la n\sminus 2}$};
        \draw[->,thick] (7.6,0.75) -- (6.35,0);
        \node[] at (7.8,0.2) (b0) {$b^*_{n\sminus1\la n\sminus2}$};
        \draw[->,thick] (7.55,-0.92) -- (6.35,-0.12);
        \node at (7.65,-0.4) (ar) {$b_{n\la n\sminus2}$};
        \draw[<-,thick] (7.6,-0.85) -- (6.4,-0.05);
        \node[] at (6.65,-0.72) (br) {$b^*_{n\sminus2\la n}$};
        
        \draw[->,thick] (0.1,0.05) -- (1.4,0.05);
        \node at (0.7,0.25) (a2) {\scalebox{0.9}{$B_{3\leftarrow2}$}};
        \draw[<-,thick] (0.1,-0.05) -- (1.4,-0.05);
        \node[] at (0.7,-0.25) (b2) {\scalebox{0.9}{$B^*_{2\leftarrow 3}$}};
        \draw[->,thick] (1.6,0.05) -- (2.9,0.05);
        \node at (2.2,0.25) (a3) {\scalebox{0.9}{$B_{4\leftarrow3}$}};
        \draw[<-,thick,] (1.6,-0.05) -- (2.9,-0.05);
        \node[] at (2.2,-0.25) (b3) {\scalebox{0.9}{$B^*_{3\leftarrow 4}$}};
        \draw[->,thick] (4.6,0.05) -- (6.2,0.05);
        \node at (5.4,0.25) (ar-3) {{\scalebox{0.7}{$B_{n\sminus2\leftarrow n\sminus3}$}}};
        \draw[<-,thick,] (4.6,-0.05) -- (6.2,-0.05);
        \node[] at (5.4,-0.25) (br-3) {{\scalebox{0.7}{$B^*_{n\sminus3\leftarrow n\sminus2}$}}};
    \end{tikzpicture}}
\end{tabular}
\caption{Extended Dynkin quiver $\widetilde{D}_n$}
\label{quiverDn}
\end{table}
}

We realize the Kleinian singularity of type $D_n$ as a Nakajima quiver variety explicitly. Recall that $\delta=(1,1,2,\cdots,2,1,1)$ 
Using notations in \Cref{quiverapp}, we have that $W_0, V_0, V_1, V_{n-1}, V_n$ are $1$-dimensional vector spaces, and $V_i,\ 2\leq i\leq n-2$ are $2$-dimensional vector spaces. We use $b_{j\la i},\ b^*_{i\la j}$ to denote the linear maps between a $1$-dimensional vector space and a $2$-dimensional vector space; and $B_{i+1\la i},\ B^*_{i\la i+1}$ to denote the linear maps between $2$-dimensional vector spaces. The representation space is
\[
M(\delta,w)=T^*\big(\Hom(V_0,V_2)\oplus\Hom(V_1,V_2)\bigoplus_{i=2}^{n\sminus3}\Hom(V_i,V_{i+1})\oplus\Hom(V_{n\sminus2},V_{n\sminus1})\oplus\Hom(V_{n\sminus2},V_n)\oplus\Hom(W_0,V_0)\big).
\]
The group $\GL(\delta)=\prod_{i=0}^n\GL(V_i)$ acts on $M(\delta,w)$ naturally with the moment map 
\[
\mu=(\mu_i)_{0\leq i\leq n}\colon M(\delta,w)\to\gl(\delta),
\]
where, for $n\geqslant 5$, we have
\begin{equation}\label{Dnmomentmap}
\begin{aligned}
    \mu_0&=-b^*_{0\la 2}b_{2\la 0}+l_0k_0,\\
    \mu_1&=-b^*_{1\la 2}b_{2\la 1}, \\
    \mu_2&=b_{2\la 0}b^*_{0\la 2}+b_{1\la 2}b^*_{2\la 1}-B^*_{2\leftarrow 3}B_{3\leftarrow 2}, \\
    \mu_i&=B_{i\leftarrow i-1}B^*_{i-1\leftarrow i}-B^*_{i\leftarrow i+1}B_{i+1\leftarrow i},\ 3\leq i\leq n-3,\\
    \mu_{n-2}&=B_{n-2\leftarrow n-3}B^*_{n-3\leftarrow n-2}-b^*_{n-2\la n-1}b_{n-1\la n-2}-b^*_{n-2\la n}b_{n\la n-2}, \\
    \mu_{n-1}&=b_{n-1\la n-2}b^*_{n-2\la n-1}, \\
    \mu_n&=b_{n\la n-2}b^*_{n-2\la n}. \\
\end{aligned}
\end{equation}
When $n=4$, we have
\[
 \mu_2=\mu_{n-2}=b_{2\la 0}b^*_{0\la 2}+b_{2\la 1}b^*_{1\la 2}-b^*_{2\la 3}b_{3\la 2}-b^*_{2\la 4}b_{4\la 2},
\]
and $\mu_0,\ \mu_1,\ \mu_3=\mu_{n-1},\ \mu_4=\mu_n$ are determined by the same equations as in \cref{Dnmomentmap}. Picture \ref{quiverD4} does a better job in illustrating the extended Dynkin quiver $\widetilde{D}_4$. Recall from \Cref{Ciequation}, we have that
\[
    C_i=\big\{[B,B^*,l_0,0]\in\M_{\chi}(\delta,w)\big| \rank(B_{a})_{\substack{a\in\bar{\Omega}\\ t(a)=i}}<\dim V_i \big\},\ 1\leq i\leq n.
\]
To simplify the writing, we introduce the following notations.
\begin{equation*}
\begin{aligned}
    B_{j\leftarrow i}:&=B_{j\leftarrow j-1}\cdots B_{i+1 \leftarrow i}\colon V_i\to V_j,\ 2\leq i\leq j\leq n-2, \\ 
    B^*_{i\leftarrow j}:&=B^*_{i\leftarrow i+1}\cdots B^*_{j-1\leftarrow j}\colon V_j\to V_i,\ 2\leq i\leq j\leq n-2.
\end{aligned}
\end{equation*}
According to \Cref{wframinginv} and \Cref{ADExyz}, the algebra of invariant functions $\C[\mu^{-1}(0)]^{\GL(\delta)}$ is generated by trace functions of loops that start and end at the vertex $0$ of degrees $4,\ 2n-4$ and $2n-2$. The explicit generators are given in \Cref{Dngeneratorprop}. 

\begin{proposition}\label{Dngeneratorprop}
The trace functions
\begin{equation}\label{Dnxyz}
    \begin{aligned}
    x:=&b^*_{0\la 2}b_{2\la 1}b^*_{1\la 2}b_{2\la 0},\ \ \ y:=b^*_{0\la 2}B^*_{2\la n-2}b^*_{n-2\la n}b_{n\la n-2}B_{n-2\la 2}b_{2\la 0},\\ &z:=b^*_{0\la 2}B^*_{2\la n-2}b^*_{n-2\la n}b_{n\la n-2}b^*_{n-2\la n-1}b_{n-1\la n-2}B_{n-2\la 2}b_{2\la 0}.
\end{aligned}
\end{equation}
generate $\C[\mu^{-1}(0)]^{\GL(\delta)}$. Moreover, we have
\begin{equation}\label{Dnxyzrelation}
    \begin{aligned}
        \C[\mu^{-1}(0)]^{\GL(\delta)}&\simeq\begin{cases}
            \C[x,y,z]/(xy(y-x^{\frac{n-2}{2}})-z^2),\ n\ \text{even}, \\
            \C[x,y,z]/(xy^2-z(z-x^{\frac{n-1}{2}})),\ n\ \text{odd}.
        \end{cases}
    \end{aligned}
\end{equation}
\end{proposition}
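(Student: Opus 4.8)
The plan is to present the claimed presentation as an isomorphism between two graded integral domains of Krull dimension $2$ that carry the \emph{same} Hilbert series, so that the whole statement collapses to one genuine computation (the relation) plus a cheap algebraic-independence check. Write $S:=\C[\mu^{-1}(0)]^{\GL(\delta)}$. By \Cref{wframinginv} we already know $S\simeq\C[u,v]^{\Gamma}\simeq\C[X]$ as graded algebras and that $S$ is generated by trace functions of loops based at the vertex $0$; by \Cref{ADExyz} the fundamental invariants of $\C[X]$ sit in degrees $4,\ 2n-4,\ 2n-2$. The first, routine step is to count arrows in the three loops of \cref{Dnxyz}: each factor $b$ or $B$ contributes one unit of degree, and $B_{n-2\la 2}$ and $B^*_{2\la n-2}$ each contribute $n-4$ arrows, so $x,y,z$ are homogeneous of degrees $4,\ 2n-4,\ 2n-2$ respectively. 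Since $\dim V_0=1$, each loop is an endomorphism of the line $V_0$, i.e.\ a genuine scalar-valued $\GL(\delta)$-invariant function on $\mu^{-1}(0)$.

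The main step is to verify the Kleinian relation on $\mu^{-1}(0)$: namely $xy(y-x^{\frac{n-2}{2}})=z^2$ for $n$ even and $xy^2=z(z-x^{\frac{n-1}{2}})$ for $n$ odd. I would carry this out directly from the ADHM equations $\mu_i=0$ of \cref{Dnmomentmap}. The mechanism is that at the one-dimensional end vertices the relations $\mu_{n-1}=b_{n-1\la n-2}b^*_{n-2\la n-1}=0$, $\mu_n=b_{n\la n-2}b^*_{n-2\la n}=0$ and $\mu_1=-b^*_{1\la 2}b_{2\la 1}=0$ make the corresponding rank-one compositions vanish (and render the adjacent operators on the two-dimensional $V_{n-2}$, $V_2$ traceless, hence nilpotent), while the interior relations $\mu_i=B_{i\la i-1}B^*_{i-1\la i}-B^*_{i\la i+1}B_{i+1\la i}=0$ let one transport a factor along the $A$-type tail. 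Pushing the two long factors $B_{n-2\la 2}$ and $B^*_{2\la n-2}$ through the chain and repeatedly applying these identities rewrites the product defining $z^2$ (resp.\ $z(z-x^{\frac{n-1}{2}})$) in terms of $x$ and $y$. This establishes that the assignment of the trace functions to the abstract generators factors through a well-defined graded algebra homomorphism
\[
\phi\colon R:=\C[x,y,z]/(\text{Kleinian relation})\longrightarrow S.
\]

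With $\phi$ in hand, the presentation follows formally. Both $R$ and $S$ are integral domains of dimension $2$: $R$ because the defining polynomial of a Kleinian singularity is irreducible, and $S$ because it embeds in $\C[u,v]$. They share the Hilbert series $\tfrac{1-t^{4n-4}}{(1-t^4)(1-t^{2n-4})(1-t^{2n-2})}$, as both are abstractly isomorphic to $\C[X]$. It therefore suffices to prove that $\phi$ is injective, since an injective graded map between graded vector spaces of equal finite dimension in every degree is automatically bijective, giving the isomorphism \cref{Dnxyzrelation}. As $R$ is a $2$-dimensional domain, injectivity is equivalent to the image being $2$-dimensional, i.e.\ to two of $x,y,z$ being algebraically independent as functions on $\mu^{-1}(0)$. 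I would settle this by exhibiting a single explicit $\chi$-semistable representation, or a one-parameter family, on which (say) $x$ and $z$ vary independently, so that the induced map $X\to\C^3$, $[\,\cdot\,]\mapsto(x,y,z)$, is not constant along any curve.

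The main obstacle is the relation computation of the second paragraph: bookkeeping the matrix products through the $(n-4)$ interior two-dimensional vertices while applying the correct $\mu_i=0$ identities is where all the work lies, and the split between $n$ even and $n$ odd—which changes both the relation and the exponent $\tfrac{n-2}{2}$ versus $\tfrac{n-1}{2}$—must be tracked carefully. The degenerate case $n=4$, where $x$ and $y$ collide in degree $4$ and the quiver of Picture~\ref{quiverD4} has no interior tail, should be handled separately and directly from $\mu_2=0$. As an independent cross-check on both the relation and the generation claim, one can instead transport $x,y,z$ through the explicit Crawley--Boevey--Holland isomorphism $e_0\Pi^0e_0\simeq\C[u,v]^{\Gamma}$ of \Cref{pathalgebra} and match them, up to the scalars and coordinate change recorded after \Cref{ADExyz}, with the fundamental invariants there, which are already known to generate $\C[u,v]^{\Gamma}$.
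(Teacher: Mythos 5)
Your architecture is genuinely different from the paper's. The paper proves generation \emph{directly}: \Cref{Dnind,Dn0234} plus an exhaustive analysis of all loops in the three generator degrees show that each of $\C[\mu^{-1}(0)]^{\GL(\delta)}_4$, $\C[\mu^{-1}(0)]^{\GL(\delta)}_{2n-4}$, $\C[\mu^{-1}(0)]^{\GL(\delta)}_{2n-2}$ is spanned by $x,y,z$ together with powers of $x$; the relation is then extracted not by expanding $z^2$ directly but from the identity $zz'=xyy'$ in \cref{Dnrelation} combined with the sum formulas \cref{Dny+y',Dnz+z'}, where $y',z'$ are the partners of $y,z$ under the diagram automorphism swapping the vertices $n-1$ and $n$; the isomorphism then follows from a surjection-from-a-$2$-dimensional-domain argument. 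You instead verify only the relation, obtain a map $\phi\colon R\to S$, and recover generation for free from injectivity plus equality of Hilbert series. This is essentially the paper's own $E_7/E_8$ strategy (\Cref{E7generatorprop}) upgraded by the Hilbert-series comparison, which is exactly what is needed here: in type $D_n$ with $n$ even the degree-$(2n-4)$ piece is $2$-dimensional (it contains both $y$ and $x^{\frac{n-2}{2}}$), so the ``each generator degree is one-dimensional'' shortcut used in $E_7,E_8$ is unavailable. The surrounding logic is sound: the Kleinian polynomial is irreducible, so $R$ is a $2$-dimensional graded domain with the Hilbert series you wrote, and an injective graded map between graded algebras with equal finite-dimensional graded pieces is bijective, which yields both \cref{Dnxyzrelation} and the generation claim at once. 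Two caveats: the relation verification you defer is precisely where the paper invests its effort, and doing it without introducing $y',z'$ is likely to be much messier than the route $zz'=xyy'$, $z'=-z$, $y'=x^{\frac{n-2}{2}}-y$; this part of your plan is a sketch of real work, not a shortcut around it.

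The genuine gap is in your injectivity step. Injectivity of $\phi$ is indeed equivalent to two of $x,y,z$ being algebraically independent in $S$, but neither ``a single explicit $\chi$-semistable representation'' nor ``a one-parameter family'' can certify this: any two functions restricted to a curve of representations satisfy \emph{some} algebraic relation, so no one-parameter family can witness independence, and a single point witnesses nothing. You need the image of $(x,z)$ (say) to be $2$-dimensional. Two repairs: (i) take a one-parameter family $s\mapsto p(s)$ and sweep it by the $\C^*$-action \cref{torusaction}; since $x$ and $z$ are homogeneous of degrees $4$ and $2n-2$, the swept family has $2$-dimensional image in the $(x,z)$-plane as soon as the $\C^*$-invariant ratio $x(p(s))^{n-1}/z(p(s))^{2}$ is a nonconstant function of $s$; or (ii) show that the common zero locus of $x,y,z$ on $\mu^{-1}(0)\gitquo\GL(\delta)$ is only the cone point; by the standard graded finiteness criterion this makes $S$ a finite module over the subalgebra generated by $x,y,z$, forcing that subalgebra to have dimension $2$ and hence $\ker\phi=0$. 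With either repair in place (and the relation computation actually carried out, including the degenerate case $n=4$), your argument closes correctly.
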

\Cref{Dnind,Dn0234} will be needed for the proof of \Cref{Dngeneratorprop}.
\begin{lemma}\label{Dnind}
Consider the extedned Dynkin quiver $\wt{D}_n,\ n\geqslant 5$. The following relations hold in $\C[\mu^{-1}(0)]^{\GL(\delta)}$.
\begin{equation}\label{Dnindeqn}
     \begin{aligned}
         &b^*_{1\la 2}(B^*_{2\la 3}B_{3\la2})^rb_{2\la1}=b_{n-1\la n-2}(B_{n-2\la n-3}B^*_{n-3\la n-2})^rb_{n-2\la n-1}\\
         =&b^*_{0\la 2}(B^*_{2\la 3}B_{3\la2})^rb_{2\la0}=b_{n\la n-2}(B_{n-2\la n-3}B^*_{n-3\la n-2})^rb_{n-2\la n}=\begin{cases}
           x^{\frac{r+1}{2}},\ r\ \text{odd}, \\
           0,\ r\ \text{even}.
    \end{cases}
    \end{aligned}
\end{equation}
\end{lemma}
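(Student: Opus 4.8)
The plan is to turn \cref{Dnindeqn} into a short rank-one computation on the two-dimensional spaces $V_2$ and $V_{n-2}$, extract a cubic relation forced by the ADHM equations, and then transport the resulting value from one short arm of the quiver to the other along the central $A$-type chain by cyclicity of the trace.

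First I would work on $\mu^{-1}(0)$ and use that $V_0,V_1$ are one-dimensional, so that $b_{2\la 0},b_{2\la 1}$ are vectors $a,b\in V_2$ and $b^*_{0\la 2},b^*_{1\la 2}$ are covectors $\alpha,\beta\in V_2^*$. The equations $\mu_1=0$ and $\mu_0=0$, the latter combined with $l_0k_0=0$ from \cref{ADEmomentmap0}, give $\beta(b)=0$ and $\alpha(a)=0$; and $\mu_2=0$ exhibits $M:=B^*_{2\la 3}B_{3\la 2}$ as the rank-$\le 2$ operator $a\alpha+b\beta$ on $V_2$. Setting $p:=\alpha(b)$ and $q:=\beta(a)$, the definition of $x$ in \cref{Dnxyz} reads $x=pq$, and the four sandwiched products in \cref{Dnindeqn} become $\beta M^r b$ and $\alpha M^r a$ on the left arm, together with two analogous expressions on the right arm.

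The computational heart is the identity $M^3=xM$: expanding $a\alpha+b\beta$ and repeatedly using $\alpha(a)=\beta(b)=0$ gives $M^2=p\,a\beta+q\,b\alpha$ and then $M^3=pq\,M$. Feeding this into $\beta M^r b$ and $\alpha M^r a$ yields $x^{(r+1)/2}$ for $r$ odd and $0$ for $r$ even, which are precisely the first and third equalities (the even base case $r=0$ being the two relations $\beta(b)=\alpha(a)=0$ directly). The same bracket computation at the vertices $n-2,n-1,n$, using $\mu_{n-1}=\mu_n=0$ and $\mu_{n-2}=0$, presents $N:=B_{n-2\la n-3}B^*_{n-3\la n-2}$ as a sum of two rank-one operators of the identical shape, so $N^3=x'N$ and the remaining two products equal ${x'}^{(r+1)/2}$ or $0$, where $x'$ is the degree-$4$ invariant built from the right arm. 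Along the way I would record the trace form of these invariants: since $\tr M=\alpha(a)+\beta(b)=0$ one gets $x=\tfrac12\tr(M^2)$, and likewise $x'=\tfrac12\tr(N^2)$.

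The one genuinely global step — and the main obstacle — is proving $x=x'$, i.e.\ $\tr(M^2)=\tr(N^2)$; everything so far is local to the two ends. I would telescope across the chain $2-3-\cdots-(n-2)$: cyclicity of the trace turns $\tr((B^*_{i\la i+1}B_{i+1\la i})^2)$ into $\tr((B_{i+1\la i}B^*_{i\la i+1})^2)$, and the intermediate moment-map relation $\mu_{i+1}=0$, namely $B_{i+1\la i}B^*_{i\la i+1}=B^*_{i+1\la i+2}B_{i+2\la i+1}$, advances the product by one vertex. Iterating for $i+1=3,\dots,n-3$ (an empty range when $n=5$, which is harmless) carries $\tr(M^2)$ to $\tr(N^2)$, giving $x=x'$ and closing the chain of equalities. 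I expect the only delicate points to be the bookkeeping of arrow directions in the telescope and keeping the two short-arm reductions symmetric.
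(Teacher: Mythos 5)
Your proposal is correct and takes essentially the same route as the paper's own proof: the paper also substitutes $B^*_{2\la 3}B_{3\la 2}=b_{2\la 0}b^*_{0\la 2}+b_{2\la 1}b^*_{1\la 2}$ via $\mu_2$ and kills the diagonal terms using $\mu_0,\mu_1$ (resp.\ $\mu_{n-1},\mu_n$ on the other arm), which is exactly the rank-one expansion your identity $M^3=xM$ repackages. Your global step $x=x'$ is precisely the paper's $r=1$ computation, proved by the same telescoping of $\tr\bigl((\,\cdot\,)^2\bigr)$ along the central chain using cyclicity of the trace and the intermediate moment maps.
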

\begin{proof}
We only prove the equalities in the second line of \cref{Dnindeqn}. The remaining equalities can be shown similarly. We begin with $r=1$, i.e., we show that
\begin{equation}\label{Dnxcopy}
    b_{n\la n-2}b^*_{n-2\la n-1}b^*_{n-1\la n-2}b^*_{n-2 \la n}=b^*_{0\la 2}b_{2\la 1}b^*_{1\la 2}b_{2\la 0}=x,
\end{equation}
which follows from the following computation.
\begin{equation}
\begin{aligned}
    2b_{n\la n-2}b^*_{n-2\la n-1}&b_{n-1\la n-2}b^*_{n-2 \la n}\xlongequal{\mu_{n\sminus1},\ \mu_n}\tr((b^*_{n-2\la n-1}b^*_{n-1\la n-2}+b^*_{n-2\la n}b^*_{n\la n-2})^2)\\\xlongequal{\mu_{n\sminus2}}&\tr((B_{n-2\la n-3}B^*_{n-3\la n-2})^2)\xlongequal{\mu_i}\tr((B^*_{2\la 3}B_{3\la 2})^2)\\\xlongequal{\mu_2}\tr((&b_{2\la 0}b^*_{0\la 2}+b_{2\la 1}b^*_{1\la 2})^2)
    =2b^*_{0\la 2}b_{2\la 1}b^*_{1\la 2}b_{2\la 0}=2x.
\end{aligned}
\end{equation}
Next, we prove the equalities in the second line of \cref{Dnindeqn} for any $r$.
\begin{equation}\label{Dnxr}
\begin{aligned}
    b^*_{0\la 2}(B^*_{2\la 3}B_{3\la2})^rb_{2\la 0}\xlongequal{\mu_2}
    &\begin{cases}
    \underline{b^*_{0\la 2}b_{2\la 1}b^*_{1\la 2}b_{2\la 0}}b^*_{0\la 2} \cdots b_{2\la 0}b^*_{0\la 2}b_{2\la 1}b^*_{1\la 2}b_{2\la 0},\ r\ \textrm{odd},   \\
    \underline{b^*_{0\la 2}b_{2\la 0}}b^*_{0\la 2}b_{2\la 1}b^*_{1\la 2} \cdots b_{2\la 0}b^*_{0\la 2}b_{2\la 1}b^*_{1\la 2}b_{2\la 0},\ r\ \textrm{even}.
    \end{cases}  \\
    =&\begin{cases}(b^*_{0\la 2}b_{2\la 1}b^*_{1\la 2}b_{2\la 0})^{\frac{r+1}{2}}=x^{\frac{r+1}{2}},\ r\ \text{odd}, \\
        0,\ r\ \text{even}.\end{cases}
\end{aligned}
\end{equation}
\end{proof}

\begin{lemma}\label{Dn0234}
Let $q$ be a loop on the extended Dynkin quiver $\wt{D}_n,\ n\geqslant5$ that starts and ends at the vertex $0$. Assume that $q$ does not contain the vertices $n-1,n$. Then the following holds in $\C[\mu^{-1}(0)]^{\GL(\delta)}$.
\begin{equation}\label{Dn0234eq}
     \tr(B_q)\begin{cases} \in\C x^{\frac{\deg q}{4}},\ 4|\deg q,\\
     =0,\ 4\nmid\deg q.
\end{cases} 
\end{equation}
\end{lemma}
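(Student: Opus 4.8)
The plan is to compute inside the preprojective algebra, using the identification $\C[\mu^{-1}(0)]^{\GL(\delta)}\simeq e_0\Pi^0e_0$ of \Cref{quiverinv} and \Cref{wframinginv}, so that $\tr(B_q)$ is just the class of the loop $q$ and the moment-map equations \eqref{Dnmomentmap} become relations among compositions of the arrow-maps. The relations I would use repeatedly are the leaf relations $b^*_{0\la 2}b_{2\la 0}=0$ and $b^*_{1\la 2}b_{2\la 1}=0$ (from $\mu_0,\mu_1$), the fork relation $B^*_{2\la 3}B_{3\la 2}=b_{2\la 0}b^*_{0\la 2}+b_{2\la 1}b^*_{1\la 2}$ (from $\mu_2$), and the chain relations $B^*_{i\la i+1}B_{i+1\la i}=B_{i\la i-1}B^*_{i-1\la i}$ for $3\leq i\leq n-3$ (from $\mu_i$). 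Crucially, all of these involve only the vertices $0,1,2,\dots,n-2$; the relation $\mu_{n-2}=0$, which is the only one bringing in $n-1,n$, is never needed. These are precisely the relations already exploited in the proof of \Cref{Dnind}, of which the present statement is the natural generalization.

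The heart of the argument is a degree-preserving reduction of an arbitrary loop $q$ at $0$ avoiding $n-1,n$ to loops supported only on $\{0,1,2\}$. First I would induct on the highest vertex $i_{\max}\in\{2,\dots,n-2\}$ visited by $q$. Since $i_{\max}$ is maximal and $n-1,n$ are avoided, the only neighbour of $i_{\max}$ that $q$ can use is $i_{\max}-1$, so every visit to $i_{\max}$ occurs as an immediate detour $(i_{\max}-1)\to i_{\max}\to(i_{\max}-1)$, contributing the consecutive factor $B^*_{i_{\max}-1\la i_{\max}}B_{i_{\max}\la i_{\max}-1}$. Applying the chain relation at vertex $i_{\max}-1$ (or, when $i_{\max}=3$, the fork relation at vertex $2$) rewrites each such factor as a detour to $i_{\max}-2$ (respectively as $b_{2\la 0}b^*_{0\la 2}+b_{2\la 1}b^*_{1\la 2}$). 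Each substitution trades a length-two segment for a length-two segment, so the total length, and hence the degree, is preserved; after clearing all visits to $i_{\max}$ the loop (now a $\C$-linear combination of loops) visits only vertices $\leq i_{\max}-1$. Iterating drives the top vertex down to $2$, expressing $\tr(B_q)$ as an integer combination of traces of loops at $0$ supported on $\{0,1,2\}$, all of degree $\deg q$.

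It then remains to analyse a single loop at $0$ supported on $\{0,1,2\}$, which has the form $b^*_{0\la 2}\,W\,b_{2\la 0}$ with $W$ a word in $P_0:=b_{2\la 0}b^*_{0\la 2}$ and $P_1:=b_{2\la 1}b^*_{1\la 2}$. Using the leaf relations, any two equal consecutive letters, or a boundary letter equal to $P_0$, annihilate the expression; the only surviving word is the strictly alternating one beginning and ending with $P_1$, whereupon the product telescopes, via $x=b^*_{0\la 2}b_{2\la 1}b^*_{1\la 2}b_{2\la 0}$ from \eqref{Dnxyz}, into a power of $x$. Tracking lengths, a surviving loop has length $\equiv 0\pmod 4$ and equals $x^{\deg/4}$, while every loop of length $\equiv 2\pmod 4$ vanishes; and since the Dynkin diagram is a tree the quiver is bipartite, so every loop at $0$ has even length and these two cases are exhaustive. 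Combining with the degree-preserving reduction gives the stated dichotomy.

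The hard part will be making the reduction of the second paragraph fully rigorous: one must check that each visit to the current top vertex is genuinely an isolated up-detour, so that the relevant two-arrow factor appears as a consecutive subword and the matrix relation may be substituted inside the trace; that the induction quantity (the top vertex) strictly decreases once a level is cleared; and that the substitutions create no new occurrences of that top vertex. I expect no conceptual difficulty beyond bookkeeping, since this is exactly the mechanism underlying the computation in the proof of \Cref{Dnind}, here applied to an arbitrary loop rather than to the single family $b^*_{0\la 2}(B^*_{2\la 3}B_{3\la 2})^rb_{2\la 0}$.
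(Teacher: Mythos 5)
Your proposal is correct and follows essentially the same route as the paper: both arguments use the chain relations $\mu_i=0$ to push every excursion of the loop down toward the fork vertex $2$, and then kill or telescope the resulting words in $b_{2\la 0}b^*_{0\la 2}$ and $b_{2\la 1}b^*_{1\la 2}$ via the leaf relations $\mu_0=\mu_1=0$, which is exactly the mechanism of \Cref{Dnind}. The only difference is bookkeeping: the paper first reduces to loops visiting $0$ once, eliminates vertex $1$ via the fork relation $\mu_2=0$ so as to land on the type $A$ chain in the canonical form $b^*_{0\la 2}(B^*_{2\la 3}B_{3\la 2})^{(\deg q-2)/2}b_{2\la 0}$, and then cites \Cref{Dnind}, whereas you run the top-vertex induction directly (keeping vertex $1$) and redo the alternating-word analysis by hand.
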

\begin{proof}
We can assume that the vertex $0$ appears only once in $q$. Otherwise, we can decompose that $q=q_1q_2$ where $q_1,q_2$ are loops that both start and end at $0$ but with lower degrees. We have $\tr(B_{q})=\tr(B_{q_1})\tr(B_{q_2})$. Proving \cref{Dn0234eq} breaks down to showing the corresponding statements for $\tr(B_{q_1})$ and $\tr(B_{q_2})$. 

Note that $q$ does not contain the vertices $n-1,n$ by assumption. We may further assume that $q$ does not contain the vertex $1$. Otherwise, we can replace the appearance of $b_{2\la 1}b^*_{1\la 2}$ in $\tr(B_q)$ with $B^*_{2\la 3}B_{3\la 2}-b_{2\la 0}b^*_{0\la 2}$ using the moment map $\mu_2=0$. 

It remains to proving \cref{Dn0234eq} when $q$ is a loop that starts and ends at the vertex $0$ on the subquiver $0\rightleftarrows2\rightleftarrows3\rightleftarrows\cdots\rightleftarrows n-2$ of type $A$. Using the moment maps $\mu_i=0,\ 3\leq i\leq n-3$ in \cref{Dnmomentmap} iteratively, we can deduce that $\tr(B_q)=b^*_{0\la 2}(B^*_{2\la 3}B_{3\la 2})^{\frac{\deg q-2}{2}}b_{2\la 0}$. Then \cref{Dn0234eq} follows from \Cref{Dnind}.
\end{proof}

\begin{proof}[Proof of \Cref{Dngeneratorprop}]
We only prove the statement for $n\geqslant 5$. The case $n=4$ is similar, but easier. 

We first show that $x,y,z$ generate $\C[\mu^{-1}(0)]^{\GL(\delta)}$. It suffices to show that the degree $d$-th component $\C[\mu^{-1}(0)]^{\GL(\delta)}_d$ with $d=2,2n-4.2n-2$, is contained in the subalgebra generated by $x,y,z$.
    
The only loops of degree $4$ that start and end at the vertex $0$ are $0\to 2\to 0\to 2 \to 0,\ 0\to 2 \to 1 \to 2 \to 0,\ 0\to 2\to 3\to 2\to 0$. The corresponding trace functions are 
\begin{equation*}
\begin{aligned}
    b^*_{0\la 2}b_{2\la 0}b^*_{0\la 2}b_{2\la 0}&\stackrel{\mu_0}{=}0, \\
    b^*_{0\la 2}b_{2\la 1}b^*_{1\la 2}b_{2\la 0}&\stackrel{\text{def}}{=}x, \\
    b^*_{0\la 2}B^*_{2\la 3}B_{3\la 2}b_{2\la 0}&\stackrel{\mu_2}{=}b^*_{0\la 2}(b_{2\la 0}b^*_{0\la 2}+b_{2\la 1}b^*_{1\la 2})b_{2\la 0}=0+x=x.
\end{aligned}
\end{equation*}
Thus, $\C[\mu^{-1}(0)]^{\GL(\delta)}_4=\C x$.

A loop of degree $2n-4$ that starts and ends at $0$ can contain at most one of $n-1,n$. If it contains neither $n-1$ nor $n$, \Cref{Dn0234} tells us that the corresponding trace function is a scalar multiple of $x^{\frac{n-2}{2}}$ (resp. $0$) when $n$ is even (resp. odd). If the loop contains exactly one of $n-1, n$, then it must be of the form $0\to 2\to \cdots \to n-2\to n \to n-2 \to \cdots \to 2 \to 0$ or $0\to 2\to \cdots \to n-2\to n-1 \to n-2 \to \cdots \to 2 \to 0$, and the corresponding trace functions are
\begin{equation}\label{Dnyy'}
\begin{aligned}
    y\stackrel{\text{def}}{=}&b^*_{0\la 2}B^*_{2\to n-2}b^*_{n-2\to n}b_{n\la n-2}B_{n-2\la 2}b_{2\la 0},\\
    y':=&b^*_{0\la 2}B^*_{2\to n-2}b^*_{n-2\to n-1}b_{n-1\la n-2}B_{n-2\la 2}b_{2\la 0}. 
\end{aligned}
\end{equation}
There is a relation between $y$ and $y'$.
\begin{equation}\label{Dny+y'}
\begin{aligned}
    y+&y'=b^*_{0\la 2}B^*_{2\to n-2}(b^*_{n-2\to n}b_{n\la n-2}+b^*_{n-2\to n-1}b_{n-1\la n-2})B_{n-2\la 2}b_{2\la 0} \\
    \stackrel{\mu_{n-2}}{=}b^*_{0\la 2}B^*_{2\to n-2}&B_{n-2\la n-3}B^*_{n-3\la n-2}B_{n-2\la 2}b_{2\la 0}{=}b^*_{0\la 2}(B^*_{2\to 3}B_{3\la 2})^{n-3}b_{2\la 0}\xlongequal{\Cref{Dnind}}\begin{cases}x^{\frac{n-2}{2}}, n\ \text{even}, \\
    0,\ n\ \text{odd}.\end{cases}
\end{aligned}
\end{equation}
It follows that $\C[\mu^{-1}(0)]^{\GL(\delta)}_{2n-4}=\C y+\C x^{\frac{n-2}{2}}$ when $n$ is even, and $\C[\mu^{-1}(0)]^{\GL(\delta)}_{2n-4}=\C y$ when $n$ is odd.

Next, we consider the following two loops of degree $2n-2$ that start and end at $0$.
\begin{equation}\label{Dnzz'}
\begin{aligned}
    z=&b^*_{0\la 2}B^*_{2\la n-2}b^*_{n-2\la n}b_{n\la n-2}b^*_{n-2\la n-1}b_{n-1\la n-2}B_{n-2\la 2}b_{2\la 0}, \\
    z':=&b^*_{0\la 2}B^*_{2\la n-2}b^*_{n-2\la n-1}b_{n-1\la n-2}b^*_{n-2\la n}b_{n\la n-2}B_{n-2\la 2}b_{2\la 0}.
    \end{aligned}
\end{equation}
One can compute that
\begin{equation}\label{Dnz+z'}
    z+z'=\begin{cases} 0,\ n\ \text{even}, \\ x^{\frac{n-1}{2}},\ n\ \text{odd}.\end{cases}
\end{equation}
Similarly to the case of degree $2n-4$, one can show that that $\C[\mu^{-1}(0)]^{\GL(\delta)}_{2n-2}=\C z$ when $n$ is even, and $\C[\mu^{-1}(0)]^{\GL(\delta)}_{2n-2}=\C z+\C x^{\frac{n-1}{2}}$ when $n$ is odd. Therefore, $\C[\mu^{-1}(0)]^{\GL(\delta)}$ is generated by the functions $x,y,z$ defined in \cref{Dnxyz} for both $n$ even and odd. Lastly, we compute that
\begin{equation}\label{Dnrelation}
\begin{aligned}
    zz'=&b^*_{0\la 2}B^*_{2\la n-2}b^*_{n-2\la n}b_{n\la n-2}b^*_{n-2\la n-1}\underline{b_{n-1\la n-2}B_{n-2\la 2}b_{2\la 0}}\\
    &\underline{\cdot b^*_{0\la 2}B^*_{2\la n-2}b^*_{n-2\la n-1}}b_{n-1\la n-2}b^*_{n-2\la n}b_{n\la n-2}B_{n-2\la 2}b_{2\la 0} \\
    =&y'b^*_{0\la 2}B^*_{2\la n-2}b^*_{n-2\la n}\underline{b_{n\la n-2}b_{n-2\la n-1}b_{n-1\la n-2}b^*_{n-2\la n}}b_{n\la n-2}B_{n-2\la 2}b_{2\la 0}\\
    =&xy' b^*_{0\la 2}B^*_{2\la n-2}b^*_{n-2\la n}b_{n\la n-2}B_{n-2\la 2}b_{2\la 0}=xyy',
\end{aligned}
\end{equation}
where the first and the last equalities only use the definitions of $y,y'$ and $z,z'$ in \cref{Dnyy',Dnzz'}, and the second equality follows from \Cref{Dnind} (taking $r=1$). Then combining \cref{Dny+y',Dnz+z',Dnrelation}, we obtain that
\begin{equation}\label{Dnrelationevenodd}
0=xyy'-zz'=\begin{cases}xy(y-x^{\frac{n-2}{2}})-z^2,\ n\ \text{even}, \\
xy^2-z(z-x^{\frac{n-1}{2}}),\ n\ \text{odd}. \\
\end{cases}
\end{equation}
The isomorphism in \cref{Dnxyzrelation} follows from the same argument as we show that \cref{Ansurj} is an isomorphism in type $A_n$.
\end{proof}

Consider the following automorphism defined on the extended Dynkin diagram of type $\widetilde{D}_n$. 
\begin{equation}\label{DnDynkinauto}
\tau(i)=i,\ 0\leq i\leq n-2;\ \tau(n-1)=n;\ \tau(n)=n-1.
\end{equation}
It induces an automorphism of the extended Dynkin quiver $\widetilde{D}_n$ in Picture \ref{quiverDn} by sending an arrow $a:i\to j$ to the arrow $\tau(a):\tau(i)\to\tau(j)$, which will be used to construct the lifts of the anti-Poisson involutions in \Cref{Dnevencase2sub,Dnoddcase2sub}. Note that $\tau$ coincides with the the order $2$ automorphism of the McKay graph of type $\wt{D}_n$ induced by an element $g\in N_{\GL_2(\C)}(\Gamma)^-$ (c.f. \Cref{DnevenAPIrem,DnoddAPIrem}). \Cref{Dnevencase1sub,Dnevencase2sub} study the preimages of fixed point loci of anti-Poissons for Kleinian singularities of type $D_n$, with $n$ even. We have $X=\Spec\C[x,y,z]/(xy(y-x^{\frac{n-2}{2}})-z^2)$.

\subsection{Type $D_n,\ n$ even, case \RNum{1}} \label{Dnevencase1sub}
In this section, we consider the anti-Poisson involution
\[
  \theta(x)=x,\ \theta(y)=y,\  \theta(z)=-z.
\]
The fixed point locus $X^{\theta}=\Spec\C[X]/(z)=\Spec\C[x,y]/(xy(y-x^{\frac{n-2}{2}}))$ has three irreducible components $L_j,\ j=1,2,3$, and each of them is an $\A^1$. Following the notations in \Cref{liftsectionq1}, we have $\pi^{-1}(X^{\theta})=\pi^{-1}(0)\cup\tilde{L}_1\cup\tilde{L}_2\cup\tilde{L}_3$.

\begin{proposition}\label{Dnevenliftz}
The involution $\ttheta\colon\M_{\chi}(\delta,w)\to\M_{\chi}(\delta,w)$ defined by
\begin{equation}\label{Dnevenliftzequation}
    \ttheta([B,B^*,l_0,k_0])=[-B,B^*,l_0,-k_0]
\end{equation}
is anti-symplectic. Moreover, it is a lift of $\theta\colon X\to X$.
\end{proposition}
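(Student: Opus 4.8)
The plan is to exhibit $\ttheta$ as the descent of a linear anti-symplectic involution $\Theta$ on the representation space $M(\delta,w)$ and then to invoke \Cref{liftconditions}, exactly mirroring the parallel type $A$ argument of \Cref{Anliftz}. Concretely, I would set
\[
\Theta\colon M(\delta,w)\to M(\delta,w),\quad (B,B^*,l_0,k_0)\mapsto(-B,B^*,l_0,-k_0),
\]
so that every forward arrow $B_a$ with $a\in\Omega$ is negated while every reverse arrow $B_{a^*}$ is preserved, $l_0$ is preserved and $k_0$ is negated. This $\Theta$ is manifestly a linear involution, and $\ttheta$ of \cref{Dnevenliftzequation} is precisely the map it induces on $\M_\chi(\delta,w)=\mu^{-1}(0)^{\chi-ss}\gitquo\GL(\delta)$. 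It then suffices to check the two hypotheses of \Cref{liftconditions}.

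First I would verify that $\Theta$ is anti-symplectic. Inspecting the symplectic form \cref{quiversymform}, each summand $\tr(B_aB'_{a^*})$, $\tr(B_{a^*}B'_a)$ and $\tr(l_ik'_i-k_il'_i)$ contains exactly one negated factor under $\Theta$, so each picks up a single sign and $\Theta^*\omega=-\omega$. Next I would check hypothesis \ref{lift2}: since $\Theta$ merely rescales the $\Hom$-components by $\pm 1$ without mixing the vertex spaces $V_i$, it commutes with the $\GL(\delta)$-action, i.e. $g^\Theta=g$, which is the trivial (identity) permutation of the factors.

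The substantive step is hypothesis \ref{lift3}, computing the action of $\Theta$ on the generators $x,y,z$ of \Cref{Dngeneratorprop} and matching it against $\theta$. Here I would count, in each generator, the number of negated forward factors $B_a$, $a\in\Omega$. For $x=b^*_{0\la 2}b_{2\la 1}b^*_{1\la 2}b_{2\la 0}$ the forward factors are $b_{2\la 1}$ and $b_{2\la 0}$, so $\Theta(x)=(-1)^2x=x$. For $y$ they are $b_{2\la 0}$, $b_{n\la n-2}$ and the $n-4$ arrows comprising $B_{n-2\la 2}$, giving $n-2$ negations, whence $\Theta(y)=(-1)^{n-2}y=y$ since $n$ is even. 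For $z$ one additionally negates $b_{n-1\la n-2}$, giving $n-1$ negations, whence $\Theta(z)=(-1)^{n-1}z=-z$. These match $\theta(x)=x$, $\theta(y)=y$, $\theta(z)=-z$, so \ref{lift3} holds. This parity bookkeeping, and in particular the use of $n$ being even to land on the correct signs for $y$ and $z$, is the only place where a genuine calculation occurs and is the step I would be most careful about.

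Having established both hypotheses, \Cref{liftconditions} immediately produces an anti-symplectic involution $\ttheta\colon\wt X\to\wt X$, $[u]\mapsto[\Theta(u)]$, satisfying $\pi\circ\ttheta=\theta\circ\pi$; that is, $\ttheta$ is a lift of $\theta$, and this is exactly the map written in \cref{Dnevenliftzequation}. This completes the proof.
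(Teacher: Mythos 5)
Your proposal is correct and follows essentially the same route as the paper's proof: you define the same linear involution $\Theta(B,B^*,l_0,k_0)=(-B,B^*,l_0,-k_0)$, verify the two hypotheses of \Cref{liftconditions} (with $g^{\Theta}=g$ and the sign counts $\Theta(x)=(-1)^2x$, $\Theta(y)=(-1)^{n-2}y$, $\Theta(z)=(-1)^{n-1}z$, using that $n$ is even), and conclude by descent. Your explicit verification that $\Theta$ is anti-symplectic and your arrow-by-arrow parity bookkeeping are simply details the paper leaves implicit.
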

\begin{proof}
We consider the following linear anti-symplectic involution on the representation space.
\begin{equation*}
    \Theta\colon M(\delta,w)\to M(\delta,w),\ \ (B,B^*,l_0,k_0)\mapsto (-B,B^*,l_0,-k_0).
\end{equation*}
It satisfies the conditions listed in \Cref{liftconditions}.
    \begin{enumerate}
    \item $g^{\Theta}=g$.
    \item $\Theta(x)=(-1)^2x=x,\ \Theta(y)=(-1)^{n-2}y=y,\ \Theta(z)=(-1)^{n-1}z=-z$.
\end{enumerate}
The involution $\ttheta$ defined in \cref{Dnevenliftzequation} is the descent of $\Theta$ to $\M_{\chi}(\delta,w)$. The rest follows from \Cref{liftconditions}.
\end{proof}
Consider the action of $\ttheta$ on the reduced exceptional fiber $\pi^{-1}(0)$, we have the following results.
\begin{proposition}\label{Dnevencase1preimage}
\begin{enumerate}[label=(\arabic*)]
   \item 
   The involution $\ttheta$ preserves all the components $C_i,\ 1\leq i\leq n$. We have $\pi^{-1}(0)^{\ttheta}=\bigcup_{\substack{i\ \text{even}\\ 2\leq i\leq n-2}}C_i\cup\{p_1,p_2,p_3\}$ where $p_1\in C_1\setminus C_2,\ p_2\in C_{n-1}\setminus C_{n-2}$, and $p_3\in C_n\setminus C_{n-2}$.
   \item\label{Dnevencase1temp2}  With a suitable choice of labeling, we have $\tilde{L}_1\cap\pi^{-1}(0)=\tilde{L}_1\cap C_1=\{p_1\},\ \tilde{L}_2\cap\pi^{-1}(0)=\tilde{L}_2\cap C_{n-1}=\{p_2\}$, and $\tilde{L}_3\cap\pi^{-1}(0)=\tilde{L}_3\cap C_n=\{p_3\}$.
   \item\label{Dnevencase1temp3}  The scheme $\pi^{-1}(X^{\theta})$ is not reduced. As a divisor, we have $\pi^{-1}(X^{\theta})=\tilde{L}_1+\tilde{L}_2+\tilde{L}_3+\sum_{i=1}^na_iC_i$, where $a_i=i+1,\ 1\leq i\leq n-2$ and $a_{n-1}=a_n=\frac{n}{2}$.
\end{enumerate}
\end{proposition}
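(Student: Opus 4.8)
The plan is to follow the general recipe of \Cref{liftsection}, using the explicit lift $\ttheta$ from \Cref{Dnevenliftz}. First I would record that $\ttheta$ preserves every component $C_i$: since the automorphism of $\GL(\delta)$ in the condition \ref{lift2} is trivial here (indeed $g^{\Theta}=g$ in the proof of \Cref{Dnevenliftz}), the induced permutation of vertices is trivial, and by the rank description of $C_i$ in \Cref{Ciequation} the map $\Theta$ merely rescales the collection $(B_a)_{t(a)=i}$ by signs, preserving its rank. Hence $\ttheta(C_i)=C_i$ for all $i$, and part~(1) reduces to locating the pointwise-fixed components and the isolated fixed points in $\pi^{-1}(0)$.

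The heart of part~(1) is to pin down which $C_i$ are fixed pointwise. The key observation is that the branch node $C_{n-2}$ must be fixed pointwise: it is preserved and meets its three preserved neighbours $C_{n-3},C_{n-1},C_n$ in three distinct points, each of which is $\ttheta$-invariant (being the unique point of an intersection of two $\ttheta$-invariant curves) and therefore fixed; an involution of $C_{n-2}\simeq\pr^1$ with at least three fixed points is the identity. Applying \Cref{liftoncomponentcor} to the type-$A$ chain $C_1-C_2-\cdots-C_{n-2}$ then forces fixed and non-fixed components to alternate, and since $n-2$ is even this gives exactly $C_2,C_4,\dots,C_{n-2}$ fixed and $C_1,C_3,\dots,C_{n-3}$ non-fixed. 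For the non-fixed spine nodes I would invoke \Cref{liftoncomponent}: the interior ones $C_3,\dots,C_{n-3}$ have both of their fixed points at intersections with fixed neighbours and so contribute no isolated point, whereas $C_1$ (whose only neighbour is the fixed $C_2$) contributes one isolated point $p_1\in C_1\setminus C_2$. For the arms, \ref{autop3} of \Cref{liftoncomponent} applied to the pairs $(C_{n-2},C_{n-1})$ and $(C_{n-2},C_n)$, with $C_{n-2}$ fixed pointwise, shows that $C_{n-1}$ and $C_n$ each carry exactly two fixed points, one at the node and one isolated ($p_2,p_3$). This produces exactly three isolated points, matching the three irreducible components of $X^{\theta}$ predicted by \ref{it1} of \Cref{isolatedtransversal}, which I would cite as the consistency check closing part~(1).

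Part~(2) is then immediate from \ref{it2} and \ref{it3} of \Cref{isolatedtransversal}: each isolated point is met by a unique $\tilde{L}_j$, transversally and along a single point of $\pi^{-1}(0)$, so after relabelling $\tilde{L}_1\cap\pi^{-1}(0)=\tilde{L}_1\cap C_1=\{p_1\}$, and likewise $\tilde{L}_2\cap C_{n-1}=\{p_2\}$ and $\tilde{L}_3\cap C_n=\{p_3\}$.

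For part~(3) I would use that $X^{\theta}=\Spec\C[X]/(z)$ is reduced (\Cref{Dnevenfixedloci}), so that $X^{\theta}=\ddiv(z)$ is a principal divisor and $\pi^{-1}(X^{\theta})=\ddiv(\pi^*z)$ is principal as well. From part~(2) the integers $b_i$ of \cref{bidef} are $b_1=b_{n-1}=b_n=1$ and $b_i=0$ otherwise, so \Cref{multofcomponent} gives $(a_1,\dots,a_n)^t=\mathcal{C}^{-1}(b_1,\dots,b_n)^t$. Rather than invert $\mathcal{C}$, I would verify that the claimed vector $a_i=i+1$ for $1\le i\le n-2$ and $a_{n-1}=a_n=n/2$ solves $\mathcal{C}a=b$: the interior chain relations $a_{i-1}+a_{i+1}=2a_i$ hold for this arithmetic progression, while the end relation $2a_1-a_2=1$, the branch relation $2a_{n-2}-a_{n-3}-a_{n-1}-a_n=0$, and the arm relations $2a_{n-1}-a_{n-2}=2a_n-a_{n-2}=1$ all check by direct substitution; invertibility of $\mathcal{C}$ then yields uniqueness. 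Since $a_1=2>1$, the divisor is not reduced. The main obstacle is part~(1), specifically the argument that $C_{n-2}$ is fixed pointwise together with the careful bookkeeping of isolated fixed points against the count in \Cref{isolatedtransversal}; once the fixed/non-fixed pattern is settled, parts~(2) and~(3) are formal.
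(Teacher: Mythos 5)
Your proposal is correct and follows essentially the same route as the paper's proof: the same lift from \Cref{Dnevenliftz}, the same branch-node argument (three fixed nodes on $C_{n-2}$ force it to be fixed pointwise via \Cref{liftoncomponent}), the alternation from \Cref{liftoncomponentcor}, \Cref{isolatedtransversal} for part (2), and \Cref{multofcomponent} with $b_1=b_{n-1}=b_n=1$ for part (3). If anything your bookkeeping is slightly sharper than the paper's: you correctly locate the chain-side isolated point on $C_1$ (the paper's proof attributes it to $C_{n-3}$, which is accurate only when $n=4$), and verifying $\mathcal{C}a=b$ directly instead of inverting $\mathcal{C}$ is an equivalent way to finish.
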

\begin{proof}
\begin{enumerate}[label=(\arabic*)]
    \item It is easy to see that $\ttheta(C_i)=C_i$ thanks to \Cref{Ciequation}. The component $C_{n-2}$ intersects with three other components $C_{n-3},\ C_{n-1},\ C_n$. By \ref{autop1} of \Cref{liftoncomponent}, there are at least three $\ttheta$-fixed points on $C_{n-2}$ (the intersection points with the adjacent components). It follows from \ref{autop2} of \Cref{liftoncomponent} that $\ttheta$ acts trivially on $C_{n-2}$. By \ref{autop3} of \Cref{liftoncomponent}, $\ttheta$ has exactly two fixed points on each of $C_{n-3},C_{n-1},C_n$, resulting in three isolated points in $\pi^{-1}(0)^{\ttheta}$. The fact that $\ttheta$ fixes each $C_i,\ 2\leq i\leq n-2$ with $i$ even, follows from \Cref{liftoncomponentcor}.
    \item This follows from \Cref{isolatedtransversal}.
    \item Recall the definition of $b_i$ in \cref{bidef}. We know that $
   (b_1,b_2,\cdots,b_{n-2},b_{n-1},b_n)=(1,0,\cdots,0,1,1)$ from \ref{Dnevencase1temp2}. The divisor $X^{\theta}=\ddiv(z)$ is principal. Then applying \Cref{multofcomponent}, we can determine that
   \begin{equation*}
            (a_1,\cdots,a_i,\cdots,a_{n-2},a_{n-1},a_n)=
            (2,\cdots,i+1,\cdots,n-1,\frac{n}{2},\frac{n}{2}).
   \end{equation*}
\end{enumerate}
\end{proof}

The fixed point locus $X^{\theta}$ and the preimage $\pi^{-1}(X^{\theta})$ are depicted in Picture \ref{Dnevencase1pic}. The number next to each component indicates its multiplicity. Thickened lines indicate that the multiplicities are larger than $1$. The $\ttheta$-fixed components of $\pi^{-1}(X^{\theta})$ are colored in red.
{
\captionsetup[table]{name=Picture}
\begin{table}[ht]
\begin{tabular}{ccc}
    \begin{tabular}{c}\begin{tikzpicture}
    \draw[thick] (-0.6,1) to (0.6,-1);
    \draw[thick] (0,1.2) to (0,-1.2);
    \draw[thick] (0.6,1) to (-0.6,-1);
    \end{tikzpicture}\end{tabular} & $\qquad$ &
    \begin{tabular}{c}\begin{tikzpicture}
    \draw[line width=0.4mm,red] (-3,2) edge [bend left=25] node [below]{} (-3,-1.5);
    \node at (-3,-1.7) () {{\color{red} $n\minus1$}};
    \draw[line width=0.4mm] (-3.5,-1) edge [bend left=35] node [below]{} (-1,-1);
    \node at (-3.7,-1) () {$\frac{n}{2}$};
    \draw[thick,red] (-1,-0.2) to (-1.6,-1.7);
    \node at (-1.6,-1.9) () {{\color{red} $1$}};
    \draw[line width=0.4mm] (-3.5,1) edge [bend left=35] node [below]{} (-1,1);
    \node at (-3.7,1) () {$\frac{n}{2}$};
    \draw[thick,red] (-1.6,2.1) to (-1,0.6);
    \node at (-1.6,2.3) () {{\color{red} $1$}};
    \draw[line width=0.4mm] (-3.5,0) edge [bend left=35] node [below]{} (-1,0);
    \node at (-3.9,0) () {$n\minus 2$};
    \draw[line width=0.4mm,red] (-2,0) edge [bend left=35] node [below]{} (0.5,0);
    \node at (-2,-0.15) () {{\color{red} $n\minus 3$}};
    \draw[line width=0.4mm] (-0.5,0) edge [bend left=25] node [below]{} (0.75,0.3);
    \node at (-0.5,-0.15) () {$n\minus 4$};
    \node at (1.5,0.2) (cdots) {$\cdots$};
    \draw[line width=0.4mm] (3.5,0) edge [bend right=25] node [below]{} (2.25,0.3);
    \node at (3.5,-0.15) () {$4$};
    \draw[line width=0.4mm,red] (5,0) edge [bend right=35] node [below]{} (2.5,0);
    \node at (5,-0.15) () {{\color{red} $3$}};
    \draw[line width=0.4mm] (6.5,0) edge [bend right=35] node [below]{} (4,0);
    \node at (6.6,-0.15) () {$2$};
    \draw[thick,red] (6.4,0.85) to (5.7,-0.65);
    \node at (5.7,-0.9) () {{\color{red} $1$}};
    \end{tikzpicture}\end{tabular}\\
    $X^{\theta}$ & & $\pi^{-1}(X^{\theta})$
\end{tabular}
\caption{$D_n,\ n$ even, Type \RMN{1}}
\label{Dnevencase1pic}
\end{table}
}

\subsection{Type $D_n,\ n$ even, case \RNum{2}} \label{Dnevencase2sub}
In this section, we consider the anti-Poisson involution
\[
  \theta(x)=x,\ \theta(y)=x^{\frac{n-2}{2}}-y,\  \theta(z)=z.
\]
The fixed point locus $X^{\theta}=\Spec\C[X]/(2y-x^{\frac{n-2}{2}})=\Spec\C[x,z]/(x^{n-1}+z^2)$ is a cusp (a single irreducible components $L$). Following the notations in \Cref{liftsectionq1}, we have $\pi^{-1}(X^{\theta})=\pi^{-1}(0)\cup\tilde{L}$. 

\begin{proposition}\label{Dnevenliftyy'}
The involution $\ttheta\colon\M_{\chi}(\delta,w)\to\M_{\chi}(\delta,w)$ defined by
\begin{equation}\label{Dnevenliftyy'equation}
    \ttheta([B_{a},B^*_{a},l_0,k_0])=[-B_{\tau(a)},B^*_{\tau(a)},l_0,-k_0]
\end{equation}
is anti-symplectic. Moreover, it is a lift of $\theta\colon X\to X$.
\end{proposition}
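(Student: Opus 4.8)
The plan is to follow the recipe of \Cref{liftconditions}, exactly as in the proof of \Cref{Dnevenliftz}: exhibit a linear anti-symplectic involution
\[
\Theta\colon M(\delta,w)\to M(\delta,w),\quad (B_a,B^*_a,l_0,k_0)\mapsto(-B_{\tau(a)},B^*_{\tau(a)},l_0,-k_0),
\]
where $\tau$ is the automorphism of $\widetilde{D}_n$ from \cref{DnDynkinauto} interchanging the two one-dimensional spaces $V_{n-1}$ and $V_n$. The map $\ttheta$ of \cref{Dnevenliftyy'equation} is by construction the descent of $\Theta$ to $\M_{\chi}(\delta,w)$, so it suffices to check the hypotheses of \Cref{liftconditions}: that $\Theta$ is anti-symplectic, that $g^{\Theta}$ permutes the factors of $\GL(\delta)$ (condition \ref{lift2}), and that $\Theta$ acts on the generators $x,y,z$ as $\theta$ does (condition \ref{lift3}).

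The first two points are immediate. Since $\tau$ preserves $\Omega$ and $\Omega^*$ separately and respects source and target, relabelling the summation indices in \cref{quiversymform} by $\tau$ changes nothing, while the sign conventions $B\mapsto -B$ and $k_0\mapsto -k_0$ (with $B^*$ and $l_0$ untouched) flip the overall sign of each type of summand; hence $\Theta^*\omega=-\omega$, just as in \Cref{Dnevenliftz}. For condition \ref{lift2}, the action \cref{actionquadruple} together with $\tau(0)=0$ shows that $(g_i)_{i\in I}^{\Theta}=(g_{\tau(i)})_{i\in I}$, which is the factor-permuting automorphism of $\GL(\delta)$ induced by the transposition $(n-1\ n)$.

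The heart of the matter is condition \ref{lift3}, and the only real work is a sign-and-relabelling count on the explicit trace monomials of \Cref{Dngeneratorprop}. Applying $\Theta$ to a trace monomial multiplies it by $(-1)^{e}$, where $e$ is the number of unstarred arrows it contains, and simultaneously relabels its path by $\tau$. For $x$ all vertices are $\tau$-fixed and $e=2$, so $\Theta(x)=x$. For $y$ one has $e=n-2$ and the $\tau$-relabelling sends the path through $n$ to the path through $n-1$, i.e.\ to $y'$ of \cref{Dnyy'}; thus $\Theta(y)=(-1)^{n-2}y'=y'$ since $n$ is even, and \cref{Dny+y'} gives $y'=x^{\frac{n-2}{2}}-y=\theta(y)$. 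For $z$ one has $e=n-1$ and $\tau$ interchanges the two inner legs through $n-1$ and $n$, carrying the path to $z'$ of \cref{Dnzz'}; hence $\Theta(z)=(-1)^{n-1}z'=-z'$, and \cref{Dnz+z'} gives $z'=-z$, so $\Theta(z)=z=\theta(z)$. With all three conditions verified, \Cref{liftconditions} produces the lift $\ttheta$. The one place to be careful is the parity bookkeeping: the identities $\Theta(y)=y'$ and $\Theta(z)=-z'$, together with the relations $y+y'=x^{\frac{n-2}{2}}$ and $z+z'=0$, all hinge on $n$ being even, so the argument is genuinely specific to this case.
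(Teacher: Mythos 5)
Your proposal is correct and follows essentially the same route as the paper: define the linear map $\Theta(B_a,B^*_a,l_0,k_0)=(-B_{\tau(a)},B^*_{\tau(a)},l_0,-k_0)$ on $M(\delta,w)$, verify the conditions of \Cref{liftconditions}, and use the relations \cref{Dny+y'} and \cref{Dnz+z'} to get $\Theta(x)=x$, $\Theta(y)=(-1)^{n-2}y'=x^{\frac{n-2}{2}}-y$, and $\Theta(z)=(-1)^{n-1}z'=z$, exactly as the paper does. Your sign bookkeeping (counting unstarred arrows, noting where evenness of $n$ enters) is slightly more explicit than the paper's, and your $-k_0$ in the definition of $\Theta$ is the correct convention needed for anti-symplecticity, matching the statement of the proposition.
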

\begin{proof}
We consider the following linear anti-symplectic involution on the representation space.
\begin{equation*}
    \Theta\colon M(\delta,w)\to M(\delta,w),\ \ (B_{a},B^*_{a},l_0,k_0)\mapsto (-B_{\tau(a)},B^*_{\tau(a)},l_0,k_0).
\end{equation*}
It satisfies the conditions listed in \Cref{liftconditions}.
    \begin{enumerate}    
    \item $(g_i)_{i\in I}^{\Theta}=(g_{\tau(i)})_{i\in I}$.
    \item $\Theta(x)=(-1)^2x=x,\ \Theta(y)=(-1)^{n-2}y'\xlongequal{\cref{Dny+y'}}x^{\frac{n-2}{2}}-y,\ \Theta(z)=(-1)^{n-1}z'\xlongequal{\cref{Dnz+z'}}z$.
\end{enumerate}
The involution $\ttheta$ defined in \cref{Dnevenliftyy'equation} is the descent of $\Theta$ to $\M_{\chi}(\delta,w)$. The rest follows from \Cref{liftconditions}.
\end{proof}
Consider the action of $\ttheta$ on the exceptional fiber $\pi^{-1}(0)$, we have the following results.
\begin{proposition}\label{Dnevencase2preimage}
\begin{enumerate}[label=(\arabic*)]
   \item 
   The involution $\ttheta$ preserves the components $C_i,\ 1\leq i\leq n-2$, and swaps the two components $C_{n-1},C_n$. We have $\pi^{-1}(0)^{\ttheta}=\bigcup_{\substack{i\ \text{odd}\\1\leq i\leq n-3}}C_i\cup\{p\}$ where $p\in C_{n-2}\setminus (C_{n-3}\cup C_{n-1}\cup C_n)$.
   \item\label{Dnevencase2temp2} We have $\tilde{L}\cap\pi^{-1}(0)=\tilde{L}\cap C_{n-2}=\{p\}$.
   \item The scheme $\pi^{-1}(X^{\theta})$ is not reduced. As a divisor, we have $\pi^{-1}(X^{\theta})=\tilde{L}+\sum_{i=1}^na_iC_i$, where $a_i=i,\ 1\leq i\leq n-2$ and $a_{n-1}=a_n=\frac{n-2}{2}$.
\end{enumerate}
\end{proposition}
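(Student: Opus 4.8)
The plan is to follow the same three-step recipe used for Type I in \Cref{Dnevencase1preimage}, adapting it to the fact that the lift $\ttheta$ now involves the nontrivial diagram automorphism $\tau$ swapping the two fork vertices $n-1,n$. First I would pin down the action of $\ttheta$ on the irreducible components of $\pi^{-1}(0)$. Using the description of $C_i$ as a rank locus in \Cref{Ciequation} together with the explicit formula \cref{Dnevenliftyy'equation} (which permutes the arrows by $\tau$), the rank condition cutting out $C_i$ is carried to the one cutting out $C_{\tau(i)}$; hence $\ttheta$ preserves each $C_i$ with $1\le i\le n-2$ and swaps $C_{n-1}$ with $C_n$. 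Since $C_{n-1}\cap C_n=\emptyset$ in the $D_n$ diagram, \ref{autop1} of \Cref{liftoncomponent} shows the swapped pair contributes no fixed points.

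Next I would analyze the preserved type-$A$ chain $C_1-C_2-\cdots-C_{n-2}$. The key observation is at the branch vertex $n-2$: the two intersection points $C_{n-2}\cap C_{n-1}$ and $C_{n-2}\cap C_n$ are interchanged by $\ttheta$ (as $C_{n-1},C_n$ are swapped), so $\ttheta$ cannot act trivially on $C_{n-2}$; by \ref{autop2} of \Cref{liftoncomponent} it therefore has exactly two fixed points on $C_{n-2}$, one of which is the $\ttheta$-stable intersection $C_{n-2}\cap C_{n-3}$ and the other an isolated point $p\in C_{n-2}\setminus(C_{n-3}\cup C_{n-1}\cup C_n)$. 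Feeding ``$C_{n-2}$ acts nontrivially'' into \Cref{liftoncomponentcor} forces the fixed and non-fixed components to alternate along the chain, giving (since $n-2$ is even) that $\ttheta$ acts trivially exactly on the $C_i$ with $i$ odd, $1\le i\le n-3$, and nontrivially on the even ones. For each even $C_i$ with $i\le n-4$ both of its fixed points are the intersections with its two odd (hence fixed) neighbors, so no further isolated points arise; thus $p$ is the unique isolated point, consistent with $X^{\theta}$ being irreducible via \ref{it1} of \Cref{isolatedtransversal}. This yields the description of $\pi^{-1}(0)^{\ttheta}$ in part (1).

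Part (2) is then immediate from \ref{it2} and \ref{it3} of \Cref{isolatedtransversal}: the unique component $\tilde{L}$ meets $\pi^{-1}(0)$ only at the isolated fixed point $p$, transversally, and since $p$ lies on $C_{n-2}$ and on no other $C_i$, we get $\tilde{L}\cap\pi^{-1}(0)=\tilde{L}\cap C_{n-2}=\{p\}$. For part (3), I would observe that $X^{\theta}=\ddiv(2y-x^{\frac{n-2}{2}})$ is a principal divisor, so \Cref{multofcomponent} applies. From parts (1)--(2) the intersection data is $b_{n-2}=1$ and $b_i=0$ for $i\ne n-2$, so $(a_1,\ldots,a_n)^t=\mathcal{C}^{-1}(0,\ldots,0,1,0,0)^t$ is the $(n-2)$-th column of the inverse Cartan matrix; rather than invert $\mathcal{C}$ I would simply verify that $a_i=i$ for $1\le i\le n-2$ and $a_{n-1}=a_n=\frac{n-2}{2}$ solves the system $\mathcal{C}a=b$ node by node, using $2i-(i-1)-(i+1)=0$ along the chain, $2\cdot 1-a_2=0$ at the tail leaf, $2\cdot\frac{n-2}{2}-a_{n-2}=0$ at the fork leaves, and $2(n-2)-(n-3)-2\cdot\frac{n-2}{2}=1$ at the branch vertex. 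Non-reducedness then follows since $a_i\ge 2$ for $i\ge 2$ (equivalently because $\pi^{-1}(0)$ is already non-reduced in type $D$, \Cref{exceptionalnonreduced}).

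I expect the main obstacle to be part (1): correctly propagating the alternation outward from the branch vertex and certifying that $C_{n-2}$ is the \emph{only} component producing an isolated fixed point, so that the isolated-point count matches the single component of the cusp $X^{\theta}$. Everything downstream---part (2) and the multiplicity computation in part (3)---is routine once the $\ttheta$-action on the components and the location of $p$ have been fixed.
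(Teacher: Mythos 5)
Your proposal is correct and follows essentially the same route as the paper: $\ttheta(C_i)=C_{\tau(i)}$ via \Cref{Ciequation}, the two-fixed-point and alternation analysis via \Cref{liftoncomponent} and \Cref{liftoncomponentcor} to locate the unique isolated point $p$ on $C_{n-2}$, \Cref{isolatedtransversal} for part (2), and \Cref{multofcomponent} applied to the principal divisor $\ddiv(2y-x^{\frac{n-2}{2}})$ for part (3). The only cosmetic difference is that you verify $\mathcal{C}(a_1,\cdots,a_n)^t=(b_1,\cdots,b_n)^t$ node by node rather than invoking $\mathcal{C}^{-1}$, which is a fine and slightly more explicit way to finish.
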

\begin{proof}
\begin{enumerate}[label=(\arabic*)]
    \item It is easy to see that $\ttheta(C_i)=C_{\tau(i)}$ thanks to \Cref{Ciequation}. The intersection points $C_{n-1}\cap C_{n-2},\ C_{n}\cap C_{n-2}$ are permuted by $\ttheta$. It follows from \ref{autop2} of \Cref{liftoncomponent} that there are exactly two $\ttheta$-fixed points on $C_{n-2}$. The intersection point $C_{n-3}\cap C_{n-2}$ is one of the $\ttheta$-fixed points on $C_{n-2}$, and the other one is an isolated point in $\pi^{-1}(0)^{\ttheta}$. It follows from \Cref{liftoncomponentcor} that the $\ttheta$-fixed components in $\{C_i\ |1\leq i\leq n-2\}$ appear alternatingly.
    \item This follows from \Cref{isolatedtransversal}.
    \item Recall the definition of $b_i$ in \cref{bidef}. We have $(b_1,\cdots,b_{n-3},b_{n-2},b_{n-1},b_n)=(0,\cdots,0,1,0,0)$ by \ref{E6case2temp2}. Moreover, $X^{\theta}=\ddiv(2y-x^{\frac{n-2}{2}})$ is a principal divisor. Then applying \Cref{multofcomponent}, we can determine that
\begin{equation*}
    (a_1,\cdots,a_i,\cdots,a_{n-2},a_{n-1},a_n)=(1,\cdots,i,\cdots,n-2,\frac{n-2}{2},\frac{n-2}{2}).
\end{equation*}
\end{enumerate}
\end{proof}

The fixed point locus $X^{\theta}$ and the preimage $\pi^{-1}(X^{\theta})$ are depicted in Picture \ref{Dnevencase2pic}. The number next to each component indicates its multiplicity. Thickened lines indicate that the multiplicities are larger than $1$. The $\ttheta$-fixed components of $\pi^{-1}(X^{\theta})$ are colored in red.
{
\captionsetup[table]{name=Picture}
\begin{table}[ht]
\begin{tabular}{ccc}
    \begin{tabular}{c}
    \begin{tikzpicture}
    \draw[thick] (0.5,-1.2) edge [bend right=20] node [below]{} (-0.3,0);
    \draw[thick] (0.5,1.2) edge [bend left=20] node [below]{} (-0.3,0);
    \end{tikzpicture}\end{tabular} & $\qquad$ &
    \begin{tabular}{c}\begin{tikzpicture}
    \draw[line width=0.4mm] (-3,2) edge [bend left=25] node [below]{} (-3,-1.5);
    \node at (-3,-1.7) () {$n\minus2$};
    \draw[line width=0.4mm] (-3.5,-1) edge [bend left=35] node [below]{} (-1,-1);
    \node at (-3.7,-0.85) () {$\frac{n\minus2}{2}$};
    \draw[line width=0.4mm] (-3.5,1) edge [bend left=35] node [below]{} (-1,1);
    \node at (-3.7,1.2) () {$\frac{n\minus2}{2}$};
    \draw[thick,red] (-3.5,0.75) to (-0.75,0.75);
    \node at (-3.7,0.6) () {{\color{red} $1$}};
  
    \draw[line width=0.4mm,red] (-3.5,0) edge [bend left=35] node [below]{} (-1,0);
    \node at (-3.9,0) () {{\color{red} $n\minus3$}};
    \draw[line width=0.4mm] (-2,0) edge [bend left=35] node [below]{} (0.5,0);
    \node at (-2,-0.15) () {$n\minus4$};
    \draw[line width=0.4mm,red] (-0.5,0) edge [bend left=25] node [below]{} (0.75,0.3);
    \node at (-0.4,-0.15) () {{\color{red} $n\minus5$}};
    \node at (1.5,0.2) (cdots) {$\cdots$};
    \draw[line width=0.4mm,red] (3.5,0) edge [bend right=25] node [below]{} (2.25,0.3);
    \node at (3.5,-0.15) () {{\color{red} $3$}};
    \draw[line width=0.4mm] (5,0) edge [bend right=35] node [below]{} (2.5,0);
    \node at (5,-0.15) () {$2$};
    \draw[thick,red] (6.5,0) edge [bend right=35] node [below]{} (4,0);
    \node at (6.6,-0.15) () {{\color{red} $1$}};
    \end{tikzpicture}\end{tabular} \\
    $X^{\theta}$ & & $\pi^{-1}(X^{\theta})$
\end{tabular}
\caption{$D_n,\ n$ even, Type \RMN{2}}
\label{Dnevencase2pic}
\end{table}
}

\Cref{Dnoddcase1sub,Dnoddcase2sub} Study the preimages of fixed point loci of anti-Poissons for Kleinian singularities of type $D_n$, with $n$ odd. We have $X=\Spec\C[x,y,z]/(xy^2-z(z-x^{\frac{n-1}{2}}))$.

\subsection{Type $D_n,\ n$ odd, case \RNum{1}} \label{Dnoddcase1sub}
In this section, we consider the anti-Poisson involution
\[
  \theta(x)=x,\ \theta(y)=-y,\  \theta(z)=z.
\]
The fixed point locus $X^{\theta}=\Spec\C[X]/(y)=\Spec\C[x,z]/(z(z-x^{\frac{n-1}{2}}))$ has two irreducible components $L_j,\ j=1,2$, and each of them is an $\A^1$. Following the notations in \Cref{liftsectionq1}, we have $\pi^{-1}(X^{\theta})=\pi^{-1}(0)\cup\tilde{L}_1\cup\tilde{L}_2$. This section is very similar to \Cref{Dnevencase1sub} with some minor changes due to the parity. We state the results, but omit the proofs.

\begin{proposition}\label{Dnoddlifty}
The involution $\ttheta\colon\M_{\chi}(\delta,w)\to\M_{\chi}(\delta,w)$ defined by
\begin{equation}\label{Dnoddliftyequation}
    \ttheta([B,B^*,l_0,k_0])=[-B,B^*,l_0,-k_0]
\end{equation}
is anti-symplectic. Moreover, it is a lift of $\theta\colon X\to X$.
\end{proposition}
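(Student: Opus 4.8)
The plan is to realize $\ttheta$ as the descent of an explicit linear map on the representation space $M(\delta,w)$ and then to invoke \Cref{liftconditions}; the argument runs parallel to the even case \Cref{Dnevenliftz}, the only difference being the parity of $n$. Concretely, I would set
\[
\Theta\colon M(\delta,w)\to M(\delta,w),\qquad (B,B^*,l_0,k_0)\mapsto(-B,B^*,l_0,-k_0),
\]
so that the map in \cref{Dnoddliftyequation} is exactly the descent of $\Theta$ to $\M_\chi(\delta,w)$. First I would record that $\Theta^2=\id$ and that $\Theta$ is anti-symplectic: since $\Theta$ negates the entries $B$ and $k_0$ while fixing $B^*$ and $l_0$, in each summand of \cref{quiversymform} exactly one of the two trace factors changes sign, so every summand is negated and $\omega\circ(\Theta\times\Theta)=-\omega$.

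Next I would verify the two hypotheses of \Cref{liftconditions}. For \ref{lift2}, note that $\Theta$ only rescales the arrows of the doubled quiver by $\pm1$ and does not permute the vertices, so it commutes with the linear $\GL(\delta)$-action \cref{actionquadruple}; hence $g^{\Theta}=g$ for every $g\in\GL(\delta)$, i.e. the required permutation of factors is trivial. For \ref{lift3} I would use the explicit generators from \Cref{Dngeneratorprop}. Since $\Theta$ negates precisely the unstarred factors $B_a$ (fixing every starred factor $B_{a^*}$), and the generator loops in \cref{Dnxyz} involve no framing maps $l_0,k_0$, a trace function of such a loop acquires the sign $(-1)^N$, where $N$ is the number of unstarred arrows $B_a$ in the loop. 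Reading off \cref{Dnxyz} gives $N=2$ for $x$, $N=n-2$ for $y$, and $N=n-1$ for $z$; with $n$ odd these are even, odd, and even, so $\Theta(x)=x$, $\Theta(y)=-y$, $\Theta(z)=z$, matching the Type $\RMN{1}$ involution $\theta$. With \ref{lift2} and \ref{lift3} in hand, \Cref{liftconditions} yields at once that $\ttheta$ is a well-defined anti-symplectic involution and a lift of $\theta$.

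The computation is entirely routine, and the only place demanding care is the sign bookkeeping in \ref{lift3}: one must track the number of unstarred arrows in each generator loop \cref{Dnxyz}, which is where the parity of $n$ enters and where the present odd case diverges from \Cref{Dnevenliftz} (there $N=2,\,n-2,\,n-1$ with $n$ even produced the signs $+,+,-$, whereas here $n$ odd produces $+,-,+$). Beyond this arithmetic there is no genuine obstacle, since anti-symplecticity and the triviality of the factor-permutation are immediate from the linear form of $\Theta$.
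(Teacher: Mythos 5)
Your proposal is correct and is essentially the paper's own argument: the paper omits the proof of this proposition precisely because it repeats the even case (\Cref{Dnevenliftz}), namely taking $\Theta(B,B^*,l_0,k_0)=(-B,B^*,l_0,-k_0)$, checking $g^{\Theta}=g$, and computing the signs $\Theta(x)=(-1)^2x$, $\Theta(y)=(-1)^{n-2}y$, $\Theta(z)=(-1)^{n-1}z$ before invoking \Cref{liftconditions}. Your sign bookkeeping via the number of unstarred arrows in the generators \cref{Dnxyz} is exactly the right parity computation, and it correctly yields $\Theta(x)=x$, $\Theta(y)=-y$, $\Theta(z)=z$ for $n$ odd, matching the Type $\RMN{1}$ involution.
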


\begin{proposition}\label{Dnoddcase1preimage}
\begin{enumerate}[label=(\arabic*)]
   \item 
   The involution $\ttheta$ preserves all the components $C_i,\ 1\leq i\leq n$. We have $\pi^{-1}(0)^{\ttheta}=\bigcup_{\substack{i\ \text{odd}\\2\leq i\leq n-2}}C_i\cup\{p_1,p_2\}$ where $p_1\in C_{n-1}\setminus C_{n-2}$ and $p_2\in C_n\setminus C_{n-2}$.
   \item\label{Dnoddcase1temp2}  With a suitable choice of labeling, we have $\ \tilde{L}_1\cap\pi^{-1}(0)=\tilde{L}_1\cap C_{n-1}=\{p_1\}$ and $\tilde{L}_2\cap\pi^{-1}(0)=\tilde{L}_2\cap C_n=\{p_2\}$.
   \item The scheme $\pi^{-1}(X^{\theta})$ is not reduced. As a divisor, we have $\pi^{-1}(X^{\theta})=\tilde{L}_1+\tilde{L}_2+\sum_{i=1}^na_iC_i$, where $a_i=i,\ 1\leq i\leq n-2$ and $a_{n-1}=a_n=\frac{n-1}{2}$.
\end{enumerate}
\end{proposition}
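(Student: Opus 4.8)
The plan is to transcribe the proof of \Cref{Dnevencase1preimage} and track the one change forced by the parity of $n$. The lift $\ttheta$ of \Cref{Dnoddlifty} is given by the same formula $[B,B^*,l_0,k_0]\mapsto[-B,B^*,l_0,-k_0]$ as in the even case, so it merely rescales $B$ by $-1$ and negates the framing $k_0$. Since the rank conditions cutting out the $C_i$ in \Cref{Ciequation} are unchanged under $B\mapsto -B$, the first observation is that $\ttheta$ preserves every exceptional component $C_i$, $1\le i\le n$.

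For part (1) I would run the branch-node analysis. The component $C_{n-2}$ meets the three preserved components $C_{n-3},C_{n-1},C_n$, so by \ref{autop1} of \Cref{liftoncomponent} their three distinct intersection points are $\ttheta$-fixed, and \ref{autop2} of \Cref{liftoncomponent} then forces $\ttheta$ to act trivially on $C_{n-2}$. Applying \ref{autop3} of \Cref{liftoncomponent} to the two legs shows $\ttheta$ has exactly two fixed points on each of $C_{n-1}$ and $C_n$, one being the intersection with $C_{n-2}$ and the other an isolated point; these are $p_1\in C_{n-1}\setminus C_{n-2}$ and $p_2\in C_n\setminus C_{n-2}$. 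Finally, \Cref{liftoncomponentcor} applied to the type-$A$ chain $C_1,\dots,C_{n-2}$ propagates the trivial action of $C_{n-2}$ alternately along the chain, so that the trivially-fixed chain components are exactly those of the same parity as $n-2$ (the odd-indexed ones). The crucial parity check is that this produces exactly two isolated points, matching the two irreducible components of $X^{\theta}$ demanded by \ref{it1} of \Cref{isolatedtransversal}; this is precisely where the odd case diverges from the even case, in which the long-leg tip $C_1$ acts non-trivially and contributes a third isolated point.

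Part (2) is then a direct invocation of \Cref{isolatedtransversal}: each isolated point of $\pi^{-1}(0)^{\ttheta}$ lies on a unique $\tilde{L}_j$, which meets $\pi^{-1}(0)$ only at that point and transversally, and relabeling gives $\tilde{L}_1\cap C_{n-1}=\{p_1\}$, $\tilde{L}_2\cap C_n=\{p_2\}$. For part (3), I would use that $X^{\theta}=\ddiv(y)$ is principal (from \Cref{Dnoddfixedloci}) so that \Cref{multofcomponent} applies; part (2) gives $b_{n-1}=b_n=1$ and $b_i=0$ otherwise, and solving $\mathcal{C}(a_1,\dots,a_n)^t=(b_1,\dots,b_n)^t$ against the $D_n$ Cartan matrix yields $a_i=i$ for $1\le i\le n-2$ and $a_{n-1}=a_n=\tfrac{n-1}{2}$, which one verifies directly vertex by vertex. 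Non-reducedness is then automatic since these multiplicities exceed $1$, consistent with $\pi^{-1}(0)$ already being non-reduced in type $D$ by \Cref{exceptionalnonreduced}.

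The only genuine content, exactly as in the even case, is the combinatorial bookkeeping in part (1); the rest is routine transcription. I therefore expect the sole obstacle to be correctly pinning down the parity-dependent alternation along the chain and confirming the isolated-point count, since this is the single place where the odd and even cases differ.
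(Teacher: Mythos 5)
Your proposal is correct and follows essentially the same route as the paper, which in fact omits this proof and defers to the even case (\Cref{Dnevencase1preimage}): branch-node analysis at $C_{n-2}$ via \Cref{liftoncomponent}, alternation along the type-$A$ chain via \Cref{liftoncomponentcor}, the intersection and transversality data from \Cref{isolatedtransversal}, and the multiplicities from \Cref{multofcomponent} applied to the principal divisor $\ddiv(y)$, solved against the $D_n$ Cartan matrix. One remark in your favor: your parity bookkeeping correctly makes $C_1$ pointwise fixed (for $n$ odd, the index $1$ has the same parity as $n-2$), so the fixed locus is $\bigcup_{i\ \text{odd},\ 1\leq i\leq n-2}C_i\cup\{p_1,p_2\}$; the range ``$2\leq i\leq n-2$'' in the stated proposition is a transcription slip from the even case, and Picture \ref{Dnoddcase1pic}, where $C_1$ is drawn in red, confirms your version rather than the literal statement.
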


The fixed point locus $X^{\theta}$ and the preimage $\pi^{-1}(X^{\theta})$ are depicted in Picture \ref{Dnoddcase1pic}. The number next to each component indicates its multiplicity. Thickened lines indicate that the multiplicities are larger than $1$. The $\ttheta$-fixed components of $\pi^{-1}(X^{\theta})$ are colored in red.
{
\captionsetup[table]{name=Picture}
\begin{table}[ht]
\begin{tabular}{ccc}
    \begin{tabular}{c}\begin{tikzpicture}
    \draw[thick] (-0.6,1) to (0.6,-1);
    \draw[thick] (0.6,1) to (-0.6,-1);
    \end{tikzpicture}\end{tabular} & $\qquad$ &
    \begin{tabular}{c}\begin{tikzpicture}
    \draw[line width=0.4mm,red] (-3,2) edge [bend left=25] node [below]{} (-3,-1.5);
    \node at (-3,-1.7) () {{\color{red} $n\minus2$}};
    \draw[line width=0.4mm] (-3.5,-1) edge [bend left=35] node [below]{} (-1,-1);
    \node at (-3.7,-0.85) () {$\frac{n\minus1}{2}$};
    \draw[thick,red] (-1,-0.2) to (-1.6,-1.7);
    \node at (-1.6,-1.9) () {{\color{red} $1$}};
    \draw[line width=0.4mm] (-3.5,1) edge [bend left=35] node [below]{} (-1,1);
    \node at (-3.7,1.2) () {$\frac{n\minus1}{2}$};
    \draw[thick,red] (-1.6,2.1) to (-1,0.6);
    \node at (-1.6,2.3) () {{\color{red} $1$}};
    \draw[line width=0.4mm] (-3.5,0) edge [bend left=35] node [below]{} (-1,0);
    \node at (-3.9,0) () {$n\minus 3$};
    \draw[line width=0.4mm,red] (-2,0) edge [bend left=35] node [below]{} (0.5,0);
    \node at (-2,-0.15) () {{\color{red} $n\minus 4$}};
    \draw[line width=0.4mm] (-0.5,0) edge [bend left=25] node [below]{} (0.75,0.3);
    \node at (-0.5,-0.15) () {$n\minus 5$};
    \node at (1.5,0.2) (cdots) {$\cdots$};
    \draw[line width=0.4mm] (3.5,0) edge [bend right=25] node [below]{} (2.25,0.3);
    \node at (3.5,-0.15) () {$2$};
    \draw[thick,red] (5,0) edge [bend right=35] node [below]{} (2.5,0);
    \node at (5.1,-0.15) () {{\color{red} $1$}};
  \end{tikzpicture}\end{tabular}\\
  $X^{\theta}$ & & $\pi^{-1}(X^{\theta})$
\end{tabular}
\caption{$D_n,\ n$ odd, Type \RMN{1}}
\label{Dnoddcase1pic}
\end{table}
}

\subsection{Type $D_n,\ n$ odd, case \RNum{2}} \label{Dnoddcase2sub}
In this section, we consider the anti-Poisson involution
\[
  \theta(x)=x,\ \theta(y)=y,\  \theta(z)=x^{\frac{n-1}{2}}-z.
\]
The fixed point locus $X^{\theta}=\Spec\C[X]/(z)=\Spec\C[x,y]/(x(4y^2+x^{\frac{n-2}{2}}))$ has two irreducible components $L_j,\ j=1,2$. One of them is $\A^1$ and the other is a cusp. Following the notations in \Cref{liftsectionq1}, we have $\pi^{-1}(X^{\theta})=\pi^{-1}(0)\cup\tilde{L}_1\cup\tilde{L}_2$. This section is very similar to \Cref{Dnevencase2sub} with some minor changes due to the parity. We state the results, but omit the proofs. Recall the automorphism $\tau$ defined on the extended Dynkin diagram of type $\wt{D}_n$ in \cref{DnDynkinauto}.

\begin{proposition}\label{Dnoddliftzz'}
The involution $\ttheta\colon\M_{\chi}(\delta,w)\to\M_{\chi}(\delta,w)$ defined by
\begin{equation}\label{Dnoddliftzz'equation}
    \ttheta([B_{a},B^*_{a},l_0,k_0])=[-B_{\tau(a)},B^*_{\tau(a)},l_0,-k_0]
\end{equation}
is anti-symplectic. Moreover, it is a lift of $\theta\colon X\to X$.
\end{proposition}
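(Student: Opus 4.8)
The plan is to follow verbatim the strategy of \Cref{Dnevenliftyy'}, exhibiting $\ttheta$ as the descent of an explicit linear anti-symplectic involution of the representation space and then verifying the hypotheses of \Cref{liftconditions}. Concretely, I would set
\[
\Theta\colon M(\delta,w)\to M(\delta,w),\quad (B_a,B^*_a,l_0,k_0)\mapsto(-B_{\tau(a)},B^*_{\tau(a)},l_0,-k_0),
\]
where $\tau$ is the order-two automorphism of $\wt{D}_n$ from \cref{DnDynkinauto} swapping the vertices $n-1$ and $n$. First I would record that $\Theta$ is a linear involution: since $\tau^2=\id$ and the sign on the $B$-entries (and on $k_0$) squares to $+1$, one checks $\Theta^2=\id$. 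That $\Theta$ is anti-symplectic is immediate from \cref{quiversymform}, since every summand of $\omega$ is a trace pairing one unstarred entry against one starred entry (or $l_0$ against $k_0$), and $\Theta$ negates exactly one factor in each such pairing while permuting the summands by $\tau$, so $\Theta^*\omega=-\omega$.

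Next I would verify the two conditions of \Cref{liftconditions}. Condition \ref{lift2} is structural: conjugation by $\Theta$ permutes the factors of $\GL(\delta)$ according to $\tau$, i.e. $(g_i)_{i\in I}^{\Theta}=(g_{\tau(i)})_{i\in I}$, exactly as in the even case. The substance of the argument is condition \ref{lift3}, compatibility with $\theta$ on the generators $x,y,z$ of \Cref{Dngeneratorprop}. Here the only genuine computation is a parity count: $\Theta$ multiplies each generator by $(-1)^{N}$, where $N$ is the number of unstarred ($\Omega$-)arrows in the defining loop, and simultaneously relabels the loop by $\tau$. For $x$ (a loop avoiding $n-1,n$) one has $N$ even and $\tau$-invariance, so $\Theta(x)=x=\theta(x)$. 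For $y$ the loop of \cref{Dnyy'} has $N=n-2$ unstarred arrows and is carried by $\tau$ to $y'$; as $n$ is odd this gives $\Theta(y)=(-1)^{n-2}y'=-y'$, and invoking \cref{Dny+y'}, which reads $y+y'=0$ for $n$ odd, yields $\Theta(y)=y=\theta(y)$. For $z$ the loop of \cref{Dnzz'} has $N=n-1$ unstarred arrows and is carried by $\tau$ to $z'$; as $n$ is odd this gives $\Theta(z)=(-1)^{n-1}z'=z'$, and \cref{Dnz+z'}, which reads $z+z'=x^{\frac{n-1}{2}}$ for $n$ odd, yields $\Theta(z)=x^{\frac{n-1}{2}}-z=\theta(z)$.

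With both conditions of \Cref{liftconditions} in hand, $\Theta$ descends to the map $\ttheta$ of \cref{Dnoddliftzz'equation} on $\wt{X}\simeq\mu^{-1}(0)^{\chi-ss}\gitquo\GL(\delta)$, and the proposition is exactly the assertion that this descent is anti-symplectic and satisfies $\pi\circ\ttheta=\theta\circ\pi$. I expect no real obstacle: the argument is entirely formal once \Cref{liftconditions} and the generator relations \cref{Dny+y',Dnz+z'} are available, and the single place demanding care is the parity bookkeeping of unstarred arrows together with the matching parity in those two relations. It is precisely the passage from $n$ even to $n$ odd that interchanges the roles of $y$ and $z$ — the relation with vanishing sum is now the one for $y$, and the relation producing $x^{\frac{n-1}{2}}$ is now the one for $z$ — which is what reproduces $\theta(y)=y$ and $\theta(z)=x^{\frac{n-1}{2}}-z$ in place of the even-case outcome.
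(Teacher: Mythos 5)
Your proof is correct and takes exactly the approach the paper intends: the paper omits the proof of \Cref{Dnoddliftzz'}, deferring to the even case \Cref{Dnevenliftyy'}, and your parity bookkeeping ($\Theta(x)=x$; $\Theta(y)=(-1)^{n-2}y'=y$ via \cref{Dny+y'}; $\Theta(z)=(-1)^{n-1}z'=x^{\frac{n-1}{2}}-z$ via \cref{Dnz+z'}) carries out precisely the ``minor changes due to the parity'' to which the paper alludes, together with the verification of conditions \ref{lift2} and \ref{lift3} of \Cref{liftconditions}. One small point in your favor: your $\Theta$ sends $k_0\mapsto -k_0$, which is what anti-symplecticity of the $\tr(l_0k_0'-k_0l_0')$ term requires and what the descended map \cref{Dnoddliftzz'equation} states, whereas the displayed $\Theta$ inside the paper's proof of \Cref{Dnevenliftyy'} writes $k_0\mapsto k_0$, an apparent typo.
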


\begin{proposition}\label{Dnoddcase2preimage}
\begin{enumerate}[label=(\arabic*)]
   \item 
   The involution $\ttheta$ preserves the components $C_i,\ 1\leq i\leq n-2$, and swaps the two components $C_{n-1},C_n$. We have $\pi^{-1}(0)^{\ttheta}=\bigcup_{\substack{i\ \text{even}\\ 2\leq i\leq n-3}}C_i\cup\{p_1,p_2\}$ where $p_1\in C_1\setminus C_2$ and $p_2\in C_{n-2}\setminus (C_{n-3}\cup C_{n-1}\cup C_n)$.
   \item\label{Dnoddcase2temp2} With a suitable choice of labeling, $\tilde{L}_1\cap\pi^{-1}(0)=\tilde{L}_1\cap C_1=\{p_1\},\ \tilde{L}_2\cap\pi^{-1}(0)=\tilde{L}_2\cap C_{n-2}=\{p_2\}$.
   \item The scheme $\pi^{-1}(X^{\theta})$ is not reduced. As a divisor, we have $\pi^{-1}(X^{\theta})=\tilde{L}_1+\tilde{L}_2+\sum_{i=1}^na_iC_i$, where $a_i=i+1,\ 1\leq i\leq n-2$ and $a_{n-1}=a_n=\frac{n-1}{2}$.
\end{enumerate}
\end{proposition}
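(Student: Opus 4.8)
The plan is to follow verbatim the strategy used for the even case in \Cref{Dnevencase2preimage}, tracking how the parity of $n$ changes the alternation pattern of $\ttheta$-fixed components. For part (1), I would first read off from \Cref{Ciequation} that $\ttheta(C_i)=C_{\tau(i)}$, where $\tau$ is the diagram automorphism of $\wt{D}_n$ in \cref{DnDynkinauto}; since $\tau$ fixes $0,1,\dots,n-2$ and swaps $n-1,n$, the lift $\ttheta$ preserves each $C_i$ with $1\le i\le n-2$ and exchanges $C_{n-1}$ with $C_n$. Because $\ttheta$ swaps the two intersection points $C_{n-2}\cap C_{n-1}$ and $C_{n-2}\cap C_n$, it acts nontrivially on $C_{n-2}$, so \ref{autop2} of \Cref{liftoncomponent} gives exactly two $\ttheta$-fixed points on $C_{n-2}$. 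By \ref{autop3} of \Cref{liftoncomponent} the point $C_{n-3}\cap C_{n-2}$ is one of them, and the remaining fixed point $p_2\in C_{n-2}\setminus(C_{n-3}\cup C_{n-1}\cup C_n)$ is isolated in $\pi^{-1}(0)^{\ttheta}$. I then apply \Cref{liftoncomponentcor} to the type-$A$ chain $C_1,\dots,C_{n-2}$, all of which are $\ttheta$-preserved, to conclude that fixed and non-fixed components alternate along this chain.

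The parity input enters here. Since $n$ is odd, $n-2$ is odd, and $C_{n-2}$ is non-fixed; the alternation then forces every even-indexed $C_i$ with $2\le i\le n-3$ to be fixed and every odd-indexed one to be non-fixed. In particular $C_1$ is non-fixed, so by \ref{autop2}--\ref{autop3} of \Cref{liftoncomponent} it carries two fixed points, one being $C_1\cap C_2$ and the other an isolated point $p_1\in C_1\setminus C_2$. This extra isolated point is exactly the feature that distinguishes the odd case from the even case, where $C_1$ is fixed and no such point appears. As a consistency check, \ref{it1} of \Cref{isolatedtransversal} says the number of isolated points in $\pi^{-1}(0)^{\ttheta}$ equals the number of irreducible components of $X^{\theta}$, which is two by \Cref{Dnoddfixedloci}; this matches the two points $p_1,p_2$ found above.

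Part (2) is then immediate from \ref{it2} and \ref{it3} of \Cref{isolatedtransversal}: each isolated point is met by a unique $\tilde{L}_j$, transversally and only there, so after relabeling $\tilde{L}_1\cap\pi^{-1}(0)=\tilde{L}_1\cap C_1=\{p_1\}$ and $\tilde{L}_2\cap\pi^{-1}(0)=\tilde{L}_2\cap C_{n-2}=\{p_2\}$. For part (3), since $X^{\theta}=\ddiv(2z-x^{\frac{n-1}{2}})$ is a principal divisor by \Cref{Dnoddfixedloci}, I would invoke \Cref{multofcomponent}. From part (2) the incidence numbers of \cref{bidef} are $b_1=b_{n-2}=1$ and $b_i=0$ otherwise, so solving $\mathcal{C}(a_1,\dots,a_n)^t=(b_1,\dots,b_n)^t$ for the $D_n$ Cartan matrix yields the claimed $a_i=i+1$ for $1\le i\le n-2$ and $a_{n-1}=a_n=\frac{n-1}{2}$; non-reducedness follows since $a_1=2>1$ (and in any case $\pi^{-1}(0)$ is non-reduced in type $D$ by \Cref{exceptionalnonreduced}).

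The only genuinely nontrivial steps are the parity bookkeeping in the alternation argument of part (1) — one must pin down which end of the chain produces the second isolated point, and check that $C_{n-2}$ being odd-indexed is what pushes the non-fixed pattern onto $C_1$ — together with verifying that the proposed $a_i$ solve the tridiagonal-plus-fork Cartan system, i.e.\ $2a_i-a_{i-1}-a_{i+1}=b_i$ along the spine and $2a_{n-1}-a_{n-2}=2a_n-a_{n-2}=0$ at the fork. Both are routine, but the former is where an error would most easily creep in, so I would carry it out by direct substitution rather than by citing the even case wholesale.
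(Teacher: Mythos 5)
Your proposal is correct and takes essentially the same approach as the paper: the paper omits this proof, stating only that it is \Cref{Dnevencase2preimage} adapted for parity, and your argument is precisely that adaptation, using the same ingredients (\Cref{Ciequation}, \Cref{liftoncomponent}, \Cref{liftoncomponentcor}, \Cref{isolatedtransversal}, \Cref{multofcomponent}) with the parity bookkeeping done correctly — since $n-2$ is odd, the non-fixed pattern anchored at $C_{n-2}$ propagates to make $C_1$ non-fixed, yielding the second isolated point $p_1\in C_1\setminus C_2$, consistent with $X^{\theta}$ having two components. Your verification that $a_i=i+1$ for $1\le i\le n-2$ and $a_{n-1}=a_n=\frac{n-1}{2}$ solve the Cartan system with $b_1=b_{n-2}=1$ is also accurate.
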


The fixed point locus $X^{\theta}$ and the preimage $\pi^{-1}(X^{\theta})$ are depicted in Picture \ref{Dnoddcase2pic}. The number next to each component indicates its multiplicity. Thickened lines indicate that the multiplicities are larger than $1$. The $\ttheta$-fixed components of $\pi^{-1}(X^{\theta})$ are colored in red.
{
\captionsetup[table]{name=Picture}
\begin{table}[ht]
\begin{tabular}{ccc}
    \begin{tabular}{c}
    \begin{tikzpicture}
    \draw[thick] (0.5,-1.2) edge [bend right=20] node [below]{} (-0.3,0);
    \draw[thick] (0.5,1.2) edge [bend left=20] node [below]{} (-0.3,0);
    \draw[thick] (-0.3,1.2) to (-0.3,-1.2);
    \end{tikzpicture}\end{tabular} & $\qquad$ &
    \begin{tabular}{c}\begin{tikzpicture}
    \draw[line width=0.4mm] (-3,2) edge [bend left=25] node [below]{} (-3,-1.5);
    \node at (-3,-1.7) () {$n\minus1$};
    \draw[line width=0.4mm] (-3.5,-1) edge [bend left=35] node [below]{} (-1,-1);
    \node at (-3.7,-0.85) () {$\frac{n\minus1}{2}$};
    \draw[line width=0.4mm] (-3.5,1) edge [bend left=35] node [below]{} (-1,1);
    \node at (-3.7,1.2) () {$\frac{n\minus1}{2}$};
    \draw[thick,red] (-3.5,0.75) to (-0.75,0.75);
    \node at (-3.7,0.6) () {{\color{red} $1$}};
  
    \draw[line width=0.4mm,red] (-3.5,0) edge [bend left=35] node [below]{} (-1,0);
    \node at (-3.9,0) () {{\color{red} $n\minus2$}};
    \draw[line width=0.4mm] (-2,0) edge [bend left=35] node [below]{} (0.5,0);
    \node at (-2,-0.15) () {$n\minus3$};
    \draw[line width=0.4mm,red] (-0.5,0) edge [bend left=25] node [below]{} (0.75,0.3);
    \node at (-0.4,-0.15) () {{\color{red} $n\minus4$}};
    \node at (1.5,0.2) (cdots) {$\cdots$};
    \draw[line width=0.4mm,red] (3.5,0) edge [bend right=25] node [below]{} (2.25,0.3);
    \node at (3.5,-0.15) () {{\color{red} $3$}};
    \draw[line width=0.4mm] (5,0) edge [bend right=35] node [below]{} (2.5,0);
    \node at (5.1,-0.15) () {$2$};
    \draw[thick,red] (4.9,0.85) to (4.2,-0.65);
    \node at (4.2,-0.9) () {{\color{red} $1$}};
    \end{tikzpicture}\end{tabular}\\
    $X^{\theta}$ & & $\pi^{-1}(X^{\theta})$
\end{tabular}
\caption{$D_n,\ n$ odd, Type \RMN{2}}
\label{Dnoddcase2pic}
\end{table}
}

\section{Preimages of fixed point loci for type $E_6$ Kleinian singularity}\label{E6quiver}

{
\captionsetup[table]{name=Picture}
\begin{table}[ht]
    \centering
\begin{tabular}{c}
    \scalebox{1.3}{\begin{tikzpicture}
        \node at (-2,-1.2) (E6) {$\widetilde{E}_6$};
        \filldraw[black] (1.5,-3) circle (2pt) node[anchor=east]{$W_0$};
        \node[] at (1.5,-3.3)  {\color{blue} $1$};
        \filldraw[black] (3,-3) circle (2pt) node[anchor=west]{$V_0$};
        \node[] at (3,-3.3)  {\color{blue} $1$};
        \filldraw[black] (3,-1.5) circle (2pt) node[anchor=west]{$V_1$};
        \node[] at (2.7,-1.5)  {\color{blue} $2$};
        
        \filldraw[black] (0,0) circle (2pt) node[anchor=south]{$V_4$};
        \node[] at (0,0.7)  {\color{blue} $1$};
        \filldraw[black] (1.5,0) circle (2pt) node[anchor=south]{$V_3$};
        \node[] at (1.5,0.7)  {\color{blue} $2$};
        \filldraw[black] (3,0) circle (2pt) node[anchor=south]{$V_2$};
        \node[] at (3,0.7)  {\color{blue} $3$};
        \filldraw[black] (4.5,0) circle (2pt) node[anchor=south]{$V_5$};
        \node[] at (4.5,0.7)  {\color{blue} $2$};
        \filldraw[black] (6,0) circle (2pt) node[anchor=south]{$V_6$};
        \node[] at (6,0.7)  {\color{blue} $1$};

        \draw[->,thick] (3.05,-1.4) -- (3.05,-0.1);
        \node at (3.5,-0.8)  {\scalebox{0.9}{$B_{2\leftarrow1}$}};
        \draw[->,thick] (2.95,-0.1) -- (2.95,-1.4);
        \node[] at (2.5,-0.8)  {\scalebox{0.9}{$B^*_{1\leftarrow 2}$}};

        \draw[->,thick] (3.05,-2.9) -- (3.05,-1.6);
        \node at (3.5,-2.15)  {\scalebox{0.9}{$B_{1\leftarrow0}$}};
        \draw[<-,thick] (2.95,-2.9) -- (2.95,-1.6);
        \node at (2.5,-2.15)  {\scalebox{0.9}{$B^*_{0\leftarrow1}$}};

        \draw[->,thick] (1.6,-2.95) -- (2.9,-2.95);
        \node[] at (2.25,-2.75)  {\scalebox{0.9}{$l_0$}};
        \draw[<-,thick] (1.6,-3.05) -- (2.9,-3.05);
        \node[] at (2.25,-3.25)  {\scalebox{0.9}{$k_0$}};
        
        \draw[<-,thick] (0.1,0.05) -- (1.4,0.05);
        \node at (0.7,0.25)  {\scalebox{0.9}{$B_{4\leftarrow3}$}};
        \draw[->,thick] (0.1,-0.05) -- (1.4,-0.05);
        \node[] at (0.7,-0.25)  {\scalebox{0.9}{$B^*_{3\leftarrow 4}$}};
        \draw[<-,thick] (1.6,0.05) -- (2.9,0.05);
        \node at (2.2,0.25)  {\scalebox{0.9}{$B_{3\leftarrow2}$}};
        \draw[->,thick] (1.6,-0.05) -- (2.9,-0.05);
        \node[] at (2.2,-0.25) {\scalebox{0.9}{$B^*_{2\leftarrow 3}$}};
        
        \draw[->,thick] (3.1,0.05) -- (4.4,0.05);
        \node at (3.7,0.25)  {\scalebox{0.9}{$B_{5\leftarrow2}$}};
        \draw[<-,thick,] (3.1,-0.05) -- (4.4,-0.05);
        \node[] at (3.7,-0.25)  {\scalebox{0.9}{$B^*_{2\leftarrow 5}$}};
        \draw[->,thick] (4.6,0.05) -- (5.9,0.05);
        \node at (5.2,0.25) {\scalebox{0.9}{$B_{6\leftarrow5}$}};
        \draw[<-,thick] (4.6,-0.05) -- (5.9,-0.05);
        \node[] at (5.2,-0.25)  {\scalebox{0.9}{$B^*_{5\leftarrow 6}$}};
    \end{tikzpicture}}
\end{tabular}
\caption{Extended Dynkin quiver $\widetilde{E}_6$}
\label{quiverE6}
\end{table}
}

We realize the Kleinian singularity of type $E_6: \Spec\C[X]=\C[x,y,z]/(z^2+zx^2+y^3)$ as a Nakajima quiver variety explicitly. Recall that $\delta=(\delta_i)_{0\leq i\leq 6}=(1,2,3,2,1,2,1)$. We have $\dim W_0=1,\ \dim V_i=\delta_i$. The number (colored in blue) next to each vector space indicates its dimension. The representation space is 
\[
M(\delta,w)=T^*\big(\bigoplus_{a\in\Omega}\Hom(V_{t(a)},V_{h(a)})\oplus\Hom(W_0,V_0)\big).
\]
The group $\GL(\delta)=\prod_{i=0}^n\GL(V_i)$ acts on $M(\delta,w)$ naturally with the moment map 
\[
\mu=(\mu_i)_{0\leq i\leq n}\colon M(\delta,w)\to\gl(\delta),
\]
where
\begin{align}
    \mu_0&=-B^*_{0\la 1}B_{1\la 0}+l_0k_0, \notag \\
    \mu_1&=B_{1\la 0}B^*_{0\la 1}-B^*_{1\la 2}B_{2\la 1}, \notag \\
    \mu_2&=B_{2\la 1}B^*_{1\la 2}-B^*_{2\leftarrow 3}B_{3\leftarrow 2}-B^*_{2\leftarrow 5}B_{5\leftarrow 2}, \notag \\
    \mu_3&=B_{3\la 2}B^*_{2\la 3}-B^*_{3\la 4}B_{4\la 3}, \label{E6momentmap} \\
    \mu_4&=B_{4\la 3}B^*_{3\la 4}, \notag\\
    \mu_5&=B_{5\la 2}B^*_{2\la 5}-B^*_{5\la 6}B_{6\la 5}, \notag \\
    \mu_6&=B_{6\la 5}B^*_{5\la 6}. \notag
\end{align}
Recall from \cref{ADEmomentmap} that setting $\mu=0$ implies that $B^*_{0\la 1}B_{1\la 0}=l_0k_0=0$. Recall from \Cref{Ciequation}, we have
\[
    C_i=\big\{[B,B^*,l_0,0]\in\M_{\chi}(\delta,w)\big| \rank(B_{a})_{\substack{a\in\bar{\Omega}\\ t(a)=i}}<\dim V_i \big\},\ 1\leq i\leq n.
\]
According to \Cref{wframinginv} and \Cref{ADExyz}, the algebra of invariant functions $\C[\mu^{-1}(0)]^{\GL(\delta)}$ is generated by trace functions of loops that start and end at the vertex $0$ of degrees $6,8,12$. The explicit generators are given in \Cref{E6generatorprop}.

\begin{proposition}\label{E6generatorprop}
We have $\C[\mu^{-1}(0)]^{\GL(\delta)}\simeq\C[x,y,z]/(z^2+zx^2+y^3)$, where
\begin{equation}\label{E6xyz}
    \begin{aligned}
    x:=&B^*_{0\la 1}B^*_{1\la 2}B^*_{2\la 3}B_{3\la 2}B_{2\la 1}B_{1\la 0}, \\
    y:=&B^*_{0\la 1}B^*_{1\la 2}B^*_{2\la 3}B^*_{3\la 4}B_{4\la 3}B_{3\la 2}B_{2\la 1}B_{1\la 0}, \\
    z:=&B^*_{0\la 1}B^*_{1\la 2}B^*_{2\la 5}B^*_{5\la 6}B_{6\la 5}B_{5\la 2}B^*_{2\la 3}B^*_{3\la 4}B_{4\la 3}B_{3\la 2}B_{2\la 1}B_{1\la 0}. \\
\end{aligned}
\end{equation}
\end{proposition}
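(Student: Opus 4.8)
The plan is to follow the template of the type $A_n$ and type $D_n$ arguments (\cref{Ansurj} and \Cref{Dngeneratorprop}): first show that $x,y,z$ generate $\C[\mu^{-1}(0)]^{\GL(\delta)}$, then verify the relation $z^2+zx^2+y^3=0$, and finally deduce the isomorphism from a domain-and-dimension comparison.

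\textbf{Generation.} By \Cref{wframinginv} the algebra $\C[\mu^{-1}(0)]^{\GL(\delta)}\simeq\C[u,v]^{\Gamma}$ is graded and, by \Cref{ADExyz}, minimally generated in degrees $6,8,12$; moreover a trace function of a loop of combinatorial length $\ell$ based at the vertex $0$ has degree $\ell$. Hence it suffices to show that each graded piece $\C[\mu^{-1}(0)]^{\GL(\delta)}_d$ with $d=6,8,12$ lies in the subalgebra generated by $x,y,z$. As in the proof of \Cref{Dn0234} I first reduce to \emph{primitive} loops, in which the vertex $0$ is visited only once: if $0$ occurs twice I split $q=q_1q_2$ and use $\tr(B_q)=\tr(B_{q_1})\tr(B_{q_2})$, which is legitimate because $\dim V_0=1$. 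Every primitive loop leaves $0$ via $B_{1\la 0}$, wanders among the three arms $2\!-\!3\!-\!4$, $2\!-\!5\!-\!6$ and $2\!-\!1$ meeting at the trivalent vertex $2$, and returns via $B^*_{0\la 1}$. I then propagate the moment-map relations $\mu_i=0$ of \cref{E6momentmap} inward from the leaves, where $\mu_4=B_{4\la 3}B^*_{3\la 4}=0$ and $\mu_6=B_{6\la 5}B^*_{5\la 6}=0$ force the outermost compositions to vanish, and across the fork, where $\mu_2$ trades the three incoming contributions against one another. I expect to need a reduction lemma in the spirit of \Cref{Dnind} and \Cref{Dn0234}: a loop confined to a single type-$A$ arm evaluates to a scalar multiple of a power of $x$ (and to $0$ in the parity-forbidden lengths). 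This should give $\C[\mu^{-1}(0)]^{\GL(\delta)}_6=\C x$, $\C[\mu^{-1}(0)]^{\GL(\delta)}_8=\C y$, and $\C[\mu^{-1}(0)]^{\GL(\delta)}_{12}=\C z+\C x^2$, which settles generation since the Hilbert series of $\C[u,v]^{\Gamma}$ has exactly these dimensions $1,1,2$ in degrees $6,8,12$.

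\textbf{The relation.} I compute the trace functions $z^2$, $zx^2$ and $y^3$ and show that their sum vanishes, mirroring the computation \cref{Dnrelation}. The key algebraic input is the fork relation $\mu_2=0$, i.e.\ $B_{2\la 1}B^*_{1\la 2}=B^*_{2\la 3}B_{3\la 2}+B^*_{2\la 5}B_{5\la 2}$, together with the leaf relations $\mu_4=\mu_6=0$; cyclically reorganizing the length-$24$ loop representing $z^2$ and substituting via $\mu_2$ splits it into a $2\!-\!3\!-\!4$ part and a $2\!-\!5\!-\!6$ part, which I expect to identify with $-y^3$ and $-zx^2$ respectively, giving $z^2+zx^2+y^3=0$ in $\C[\mu^{-1}(0)]^{\GL(\delta)}$.

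\textbf{Isomorphism and main obstacle.} The relation produces a graded surjection $\C[x,y,z]/(z^2+zx^2+y^3)\twoheadrightarrow\C[\mu^{-1}(0)]^{\GL(\delta)}$; since $z^2+zx^2+y^3$ is irreducible the source is a $2$-dimensional integral domain, while the target is a $2$-dimensional domain by \Cref{wframinginv}, so the surjection is forced to be an isomorphism, exactly as for \cref{Ansurj}. The hard part is the generation step in degree $12$: because the fork vertex $2$ carries a $3$-dimensional space while its neighbours $1,3,5$ carry $2$-dimensional spaces, the relations $\mu_i=0$ are genuine matrix identities rather than scalar ones, and a sizeable list of primitive length-$12$ loops visiting several arms must each be collapsed into $\C z+\C x^2$. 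Organizing this matrix bookkeeping—so that no loop escapes the claimed two-dimensional span—is where the real work lies, and it is the reason the type-$E_6$ computation is isolated as a separate proposition rather than left to the reader.
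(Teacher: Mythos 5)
Your skeleton coincides with the paper's proof of \Cref{E6generatorprop}: reduce to trace functions of loops visiting the vertex $0$ once, compute the graded pieces in degrees $6,8,12$ by pushing the moment-map relations around the quiver, verify the degree-$24$ relation, and conclude via the surjection $\C[x,y,z]/(z^2+zx^2+y^3)\twoheadrightarrow\C[\mu^{-1}(0)]^{\GL(\delta)}$ plus the domain-and-dimension argument; that last step and your dimension count ($1,1,2$ in degrees $6,8,12$) are fine. However, the two concrete claims on which you rest ``the real work'' both fail as stated. First, your proposed reduction lemma --- a loop based at $0$ confined to a single arm evaluates to a power of $x$, and to $0$ in degrees where no power of $x$ lives --- is false in type $E_6$: the degree-$8$ one-arm loop $0\to1\to2\to3\to4\to3\to2\to1\to0$ is, by definition, the generator $y$, which is nonzero, whereas your lemma would force it to vanish (since $8$ is not a multiple of $\deg x=6$); this contradicts your own claim that $\C[\mu^{-1}(0)]^{\GL(\delta)}_8=\C y$. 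The correct $E_6$ substitute is \Cref{E6lemma}: the vanishing statements $B^*_{1\la 2}B_{2\la 1}B_{1\la 0}=0$ and $(B^*_{2\la 3}B_{3\la 2})^3=(B^*_{2\la 5}B_{5\la 2})^3=0$, together with identifications of all admissible degree-$8$ and degree-$12$ loops with $\pm y$ and with $z$; it is these identifications, not a power-of-$x$ statement, that collapse the degree-$8$ and degree-$12$ pieces to $\C y$ and $\C z+\C x^2$.

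Second, the relation. The paper never expands $z^2$; its organizing device, absent from your plan, is the pair of arm-swapped companions $x':=B^*_{0\la 1}B^*_{1\la 2}B^*_{2\la 5}B_{5\la 2}B_{2\la 1}B_{1\la 0}$ and $z'$ (the loop for $z$ with the two long arms interchanged). One proves $x+x'=0$ (\cref{E6xx'}), $z+z'=-x^2$ (\cref{E6z+z'}), and $zz'=y^3$ (\cref{E6relation}); thus $z$ and $z'$ are the two roots of $t^2+x^2t+y^3$, which is exactly the desired relation. Your predicted splitting --- that after substituting $\mu_2$ the ``$2$--$3$--$4$ part'' of $z^2$ equals $-y^3$ and the ``$2$--$5$--$6$ part'' equals $-zx^2$ --- cannot be the actual structure: $y$ can be written through either arm (part \ref{E6yequiv} of \Cref{E6lemma}), and both $y^3$ and $zx^2$ genuinely mix the two arms. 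What the expansion of $z^2$ via $\mu_1$ and $\mu_2$ actually produces is the companion $z'$, so any honest direct computation re-derives the two identities above in disguise. Introduce $z'$ (and $x'$) explicitly, replace your reduction lemma by \Cref{E6lemma}, and your outline becomes the paper's proof.
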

Before proving \Cref{E6generatorprop}, we need a lemma.
\begin{lemma}\label{E6lemma}
\begin{enumerate}[label=(\arabic*)]
    \item\label{E6012} $B^*_{1\la 2}B_{2\la 1}B_{1\la 0}=B^*_{0\la 1}B^*_{1\la 2}B_{2\la 1}=0$.
    \item\label{E635cube} $(B^*_{2\la 3}B_{3\la 2})^3=(B^*_{2\la 5}B_{5\la 2})^3=0$ 
    \item\label{E6yequiv} $B_{4\la 3}B_{3\la 2}B^*_{2\la 5}B^*_{5\la 6}B_{6\la 5}B_{5\la 2}B^*_{2\la 3}B^*_{3\la 4}=B^*_{0\la 1}B^*_{1\la 2}B^*_{2\la 5}B^*_{5\la 6}B_{6\la 5}B_{5\la 2}B_{2\la 1}B_{1\la 0}=y$.
    \item\label{E6zequiv} $B^*_{0\la 1}B^*_{1\la 2}B^*_{2\la 3}B_{3\la 2}B^*_{2\la 5}B_{5\la 2}B^*_{2\la 3}B_{3\la 2}B^*_{2\la 5}B_{5\la 2}B_{2\la 1}B_{1\la 0}=z$.
\end{enumerate}
\end{lemma}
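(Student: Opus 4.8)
The plan is to push every long product of structure maps in the statement down to the three-dimensional central space $V_2$, where it becomes a short word in a few operators, and then to read off all four identities from the moment map equations \cref{E6momentmap} together with $\dim V_0=1$. To this end I would set $N_3:=B^*_{2\la 3}B_{3\la 2}$, $N_5:=B^*_{2\la 5}B_{5\la 2}$ and $K:=B_{2\la 1}B^*_{1\la 2}$ in $\End(V_2)$, together with the composites $v:=B_{2\la 1}B_{1\la 0}\in\Hom(V_0,V_2)$ and $\xi:=B^*_{0\la 1}B^*_{1\la 2}\in\Hom(V_2,V_0)$. Collapsing each short arm will rewrite the generators as $y=\xi N_3^2 v$ and $z=\xi N_5^2N_3^2 v$.

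First I would clear \ref{E6012} and \ref{E635cube}, which feed everything else. For \ref{E6012}, $\mu_0=0$ forces $B^*_{0\la 1}B_{1\la 0}=l_0k_0=0$ since $V_0$ is a line, while $\mu_1=0$ rewrites $B^*_{1\la 2}B_{2\la 1}=B_{1\la 0}B^*_{0\la 1}$; substituting and using the first vanishing kills both products. For \ref{E635cube}, $\mu_3=0$ and $\mu_4=0$ give $B_{3\la 2}B^*_{2\la 3}=B^*_{3\la 4}B_{4\la 3}$ and $B_{4\la 3}B^*_{3\la 4}=0$, so $(B_{3\la 2}B^*_{2\la 3})^2=0$ and hence $N_3^3=B^*_{2\la 3}(B_{3\la 2}B^*_{2\la 3})^2B_{3\la 2}=0$; the claim $N_5^3=0$ is identical using $\mu_5,\mu_6$.

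The core is then a short dictionary. Using $\mu_3=0$ I get $B^*_{2\la 3}B^*_{3\la 4}B_{4\la 3}B_{3\la 2}=N_3^2$, and symmetrically $B^*_{2\la 5}B^*_{5\la 6}B_{6\la 5}B_{5\la 2}=N_5^2$, which is what turns $y$ into $\xi N_3^2 v$ and $z$ into $\xi N_5^2N_3^2 v$. Next $\mu_2=0$ gives $K=N_3+N_5$; part \ref{E6012} gives $\xi K=0$ and $Kv=0$, equivalently $\xi N_3=-\xi N_5$ and $N_3 v=-N_5 v$; and $\mu_1=0$ gives the rank-one relation $v\xi=K^2$. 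Granting these, the second equality in \ref{E6yequiv} is the sign chase $\xi N_3^2 v=-\xi N_5N_3 v=\xi N_5^2 v$, and \ref{E6zequiv} collapses likewise via $\xi(N_3N_5)^2 v=\xi N_5^2N_3^2 v=z$. For the first equality in \ref{E6yequiv}, I would cyclically identify the left-hand endomorphism of $V_4$ with $\tr(N_3^2N_5^2)$, rewrite $y=\xi N_3^2 v=\tr(N_3^2K^2)$ using $v\xi=K^2$, expand $K^2=(N_3+N_5)^2$, and discard every resulting term containing the factor $N_3^3=0$.

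I expect the only non-formal step to be this first equality in \ref{E6yequiv}: unlike the others it is an honest trace on $V_2$ rather than a scalar of the shape $\xi(\cdots)v$, so it cannot be handled by the $\xi K=0$, $Kv=0$ sign chase alone and genuinely needs the combination of $v\xi=K^2$ with the nilpotency from \ref{E635cube}. Everything else is bookkeeping, and the main risk is routing each degree $8$ and degree $12$ loop through vertex $2$ correctly and applying each substitution $\mu_i=0$ on the correct side.
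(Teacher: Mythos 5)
Your proposal is correct and takes essentially the same approach as the paper: parts \ref{E6012} and \ref{E635cube} via $\mu_0,\mu_1$ and $\mu_3,\mu_4$ (resp.\ $\mu_5,\mu_6$), and parts \ref{E6yequiv}, \ref{E6zequiv} by collapsing every word into $N_3,N_5,K$ acting on $V_2$, where your relations $\xi K=0$, $Kv=0$, $v\xi=K^2$, $K=N_3+N_5$ are exactly the paper's substitutions via \ref{E6012}, $\mu_1$, $\mu_2$, and your trace expansion of $\tr(N_3^2K^2)$ with $N_3^3=0$ reproduces the paper's step $\tr((B^*_{2\la 3}B_{3\la 2})^2(B^*_{2\la 5}B_{5\la 2})^2)=\tr((B^*_{2\la 3}B_{3\la 2})^2(B_{2\la 1}B^*_{1\la 2})^2)$. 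The only cosmetic difference is that you prove the second equality of \ref{E6yequiv} by the sign chase ($\xi N_3=-\xi N_5$, $N_3v=-N_5v$) that the paper deploys only for \ref{E6zequiv}, where the paper simply says ``similarly.''
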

\begin{proof}
\begin{enumerate}[label=(\arabic*)]
    \item We have $
    \underline{B^*_{1\la 2}B_{2\la 1}}B_{1\la 0}\stackrel{\mu_1}{=}B_{1\la 0}\underline{B^*_{0\la 1}B_{1\la 0}}\stackrel{\mu_0}{=}0$. Similarly, $B^*_{0\la 1}B^*_{1\la 2}B_{2\la 1}=0$.
    \item We have  $(B^*_{2\la 3}B_{3\la 2})^3\stackrel{\mu_3}{=}B^*_{2\la 3}B^*_{3\la 4}\underline{B_{4\la 3}B^*_{3\la 4}}B_{4\la 3}B_{3\la 2}\stackrel{\mu_4}{=}0$. Similarly, $(B^*_{2\la 5}B_{5\la 2})^3=0$.
    \item We have
    \begin{align*}
        B_{6\la 5}B_{5\la 2}B^*_{2\la 3}B^*_{3\la 4}B_{4\la 3}B_{3\la 2}B^*_{2\la 5}B^*_{5\la 6}\xlongequal{\mu_3,\ \mu_5}&\tr((B^*_{2\la 3}B_{3\la 2})^2(B^*_{2\la 5}B_{5\la 2})^2) \\
        \xlongequal{\ref{E635cube},\ \mu_2}\tr((B^*_{2\la 3}B_{3\la 2})^2(B_{2\la 1}B^*_{1\la 2})^2) \xlongequal{\ \ \mu_1\ \ }&B^*_{0\la 1}B^*_{1\la 2}(B^*_{2\la 3}B_{3\la 2})^2B^*_{2\la 1}B_{1\la 0}=y.
    \end{align*}
    Similarly $B^*_{0\la 1}B^*_{1\la 2}B^*_{2\la 5}B^*_{5\la 6}B_{6\la 5}B_{5\la 2}B_{2\la 1}B_{1\la 0}=y$.
    \item We can compute that
    \begin{align*}
        &B^*_{0\la 1}B^*_{1\la 2}B^*_{2\la 3}B_{3\la 2}B^*_{2\la 5}B_{5\la 2}B^*_{2\la 3}B_{3\la 2}B^*_{2\la 5}B_{5\la 2}B_{2\la 1}B_{1\la 0}\\
        \stackrel{\mu_2}{=}&B^*_{0\la 1}B^*_{1\la 2}(B_{2\la 1}B^*_{1\la 2}-B^*_{2\la 5}B_{5\la 2})B^*_{2\la 5}B_{5\la 2}B^*_{2\la 3}B_{3\la 2}(B_{2\la 1}B^*_{1\la 2}-B^*_{2\la 3}B_{3\la 2})B_{2\la 1}B_{1\la 0} \\
        \stackrel{\ref{E6012}}{=}&B^*_{0\la 1}B^*_{1\la 2}(B^*_{2\la 5}B_{5\la 2})^2(B^*_{2\la 3}B_{3\la 2})^2B_{2\la 1}B_{1\la 0}\xlongequal{\mu_3,\ \mu_5}z.
    \end{align*}
\end{enumerate}
\end{proof}

\begin{proof}[Proof of \Cref{E6generatorprop}]
    We first show that $x,y,z$ generate $\C[\mu^{-1}(0)]^{\GL(\delta)}$. It suffices to show that the degree $d$-th component $\C[\mu^{-1}(0)]^{\GL(\delta)}_d$ with $d=6,8,12$, is contained in the subalgebra generated by $x,y,z$. We only need to consider trace functions of loops such that the vertex $0$ only appears once; otherwise, the function can be written as the product of two trace functions of lower degrees. 
    
    The loops of degree $6$ that start and end at the vertex $0$ are $0\to1\to2\to3\to2\to1\to0,\ 0\to1\to2\to5\to2\to1\to0,\ 0\to 1\to2\to1\to2\to1\to0$. The corresponding trace functions are
    \begin{equation}\label{E6xx'}
    \begin{aligned}
        x\stackrel{\text{def}}{=}&B^*_{0\la 1}B^*_{1\la 2}B^*_{2\la 3}B_{3\la 2}B_{2\la 1}B_{1\la 0}, \\
        x':=&B^*_{0\la 1}B^*_{1\la 2}B^*_{2\la 5}B_{5\la 2}B_{2\la 1}B_{1\la 0}, \\
        x+x'\stackrel{\mu_2}{=}&B^*_{0\la 1}B^*_{1\la 2}B_{2\la 1}\underline{B^*_{1\la 2}B_{2\la 1}B_{1\la 0}}\xlongequal{\ref{E6012}}0,
    \end{aligned}
    \end{equation}
    which not only tells us that there is a relation $x+x'=0$, but also that $\C[\mu^{-1}(0)]^{\GL(\delta)}_6=\C x$.

    Next, we consider the loops that start and end at the vertex $0$ of degree $8$. If the loop only involves vertices $0,1,2$, then one can show that the corresponding trace function is $0$ by using \ref{E6012} of \Cref{E6lemma}. If the loop contains (exactly) one of the vertices $4,6$, then it will be of the form $0\to1\to2\to3\to4\to3\to2\to1\to0,\ 0\to1\to2\to5\to6\to5\to2\to1\to0$. The corresponding trace functions are
    \begin{equation}\label{E6y}
        \begin{aligned}
            B^*_{0\la 1}B^*_{1\la 2}B^*_{2\la 3}B^*_{3\la 4}B_{4\la 3}B_{3\la 2}B_{2\la 1}B_{1\la 0}&\stackrel{\text{def}}{=}y,\\
            B^*_{0\la 1}B^*_{1\la 2}B^*_{2\la 5}B^*_{5\la 6}B_{6\la 5}B_{5\la 2}B_{2\la 1}B_{1\la 0}&\stackrel{\ref{E6yequiv}}{=}y.\\
            \end{aligned}
    \end{equation}
     Meanwhile, if the loop contains neither $4$ nor $6$, then it will be of the form $ 0\to1\to2\to3\to2\to3\to2\to1\to0,\ 0\to1\to2\to3\to2\to5\to2\to1\to0,\ 0\to1\to2\to5\to2\to3\to2\to1\to0,\ 0\to1\to2\to5\to2\to5\to2\to1\to0$. Similar to the proof of \ref{E6yequiv} in \Cref{E6lemma}, one can show that the corresponding trace functions are either $y$ or $-y$. It follows that $\C[\mu^{-1}(0)]^{\GL(\delta)}=\C y$.

    Lastly, we consider the following two loops of degree $12$ that starts and ends at $0$. 
    \begin{equation}\label{E6zz'}
    \begin{aligned}
      z\stackrel{\text{def}}{=}&B^*_{0\la 1}B_{1\la 2}B^*_{2\la 5}B^*_{5\la 6}B_{6\la 5}B_{5\la 2}B^*_{2\la 3}B^*_{3\la 4}B_{4\la 3}B_{3\la 2}B_{2\la 1}B_{1\la 0}, \\
      z':=&B^*_{0\la 1}B_{1\la 2}B^*_{2\la 3}B^*_{3\la 4}B_{4\la 3}B_{3\la 2}B^*_{2\la 5}B^*_{5\la 6}B_{6\la 5}B_{5\la 2}B_{2\la 1}B_{1\la 0}.
    \end{aligned}
    \end{equation}
    We have $z'\xlongequal{\mu_3,\ \mu_5}B^*_{0\la 1}B_{1\la 2}(B^*_{2\la 3}B_{3\la 2})^2(B^*_{2\la 5}B_{5\la 2})^2B_{2\la 1}B_{1\la 0}$.
    Using \ref{E6zequiv} in \Cref{E6lemma}, we get
    \begin{equation}\label{E6z+z'}
    \begin{aligned}
        z+z'=&B^*_{0\la 1}B_{1\la 2}B^*_{2\la 3}B_{3\la 2}(B^*_{2\la 5}B_{5\la 2}B^*_{2\la 3}B_{3\la 2}+B^*_{2\la 3}B_{3\la 2}B^*_{2\la 5}B_{5\la 2})B^*_{2\la 5}B_{5\la 2}B_{2\la 1}B_{1\la 0} \\
        \stackrel{\ref{E635cube}}{=}&B^*_{0\la 1}B_{1\la 2}B^*_{2\la 3}B_{3\la 2}(B^*_{2\la 3}B_{3\la 2}+B^*_{2\la 5}B_{5\la 2})^2B^*_{2\la 5}B_{5\la 2}B_{2\la 1}B_{1\la 0} \\
        \stackrel{\mu_2}{=}&B^*_{0\la 1}B_{1\la 2}B^*_{2\la 3}B_{3\la 2}(B_{2\la 1}B^*_{1\la 2})^2B^*_{2\la 5}B_{5\la 2}B_{2\la 1}B_{1\la 0} \\
        \stackrel{\mu_1}{=}&B^*_{0\la 1}B_{1\la 2}B^*_{2\la 3}B_{3\la 2}B_{2\la 1}B_{1\la 0}B^*_{0\la 1}B^*_{1\la 2}B^*_{2\la 5}B_{5\la 2}B_{2\la 1}B_{1\la 0}=xx'\xlongequal{\cref{E6xx'}}-x^2.
    \end{aligned}
    \end{equation}
    
    Similarly to the case of degree $8$, we can show that $\C[\mu^{-1}(0)]^{\GL(\delta)}_{12}=\C z+\C x^2$. Therefore, $\C[\mu^{-1}(0)]^{\GL(\delta)}$ is generated by the invariant functions $x,y,z$ defined in \cref{E6xyz}. Finally, we have that
    \begin{equation}\label{E6relation}
        \begin{aligned}
           zz'=&B^*_{0\la 1}B^*_{1\la 2}B^*_{2\la 5}B^*_{5\la 6}B_{6\la 5}B_{5\la 2}B^*_{2\la 3}B^*_{3\la 4}\underline{B_{4\la 3}B_{3\la 2}B_{2\la 1}B_{1\la 0}} \\
           &\underline{\cdot B^*_{0\la 1}B^*_{1\la 2}B^*_{2\la 3}B^*_{3\la 4}}B_{4\la 3}B_{3\la 2}B^*_{2\la 5}B^*_{5\la 6}B_{6\la 5}B_{5\la 2}B_{2\la 1}B_{1\la 0}\\
           \stackrel{\text{def}}{=}&yB^*_{0\la 1}B^*_{1\la 2}B^*_{2\la 5}B^*_{5\la 6}\underline{B_{6\la 5}B_{5\la 2}B^*_{2\la 3}B^*_{3\la 4}B_{4\la 3}B_{3\la 2}B^*_{2\la 5}B^*_{5\la 6}}B_{6\la 5}B_{5\la 2}B_{2\la 1}B_{1\la 0} \\
           \stackrel{\ref{E6yequiv}}{=}&y^2B^*_{0\la 1}B_{1\la 2}B^*_{2\la 5}B^*_{5\la 6}B_{6\la 5}B_{5\la 2}B_{2\la 1}B_{1\la 0}\stackrel{\ref{E6yequiv}}{=}y^3,
        \end{aligned}
    \end{equation}
Combining \cref{E6z+z',E6relation}, we obtain that $z^2+zx^2+y^3=0$. It follows that there is a surjective homomorphism
\begin{equation}\label{E6surj}
\C[x,y,z]/(z^2+zx^2+y^3)\twoheadrightarrow\C[\mu^{-1}(0)]^{\GL(\delta)},
\end{equation}
which turns out to be an isomorphism for the same reason as for type $A_n$ in \cref{Ansurj}.
\end{proof}

\subsection{Type $E_6$, case \RNum{1}} \label{E6case1sub}
In this section, we consider the anti-Poisson involution
\[
  \theta(x)=-x,\ \theta(y)=y,\  \theta(z)=z.
\]
The fixed point locus $X^{\theta}=\Spec\C[X]/(x)=\Spec\C[y,z]/(z^2+y^3)$ is a cusp (a single irreducible component $L$). Following the notations in \Cref{liftsectionq1}, we have $\pi^{-1}(X^{\theta})=\pi^{-1}(0)\cup\tilde{L}$.

\begin{proposition}\label{E6liftx}
The involution $\ttheta\colon\M_{\chi}(\delta,w)\to\M_{\chi}(\delta,w)$ defined by
\begin{equation}\label{E6liftxequation}
    \ttheta([B,B^*,l_0,k_0])=[-B,B^*,l_0,-k_0]
\end{equation}
is anti-symplectic. Moreover, it is a lift of $\theta\colon X\to X$.
\end{proposition}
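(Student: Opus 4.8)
The plan is to produce a linear anti-symplectic involution $\Theta$ on the representation space $M(\delta,w)$ and then invoke \Cref{liftconditions} to conclude that it descends to the asserted lift $\ttheta$. Concretely, I would set
\[
\Theta\colon M(\delta,w)\to M(\delta,w),\quad (B,B^*,l_0,k_0)\mapsto(-B,B^*,l_0,-k_0),
\]
and first confirm that $\Theta$ is anti-symplectic by substituting it into the explicit pairing \cref{quiversymform}. Since $\Theta$ multiplies every $B_a$ by $-1$ while fixing every $B_{a^*}$ and $l_0$ and negating $k_0$, each of the three families of summands $\tr(B_aB'_{a^*})$, $\tr(B_{a^*}B'_a)$, and $\tr(l_ik'_i-k_il'_i)$ acquires exactly one sign, so $\omega(\Theta u,\Theta u')=-\omega(u,u')$.

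Next I would verify the two hypotheses of \Cref{liftconditions}. Condition \ref{lift2} is immediate: $\Theta$ rescales the $\Hom$-factors by the central scalars $\pm1$ and does not permute the vertices, so the conjugation $g^{\Theta}=\Theta g\Theta^{-1}$ equals $g$ for every $g\in\GL(\delta)$, corresponding to the trivial permutation of the factors. The substance of the argument is condition \ref{lift3}, namely checking that $\Theta$ acts on the generators $x,y,z$ of $\C[X]$ exactly as $\theta$ does, that is, $\Theta(x)=-x,\ \Theta(y)=y,\ \Theta(z)=z$.

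For this I would use the explicit trace-function expressions for $x,y,z$ from \cref{E6xyz} in \Cref{E6generatorprop}. The key observation is that pulling a trace function $\tr(B_{a_1}\cdots B_{a_m})$ of a loop $p$ back along $\Theta$ multiplies it by $(-1)^{n(p)}$, where $n(p)$ is the number of arrows of $p$ lying in $\Omega$ (the unstarred $B$'s). It then remains to count: the loop defining $x$ (degree $6$) contains the three unstarred arrows $B_{3\la 2},B_{2\la 1},B_{1\la 0}$, giving $\Theta(x)=(-1)^3x=-x$; the loop for $y$ (degree $8$) contains four unstarred arrows, giving $\Theta(y)=y$; and the loop for $z$ (degree $12$) contains six, giving $\Theta(z)=z$. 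These match $\theta(x)=-x,\ \theta(y)=y,\ \theta(z)=z$, so \ref{lift3} holds.

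With both conditions verified, \Cref{liftconditions} yields at once that $\Theta$ descends to the anti-symplectic involution $\ttheta([B,B^*,l_0,k_0])=[-B,B^*,l_0,-k_0]$ on $\wt{X}=\M_{\chi}(\delta,w)$ and that $\ttheta$ is a lift of $\theta$. I do not anticipate a genuine obstacle: the argument is parallel to those of \Cref{Anliftz,Dnevenliftz}, and the only real content is the sign bookkeeping in condition \ref{lift3}. The single point deserving care is to keep the orientation convention straight so that "unstarred arrow" is unambiguous when computing $n(p)$; once the generators are read off from \cref{E6xyz} this reduces to a mechanical count.
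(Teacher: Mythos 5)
Your proof is correct and takes essentially the same approach as the paper: the same linear involution $\Theta(B,B^*,l_0,k_0)=(-B,B^*,l_0,-k_0)$, verification of conditions \ref{lift2} and \ref{lift3} of \Cref{liftconditions} by counting unstarred arrows in the generators from \cref{E6xyz}, yielding $\Theta(x)=(-1)^3x=-x$, $\Theta(y)=(-1)^4y=y$, $\Theta(z)=(-1)^6z=z$, and then descent to $\M_{\chi}(\delta,w)$. The only cosmetic difference is that you write out the anti-symplectic check against \cref{quiversymform} explicitly, which the paper leaves implicit.
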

\begin{proof}
We consider the following linear anti-symplectic involution on the representation space.
\begin{equation*}
    \Theta\colon M(\delta,w)\to M(\delta,w),\ \ (B,B^*,l_0,k_0)\mapsto (-B,B^*,l_0,-k_0).
\end{equation*}
It satisfies the conditions listed in \Cref{liftconditions}.
    \begin{enumerate}
    \item $g^{\Theta}=g$.
    \item $\Theta(x)=(-1)^3x=-x,\ \Theta(y)=(-1)^4y=y,\ \Theta(z)=(-1)^6z=z$.
\end{enumerate}
The involution $\ttheta$ defined in \cref{E6liftxequation} is the descent of $\Theta$ to $\M_{\chi}(\delta,w)$. The rest follows from \Cref{liftconditions}.
\end{proof}

\begin{proposition}\label{E6case1preimage}
\begin{enumerate}[label=(\arabic*)]
   \item The involution $\ttheta$ preserves all the components $C_i,\ 1\leq i\leq 6$. We have $\pi^{-1}(0)^{\ttheta}=C_2\cup C_4\cup C_6\cup\{p\}$ where $p\in C_1\setminus C_2$.
   \item\label{E6case1temp2} We have $\tilde{L}\cap\pi^{-1}(0)=\tilde{L}\cap C_1=\{p\}$.
   \item The scheme $\pi^{-1}(X^{\theta})$ is not reduced. As a divisor, we have $\pi^{-1}(X^{\theta})=\tilde{L}+\sum_{i=1}^6\delta_iC_i=\tilde{L}+\pi^{-1}(0)$. Recall that $(\delta_i)_{1\leq i\leq 6}=(2,3,2,1,2,1)$.
\end{enumerate}
\end{proposition}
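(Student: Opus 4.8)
The plan is to run the three-step recipe of \Cref{liftsection}, exploiting that the lift here is the descent of $\Theta\colon(B,B^*,l_0,k_0)\mapsto(-B,B^*,l_0,-k_0)$, which merely negates the arrows in $\Omega$ while fixing those in $\Omega^*$.

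For part (1), since $\Theta$ only rescales each $B_a$ by $\pm 1$, it preserves the rank of the collection $(B_a)_{a\in\bar\Omega,\,t(a)=i}$; hence by the description of the components in \Cref{Ciequation}, $\ttheta$ preserves each $C_i$, $1\leq i\leq 6$. I would then read off the finite $E_6$ diagram from Picture \ref{quiverE6}: vertex $2$ is trivalent, adjacent to $1,3,5$. As $\ttheta$ fixes $C_1,C_2,C_3,C_5$ setwise, the three intersection points $C_2\cap C_1,\ C_2\cap C_3,\ C_2\cap C_5$ all lie in $\pi^{-1}(0)^{\ttheta}$ by \ref{autop1} of \Cref{liftoncomponent}; a $\pr^1$ carrying three fixed points must be fixed pointwise (\ref{autop2} of \Cref{liftoncomponent}), so $\ttheta$ acts trivially on $C_2$. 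Propagating along the three arms with \Cref{liftoncomponentcor}, the components $C_4,C_6$ are fixed pointwise while $C_1,C_3,C_5$ are non-fixed with exactly two fixed points each. For the interior arm nodes $C_3,C_5$ both fixed points are the intersections with their neighbours, so only the length-one arm $C_1$ acquires an extra fixed point $p\in C_1\setminus C_2$. Since $X^{\theta}$ is a single cusp, \ref{it1} of \Cref{isolatedtransversal} forces exactly one isolated point in $\pi^{-1}(0)^{\ttheta}$, which must be this $p$, giving $\pi^{-1}(0)^{\ttheta}=C_2\cup C_4\cup C_6\cup\{p\}$.

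Part (2) is then immediate from \ref{it2} and \ref{it3} of \Cref{isolatedtransversal}: $\tilde{L}$ is the unique $\A^1$-component meeting $\pi^{-1}(0)^{\ttheta}$, it does so only at $p$ and transversally, and as $p\in C_1\setminus C_2$ we conclude $\tilde{L}\cap\pi^{-1}(0)=\tilde{L}\cap C_1=\{p\}$. For part (3), \Cref{E6fixedloci} gives vanishing ideal $J=(x)$, so $X^{\theta}=\ddiv(x)$ is a reduced principal divisor and \Cref{multofcomponent} applies. By part (2), $b_1=1$ and $b_i=0$ for $i\neq 1$, so $(a_1,\dots,a_6)^t=\mathcal{C}^{-1}e_1$, the first column of $\mathcal{C}^{-1}$, where $e_1$ is the first standard basis vector. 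The key identity is $\mathcal{C}^{-1}e_1=(\delta_1,\dots,\delta_6)^t$, i.e. $\mathcal{C}(\delta_1,\dots,\delta_6)^t=e_1$, which I would prove from the affine null-root relation: writing $\alpha_{\max}=\sum_{i=1}^6\delta_i\alpha_i$ for the highest root, one has $\bigl(\mathcal{C}(\delta_1,\dots,\delta_6)^t\bigr)_j=\langle\alpha_{\max},\alpha_j^{\vee}\rangle=-\langle\alpha_0,\alpha_j^{\vee}\rangle$ because $\alpha_0+\alpha_{\max}$ is the null root annihilated by every $\alpha_j^{\vee}$. Since the affine node $0$ is adjacent precisely to vertex $1$ (Picture \ref{quiverE6}), the right-hand side equals $\delta_{j,1}$, so $a_i=\delta_i$ and $\pi^{-1}(X^{\theta})=\tilde{L}+\sum_{i=1}^6\delta_iC_i=\tilde{L}+\pi^{-1}(0)$ by \Cref{exceptionalnonreduced}; this is non-reduced since $\delta_2=3>1$.

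The only genuinely non-formal ingredient is the identification of the first column of $\mathcal{C}^{-1}$ with the mark vector $(\delta_i)$, together with the check that the isolated point lands on $C_1$, the arm adjacent to the affine vertex, rather than on another leaf. Both hinge on the adjacency of the affine node $0$ to vertex $1$, so I would make this adjacency explicit from the quiver before invoking the null-root computation; everything else reduces to the bookkeeping lemmas \Cref{liftoncomponent}, \Cref{liftoncomponentcor}, and \Cref{isolatedtransversal}.
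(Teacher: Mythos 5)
Your proposal is correct and takes essentially the same route as the paper's proof: the same descent argument via \Cref{Ciequation} showing each $C_i$ is preserved, the same use of \Cref{liftoncomponent} and \Cref{liftoncomponentcor} to conclude that the trivalent component $C_2$ (and then $C_4$, $C_6$) is fixed pointwise while $C_1,C_3,C_5$ carry exactly two fixed points each, placing the unique isolated point on $C_1\setminus C_2$, and the same application of \Cref{multofcomponent} with $(b_1,\dots,b_6)=(1,0,\dots,0)$; your null-root identity $\mathcal{C}(\delta_1,\dots,\delta_6)^t=e_1$ is a nice conceptual justification of the matrix inversion that the paper leaves as an implicit computation. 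One small slip: the fact that the intersection points $C_2\cap C_1$, $C_2\cap C_3$, $C_2\cap C_5$ lie in $\pi^{-1}(0)^{\ttheta}$ follows simply because $\ttheta$ preserves both components of each pair (so it must fix their unique intersection point), not from \ref{autop1} of \Cref{liftoncomponent}, which concerns the case of \emph{swapped} components.
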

\begin{proof}
    \begin{enumerate}[label=(\arabic*)]
        \item It is easy to see that $\ttheta(C_i)=C_i$ thanks to \Cref{Ciequation}. The component $C_2$ intersects with three other components $C_1,C_3,C_5$, so there are at least three $\ttheta$-fixed points on $C_2$.
        It follows from \ref{autop2} of \Cref{liftoncomponent} that $\ttheta$ acts trivially on $C_2$. By \ref{autop3} of \Cref{liftoncomponent}, $\ttheta$ has exactly two fixed points on each of $C_1,C_3,C_5$, and $\ttheta$ acts trivially on the components $C_4,C_6$. Therefore, there is a unique isolated point in $\pi^{-1}(0)^{\ttheta}$, which lies in $C_1\setminus C_2$.
        \item This follows from \Cref{isolatedtransversal}.
        \item Recall the definition of $b_i$ from \cref{bidef}. We have $(b_1,b_2,\cdots,b_6)=(1,0,\cdots,0)$ from \ref{E6case1temp2}. Then the multiplicities in $\pi^{-1}(X^{\theta})$ can be determined by \Cref{multofcomponent} as $X^{\theta}=\ddiv(x)$ is a principal divisor.
    \end{enumerate}
\end{proof}

The fixed point locus $X^{\theta}$ and the preimage $\pi^{-1}(X^{\theta})$ are depicted in Picture \ref{E6case1pic}. The number next to each component indicates its multiplicity. Thickened lines indicate that the multiplicities are larger than $1$. The $\ttheta$-fixed components of $\pi^{-1}(X^{\theta})$ are colored in red.
{
\captionsetup[table]{name=Picture}
\begin{table}[ht]
\begin{tabular}{ccc}
    \begin{tabular}{c}
    \begin{tikzpicture}
    \draw[thick] (0.5,-1.2) edge [bend right=20] node [below]{} (-0.3,0);
    \draw[thick] (0.5,1.2) edge [bend left=20] node [below]{} (-0.3,0);
    \end{tikzpicture}\end{tabular} & $\qquad$ &
    \begin{tabular}{c}\begin{tikzpicture}
    \draw[line width=0.4mm,red] (-3,2) edge [bend left=25] node [below]{} (-3,-1.5);
    \node at (-3,-1.7) () {\textcolor{red}{$3$}};
    \draw[line width=0.4mm] (-3.5,-1) edge [bend left=35] node [below]{} (-1,-1);
    \node at (-3.7,-1) () {$2$};
    \draw[line width=0.4mm] (-3.5,1) edge [bend left=35] node [below]{} (-1,1);
    \node at (-3.7,1) () {$2$};
    \draw[thick,red] (-1.1,0.85) to (-1.7,-0.5);
    \node at (-1,0.7) () {{\color{red} $1$}};
  
    \draw[line width=0.4mm] (-3.5,0) edge [bend left=35] node [below]{} (-1,0);
    \node at (-3.7,0) () {$2$};
    
    \draw[thick,red] (-2,1) edge [bend left=35] node [below]{} (0.5,1);
    \node at (0.6,0.85) () {\textcolor{red}{$1$}};
    \draw[thick,red] (-2,-1) edge [bend left=35] node [below]{} (0.5,-1);
    \node at (0.6,-1.15) () {\textcolor{red}{$1$}};
\end{tikzpicture}\end{tabular}\\
$X^{\theta}$ & & $\pi^{-1}(X^{\theta})$
\end{tabular}
\caption{$E_6$, Type \RMN{1}}
\label{E6case1pic}
\end{table}
}

\subsection{Type $E_6$, case \RNum{2}} \label{E6case2sub}
In this section, we consider the anti-Poisson involution
\[
  \theta(x)=x,\ \theta(y)=y,\  \theta(z)=-z-x^2.
\]
The fixed point locus $X^{\theta}=\Spec\C[X]/(2z+x^2)=\Spec\C[x,y]/(x^4-4y^3)$ is a cusp (a single irreducible component $L$). Following the notations in \Cref{liftsectionq1}, we have $\pi^{-1}(X^{\theta})=\pi^{-1}(0)\cup\tilde{L}$. Consider the following automorphism defined on the extended Dynkin diagram of type $\widetilde{E}_6$. 
\begin{equation}\label{E6Dynkinauto}
\tau(i)=i,\ i=0,1,2;\ \tau(3)=4,\ \tau(4)=3,\ \tau(5)=6,\ \tau(6)=5.
\end{equation}
It induces an automorphism of the extended Dynkin quiver $\widetilde{E}_6$ in Picture \ref{quiverE6} by sending an arrow $a:i\to j$ to the arrow $\tau(a):\tau(i)\to\tau(j)$. Note that $\tau$ coincides with the nontrivial automorphism of the McKay graph of type $\tilde{E}_6$ induced by an element $g\in N_{\GL_2(\C)}(\Gamma)^-$ (c.f. \Cref{E6APIrem}).

\begin{proposition}\label{E6liftz}
The involution $\ttheta\colon\M_{\chi}(\delta,w)\to\M_{\chi}(\delta,w)$ defined by
\begin{equation}\label{E6liftzequation}
    \ttheta([B_{a},B^*_{a},l_0,k_0])=[-B_{\tau(a)},B^*_{\tau(a)},l_0,-k_0]
\end{equation}
is anti-symplectic. Moreover, it is a lift of $\theta\colon X\to X$.
\end{proposition}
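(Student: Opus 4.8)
The plan is to apply \Cref{liftconditions}: it reduces the statement to producing an anti-symplectic linear automorphism $\Theta$ of $M(\delta,w)$ that descends to $\ttheta$ and satisfies the two hypotheses there. Following the template of \Cref{E6liftx} and \Cref{Dnevenliftyy'}, I would set
\begin{equation*}
    \Theta\colon M(\delta,w)\to M(\delta,w),\qquad (B_a,B^*_a,l_0,k_0)\mapsto(-B_{\tau(a)},B^*_{\tau(a)},l_0,-k_0),
\end{equation*}
where $\tau$ is the automorphism of \cref{E6Dynkinauto} interchanging the two longer arms of $\widetilde{E}_6$ (fixing the vertices $0,1,2$). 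This $\Theta$ is linear and invertible, and checking $\Theta^*\omega=-\omega$ against \cref{quiversymform} is short: sending every unstarred arrow $B_a\mapsto -B_{\tau(a)}$ and every starred arrow $B^*_a\mapsto B^*_{\tau(a)}$ flips the sign of each pairing $\tr(B_aB'_{a^*})$ and $\tr(B_{a^*}B'_a)$ (exactly one factor changes sign), while the sums are merely permuted among themselves by $\tau$; the framing term $\tr(l_0k'_0-k_0l'_0)$ is negated by $k_0\mapsto -k_0$. Hence $\Theta$ is anti-symplectic.

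It then remains to verify the two hypotheses of \Cref{liftconditions}. The first, condition \ref{lift2}, is automatic: since $\Theta$ permutes the factors $V_i$ according to $\tau$, conjugation gives $(g_i)_{i\in I}^{\Theta}=(g_{\tau(i)})_{i\in I}$, a permutation of the factors of $\GL(\delta)$. The second, condition \ref{lift3}, is the real content and requires evaluating $\Theta$ on the three generators from \cref{E6xyz}. Tracking how $\tau$ relabels the arrows in each loop and collecting the signs from the unstarred factors, one computes $\Theta(x)=-x'$, $\Theta(y)$ equals the analogous loop running through the opposite arm, and $\Theta(z)=z'$, where $x'$ and $z'$ are the auxiliary loops from \cref{E6xx'} and \cref{E6zz'}. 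The three identities I need then follow from relations already proved in \Cref{E6generatorprop}: the relation $x+x'=0$ of \cref{E6xx'} gives $\Theta(x)=-x'=x$; part \ref{E6yequiv} of \Cref{E6lemma} gives $\Theta(y)=y$; and the relation $z+z'=-x^2$ of \cref{E6z+z'} gives $\Theta(z)=z'=-z-x^2$. These agree with $\theta(x)=x$, $\theta(y)=y$, $\theta(z)=-z-x^2$.

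Granting both hypotheses, \Cref{liftconditions} yields the descent $\ttheta([u])=[\Theta(u)]$, an anti-symplectic involution of $\wt{X}$ with $\pi\circ\ttheta=\theta\circ\pi$; writing $\Theta$ on $\GL(\delta)$-orbits recovers \cref{E6liftzequation}, and since $k_0=0$ on the $\chi$-semistable locus the choice of sign on $k_0$ does not affect the descent. I expect the sign bookkeeping in condition \ref{lift3} to be the only delicate point, in particular confirming that the six unstarred factors in $z$ contribute $(-1)^6=+1$ so that $\Theta(z)$ is exactly $z'$; this is where the shift $z\mapsto -z-x^2$ (as opposed to a diagonal sign change) genuinely enters, via $z+z'=-x^2$.
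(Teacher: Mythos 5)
Your proposal is correct and takes essentially the same route as the paper's proof: the same linear map $\Theta$, the same appeal to \Cref{liftconditions}, and the same sign bookkeeping $\Theta(x)=(-1)^3x'=x$, $\Theta(y)=(-1)^4y=y$, $\Theta(z)=(-1)^6z'=-z-x^2$ via \cref{E6xx'}, part \ref{E6yequiv} of \Cref{E6lemma}, and \cref{E6z+z'}. Your additions---the explicit verification that $\Theta^*\omega=-\omega$, which the paper only asserts, and reading $\tau$ as the arm-swap $3\leftrightarrow5$, $4\leftrightarrow6$ of $\widetilde{E}_6$ (the only reading compatible with the dimension vector; the formula printed in \cref{E6Dynkinauto} appears to contain a typo)---only strengthen the write-up.
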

\begin{proof}
We consider the following linear anti-symplectic involution on the representation space. 
\begin{equation*}
     \Theta\colon M(\delta,w)\to M(\delta,w),\ \ (B_{a},B^*_{a},l_0,k_0)\mapsto (-B_{\tau(a)},B^*_{\tau(a)},l_0,-k_0).
    \end{equation*}
It satisfies the conditions listed in \Cref{liftconditions}.
    \begin{enumerate}
    \item $(g_i)_{i\in I}^{\Theta}=(g_{\tau(i)})_{i\in I}$.
    \item $\Theta(x)=(-1)^3x'\xlongequal{\cref{E6xx'}}x,\ \Theta(y)\xlongequal{\Cref{E6lemma},\ \ref{E6yequiv}}(-1)^4y=y,\ \Theta(z)=(-1)^6z'
    \xlongequal{\cref{E6z+z'}}-z-x^2$.
\end{enumerate}
The involution $\ttheta$ defined in \cref{E6liftzequation} is the descent of $\Theta$ to $\M_{\chi}(\delta,w)$. The rest follows from \Cref{liftconditions}.
\end{proof}

\begin{proposition}\label{E6case2preimage}
\begin{enumerate}[label=(\arabic*)]
   \item 
   The involution $\ttheta$ preserves the components $C_1,C_2$. It swaps $C_3$ with $C_5$, and $C_4$ with $C_6$. We have $\pi^{-1}(0)^{\ttheta}=C_1\cup\{p\}$ where $p\in C_2\setminus (C_1\cup C_3\cup C_5)$.
   \item\label{E6case2temp2} We have $\tilde{L}\cap\pi^{-1}(0)=\tilde{L}\cap C_2=\{p\}$.
   \item The scheme $\pi^{-1}(X^{\theta})$ is not reduced. As a divisor, we have $\pi^{-1}(X^{\theta})=\tilde{L}+\sum_{i=1}^na_iC_i$, where $(a_i)_{1\leq i\leq 6}=(3,6,4,2,4,2)$.
\end{enumerate}
\end{proposition}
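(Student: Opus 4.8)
The plan is to mirror the three-step strategy already used for \Cref{E6case1preimage} and \Cref{Dnevencase2preimage}: the lift $\ttheta$ has been produced in \Cref{E6liftz} and the generators are pinned down in \Cref{E6generatorprop}, so all three parts reduce to applying the general machinery of \Cref{Ciequation}, \Cref{liftoncomponent}, \Cref{isolatedtransversal}, and \Cref{multofcomponent}.

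For part (1) I would first read off the action of $\ttheta$ on the exceptional components directly from \Cref{Ciequation}: since $\ttheta$ descends from $\Theta(B_a,B^*_a,l_0,k_0)=(-B_{\tau(a)},B^*_{\tau(a)},l_0,-k_0)$ and the rank conditions defining the $C_i$ are invariant under negating the arrows, one gets $\ttheta(C_i)=C_{\tau(i)}$. With $\tau$ fixing the vertices $0,1,2$ and interchanging the two long arms of the $E_6$ diagram (c.f. \cref{E6Dynkinauto}), this shows $\ttheta$ preserves $C_1,C_2$ and swaps $C_3\leftrightarrow C_5$, $C_4\leftrightarrow C_6$. The substantive point is to locate $\pi^{-1}(0)^{\ttheta}$. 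The two points $C_2\cap C_3$ and $C_2\cap C_5$ are interchanged by $\ttheta$, hence not individually fixed, while $C_1\cap C_2$ is fixed. Applying part \ref{autop3} of \Cref{liftoncomponent} to the adjacent preserved pair $C_1,C_2$, the involution acts trivially on exactly one of them and with two fixed points on the other. The key deduction is to rule out that $\ttheta$ acts trivially on $C_2$: if it did, it would fix $C_2\cap C_3$ and $C_2\cap C_5$, contradicting that $\ttheta$ swaps these two distinct points (distinct because $C_3\cap C_5=\emptyset$). Hence $\ttheta$ is trivial on $C_1$ and has exactly two fixed points on $C_2$, namely $C_1\cap C_2$ and an isolated point $p\in C_2\setminus(C_1\cup C_3\cup C_5)$, giving $\pi^{-1}(0)^{\ttheta}=C_1\cup\{p\}$. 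This is consistent with part \ref{it1} of \Cref{isolatedtransversal}, which predicts a single isolated point since $X^{\theta}$ is irreducible.

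Part (2) is then immediate from parts \ref{it2} and \ref{it3} of \Cref{isolatedtransversal}: the unique $\tilde{L}$ attached to the isolated point $p$ meets $\pi^{-1}(0)$ only at $p$ and transversally, and since $p\in C_2$ this forces $\tilde{L}\cap\pi^{-1}(0)=\tilde{L}\cap C_2=\{p\}$. For part (3) I would use that $X^{\theta}=\ddiv(2z+x^2)$ is a principal divisor, so \Cref{multofcomponent} applies. From part (2) the intersection data of \cref{bidef} is $(b_1,\dots,b_6)=(0,1,0,0,0,0)$, the standard basis vector at the trivalent node. Solving $\mathcal{C}(a_1,\dots,a_6)^t=(0,1,0,0,0,0)^t$ with the $E_6$ Cartan matrix by back-substitution, exploiting the symmetry of the two long arms, yields $(a_i)_{1\le i\le 6}=(3,6,4,2,4,2)$. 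Since several $a_i>1$ (and, independently, $\pi^{-1}(0)$ is already non-reduced in type $E$ by \Cref{exceptionalnonreduced}), the scheme $\pi^{-1}(X^{\theta})$ is not reduced.

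The only genuinely non-formal step is the fixed-point bookkeeping in part (1), in particular the argument that $\ttheta$ cannot act trivially on $C_2$; that is where the written proof should concentrate. The remaining steps are direct invocations of the established lemmas together with a routine linear-algebra computation, so I would keep them brief.
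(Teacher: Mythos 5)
Your proposal is correct and follows essentially the same route as the paper's proof: the action on components via \Cref{Ciequation} and the diagram automorphism $\tau$, fixed-point bookkeeping on $C_1,C_2$ via \Cref{liftoncomponent}, transversality from \Cref{isolatedtransversal}, and the multiplicities from \Cref{multofcomponent} with $(b_i)=(0,1,0,\dots,0)$ and the principal divisor $\ddiv(2z+x^2)$. The only cosmetic difference is that the paper invokes part (2) of \Cref{liftoncomponent} directly (the swapped points $C_2\cap C_3$, $C_2\cap C_5$ force exactly two fixed points on $C_2$), whereas you reach the same conclusion through part (3) and then rule out triviality on $C_2$; your version has the small merit of making explicit why $C_1$ is fixed pointwise, which the paper leaves implicit.
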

\begin{proof}
    \begin{enumerate}[label=(\arabic*)]
        \item It is easy to see that $\ttheta(C_i)=C_{\tau(i)}$ thanks to \Cref{Ciequation}. The intersection points $C_2\cap C_3,\ C_2\cap C_5$ are permuted by $\ttheta$. It follows from \ref{autop2} of \Cref{liftoncomponent} that there are exactly two $\ttheta$-fixed points on $C_2$. The intersection point $C_1\cap C_2$ is one of the $\ttheta$-fixed points on $C_2$, and the other one is an isolated point in $\pi^{-1}(0)^{\ttheta}$.
        \item This follows from \Cref{isolatedtransversal}. 
        \item Recall the definition of $b_i$ in \cref{bidef}. We have $(b_1,b_2,b_3\cdots,b_6)=(0,1,0\cdots,0)$ by \ref{E6case2temp2}. Moreover, $X^{\theta}=\ddiv(2z+x^2)$ is a principal divisor, then $a_i$'s can be determined by \Cref{multofcomponent}. 
    \end{enumerate}
\end{proof}

The fixed point locus $X^{\theta}$ and the preimage $\pi^{-1}(X^{\theta})$ are depicted in Picture \ref{E6case2pic}. The number next to each component indicates its multiplicity. Thickened lines indicate that the multiplicities are larger than $1$. The $\ttheta$-fixed components of $\pi^{-1}(X^{\theta})$ are colored in red.
{
\captionsetup[table]{name=Picture}
\begin{table}[ht]
\begin{tabular}{ccc}
    \begin{tabular}{c}
    \begin{tikzpicture}
    \draw[thick] (0.5,-1.2) edge [bend right=20] node [below]{} (-0.3,0);
    \draw[thick] (0.5,1.2) edge [bend left=20] node [below]{} (-0.3,0);
    \end{tikzpicture}\end{tabular} & $\qquad$ &
    \begin{tabular}{c}\begin{tikzpicture}
    \draw[line width=0.4mm] (-3,2) edge [bend left=25] node [below]{} (-3,-1.5);
    \node at (-3,-1.7) () {$6$};
    \draw[line width=0.4mm] (-3.5,-1) edge [bend left=35] node [below]{} (-1,-1);
    \node at (-3.7,-1) () {$4$};
    \draw[line width=0.4mm] (-3.5,1) edge [bend left=35] node [below]{} (-1,1);
    \node at (-3.7,1) () {$4$};
    \draw[thick,red] (-3.5,0.6) to (-0.75,0.6);
    \node at (-3.7,0.6) () {{\color{red} $1$}};
  
    \draw[line width=0.4mm,red] (-3.5,-0.2) edge [bend left=35] node [below]{} (-1,-0.2);
    \node at (-3.7,-0.4) () {{\color{red} $3$}};
    
    \draw[line width=0.4mm] (-2,1) edge [bend left=35] node [below]{} (0.5,1);
    \node at (0.6,0.85) () {$2$};

    \draw[line width=0.4mm] (-2,-1) edge [bend left=35] node [below]{} (0.5,-1);
    \node at (0.6,-1.15) () {$2$};
    \end{tikzpicture}\end{tabular}\\
    $X^{\theta}$ & & $\pi^{-1}(X^{\theta})$
\end{tabular}
\caption{$E_6$, Type \RMN{2}}
\label{E6case2pic}
\end{table}
}

\section{Preimages of fixed point loci for type $E_7$ Kleinian singularity}\label{E7quiver}
{
\captionsetup[table]{name=Picture}
\begin{table}[ht]
    \centering
\begin{tabular}{c}
    \scalebox{1.3}{\begin{tikzpicture}
        \node at (-3,0) (E7) {$\widetilde{E}_7$};
        
        \filldraw[black] (-1.5,1.5) circle (2pt) node[anchor=east]{$W_0$};
        \node[] at (-1.2,1.5)  {\color{blue} $1$};
        \filldraw[black] (3,-1.5) circle (2pt) node[anchor=east]{$V_7$};
        \node[] at (3.3,-1.5)  {\color{blue} $2$};
        \filldraw[black] (-1.5,0) circle (2pt) node[anchor=east]{$V_0$};
        \node[] at (-1.5,-0.3)  {\color{blue} $1$};
        \filldraw[black] (0,0) circle (2pt) node[anchor=south]{$V_1$};
        \node[] at (0,-0.3)  {\color{blue} $2$};
        \filldraw[black] (1.5,0) circle (2pt) node[anchor=south]{$V_2$};
        \node[] at (1.5,-0.3)  {\color{blue} $3$};
        \filldraw[black] (3,0) circle (2pt) node[anchor=south]{$V_3$};
        \node[] at (3,0.7)  {\color{blue} $4$};
        \filldraw[black] (4.5,0) circle (2pt) node[anchor=south]{$V_4$};
        \node[] at (4.5,-0.3)  {\color{blue} $3$};
        \filldraw[black] (6,0) circle (2pt) node[anchor=south]{$V_5$};
        \node[] at (6,-0.3)  {\color{blue} $2$};
        \filldraw[black] (7.5,0) circle (2pt) node[anchor=south]{$V_6$};
        \node[] at (7.5,-0.3)  {\color{blue} $1$};

        \draw[->,thick] (-1.55,1.4) -- (-1.55,0.1);
        \node[] at (-1.75,0.8)  {\scalebox{0.9}{$l_0$}};
        \draw[<-,thick] (-1.45,1.4) -- (-1.45,0.1);
        \node[] at (-1.25,0.8)  {\scalebox{0.9}{$k_0$}};

        \draw[->,thick] (3.05,-1.4) -- (3.05,-0.1);
        \node at (3.5,-0.85)  {\scalebox{0.9}{$B_{7\leftarrow3}$}};
        \draw[->,thick] (2.95,-0.1) -- (2.95,-1.4);
        \node[] at (2.5,-0.85)  {\scalebox{0.9}{$B^*_{3\leftarrow 7}$}};

        \draw[->,thick] (-1.4,0.05) -- (-0.1,0.05);
        \node at (-0.7,0.25)  {\scalebox{0.9}{$B_{1\leftarrow0}$}};
        \draw[<-,thick] (-1.4,-0.05) -- (-0.1,-0.05);
        \node[] at (-0.7,-0.25)  {\scalebox{0.9}{$B^*_{0\leftarrow 1}$}};
        \draw[->,thick] (0.1,0.05) -- (1.4,0.05);
        \node at (0.7,0.25)  {\scalebox{0.9}{$B_{2\leftarrow1}$}};
        \draw[<-,thick] (0.1,-0.05) -- (1.4,-0.05);
        \node[] at (0.7,-0.25)  {\scalebox{0.9}{$B^*_{1\leftarrow 2}$}};
        \draw[->,thick] (1.6,0.05) -- (2.9,0.05);
        \node at (2.2,0.25)  {\scalebox{0.9}{$B_{3\leftarrow2}$}};
        \draw[<-,thick] (1.6,-0.05) -- (2.9,-0.05);
        \node[] at (2.2,-0.25) {\scalebox{0.9}{$B^*_{2\leftarrow 3}$}};
        
        \draw[->,thick] (3.1,0.05) -- (4.4,0.05);
        \node at (3.7,0.25)  {\scalebox{0.9}{$B_{4\leftarrow3}$}};
        \draw[<-,thick,] (3.1,-0.05) -- (4.4,-0.05);
        \node[] at (3.7,-0.25)  {\scalebox{0.9}{$B^*_{3\leftarrow 4}$}};
        \draw[->,thick] (4.6,0.05) -- (5.9,0.05);
        \node at (5.2,0.25) {\scalebox{0.9}{$B_{5\leftarrow4}$}};
        \draw[<-,thick] (4.6,-0.05) -- (5.9,-0.05);
        \node[] at (5.2,-0.25)  {\scalebox{0.9}{$B^*_{4\leftarrow 5}$}};
        \draw[->,thick] (6.1,0.05) -- (7.4,0.05);
        \node at (6.7,0.25) {\scalebox{0.9}{$B_{6\leftarrow5}$}};
        \draw[<-,thick] (6.1,-0.05) -- (7.4,-0.05);
        \node[] at (6.7,-0.25)  {\scalebox{0.9}{$B^*_{5\leftarrow 6}$}};
    \end{tikzpicture}}
\end{tabular}
\caption{Extended Dynkin quiver $\widetilde{E}_7$}
\label{quiverE7}
\end{table}
}

We realize the Kleinian singularity of type $E_7: \Spec\C[X]=\C[x,y,z]/(x^3y+y^3+z^2)$ as a Nakajima quiver variety explicitly. Recall that $\delta=(\delta_i)_{0\leq i\leq 7}=(1,2,3,4,3,2,1,2)$. We have $\dim W_0=1,\ \dim V_i=\delta_i$. The number (colored in blue) next to each vector space indicates its dimension. 
The representation space is 
\[
M(\delta,w)=T^*\big(\bigoplus_{a\in\Omega}\Hom(V_{t(a)},V_{h(a)})\oplus\Hom(W_0,V_0)\big).
\]
The group $\GL(\delta)=\prod_{i=0}^n\GL(V_i)$ acts on $M(\delta,w)$ naturally with the moment map
\[
\mu=(\mu_i)_{0\leq i\leq n}\colon M(\delta,w)\to\gl(\delta).
\]

According to \Cref{wframinginv} and \Cref{ADExyz}, the algebra of invariant functions $\C[\mu^{-1}(0)]^{\GL(\delta)}$ is generated by trace functions $x,y,z$ with $\deg x=8,\ \deg y=12,\ \deg z=18$.
We would like to point out that the lift of the unique anti-Poisson involution in type $E_7$ can be constructed without knowing the explicit forms of $x,y,z$, thanks to \Cref{halfdeglemma} and the observation on the gradings of $x,y,z$. The lift will be constructed in \Cref{E7lift}. 

Though the explicit forms of $x,y,z$ as trace functions is not needed for the main purpose of the paper, it is still nice to have some pictorial description of the functions on Kleinian singularity as trace functions of loops on the extended Dynkin quiver. This will be discussed in \Cref{E7generatorprop}. 

\begin{lemma}\label{halfdeglemma}
    Let $Q=(I,\Omega)$ be a quiver without loops (for example, extended Dynkin quiver of types $D$ or $E$). Let $q$ be a loop on the doubled quiver $\bar{Q}=(I,\bar{\Omega})$. Then $q$ has $\frac{\deg q}{2}$ number of arrows from $\Omega$ (resp. $\Omega^*$).
\end{lemma}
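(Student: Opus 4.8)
The plan is to build a $\mathbb{Z}$-valued grading on the vertex set $I$ that increases by exactly one across every arrow of $\Omega$, and then read off the conclusion by a telescoping argument around the loop $q$. The crucial structural input is that the underlying graph of an extended Dynkin quiver of type $D$ or $E$ is a tree; in particular it is acyclic, and this is precisely what makes the grading well defined. (By contrast, in type $A$ the underlying graph is a cycle and the analogous statement can fail, so acyclicity is genuinely being used here.)

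First I would construct a function $\phi\colon I\to\mathbb{Z}$ satisfying $\phi(h(a))-\phi(t(a))=1$ for every $a\in\Omega$ (here $t(a),h(a)$ are the initial and terminal vertices as fixed in \Cref{repofquiver}; I use $\phi$ rather than $h$ for the grading to avoid a clash of notation). To do this, fix a base vertex $i_0\in I$ and set $\phi(i_0)=0$. For any other vertex $v$ there is a unique reduced walk in the underlying tree from $i_0$ to $v$; traversing it, let each edge crossed in the direction of its $\Omega$-orientation contribute $+1$ and each edge crossed against it contribute $-1$, and define $\phi(v)$ to be the resulting total. Because the graph is a tree there are no cycles, so $\phi$ is independent of choices and hence well defined. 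One then checks from the tree structure that $\phi(h(a))-\phi(t(a))=1$ for each $a\in\Omega$; dualizing, $\phi(h(a^*))-\phi(t(a^*))=-1$ for each $a^*\in\Omega^*$, since $t(a^*)=h(a)$ and $h(a^*)=t(a)$.

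Next I would apply $\phi$ to the loop. Write $q=a_1a_2\cdots a_m$ with $m=\deg q$, where each $a_k\in\bar{\Omega}$ and the arrows compose so that $q$ returns to its starting vertex. Set $\epsilon(a_k)=+1$ if $a_k\in\Omega$ and $\epsilon(a_k)=-1$ if $a_k\in\Omega^*$. By the previous paragraph the change of $\phi$ across $a_k$ equals $\epsilon(a_k)$, so summing around the closed walk telescopes:
\[
\sum_{k=1}^m\epsilon(a_k)=\phi(h(a_m))-\phi(t(a_1))=0.
\]
Writing $p$ and $p^*$ for the numbers of arrows of $q$ lying in $\Omega$ and $\Omega^*$ respectively, this reads $p-p^*=0$, while by definition $p+p^*=m=\deg q$. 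Hence $p=p^*=\tfrac{\deg q}{2}$, which is the claim.

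The only real subtlety — and the single place the hypothesis enters — is the well-definedness of $\phi$, which rests on the acyclicity of the underlying graph of the extended Dynkin quiver of type $D$ or $E$. I do not anticipate any further obstacle: once $\phi$ is in hand the argument is a one-line telescoping, and the fact that $\deg q$ is even falls out automatically.
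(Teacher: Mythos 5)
Your proof is correct, and in fact the paper offers nothing to compare it against: the proof of \Cref{halfdeglemma} is simply marked ``Omitted'' in the text, so your argument fills a genuine gap rather than duplicating or diverging from an existing one. The one interpretive point you raise is exactly the right one: the hypothesis must be read as saying that the underlying undirected graph of $Q$ is acyclic (a tree), not merely that $Q$ has no arrows from a vertex to itself, since under the weaker reading the statement is false --- e.g.\ for the extended Dynkin quiver of type $A$, the loop $x=B_{0\la n}\cdots B_{2\la 1}B_{1\la 0}$ uses only arrows of $\Omega$ --- and this reading is consistent both with the lemma's own use of ``loop'' for a closed path and with the paper restricting its examples (and its applications, in types $E_7$ and $E_8$) to trees. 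Your construction of the grading $\phi$ is sound: well-definedness is immediate from uniqueness of reduced walks in a tree, the identity $\phi(h(a))-\phi(t(a))=1$ holds because the reduced walk to one endpoint of an edge is obtained from the reduced walk to the other endpoint by appending that edge, and the telescoping sum around the closed walk then gives $p-p^*=0$, hence $p=p^*=\tfrac{\deg q}{2}$. For what it is worth, there is an equally short local variant: removing any edge $e$ of the tree disconnects it, so a closed walk must cross $e$ equally often in each direction; summing over edges shows each $a\in\Omega$ occurs in $q$ exactly as often as $a^*$, a per-edge refinement of your global count. Either way, the lemma is established.
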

\begin{proof}
    Omitted.
\end{proof}

\subsection{Lift of anti-Poisson involution}\label{E7liftsub}
Recall from \Cref{E7API} that the Kleinian singularity of type $E_7$ has a unique conjugacy class of anti-Poisson involution, which is given by
\[
  \theta(x)=x,\ \theta(y)=y,\  \theta(z)=-z.
\]
The fixed point locus $X^{\theta}=\Spec\C[x,y]/(y(x^3+y^2))$ has two irreducible components $L_j,\ j=1,2$ with an $\A^1$ and a cusp. Following the notations in \Cref{liftsectionq1}, we have $\pi^{-1}(X^{\theta})=\pi^{-1}(0)\cup\tilde{L}_1\cup\tilde{L}_2$. 

\begin{proposition}\label{E7lift}
The involution $\ttheta\colon\M_{\chi}(\delta,w)\to\M_{\chi}(\delta,w)$ defined by
\begin{equation}\label{E7liftequation}
    \ttheta([B,B^*,l_0,k_0])=[-B,B^*,l_0,-k_0]
\end{equation}
is anti-symplectic. Moreover, it is a lift of $\theta\colon X\to X$.
\end{proposition}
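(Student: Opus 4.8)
The plan is to follow the template already established in \Cref{E6liftx,Dnevenliftz,E6case1preimage}: exhibit a linear anti-symplectic involution $\Theta$ of the representation space $M(\delta,w)$ whose descent to the GIT quotient is $\ttheta$, and then invoke \Cref{liftconditions}. Concretely, I would set
\[
\Theta\colon M(\delta,w)\to M(\delta,w),\ \ (B,B^*,l_0,k_0)\mapsto(-B,B^*,l_0,-k_0),
\]
so that the map $\ttheta$ in \cref{E7liftequation} is exactly the induced map on $\wt{X}\simeq\mu^{-1}(0)^{\chi-ss}\gitquo\GL(\delta)$. That $\Theta$ is an involution is clear, since applying it twice restores $B$ and $k_0$. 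That it is anti-symplectic is a one-line inspection of \cref{quiversymform}: the substitution $B\mapsto-B$ and $k_0\mapsto-k_0$ (with $B^*,l_0$ fixed) negates each summand $\tr(B_aB'_{a^*})$, $\tr(B_{a^*}B'_a)$, $\tr(l_ik'_i)$, and $\tr(k_il'_i)$, whence $\omega(\Theta u,\Theta v)=-\omega(u,v)$.

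Next I would verify the two hypotheses of \Cref{liftconditions}. Condition \ref{lift2} holds trivially: $\Theta$ does not permute the vertices of the quiver, so conjugation fixes each factor of $\GL(\delta)$, i.e. $g^{\Theta}=g$. The substance is condition \ref{lift3}, that $\Theta(x)=x$, $\Theta(y)=y$, and $\Theta(z)=-z$, where $x,y,z$ generate $\C[X]$ with $\deg x=8,\ \deg y=12,\ \deg z=18$. The crucial simplification — and the reason I can avoid \Cref{E7generatorprop} altogether — is that $\Theta$ acts by a single scalar on each graded piece. Indeed, since $\Theta$ negates every arrow in $\Omega$ and fixes every arrow in $\Omega^*$, the trace function of a loop $q$ is sent to $(-1)^{m}\tr(B_q)$, where $m$ is the number of arrows of $q$ lying in $\Omega$; by \Cref{halfdeglemma}, a loop of degree $d$ has $m=d/2$ such arrows. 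As every homogeneous element of $\C[\mu^{-1}(0)]^{\GL(\delta)}$ of degree $d$ is a combination of degree-$d$ loop trace functions, $\Theta$ acts on $\C[X]_d$ by the scalar $(-1)^{d/2}$. Plugging in $d=8,12,18$ gives $(-1)^4=1$, $(-1)^6=1$, and $(-1)^9=-1$, which match the action of $\theta$ on $x,y,z$ exactly, so condition \ref{lift3} follows.

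With both hypotheses of \Cref{liftconditions} in hand, that proposition immediately produces the descent $\ttheta$, asserts it is anti-symplectic, and gives $\pi\circ\ttheta=\theta\circ\pi$, so $\ttheta$ is a lift of $\theta$. I expect no genuine obstacle: the entire argument is the degree-parity observation, and both \Cref{halfdeglemma} and \Cref{liftconditions} are already available. The only point worth a second glance is the claim that $\Theta$ acts as a scalar on each graded piece; this is robust precisely because all loops contributing to a fixed $\C[X]_d$ share the same degree and hence the same sign $(-1)^{d/2}$, so one never needs the explicit loop expressions for $x,y,z$.
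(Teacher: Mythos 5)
Your proposal is correct and follows essentially the same route as the paper's own proof: the same linear involution $\Theta(B,B^*,l_0,k_0)=(-B,B^*,l_0,-k_0)$, the same verification of the two hypotheses of \Cref{liftconditions}, and the same use of \Cref{halfdeglemma} to read off the signs $(-1)^{d/2}$ on the degree $8$, $12$, $18$ generators without needing their explicit loop expressions from \Cref{E7generatorprop}. The only difference is that you spell out the parity argument (each loop of degree $d$ has exactly $d/2$ arrows from $\Omega$, hence $\Theta$ scales $\C[X]_d$ by $(-1)^{d/2}$) which the paper leaves implicit.
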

\begin{proof}
We consider the following linear anti-symplectic involution on the representation space.
\begin{equation*}
    \Theta\colon M(\delta,w)\to M(\delta,w),\ \ (B,B^*,l_0,k_0)\mapsto(-B,B^*,l_0,-k_0).
\end{equation*}
It satisfies the conditions listed in \Cref{liftconditions}.
    \begin{enumerate}
    \item $g^{\Theta}=g$.
    \item Recall that $\deg x=8,\ \deg y=12,\ \deg z=18$. By \Cref{halfdeglemma}, we have $\Theta(x)=(-1)^4x=x,\ \Theta(y)=(-1)^6y=y,\ \Theta(z)=(-1)^9z=-z$.
\end{enumerate}
The involution $\ttheta$ defined in \cref{E7liftequation}, is the descent of $\Theta$ to $\M_{\chi}(\delta,w)$. The rest follows from \Cref{liftconditions}. 
\end{proof}

\begin{proposition}\label{E7preimage}
\begin{enumerate}[label=(\arabic*)]
   \item 
   The involution $\ttheta$ preserves all the components $C_i,\ 1\leq i\leq 7$. We have $\pi^{-1}(0)^{\ttheta}=C_1\cup C_3\cup C_5\cup\{p_1,p_2\}$ where $p_1\in C_6\setminus C_5,\ p_2\in C_7\setminus C_3$.
   \item\label{E7temp2} With a suitable choice of labeling, $\tilde{L}_1\cap\pi^{-1}(0)=\tilde{L}_1\cap C_6=\{p_1\},\ \tilde{L}_2\cap\pi^{-1}(0)=\tilde{L}_2\cap C_7=\{p_2\}$.
   \item The scheme $\pi^{-1}(X^{\theta})$ is not reduced. As a divisor, we have $\pi^{-1}(X^{\theta})=\tilde{L}_1+\tilde{L}_2+\sum_{i=1}^7a_iC_i$, where $(a_i)_{1\leq i\leq 7}=(3,6,9,7,5,3,5)$.
\end{enumerate}
\end{proposition}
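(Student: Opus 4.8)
The plan is to mirror the strategy already used successfully in the $D_n$ and $E_6$ cases (\Cref{Dnevencase1preimage,E6case1preimage}), exploiting that the lift $\ttheta$ acts by $[B,B^*,l_0,k_0]\mapsto[-B,B^*,l_0,-k_0]$ and therefore preserves every component $C_i$ of $\pi^{-1}(0)$. First I would establish part (1). Since the equations cutting out $C_i$ in \Cref{Ciequation} involve only rank conditions on the collection $(B_a)_{t(a)=i}$, and scaling $B\mapsto -B$ leaves these ranks unchanged, it is immediate that $\ttheta(C_i)=C_i$ for all $i$. The heart of the argument is then the combinatorial analysis of fixed points on the individual $\pr^1$'s using \Cref{liftoncomponent}. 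The key local input is the valence-three vertex $C_3$ of the $E_7$ diagram, which meets $C_2$, $C_4$, and $C_7$: by \ref{autop1} of \Cref{liftoncomponent} these three intersection points are all $\ttheta$-fixed, so $C_3$ carries at least three fixed points, forcing $\ttheta$ to act trivially on $C_3$ by \ref{autop2}. Then \ref{autop3} and \Cref{liftoncomponentcor} propagate the alternating ``fixed/non-fixed'' pattern outward along each arm of the Dynkin diagram. Along the arm $C_3$--$C_2$--$C_1$--$C_0$ and its mirror, and the short arm through $C_7$, one reads off that $\ttheta$ acts trivially exactly on $C_1, C_3, C_5$, leaving two isolated fixed points $p_1\in C_6\setminus C_5$ and $p_2\in C_7\setminus C_3$ (the ``extra'' fixed points on the end components $C_6$ and $C_7$ that are not accounted for by intersections with neighbors). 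This matches the count in \ref{it1} of \Cref{isolatedtransversal}, namely two isolated points equal to the two irreducible components of $X^\theta$.

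Part (2) then follows directly from \Cref{isolatedtransversal}: each of the two $\A^1$-components $\tilde{L}_1,\tilde{L}_2$ meets $\pi^{-1}(0)^{\ttheta}$ in exactly one of the isolated points, transversally, and by the alternating analysis the only isolated points are the two just identified. Labeling $\tilde{L}_1$ to pass through $p_1\in C_6$ and $\tilde{L}_2$ through $p_2\in C_7$ gives the claimed incidences.

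For part (3), I would invoke \Cref{multofcomponent}. From part (2) the incidence vector is $(b_1,\ldots,b_7)=(0,0,0,0,0,1,1)$, recording that only $C_6$ and $C_7$ meet an $\A^1$-component. Since $X^\theta=\ddiv(z)$ is a principal divisor (as $z$ cuts out $X^\theta$ with $\theta(z)=-z$), the formula $(a_1,\ldots,a_7)^t=\mathcal{C}^{-1}(b_1,\ldots,b_7)^t$ applies, where $\mathcal{C}$ is the $E_7$ Cartan matrix. Carrying out this single matrix inversion (a routine computation I would not reproduce in full) yields $(a_i)=(3,6,9,7,5,3,5)$. Non-reducedness is then automatic: $\pi^{-1}(0)$ is already non-reduced in type $E$ by \Cref{exceptionalnonreduced}, and indeed every $a_i>1$ here.

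The main obstacle I anticipate is \emph{not} the multiplicity computation, which is mechanical once $b_i$ is known, but rather pinning down the precise location of the two isolated fixed points and verifying that the alternating pattern from \Cref{liftoncomponentcor} terminates correctly at the three leaves $C_0, C_6, C_7$ of the $\widetilde{E}_7$ diagram. The subtlety is bookkeeping at the branch vertex $C_3$ and ensuring consistency of the three arms simultaneously; one must confirm that forcing $\ttheta|_{C_3}=\id$ is compatible with the global order-two constraint and that exactly two—rather than three—leaves produce an extra isolated fixed point, the third leaf $C_0$ being the framing vertex's neighbor where the pattern instead places a $\ttheta$-fixed component adjacent to it. This requires care but is fully determined by the local degree-two structure of \Cref{liftoncomponent} together with the known number of isolated points from \Cref{isolatedtransversal}.
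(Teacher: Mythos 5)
Your proposal is correct and follows essentially the same route as the paper: part (1) via \Cref{Ciequation} and the fixed-point combinatorics of \Cref{liftoncomponent,liftoncomponentcor} starting from the trivalent component $C_3$, part (2) via \Cref{isolatedtransversal}, and part (3) via \Cref{multofcomponent} with $(b_1,\cdots,b_7)=(0,\cdots,0,1,1)$ (your multiplicities indeed solve $\mathcal{C}a=b$). The only slip is notational: the third leaf of the finite $E_7$ diagram is $C_1$, not ``$C_0$'' (there is no component $C_0$ in $\pi^{-1}(0)$); as your alternating-pattern argument correctly shows, $\ttheta$ acts trivially on $C_1$, so that leaf contributes no isolated fixed point.
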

\begin{proof}
    \begin{enumerate}[label=(\arabic*)]
        \item It is easy to see that $\ttheta(C_i)=C_{i}$ thanks to \Cref{Ciequation}. The component $C_3$ intersects with three other components $C_2,\ C_4,\ C_7$, so there are at least three $\ttheta$-fixed points on $C_3$.
        It follows from \ref{autop2} of \Cref{liftoncomponent} that $\ttheta$ acts trivially on $C_3$. By \ref{autop3} of \Cref{liftoncomponent}, $\ttheta$ has exactly two fixed points on each of the components $C_2,C_4,C_6,C_7$, and $\ttheta$ acts trivially on the components $C_1,C_5$. Therefore, there are two isolated points in $\pi^{-1}(0)^{\ttheta}$. One of them lies in $C_6\setminus C_5$, and the other lies in $C_7\setminus C_3$.
        \item This follows from \Cref{isolatedtransversal}. 
        \item Recall the definition of $b_i$ from \cref{bidef}. We have $(b_1,\cdots,b_5,b_6,b_7)=(0,\cdots,0,1,1)$ from \ref{E7temp2}. Moreover, $X^{\theta}=\ddiv(z)$ is a principal divisor, then $a_i$'s can be determined by \Cref{multofcomponent}. 
    \end{enumerate}
\end{proof}

The fixed point locus $X^{\theta}$ and the preimage $\pi^{-1}(X^{\theta})$ are depicted in Picture \ref{E7pic}. The number next to each component indicates its multiplicity. Thickened lines indicate that the multiplicities are larger than $1$. The $\ttheta$-fixed components of $\pi^{-1}(X^{\theta})$ are colored in red.
{
\captionsetup[table]{name=Picture}
\begin{table}[ht]
\begin{tabular}{ccc}
    \begin{tabular}{c}
    \begin{tikzpicture}
    \draw[thick] (0.5,-1.2) edge [bend right=20] node [below]{} (-0.3,0);
    \draw[thick] (0.5,1.2) edge [bend left=20] node [below]{} (-0.3,0);
    \draw[thick] (-0.3,1.2) to (-0.3,-1.2);
    \end{tikzpicture}\end{tabular} & $\qquad$ &
    \begin{tabular}{c}\begin{tikzpicture}
    \draw[line width=0.4mm,red] (-3,2) edge [bend left=25] node [below]{} (-3,-1.5);
    \node at (-3,-1.7) () {\textcolor{red}{$a_3=9$}};
    \draw[line width=0.4mm] (-3.5,-1) edge [bend left=35] node [below]{} (-1,-1);
    \node at (-3.9,-0.8) () {$a_4=7$};
    \draw[line width=0.4mm] (-3.5,1) edge [bend left=35] node [below]{} (-1,1);
    \node at (-3.9,1.2) () {$a_2=6$};
    \draw[line width=0.4mm] (-3.5,0) edge [bend left=35] node [below]{} (-1,0);
    \node at (-3.9,0.2) () {$a_7=5$};
    
    \draw[line width=0.4mm,red] (-2,1) edge [bend left=35] node [below]{} (0.5,1);
    \node at (0.6,0.85) () {\textcolor{red}{$a_1=3$}};
    \draw[line width=0.4mm,red] (-2,-1) edge [bend left=35] node [below]{} (0.5,-1);
    \node at (0.6,-1.15) () {\textcolor{red}{$a_5=5$}};
    \draw[line width=0.4mm] (-0.5,-1) edge [bend left=35] node [below]{} (2,-1);
    \node at (2.1,-1.15) () {$a_6=3$};

    \draw[thick,red] (-1.1,0.85) to (-1.7,-0.5);
    \node at (-1,0.7) () {{\color{red} $1$}};
    \draw[thick,red] (2,-0.15) to (1.3,-1.6);
    \node at (1.3,-1.8) () {{\color{red} $1$}};
\end{tikzpicture}\end{tabular}\\
$X^{\theta}$ & & $\pi^{-1}(X^{\theta})$
\end{tabular}
\caption{$E_7$, fixed point locus and its preimage}
\label{E7pic}
\end{table}
}

\subsection{Explicit generators $x,y,z$ for type $E_7$}\label{E7generatorsub}

In this section, we discuss how to find the explicit forms of the generators $x,y,z$ of $\C[\mu^{-1}(0)]^{\GL(\delta)}$ as trace functions.

\begin{proposition}\label{E7generatorprop}
We have $\C[\mu^{-1}(0)]^{\GL(\delta)}\simeq\C[x,y,z]/(x^3y+y^3+z^2)$, where
\begin{equation}\label{E7xyz}
    \begin{aligned}
    x:=&B^*_{0\la 1}B^*_{1\la 2}B^*_{2\la 3}B^*_{3\la 4}B_{4\la 3}B_{3\la 2}B_{2\la 1}B_{1\la 0}, \\
    y:=&B^*_{0\la 1}B^*_{1\la 2}B^*_{2\la 3}(B^*_{3\la 4}B_{4\la 3})^3B_{3\la 2}B_{2\la 1}B_{1\la 0}, \\
    z:=&B^*_{0\la 1}B^*_{1\la 2}B^*_{2\la 3}(B^*_{3\la 4}B_{4\la 3})^2B^*_{3\la 7}B_{7\la 3}(B^*_{3\la 4}B_{4\la 3})^3B_{3\la 2}B_{2\la 1}B_{1\la 0}.
\end{aligned}
\end{equation}
\end{proposition}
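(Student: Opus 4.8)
The plan is to follow the template already established for types $D_n$ and $E_6$ in \Cref{Dngeneratorprop,E6generatorprop}. First I would record a preparatory lemma, analogous to \Cref{E6lemma}, collecting the vanishing and nilpotency relations forced by the equations $\mu_i=0$. At the framed end, the zero-fiber conditions give $B^*_{0\la 1}B_{1\la 0}=0$, whence (feeding this through $\mu_1$) $B^*_{1\la 2}B_{2\la 1}B_{1\la 0}=0$ and $B^*_{0\la 1}B^*_{1\la 2}B_{2\la 1}=0$, exactly as in part \ref{E6012} of \Cref{E6lemma}. At the two free ends I would establish nilpotency bounds by pushing composites down each arm: iterating $\mu_4,\mu_5,\mu_6$ along the arm $3\!-\!4\!-\!5\!-\!6$ yields $(B^*_{3\la 4}B_{4\la 3})^4=0$, while $\mu_7=B_{7\la 3}B^*_{3\la 7}=0$ gives $(B^*_{3\la 7}B_{7\la 3})^2=0$. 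These are the $E_7$ analogues of part \ref{E635cube}, and they are the tools that cut the enumeration of loops down to a manageable list. Along the way I would also record, as in parts \ref{E6yequiv},\ref{E6zequiv}, the equivalent trace-function expressions for $y$ and $z$ obtained by sliding the branch operators across the central vertex $V_3$.

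Second, I would prove that $x,y,z$ generate $\C[\mu^{-1}(0)]^{\GL(\delta)}$. By \Cref{wframinginv} the algebra is generated by trace functions of loops based at the vertex $0$, and by \Cref{ADExyz} the only generating degrees are $8,12,18$, so it suffices to show that each graded piece $\C[\mu^{-1}(0)]^{\GL(\delta)}_d$ with $d\in\{8,12,18\}$ lies in the subalgebra generated by $x,y,z$. As in the $E_6$ argument I restrict to loops in which $0$ appears exactly once (otherwise the trace factors as a product of lower-degree loops). The vanishing relations at vertex $1$ kill every loop that merely bounces inside the sub-quiver $0\!-\!1\!-\!2$, and the nilpotency bounds rule out the deep excursions; the surviving loops in each degree all reduce to a single scalar multiple of $x$, $y$, or $z$ respectively. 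Here I would emphasize a feature special to $E_7$: since $8,12,18$ are pairwise incommensurable in the relevant range, there is no competing monomial (no $x^2$ in degree $16$ interfering, etc.), so the computation in fact gives $\C[\mu^{-1}(0)]^{\GL(\delta)}_8=\C x$, $\C[\mu^{-1}(0)]^{\GL(\delta)}_{12}=\C y$, and $\C[\mu^{-1}(0)]^{\GL(\delta)}_{18}=\C z$, each one-dimensional.

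Third---and this is the step I expect to be the main obstacle---I would verify the relation $x^3y+y^3+z^2=0$. Because the three graded pieces above are one-dimensional, the relation cannot be read off from a two-term comparison as in \cref{E6z+z'}; instead it lives in degree $36$, where the only monomials in the generators are $x^3y$, $y^3$, and $z^2$, so the relation is forced to be the unique linear dependence among the corresponding trace functions. I would produce it by computing $z^2$ (conveniently via the product $z\,z'$ of $z$ with the equivalent degree-$18$ loop $z'$ obtained by interchanging the order in which the $4$-arm and the $7$-arm are traversed, as in the computation \cref{E6relation}) and repeatedly feeding $\mu_2,\mu_3,\mu_4,\mu_5,\mu_7$ into the trace to collapse it to a combination of $x^3y$ and $y^3$. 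The bookkeeping is heavier than in $E_6$ precisely because the branch vertex $V_3$ has dimension $4$ and $z$ already contains $(B^*_{3\la 4}B_{4\la 3})^3$, which sits just one step below the nilpotency threshold; tracking which intermediate composites survive when the operators are slid past $V_3$ is the delicate part, and I expect the outcome to be $z^2=-(x^3y+y^3)$.

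Finally, the relation yields a surjection $\C[x,y,z]/(x^3y+y^3+z^2)\twoheadrightarrow\C[\mu^{-1}(0)]^{\GL(\delta)}$, and this is an isomorphism by the same dimension-and-domain argument used for \cref{Ansurj} and \cref{E6surj}: the source is a two-dimensional integral domain (an irreducible hypersurface in $\C^3$) and the target has dimension $2$ by \Cref{wframinginv}, so the kernel, being prime and forcing a dimension drop if nonzero, must vanish.
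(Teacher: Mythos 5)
Your skeleton is coherent and your preparatory lemmas are all correct: on $\mu^{-1}(0)$ one indeed has $B^*_{0\la 1}B_{1\la 0}=0$, the vanishing relations at vertex $1$, $(B^*_{3\la 4}B_{4\la 3})^4=0$ (by pushing composites down the arm $3\!-\!4\!-\!5\!-\!6$), and $(B^*_{3\la 7}B_{7\la 3})^2=0$. You also correctly isolate the structural fact that none of $8,12,18$ is a nonnegative integral combination of the others, so the graded pieces $\C[\mu^{-1}(0)]^{\GL(\delta)}_d$, $d\in\{8,12,18\}$, are one-dimensional. But there is a genuine gap at exactly the step you flag as ``the main obstacle'': the degree-$36$ relation is never established. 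You propose to compute $z+z'$ and $zz'$ by sliding operators past $V_3$ and state that you ``expect the outcome to be $z^2=-(x^3y+y^3)$'' --- that is a conjecture, not a proof, and it is precisely the computation the paper declares intractable by hand. The reductions in types $D_n$ and $E_6$ (e.g.\ \cref{E6z+z'}, \cref{E6relation}) work because composites repeatedly pass through $1$-dimensional vertices, where they become scalars that can be pulled out of traces and recombined; in type $E_7$ only $V_0$ and $V_6$ are $1$-dimensional, the branch $3\to 7\to 3$ sits at a $2$-dimensional vertex, and the central vertex has dimension $4$, so those trace-splitting moves are mostly unavailable. The same (lesser) issue affects your generation step, where the reduction of all degree-$12$ and degree-$18$ loops is asserted rather than carried out.

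The paper takes a different route that sidesteps both computations, and your own one-dimensionality observation is the key to it: once $\dim\C[\mu^{-1}(0)]^{\GL(\delta)}_d=1$ for $d=8,12,18$, generation requires only that $x,y,z$ be \emph{nonzero} functions, and the relation --- which must exist, be unique up to scale, and have the form $ax^3y+by^3+cz^2=0$ since those are the only degree-$36$ monomials --- is pinned down by finitely many linear conditions on $(a,b,c)$. Both are settled by evaluating $x,y,z$ at explicit points of $\mu^{-1}(0)$: the paper exhibits two such points (\Cref{E7twopoints}), computes $(x_1,y_1,z_1)=(-8,16,64)$ and $(x_2,y_2,z_2)=(4,-8,-32)$ (\Cref{E7lemma}), and solves to get $a=b=c$. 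So to repair your proof you should either actually carry out the symbolic reduction of $z+z'$ and $zz'$ (heavy, and not obviously feasible with the lemmas you have), or replace your second and third steps by point evaluations as the paper does; your final dimension-and-domain argument for upgrading the surjection to an isomorphism is identical in both approaches and is fine as stated.
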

We will give a proof that is different from types $D_n$ and $E_6$ (c.f. \Cref{Dngeneratorprop,E6generatorprop}). The reason is that there are too few $1$-dimensional vector spaces among the vector spaces $V_i,\ 0\leq i\leq 7$ in type $E_7$, making it very hard to show that $x,y,z$ generate $\C[\mu^{-1}(0)]^{\GL(\delta)}$ and to find the relation among $x,y,z$ by manipulating with the ADHM equation in \cref{ADEmomentmap0}. \Cref{E7lemma} will be needed in the proof \Cref{E7generatorprop}. Consider the following two points $(B^i,B^{*,i},1,0)\in M(\delta,w),\ i=1,2$, given in \Cref{E7twopoints}. 
Let $x_i,y_i,z_i$ denote values of the functions $x,y,z$ at the points $(B^i,B^{*,i},1,0),\ i=1,2$.

{
\captionsetup[table]{name=Table}
\begin{table}[ht]
\begin{tabular}{c|c}
    $(B^1,B^{*,1},1,0)$ & $(B^2,B^{*,2},1,0)$ \\
    \hline
    {$\!\begin{aligned}
        B^1_{1\la 0}&=\begin{pmatrix}
            0 \\ 1
        \end{pmatrix},\ B^{*,1}_{0\la 1}=\begin{pmatrix}
           -8 & 0
        \end{pmatrix} \\
        \ B^1_{2\la 1}&=\begin{pmatrix}
            1 & 0 \\ 
            0 & 0 \\
            0 & 1
        \end{pmatrix},\ B^{*,1}_{1\la 2}=\begin{pmatrix}
            0 & 1 & 0 \\ 
           -8 & 0 & 0 
        \end{pmatrix} \\
        \ B^1_{3\la 2}&=\begin{pmatrix}
            2 & 0 & 0 \\ 
           -2 & 1 & 0 \\
            4 & 0 & 0 \\
           -4 & 0 & 1
        \end{pmatrix},\ B^{*,1}_{2\la 3}=\begin{pmatrix}
            1 & 1 & 0 & 0 \\ 
           -2 & 0 & 1 & 0 \\
           -4 & 0 & 0 & 0
        \end{pmatrix} \\
        B^1_{4\la 3}&=\begin{pmatrix}
            0 & 1 & 0 & 0 \\ 
            0 & 0 & 1 & 0 \\
            0 & 0 & 0 & 1 
        \end{pmatrix},\ B^{*,1}_{3\la 4}=\begin{pmatrix}
            1 & 0 & 0 \\ 
            0 & 1 & 0 \\
            0 & 0 & 1 \\
            0 & 0 & 0 
        \end{pmatrix} \\
        B^1_{5\la 4}&=\begin{pmatrix}
            0 & 1 & 0 \\
            0 & 0 & 1
        \end{pmatrix},\ B^{*,1}_{4\la 5}=\begin{pmatrix}
            1 & 0 \\
            0 & 1 \\
            0 & 0
        \end{pmatrix} \\
        B^1_{6\la 5}&=\begin{pmatrix}
            0 & 1 
        \end{pmatrix},\ B^{*,1}_{5\la 6}=\begin{pmatrix}
            1 \\
            0
        \end{pmatrix} \\
        B^1_{7\la 3}&=\begin{pmatrix}
            2 & 1 & 0 & 0 \\
            4 & 0 & 0 & 1
        \end{pmatrix},\ B^{*,1}_{3\la 7}=\begin{pmatrix}
            1 & 0 \\
           -2 & 0 \\
            4 & -1 \\ 
           -4 & 0
        \end{pmatrix}
    \end{aligned}$} & {$\!\begin{aligned}
        B^2_{1\la 0}&=\begin{pmatrix}
            0 \\ 1
        \end{pmatrix},\ B^{*,2}_{0\la 1}=\begin{pmatrix}
            4 & 0
        \end{pmatrix} \\
        \ B^2_{2\la 1}&=\begin{pmatrix}
            1 & 0 \\ 
            0 & 0 \\
            0 & 1
        \end{pmatrix},\ B^{*,2}_{1\la 2}=\begin{pmatrix}
            0 & 1 & 0 \\ 
            4 & 0 & 0 
        \end{pmatrix} \\
        \ B^2_{3\la 2}&=\begin{pmatrix}
           -1 & 0 & 0 \\ 
           -2 & 1 & 0 \\
           -2 & 0 & 0 \\
           -4 & 0 & 1
        \end{pmatrix},\ B^{*,2}_{2\la 3}=\begin{pmatrix}
           -2 & 1 & 0 & 0 \\ 
           -2 & 0 & 1 & 0 \\
           -4 & 0 & 0 & 0
        \end{pmatrix} \\
        B^2_{4\la 3}&=\begin{pmatrix}
            0 & 1 & 0 & 0 \\ 
            0 & 0 & 1 & 0 \\
            0 & 0 & 0 & 1 
        \end{pmatrix},\ B^{*,2}_{3\la 4}=\begin{pmatrix}
            1 & 0 & 0 \\ 
            0 & 1 & 0 \\
            0 & 0 & 1 \\
            0 & 0 & 0 
        \end{pmatrix} \\
        B^2_{5\la 4}&=\begin{pmatrix}
            0 & 1 & 0 \\
            0 & 0 & 1
        \end{pmatrix},\ B^{*,2}_{4\la 5}=\begin{pmatrix}
            1 & 0 \\
            0 & 1 \\
            0 & 0
        \end{pmatrix} \\
        B^2_{6\la 5}&=\begin{pmatrix}
            0 & 1 
        \end{pmatrix},\ B^{*,2}_{5\la 6}=\begin{pmatrix}
            1 \\
            0
        \end{pmatrix} \\
        B^2_{7\la 3}&=\begin{pmatrix}
           -1 & 1 & 0 & 0 \\
           -2 & 0 & 0 & 1
        \end{pmatrix},\ B^{*,2}_{3\la 7}=\begin{pmatrix}
           -2 & 0 \\
           -2 & 0 \\
           -2 & -1 \\ 
           -4 & 0
        \end{pmatrix}
    \end{aligned}$}
\end{tabular}
\caption{Two generic points in $E_7$}
\label{E7twopoints}
\end{table}
}

\begin{lemma}\label{E7lemma}
   \begin{enumerate}
       \item $(B^i,B^{*,i},1,0)\in\mu^{-1}(0),\ i=1,2$. It follows that $(x_i,y_i,z_i)\in X,\ i=1,2$.
       \item $x_1=-8,\ y_1=16,\ z_1=64$.
       \item $x_2=4,\ y_2=-8,\ z_2=-32$.
   \end{enumerate}
\end{lemma}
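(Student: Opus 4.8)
The plan is to treat all three parts as explicit computations with the matrices recorded in \Cref{E7twopoints}; in type $E_7$ there is no economical way to argue these facts abstractly, so the point of the lemma is precisely to record the outcome of a mechanical check.

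First I would verify part (1) by checking the eight matrix identities $\mu_0=\cdots=\mu_7=0$ at each point $(B^i,B^{*,i},1,0)$, where the $\mu_j$ are the components of the moment map given by the general formula \cref{ADEmomentmap}. The key simplification is that $k_0=0$, so the framing term $l_0k_0$ in $\mu_0$ drops out and $\mu_0=-B^*_{0\la 1}B_{1\la 0}$ becomes a single scalar equation; for instance at the first point $B^{*,1}_{0\la 1}B^1_{1\la 0}=\begin{pmatrix}-8 & 0\end{pmatrix}\begin{pmatrix}0\\1\end{pmatrix}=0$. The remaining components are the commutator-type expressions of \cref{ADEmomentmap} (with the obvious three-term modification at the trivalent vertex $3$), each verified by multiplying out the given matrices. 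Once $\mu=0$ at the point, it defines a closed point of $\M_0(\delta,w)=\mu^{-1}(0)\gitquo\GL(\delta)$, which is identified with $X$ by \Cref{wframinginv}; since $x,y,z$ are trace functions of loops based at the vertex $0$, they are invariant regular functions, and the triple $(x_i,y_i,z_i)$ is exactly the image of this point under the embedding of $X$ in $\C^3$ cut out by $x,y,z$, which proves $(x_i,y_i,z_i)\in X$.

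For parts (2) and (3) I would evaluate the three trace functions defined in \cref{E7xyz} at the two points. Because $\dim V_0=1$, each of $x,y,z$ is a composition of linear maps $V_0\to V_0$, hence a scalar, which I would compute by multiplying right to left in stages, e.g.\ first forming $B_{3\la 2}B_{2\la 1}B_{1\la 0}\colon V_0\to V_3$ as a column vector in $\C^4$ and then applying the remaining factors. Carrying this out produces $(x_1,y_1,z_1)=(-8,16,64)$ and $(x_2,y_2,z_2)=(4,-8,-32)$.

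The only genuine obstacle is organizational rather than conceptual: the matrices reach size $4\times 4$ and the loop defining $z$ has eighteen arrows, so the bookkeeping is long and error-prone, and I would either carry it out in small stages by hand or confirm it with a computer algebra system, as is done elsewhere in the paper. As a consistency check I note that both output triples satisfy $x^3y+y^3+z^2=0$ — indeed $(-8)^3\cdot 16+16^3+64^2=0$ and $4^3\cdot(-8)+(-8)^3+(-32)^2=0$ — which is forced by part (1) and reassures that the arithmetic is correct.
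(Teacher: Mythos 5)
Your proposal is correct and follows essentially the same route as the paper, whose entire proof is the observation that everything reduces to multiplying the matrices of \Cref{E7twopoints} out against the moment-map equations \cref{ADEmomentmap0} and the trace functions of \Cref{E7generatorprop}. One small caveat: your remark that the identity $x^3y+y^3+z^2=0$ at the two points is ``forced by part (1)'' is slightly circular at this stage, since the fact that the trace functions satisfy this particular normalization of the relation (rather than $ax^3y+by^3+cz^2=0$ with some other nonzero coefficients) is established only in \Cref{E7generatorprop} \emph{using} this lemma; as a numerical sanity check on the arithmetic, however, it is exactly the right thing to do.
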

\begin{proof}
    The proof is just multiplication of matrices, using the definitions of the moment maps in \cref{ADEmomentmap0} and the functions $x,y,z$ in \Cref{E7generatorprop}.
\end{proof}

\begin{proof}[Proof of \Cref{E7generatorprop}]
    Recall from \Cref{wframinginv} and \Cref{ADExyz} that the algebra of invariant functions $\C[\mu^{-1}(0)]^{\GL(\delta)}$ is generated by trace functions of degrees $8,12,18$. Note that none of the degrees can be written as a positive integral linear combination of the others. It follows that
    \[
    \dim\C[\mu^{-1}(0)]^{\GL(\delta)}_8=\dim\C[\mu^{-1}(0)]^{\GL(\delta)}_{12}=\dim\C[\mu^{-1}(0)]^{\GL(\delta)}_{18}=1.
    \]
    We know that $x,y,z$ are all nonzero functions in $\C[\mu^{-1}(0)]^{\GL(\delta)}$ from \Cref{E7lemma}, and that $\deg x=8,\deg y=12,\deg z=18$ from their definitions in \cref{E7xyz}. Therefore,
    \[
    \C[\mu^{-1}(0)]^{\GL(\delta)}_8=\C x,\ \C[\mu^{-1}(0)]^{\GL(\delta)}_{12}=\C y,\ \C[\mu^{-1}(0)]^{\GL(\delta)}_{18}=\C z.
    \]
    It follows that $\C[\mu^{-1}(0)]^{\GL(\delta)}$ is generated by $x,y,z$. Still by \Cref{wframinginv} and \Cref{ADExyz}, there must be a unique (up to scaling) nontrivial relation $F(x,y,z)=0$ of degree $36$ among $x,y,z$. The only monomials of degree $36$ are $x^3y,y^3,z^2$, 
    thus we can write
    \begin{equation}\label{E7abc}        
    F(x,y,z)=a x^3y+b y^3+c z^2=0,\ a,b,c\in \C,\ \text{not all zero}
    \end{equation}
    Plugging the points $(x_i,y_i,z_i),\ i=1,2$ from \Cref{E7lemma} into \cref{E7abc}, we get 
    \begin{align*}
         F(x_1,y_1,z_1)=2^{12}(-2a+b+c)=0, \\
         F(x_2,y_2,z_2)=2^9(-a-b+2c)=0.
    \end{align*}
    It follows that $a=b=c$. 
    We may take $a=b=c=1$. Then we get a surjective homomorphism
    \begin{equation}\label{E7surj}
    \C[x,y,z]/(x^3y+y^3+z^2)\twoheadrightarrow\C[\mu^{-1}(0)]^{\GL(\delta)},
    \end{equation}
    which turns out to be an isomorphism because both sides are integral and of dimension $2$.
\end{proof}

\section{Preimages of fixed point loci for type $E_8$ Kleinian singularity}\label{E8quiver}
{
\captionsetup[table]{name=Picture}
\begin{table}[ht]
    \centering
\begin{tabular}{c}
    \scalebox{1.3}{\begin{tikzpicture}
        \node at (-3,0) (E8) {$\widetilde{E}_8$};

        \filldraw[black] (-1.5,1.5) circle (2pt) node[anchor=east]{$W_0$};
        \node[] at (-1.2,1.5)  {\color{blue} $1$};
        \filldraw[black] (6,-1.5) circle (2pt) node[anchor=east]{$V_8$};
        \node[] at (6.3,-1.5)  {\color{blue} $2$};
        \filldraw[black] (-1.5,0) circle (2pt) node[anchor=east]{$V_0$};
        \node[] at (-1.5,-0.3)  {\color{blue} $1$};
        \filldraw[black] (0,0) circle (2pt) node[anchor=south]{$V_1$};
        \node[] at (0,-0.3)  {\color{blue} $2$};
        \filldraw[black] (1.5,0) circle (2pt) node[anchor=south]{$V_2$};
        \node[] at (1.5,-0.3)  {\color{blue} $3$};
        \filldraw[black] (3,0) circle (2pt) node[anchor=south]{$V_3$};
        \node[] at (3,-0.3)  {\color{blue} $4$};
        \filldraw[black] (4.5,0) circle (2pt) node[anchor=south]{$V_4$};
        \node[] at (4.5,-0.3)  {\color{blue} $5$};
        \filldraw[black] (6,0) circle (2pt) node[anchor=south]{$V_5$};
        \node[] at (6,0.7)  {\color{blue} $6$};
        \filldraw[black] (7.5,0) circle (2pt) node[anchor=south]{$V_6$};
        \node[] at (7.5,-0.3)  {\color{blue} $4$};
        \filldraw[black] (9,0) circle (2pt) node[anchor=south]{$V_7$};
        \node[] at (9,-0.3)  {\color{blue} $2$};

        \draw[->,thick] (-1.55,1.4) -- (-1.55,0.1);
        \node[] at (-1.75,0.8)  {\scalebox{0.9}{$l_0$}};
        \draw[<-,thick] (-1.45,1.4) -- (-1.45,0.1);
        \node[] at (-1.25,0.8)  {\scalebox{0.9}{$k_0$}};

        \draw[->,thick] (6.05,-1.4) -- (6.05,-0.1);
        \node at (6.5,-0.85)  {\scalebox{0.9}{$B_{8\leftarrow5}$}};
        \draw[->,thick] (5.95,-0.1) -- (5.95,-1.4);
        \node[] at (5.5,-0.85)  {\scalebox{0.9}{$B^*_{5\leftarrow 8}$}};

        \draw[->,thick] (-1.4,0.05) -- (-0.1,0.05);
        \node at (-0.7,0.25)  {\scalebox{0.9}{$B_{1\leftarrow0}$}};
        \draw[<-,thick] (-1.4,-0.05) -- (-0.1,-0.05);
        \node[] at (-0.7,-0.25)  {\scalebox{0.9}{$B^*_{0\leftarrow 1}$}};
        \draw[->,thick] (0.1,0.05) -- (1.4,0.05);
        \node at (0.7,0.25)  {\scalebox{0.9}{$B_{2\leftarrow1}$}};
        \draw[<-,thick] (0.1,-0.05) -- (1.4,-0.05);
        \node[] at (0.7,-0.25)  {\scalebox{0.9}{$B^*_{1\leftarrow 2}$}};
        \draw[->,thick] (1.6,0.05) -- (2.9,0.05);
        \node at (2.2,0.25)  {\scalebox{0.9}{$B_{3\leftarrow2}$}};
        \draw[<-,thick] (1.6,-0.05) -- (2.9,-0.05);
        \node[] at (2.2,-0.25) {\scalebox{0.9}{$B^*_{2\leftarrow 3}$}};
        
        \draw[->,thick] (3.1,0.05) -- (4.4,0.05);
        \node at (3.7,0.25)  {\scalebox{0.9}{$B_{4\leftarrow3}$}};
        \draw[<-,thick,] (3.1,-0.05) -- (4.4,-0.05);
        \node[] at (3.7,-0.25)  {\scalebox{0.9}{$B^*_{3\leftarrow 4}$}};
        \draw[->,thick] (4.6,0.05) -- (5.9,0.05);
        \node at (5.2,0.25) {\scalebox{0.9}{$B_{5\leftarrow4}$}};
        \draw[<-,thick] (4.6,-0.05) -- (5.9,-0.05);
        \node[] at (5.2,-0.25)  {\scalebox{0.9}{$B^*_{4\leftarrow 5}$}};
        \draw[->,thick] (6.1,0.05) -- (7.4,0.05);
        \node at (6.7,0.25) {\scalebox{0.9}{$B_{6\leftarrow5}$}};
        \draw[<-,thick] (6.1,-0.05) -- (7.4,-0.05);
        \node[] at (6.7,-0.25)  {\scalebox{0.9}{$B^*_{5\leftarrow 6}$}};
        \draw[->,thick] (7.6,0.05) -- (8.9,0.05);
        \node at (8.2,0.25) {\scalebox{0.9}{$B_{7\leftarrow6}$}};
        \draw[<-,thick] (7.6,-0.05) -- (8.9,-0.05);
        \node[] at (8.2,-0.25)  {\scalebox{0.9}{$B^*_{6\leftarrow 7}$}};
    \end{tikzpicture}}
\end{tabular}
\caption{Extended Dynkin quiver $\widetilde{E}_8$}
\label{quiverE8}
\end{table}
}

We realize the Kleinian singularity of type $E_8: \Spec\C[X]=\C[x,y,z]/(x^5+y^3+z^2)$ as a Nakajima quiver variety explicitly. Recall $\delta=(\delta_i)_{0\leq i\leq 8}=(1,2,3,4,5,6,4,2,3)$. We have $\dim W_0=1,\ \dim V_i=\delta_i$. The number (colored in blue) next to each vector space indicates its dimension. The representation space is 
\[
M(\delta,w)=T^*\big(\bigoplus_{a\in\Omega}\Hom(V_{t(a)},V_{h(a)})\oplus\Hom(W_0,V_0)\big).
\]
The group $\GL(\delta)=\prod_{i=0}^n\GL(V_i)$ acts on $M(\delta,w)$ naturally with the moment map 
\[
\mu=(\mu_i)_{0\leq i\leq n}\colon M(\delta,w)\to\gl(\delta).
\]
According to \Cref{wframinginv} and \Cref{ADExyz}, the algebra of invariant functions $\C[\mu^{-1}(0)]^{\GL(\delta)}$ is generated by trace functions $x,y,z$ with $\deg x=12,\ \deg y=20,\ \deg z=30$. Similar to the case of type $E_7$, we can construct the lift of the unique anti-Poisson involution in type $E_8$ without knowing the explicit forms of $x,y,z$ (c.f. \Cref{E8lift}). The explicit forms of the trace functions $x,y,z$ will be discussed in \Cref{E8generatorprop}.

\subsection{Lift of anti-Poisson involution}\label{E8liftsub}
Recall from \Cref{E8API} that the Kleinian singularity of type $E_8$ has a unique conjugacy class of anti-Poisson involution, which is given by
\[
  \theta(x)=x,\ \theta(y)=y,\ \theta(z)=-z.
\]
The fixed point locus $X^{\theta}=\Spec\C[x,y]/(x^5+y^3)$ is a cusp (a single irreducible component $L$). Following the notations in \Cref{liftsectionq1}, we have $\pi^{-1}(X^{\theta})=\pi^{-1}(0)\cup\tilde{L}$. The proofs of \Cref{E8lift,E8preimage} repeat that of type $E_7$. We leave them to the reader.

\begin{proposition}\label{E8lift}
The involution $\ttheta\colon\M_{\chi}(\delta,w)\to\M_{\chi}(\delta,w)$ defined by
\begin{equation}\label{E8liftequation}
    \ttheta([B,B^*,l_0,k_0])=[-B,B^*,l_0,-k_0]
\end{equation}
is anti-symplectic. Moreover, it is a lift of $\theta\colon X\to X$.
\end{proposition}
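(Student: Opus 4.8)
The plan is to follow the template of \Cref{E7lift} verbatim, exhibiting $\ttheta$ as the descent of an explicit linear involution of the representation space and then quoting \Cref{liftconditions}. First I would introduce
\[
    \Theta\colon M(\delta,w)\to M(\delta,w),\quad (B,B^*,l_0,k_0)\mapsto(-B,B^*,l_0,-k_0),
\]
and check that it is anti-symplectic. This is a one-line computation against \cref{quiversymform}: because $\Theta$ negates every $B_a$ and $k_0$ while fixing every $B_{a^*}$ and $l_0$, each of the summands $\tr(B_aB'_{a^*})$, $\tr(B_{a^*}B'_a)$, and $\tr(l_ik'_i-k_il'_i)$ acquires exactly one sign, so that $\omega(\Theta u,\Theta u')=-\omega(u,u')$ for all $u,u'$.

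Next I would verify the two hypotheses of \Cref{liftconditions}. The first is immediate: since $\Theta$ merely rescales the arrow data inside each $\Hom$-space and never interchanges different vertices, the conjugation $g^{\Theta}=\Theta g\Theta^{-1}$ equals $g$, i.e. it permutes the factors of $\GL(\delta)$ by the identity permutation. For the second hypothesis I must show that $\Theta$ acts on the generators $x,y,z$ of $\C[X]$ precisely as the anti-Poisson involution $\theta$ of \Cref{E8API}, namely $\Theta(x)=x,\ \Theta(y)=y,\ \Theta(z)=-z$.

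The one genuinely substantive step is this last sign computation, and the point is that it can be carried out without ever writing $x,y,z$ as explicit trace functions. Since $\widetilde{E}_8$ has no loops, \Cref{halfdeglemma} guarantees that any loop representing a homogeneous generator of degree $d$ contains exactly $d/2$ arrows from $\Omega$; hence negating all the $B_a$ scales its trace function by $(-1)^{d/2}$. With $\deg x=12,\ \deg y=20,\ \deg z=30$ this yields $(-1)^{6}=1,\ (-1)^{10}=1,\ (-1)^{15}=-1$, matching $\theta$. Having checked both hypotheses, \Cref{liftconditions} then delivers that $\Theta$ descends to the anti-symplectic involution $\ttheta$ of the asserted form with $\pi\circ\ttheta=\theta\circ\pi$, so $\ttheta$ is a lift. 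I expect no real obstacle: the argument is structurally identical to \Cref{E7lift}, the only difference being the degrees entering the final parity count.
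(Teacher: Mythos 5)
Your proposal is correct and is precisely the argument the paper intends: it states that the proof of \Cref{E8lift} repeats that of \Cref{E7lift}, which is exactly your route via \Cref{liftconditions}, with $g^{\Theta}=g$ and the parity count from \Cref{halfdeglemma} applied to $\deg x=12,\ \deg y=20,\ \deg z=30$ giving $\Theta(x)=x,\ \Theta(y)=y,\ \Theta(z)=-z$. Your explicit verification that $\Theta$ is anti-symplectic against \cref{quiversymform} is a correct (and welcome) filling-in of a step the paper leaves implicit.
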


\begin{proposition}\label{E8preimage}
\begin{enumerate}[label=(\arabic*)]
   \item 
   The involution $\ttheta$ preserves all the components $C_i,\ 1\leq i\leq 8$. We have $\pi^{-1}(0)^{\ttheta}=C_1\cup C_3\cup C_5\cup C_7\cup\{p\}$ where $p\in C_8\setminus C_5$.
   \item\label{E8temp2} We have $\tilde{L}\cap\pi^{-1}(0)=\tilde{L}\cap C_8=\{p\}$.
   \item The scheme $\pi^{-1}(X^{\theta})$ is not reduced. As a divisor, we have $\pi^{-1}(X^{\theta})=\tilde{L}+\sum_{i=1}^8a_iC_i$, where $(a_i)_{1\leq i\leq 8}=(3,6,9,12,15,10,5,8)$.
\end{enumerate}
\end{proposition}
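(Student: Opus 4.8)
The plan is to mirror the type $E_7$ argument from \Cref{E7preimage}, transporting it to the combinatorics of $\widetilde{E}_8$ in Picture \ref{quiverE8}. For part (1) I would first note that the lift in \cref{E8liftequation} acts by $(B,B^*,l_0,k_0)\mapsto(-B,B^*,l_0,-k_0)$, which merely rescales the $B$'s; hence the rank conditions defining the $C_i$ in \Cref{Ciequation} are unchanged and $\ttheta(C_i)=C_i$ for every $i$. The key structural input is that the trivalent component is $C_5$ (the vertex carrying the maximal mark $\delta_5=6$), which meets $C_4$, $C_6$, and $C_8$. By \ref{autop1} of \Cref{liftoncomponent} these three intersection points are $\ttheta$-fixed, so $\ttheta$ has at least three fixed points on $C_5\simeq\pr^1$ and must therefore act trivially on $C_5$ by \ref{autop2}. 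I would then propagate along each of the three type-$A$ arms out of $C_5$ using \Cref{liftoncomponentcor} to conclude that $\ttheta$ fixes $C_1,C_3,C_5,C_7$ pointwise and has exactly two fixed points on each of $C_2,C_4,C_6,C_8$.

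To locate the isolated points of $\pi^{-1}(0)^{\ttheta}$, I would observe that for a non-fixed component its fixed points are exhausted by intersections with adjacent fixed components: $C_2,C_4,C_6$ each have two such neighbours, whereas $C_8$ has only the single neighbour $C_5$. Thus one of the two fixed points of $C_8$ is $C_5\cap C_8$ and the other is an isolated point $p\in C_8\setminus C_5$, giving exactly one isolated point---in agreement with \ref{it1} of \Cref{isolatedtransversal}, since $X^{\theta}$ is irreducible. Part (2) then follows formally from \ref{it2} and \ref{it3} of \Cref{isolatedtransversal}: the unique $\tilde{L}$ meets $\pi^{-1}(0)^{\ttheta}$ only at $p$, and as $p$ lies on $C_8$ and on no other component, $\tilde{L}\cap\pi^{-1}(0)=\tilde{L}\cap C_8=\{p\}$, transversally.

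For part (3), the previous steps give $b_8=1$ and $b_i=0$ for $i\neq 8$ in the notation of \cref{bidef}. Since $\theta(z)=-z$, the fixed locus $X^{\theta}=\ddiv(z)$ is a principal divisor, so \Cref{multofcomponent} applies and yields $(a_1,\dots,a_8)^t=\mathcal{C}^{-1}(0,\dots,0,1)^t$, i.e.\ the last column of the inverse $E_8$ Cartan matrix; carrying out this inversion should return $(3,6,9,12,15,10,5,8)$. As these multiplicities are all larger than $1$, $\pi^{-1}(X^{\theta})$ is non-reduced.

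I expect the only genuinely case-specific difficulty to be the bookkeeping at the trivalent node and along the three arms: one must seed the alternation correctly from the fact that $C_5$ is fixed pointwise, and then check that exactly one isolated fixed point survives, on the short branch $C_8$ rather than elsewhere. The final linear-algebra step is routine, but the precise entries of $\mathcal{C}^{-1}(0,\dots,0,1)^t$ must be computed with care to match the stated tuple.
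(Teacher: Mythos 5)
Your proposal is correct and follows precisely the paper's intended argument: the paper states that the proof of \Cref{E8preimage} simply repeats that of type $E_7$ (\Cref{E7preimage}), which is exactly what you do --- seed the alternation at the trivalent component $C_5$ (fixed pointwise since it carries at least three fixed points), propagate along the three arms via \Cref{liftoncomponentcor} to find the unique isolated fixed point on the length-one arm $C_8$, apply \Cref{isolatedtransversal} for part (2), and apply \Cref{multofcomponent} to the principal divisor $\ddiv(z)$ for part (3); the stated tuple is indeed correct, as one checks $\mathcal{C}(3,6,9,12,15,10,5,8)^t=(0,\dots,0,1)^t$ directly from the adjacency of the paper's labeling. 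One trivial nit: the fixedness of the three points $C_5\cap C_4$, $C_5\cap C_6$, $C_5\cap C_8$ follows simply because $\ttheta$ preserves both curves through each such point (so it preserves, hence fixes, the single intersection point), not from \ref{autop1} of \Cref{liftoncomponent}, which concerns swapped components.
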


The fixed point locus $X^{\theta}$ and the preimage $\pi^{-1}(X^{\theta})$ are depicted in Picture \ref{E8pic}. The number next to each component indicates its multiplicity. Thickened lines indicate that the multiplicities are larger than $1$. The $\ttheta$-fixed components of $\pi^{-1}(X^{\theta})$ are colored in red.
{
\captionsetup[table]{name=Picture}
\begin{table}[ht]
\begin{tabular}{ccc}
    \begin{tabular}{c}
    \begin{tikzpicture}
    \draw[thick] (0.5,-1.2) edge [bend right=20] node [below]{} (-0.3,0);
    \draw[thick] (0.5,1.2) edge [bend left=20] node [below]{} (-0.3,0);
    \end{tikzpicture}\end{tabular} & $\qquad$ &
    \begin{tabular}{c}\begin{tikzpicture}
    \draw[line width=0.4mm,red] (-3,2) edge [bend left=25] node [below]{} (-3,-1.5);
    \node at (-3,-1.7) () {\textcolor{red}{$a_5=15$}};
    \draw[line width=0.4mm] (-3.5,-1) edge [bend left=35] node [below]{} (-1,-1);
    \node at (-3.9,-0.75) () {$a_4=12$};
    \draw[line width=0.4mm] (-3.5,1) edge [bend left=35] node [below]{} (-1,1);
    \node at (-3.9,1.25) () {$a_6=10$};
    \draw[line width=0.4mm] (-3.5,0) edge [bend left=35] node [below]{} (-1,0);
    \node at (-3.9,0.2) () {$a_8=8$};
    
    \draw[line width=0.4mm,red] (-2,1) edge [bend left=35] node [below]{} (0.5,1);
    \node at (0.6,0.85) () {\textcolor{red}{$a_7=5$}};
    \draw[line width=0.4mm,red] (-2,-1) edge [bend left=35] node [below]{} (0.5,-1);
    \node at (-2,-1.15) () {\textcolor{red}{$a_3=9$}};
    \draw[line width=0.4mm] (-0.5,-1) edge [bend left=35] node [below]{} (2,-1);
    \node at (2.1,-1.15) () {$a_2=6$};
    \draw[line width=0.4mm,red] (1,-1) edge [bend left=35] node [below]{} (3.5,-1);
    \node at (3.6,-1.15) () {{\color{red} $a_1=3$}};

    \draw[thick,red] (-1.1,0.85) to (-1.7,-0.5);
    \node at (-1,0.7) () {{\color{red} $1$}};
\end{tikzpicture}\end{tabular}\\
$X^{\theta}$ & & $\pi^{-1}(X^{\theta})$
\end{tabular}
\caption{$E_8$, fixed point locus and its preimage}
\label{E8pic}
\end{table}
}

\subsection{Explicit generators $x,y,z$ for type $E_8$}
In this section, we discuss how to find the explicit forms of the generators $x,y,z$ of $\C[\mu^{-1}(0)]^{\GL(\delta)}$ as trace functions.
\begin{proposition}\label{E8generatorprop}
We have $\C[\mu^{-1}(0)]^{\GL(\delta)}\simeq\C[x,y,z]/(x^5+y^3+z^2)$, where
\begin{equation}\label{E8xyz}
    \begin{aligned}
    x:=&B^*_{0\la 1}B^*_{1\la 2}B^*_{2\la 3}B^*_{3\la 4}B^*_{4\la 5}B^*_{5\la 8}B_{8\la 5}B_{5\la 4}B_{4\la 3}B_{3\la 2}B_{2\la 1}B_{1\la 0}, \\
    y:=&B^*_{0\la 1}B^*_{1\la 2}B^*_{2\la 3}B^*_{3\la 4}B^*_{4\la 5}(B^*_{5\la 6}B_{6\la 5})^2B^*_{5\la 8}B_{8\la 5}(B^*_{5\la 6}B_{6\la 5})^2B_{5\la 4}B_{4\la 3}B_{3\la 2}B_{2\la 1}B_{1\la 0}, \\
    z:=&B^*_{0\la 1}B^*_{1\la 2}B^*_{2\la 3}B^*_{3\la 4}B^*_{4\la 5}(B^*_{5\la 6}B_{6\la 5})^2(B_{5\la 4}B^*_{4\la 5})^3 \\
    &\cdot (B^*_{5\la 6}B_{6\la 5})^2B^*_{5\la 8}B_{8\la 5}(B^*_{5\la 6}B_{6\la 5})^2B_{5\la 4}B_{4\la 3}B_{3\la 2}B_{2\la 1}B_{1\la 0}.
\end{aligned}
\end{equation}
\end{proposition}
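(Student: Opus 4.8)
The plan is to follow the strategy of \Cref{E7generatorprop} rather than the direct manipulation of the moment-map relations used for types $D_n$ and $E_6$ in \Cref{Dngeneratorprop,E6generatorprop}. The reason is that type $E_8$ has even fewer $1$-dimensional vertices than $E_7$: only $V_0$ and $W_0$ are $1$-dimensional. Consequently, rewriting an arbitrary invariant trace function as a polynomial in $x,y,z$ by collapsing loops through the relations $\mu_i=0$ is intractable, since there is almost nowhere to reduce a long loop down to a scalar. Instead I will pin down the graded structure abstractly and then determine the single defining relation by evaluating at explicit points.

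First I would record, via \Cref{wframinginv} and \Cref{ADExyz}, that $\C[\mu^{-1}(0)]^{\GL(\delta)}$ is generated by trace functions of loops based at the vertex $0$ in degrees $12,20,30$, and that the graded components in these three degrees are each $1$-dimensional. The last point holds because $12$ is the minimal positive degree while the smallest degree of a nontrivial product of generators is $24=12+12$; a quick check of the Diophantine constraints shows that none of $12,20,30$ arises as a sum of two or more of the degrees $12,20,30$. Hence each of these graded components is spanned by a single generator, and once I verify that the trace functions $x,y,z$ of \cref{E8xyz} are nonzero, they must span $\C[\mu^{-1}(0)]^{\GL(\delta)}_{12}$, $\C[\mu^{-1}(0)]^{\GL(\delta)}_{20}$, $\C[\mu^{-1}(0)]^{\GL(\delta)}_{30}$ respectively, and therefore generate the whole algebra.

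Next I would produce two explicit points $(B^i,B^{*,i},1,0)\in\mu^{-1}(0)$, $i=1,2$, in the spirit of \Cref{E7twopoints} and \Cref{E7lemma}, check that they satisfy the ADHM equations \cref{ADEmomentmap0}, and compute the values $(x_i,y_i,z_i)$; this simultaneously exhibits the nonvanishing of $x,y,z$. By \Cref{wframinginv} and \Cref{ADExyz} there is a unique (up to scale) relation of degree $60$, and the only monomials of that degree are $x^5,y^3,z^2$, since $12a+20b+30c=60$ has only the nonnegative solutions $(5,0,0),(0,3,0),(0,0,2)$. Writing the relation as $ax^5+by^3+cz^2=0$ and substituting the two points gives two linear equations in $(a,b,c)$ whose solution space I expect to be one-dimensional, forcing $a:b:c=1:1:1$ and hence the relation $x^5+y^3+z^2=0$. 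This yields a surjection $\C[x,y,z]/(x^5+y^3+z^2)\twoheadrightarrow\C[\mu^{-1}(0)]^{\GL(\delta)}$, which is an isomorphism because both sides are integral domains of Krull dimension $2$, exactly as in \cref{Ansurj} and \cref{E7surj}.

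The main obstacle will be the explicit points: the vector spaces $V_i$ now reach dimension $6$, so the matrix representatives are markedly larger than in the $E_7$ computation, and verifying $\mu=0$ together with evaluating the long trace-function products defining $x,y,z$ is a heavy, though routine, calculation. A secondary subtlety is ensuring the two chosen points impose \emph{independent} linear conditions on $(a,b,c)$, i.e. that the vectors $(x_i^5,y_i^3,z_i^2)$ are not proportional, so that the coefficients are genuinely determined; selecting points with sufficiently generic values of $x,y,z$ will handle this.
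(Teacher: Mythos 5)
Your proposal follows exactly the paper's own route: the paper proves \Cref{E8generatorprop} by repeating the argument of \Cref{E7generatorprop} --- one-dimensionality of the graded components in degrees $12,20,30$ since none is a positive integral combination of the others, nonvanishing of $x,y,z$ via the two explicit points of \Cref{E8twopoints} checked in \Cref{E8lemma}, determination of the unique degree-$60$ relation $ax^5+by^3+cz^2=0$ by substituting those values, and the surjection-is-isomorphism argument as in \cref{E7surj}. Your two anticipated obstacles (the size of the matrices, with $\dim V_5=6$, and the independence of the two linear conditions on $(a,b,c)$) are precisely what the paper's explicit choice of points resolves.
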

The proof of \Cref{E8generatorprop} is  identical to that of \Cref{E7generatorprop}, with the help of \Cref{E8lemma}. Consider the following two points $(B^i,B^{*,i},1,0)\in M(\delta,w),\ i=1,2$, given in \Cref{E8twopoints}. Let $x_i,y_i,z_i$ denote the values of the functions $x,y,z$ at the points $(B^i,B^{*,i},1,0),\ i=1,2$.

{
\captionsetup[table]{name=Table}
\begin{table}[ht]
\begin{tabular}{c|c}
    $(B^1,B^{*,1},1,0)$ & $(B^2,B^{*,2},1,0)$ \\
    \hline
    \scalebox{0.8}{
    {$\!\begin{aligned}
        B^1_{1\la 0}&=\begin{pmatrix}
            0 \\ 1
        \end{pmatrix},\ B^{*,1}_{0\la 1}=\begin{pmatrix}
           32 & 0
        \end{pmatrix} \\
        \ B^1_{2\la 1}&=\begin{pmatrix}
            1 & 0 \\ 
            0 & 0 \\
            0 & 1
        \end{pmatrix},\ B^{*,1}_{1\la 2}=\begin{pmatrix}
            0 & 1 & 0 \\ 
           32 & 0 & 0 
        \end{pmatrix} \\
        \ B^1_{3\la 2}&=\begin{pmatrix}
            1 & 0 & 0 \\ 
            0 & 1 & 0 \\
            0 & 0 & 0 \\
            0 & 0 & 1
        \end{pmatrix},\ B^{*,1}_{2\la 3}=\begin{pmatrix}
            0 & 1 & 0 & 0 \\ 
            0 & 0 & 1 & 0 \\
           32 & 0 & 0 & 0
        \end{pmatrix} \\
        B^1_{4\la 3}&=\begin{pmatrix}
            1 & 0 & 0 & 0 \\ 
            0 & 1 & 0 & 0 \\
            0 & 0 & 1 & 0 \\
            0 & 0 & 0 & 0 \\
            0 & 0 & 0 & 1
        \end{pmatrix},\ B^{*,1}_{3\la 4}=\begin{pmatrix}
            0 & 1 & 0 & 0 & 0 \\ 
            0 & 0 & 1 & 0 & 0\\
            0 & 0 & 0 & 1 & 0 \\
           32 & 0 & 0 & 0 & 0 \\
        \end{pmatrix} \\
        B^1_{5\la 4}&=\begin{pmatrix}
            2 & 0 & 0 & 0 & 0 \\ 
           -2 & 1 & 0 & 0 & 0 \\
            4 & 0 & 1 & 0 & 0 \\ 
            8 & 0 & 0 & 1 & 0 \\
          -16 & 0 & 0 & 0 & 0 \\
           16 & 0 & 0 & 0 & 1
        \end{pmatrix},\ B^{*,1}_{4\la 5}=\begin{pmatrix}
            1 & 1 & 0 & 0 & 0 & 0 \\ 
           -2 & 0 & 1 & 0 & 0 & 0 \\
           -4 & 0 & 0 & 1 & 0 & 0 \\
            8 & 0 & 0 & 0 & 1 & 0 \\
           16 & 0 & 0 & 0 & 0 & 0
        \end{pmatrix} \\
        B^1_{6\la 5}&=\begin{pmatrix}
            0 & 1 & 0 & 0 & 0 & 0 \\ 
            0 & 0 & 1 & 0 & 0 & 0 \\
            0 & 0 & 0 & 0 & 1 & 0 \\
            0 & 0 & 0 & 0 & 0 & 1
        \end{pmatrix},\ B^{*,1}_{5\la 6}=\begin{pmatrix}
            1 & 0 & 0 & 0 \\ 
            0 & 1 & 0 & 0 \\
            0 & 0 & 0 & 0 \\
            0 & 0 & 1 & 0 \\
            0 & 0 & 0 & 1 \\
            0 & 0 & 0 & 0
        \end{pmatrix} \\
        B^1_{7\la 6}&=\begin{pmatrix}
            0 & 1 & 0 & 0 \\
            0 & 0 & 0 & 1
        \end{pmatrix},\ B^{*,1}_{6\la 7}=\begin{pmatrix}
            1 & 0 \\
            0 & 0 \\
            0 & 1 \\
            0 & 0
        \end{pmatrix} \\
        B^1_{8\la 5}&=\begin{pmatrix}
            2 & 1 & 0 & 0 & 0 & 0 \\
           -8 & 0 & 0 & 1 & 0 & 0 \\
          -16 & 0 & 0 & 0 & 0 & 1
        \end{pmatrix},\ B^{*,1}_{5\la 8}=\begin{pmatrix}
            1 & 0 & 0 \\
           -2 & 0 & 0 \\
            4 & 1 & 0 \\ 
            8 & 0 & 0 \\
          -16 & 0 & -1 \\
           16 & 0 & 0
        \end{pmatrix}
    \end{aligned}$}} & \scalebox{0.8}{{$\!\begin{aligned}
        B^2_{1\la 0}&=\begin{pmatrix}
            0 \\ 1
        \end{pmatrix},\ B^{*,2}_{0\la 1}=\begin{pmatrix}
           8 & 0
        \end{pmatrix} \\
        \ B^2_{2\la 1}&=\begin{pmatrix}
            1 & 0 \\ 
            0 & 0 \\
            0 & 1
        \end{pmatrix},\ B^{*,2}_{1\la 2}=\begin{pmatrix}
            0 & 1 & 0 \\ 
            8 & 0 & 0 
        \end{pmatrix} \\
        \ B^2_{3\la 2}&=\begin{pmatrix}
            1 & 0 & 0 \\ 
            0 & 1 & 0 \\
            0 & 0 & 0 \\
            0 & 0 & 1
        \end{pmatrix},\ B^{*,2}_{2\la 3}=\begin{pmatrix}
            0 & 1 & 0 & 0 \\ 
            0 & 0 & 1 & 0 \\
            8 & 0 & 0 & 0
        \end{pmatrix} \\
        B^2_{4\la 3}&=\begin{pmatrix}
            1 & 0 & 0 & 0 \\ 
            0 & 1 & 0 & 0 \\
            0 & 0 & 1 & 0 \\
            0 & 0 & 0 & 0 \\
            0 & 0 & 0 & 1
        \end{pmatrix},\ B^{*,2}_{3\la 4}=\begin{pmatrix}
            0 & 1 & 0 & 0 & 0 \\ 
            0 & 0 & 1 & 0 & 0\\
            0 & 0 & 0 & 1 & 0 \\
            8 & 0 & 0 & 0 & 0 \\
        \end{pmatrix} \\
        B^2_{5\la 4}&=\begin{pmatrix}
           -1 & 0 & 0 & 0 & 0 \\ 
           -2 & 1 & 0 & 0 & 0 \\
           -2 & 0 & 1 & 0 & 0 \\ 
           -4 & 0 & 0 & 1 & 0 \\
           -4 & 0 & 0 & 0 & 0 \\
           -8 & 0 & 0 & 0 & 1
        \end{pmatrix},\ B^{*,2}_{4\la 5}=\begin{pmatrix}
           -2 & 1 & 0 & 0 & 0 & 0 \\ 
           -2 & 0 & 1 & 0 & 0 & 0 \\
           -4 & 0 & 0 & 1 & 0 & 0 \\
           -4 & 0 & 0 & 0 & 1 & 0 \\
           -8 & 0 & 0 & 0 & 0 & 0
        \end{pmatrix} \\
        B^2_{6\la 5}&=\begin{pmatrix}
            0 & 1 & 0 & 0 & 0 & 0 \\ 
            0 & 0 & 1 & 0 & 0 & 0 \\
            0 & 0 & 0 & 0 & 1 & 0 \\
            0 & 0 & 0 & 0 & 0 & 1
        \end{pmatrix},\ B^{*,2}_{5\la 6}=\begin{pmatrix}
            1 & 0 & 0 & 0 \\ 
            0 & 1 & 0 & 0 \\
            0 & 0 & 0 & 0 \\
            0 & 0 & 1 & 0 \\
            0 & 0 & 0 & 1 \\
            0 & 0 & 0 & 0
        \end{pmatrix} \\
        B^2_{7\la 6}&=\begin{pmatrix}
            0 & 1 & 0 & 0 \\
            0 & 0 & 0 & 1
        \end{pmatrix},\ B^{*,2}_{6\la 7}=\begin{pmatrix}
            1 & 0 \\
            0 & 0 \\
            0 & 1 \\
            0 & 0
        \end{pmatrix} \\
        B^2_{8\la 5}&=\begin{pmatrix}
           -1 & 1 & 0 & 0 & 0 & 0 \\
           -2 & 0 & 0 & 1 & 0 & 0 \\
           -4 & 0 & 0 & 0 & 0 & 1
        \end{pmatrix},\ B^{*,2}_{5\la 8}=\begin{pmatrix}
           -2 & 0 & 0 \\
           -2 & 0 & 0 \\
           -2 & 1 & 0 \\ 
           -4 & 0 & 0 \\
           -4 & 0 & -1 \\
           -8 & 0 & 0
        \end{pmatrix}
    \end{aligned}$}}
\end{tabular}
\caption{Two generic points in $E_8$}
\label{E8twopoints}
\end{table}
}

\begin{lemma}\label{E8lemma}
   \begin{enumerate}
       \item $(B^i,B^{*,i},1,0)\in\mu^{-1}(0),\ i=1,2$. It follows that $(x_i,y_i,z_i)\in X,\ i=1,2$.
       \item $x_1=-2^5,\ y_1=2^8,\ z_1=-2^{12}$.
       \item $x_2=-2^3,\ y_2=-2^5,\ z_2=-2^8$.
   \end{enumerate}
\end{lemma}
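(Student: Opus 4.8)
The plan is to carry out the direct matrix computation, exactly parallel to the proof of \Cref{E7lemma}. All three assertions reduce to substituting the explicit matrices recorded in \Cref{E8twopoints} into the defining equations: part (1) into the moment map equations \cref{ADEmomentmap0}, and parts (2)--(3) into the trace-function formulas \cref{E8xyz}. Since $\dim V_0=1$, every trace function in \cref{E8xyz} is simply the scalar obtained by composing the indicated chain of linear maps into an endomorphism of the one-dimensional space $V_0$, so that $x_i,y_i,z_i$ are read off directly as numbers rather than as traces of larger matrices.

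For part (1), I would verify $\mu_j(B^i,B^{*,i},1,0)=0$ one vertex $j=0,1,\dots,8$ at a time. The interior vertices contribute the usual commutator-type identity $B_{j\la j-1}B^*_{j-1\la j}=B^*_{j\la j+1}B_{j+1\la j}$, the framing vertex $0$ gives $B^*_{0\la 1}B_{1\la 0}=l_0k_0=0$ (here $k_0=0$), the trivalent vertex $5$ splits the incoming contribution among its three neighbors $4,6,8$, and the two leaves $7,8$ impose one-sided conditions. Each of these is a single matrix identity in sizes at most $6\times 6$. Once the moment map vanishes, the point lies in $\mu^{-1}(0)$, and its image under $\mu^{-1}(0)\to\mu^{-1}(0)\gitquo\GL(\delta)\simeq X$ (c.f. \Cref{wframinginv}) is exactly the triple $(x_i,y_i,z_i)$, which therefore lies in $X$.

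For parts (2) and (3), I would evaluate the three chains in \cref{E8xyz} at each of the two points. The matrices are deliberately chosen in near-echelon form: most of the $B$-maps are inclusions and most of the $B^*$-maps are (scalar-twisted) projections, so the long compositions collapse after a few cancellations rather than requiring a full expansion. The powers of $2$ in the stated answers arise from propagating the framing scalar ($32=2^5$ for the first point, $8=2^3$ for the second) together with the dyadic entries of the $B$-maps through each chain.

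The main obstacle is purely bookkeeping: the central vertex carries a six-dimensional space and the formula for $z$ in \cref{E8xyz} is a composition of more than twenty maps, so tracking signs and entries by hand is delicate and is best confirmed with a computer algebra system. A useful internal consistency check, which I would perform at the end, is that the resulting triples satisfy the defining relation $x^5+y^3+z^2=0$ of \Cref{E8generatorprop}: indeed $(-2^5)^5+(2^8)^3+(-2^{12})^2=-2^{25}+2^{24}+2^{24}=0$ and $(-2^3)^5+(-2^5)^3+(-2^8)^2=-2^{15}-2^{15}+2^{16}=0$. This both reconfirms part (1) and guards against arithmetic slips in parts (2)--(3).
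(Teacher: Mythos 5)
Your proposal takes essentially the same approach as the paper: the paper's proof (stated explicitly for the analogous \Cref{E7lemma} and carried over to \Cref{E8lemma}) is precisely the direct matrix multiplication you describe, verifying the moment map equations \cref{ADEmomentmap0} vertex by vertex and evaluating the trace-function chains \cref{E8xyz}, with $\dim V_0=1$ making each value a scalar. Your closing check that the computed triples satisfy $x^5+y^3+z^2=0$ is correct arithmetic and a sensible safeguard, but note it can only serve as a sanity check, not as part of the argument, since that relation is established in \Cref{E8generatorprop} using this very lemma.
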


\printbibliography
\end{document}